\newcommand{\Spec}{\operatorname{Spec}}
\newcommand{\Met}{\operatorname{Met}}
\newcommand{\dd}{\mathrm{d}}
\newcommand{\id}{\operatorname{Id}}
\newcommand{\Vol}{\operatorname{Vol}}
\newcommand{\vol}{\operatorname{vol}}
\newcommand{\Ric}{\operatorname{Ric}}
\newcommand{\scal}{\operatorname{scal}}
\newcommand{\diam}{\operatorname{diam}}
\newcommand{\Ss}{\mathds S}
\newcommand{\Hr}{\mathds H}
\newcommand{\Z}{\mathds Z}
\newcommand{\R}{\mathds R}
\newcommand{\C}{\mathds C}
\newcommand{\Ca}{\mathds C\mathrm a}
\newcommand{\GL}{\mathsf{GL}}
\newcommand{\SO}{\mathsf{SO}}
\newcommand{\Ot}{\mathsf{O}}
\newcommand{\SU}{\mathsf{SU}}
\newcommand{\Ut}{\mathsf{U}}
\newcommand{\Sp}{\mathsf{Sp}}
\newcommand{\Spin}{\mathsf{Spin}}
\newcommand{\T}{\mathsf{T}}
\newcommand{\G}{\mathsf{G}}
\newcommand{\K}{\mathsf{K}}
\renewcommand{\H}{\mathsf H}
\newcommand{\gl}{\mathfrak{gl}}
\newcommand{\sll}{\mathfrak{sl}}
\newcommand{\spp}{\mathfrak{sp}}
\DeclareMathOperator{\Ad}{Ad}
\DeclareMathOperator{\spec}{Spec}
\DeclareMathOperator{\Iso}{Iso}
\DeclareMathOperator{\tr}{tr}
\DeclareMathOperator{\diag}{diag}
\newcommand{\Id}{\id}
\newcommand{\mi}{\mathrm{i}}
\newcommand{\mj}{\mathrm{j}}
\newcommand{\mk}{\mathrm{k}}
\DeclareMathOperator{\Cas}{Cas}
\newcommand{\innerdots}{\langle {\cdot},{\cdot}\rangle }
\newcommand{\abc}{(a,b,c)}
\newcommand{\abcs}{(a,b,c,s)}
\newcommand{\Span}{\operatorname{span}}
\newcommand{\g}{\mathrm g}
\newcommand{\gr}{\g_\mathrm{round}}
\newcommand{\gFS}{\g_{\rm FS}}
\newcommand{\calS}{\mathcal B}
\newtheorem{theorem}{Theorem}[]
\newtheorem{lemma}[theorem]{Lemma}
\newtheorem{proposition}[theorem]{Proposition}
\newtheorem{claim}{Claim}
\newtheorem{mainthm}{\sc Theorem}
\newtheorem{maincor}[mainthm]{\sc Corollary}
\theoremstyle{definition}
\theoremstyle{remark}
\newtheorem{remark}[theorem]{Remark}
\newtheorem{example}[theorem]{Example}
\title{The first eigenvalue of a homogeneous CROSS}
\subjclass{53C30, 58J50, 58J53, 35P15, 35B35, 58J55, 53C18}
\author[R.~G.~Bettiol]{Renato G.~Bettiol}
\address{City University of New York (Lehman College) \newline
\indent Department of Mathematics  \newline
\indent 250 Bedford Park~Blvd W\newline
\indent Bronx, NY, 10468, USA }
\email{r.bettiol@lehman.cuny.edu}
\author[E.~A.~Lauret]{Emilio~A.~Lauret}
\address{
	Universidad Nacional del Sur (UNS) - CONICET\newline
	\indent Instituto de Matem\'atica de Bah\'ia Blanca (INMABB) \newline
	\indent Departamento de Matem\'atica\newline
	\indent Av.\ Alem 1255, Bah\'ia Blanca B8000CPB, Argentina}
\email{emilio.lauret@uns.edu.ar}
\author[P.~Piccione]{Paolo Piccione}
\address{Universidade de S\~ao Paulo \newline
\indent Departamento de Matem\'atica \newline
\indent Rua do Mat\~ao, 1010 \newline
\indent S\~ao Paulo, SP, 05508-090, Brazil}
\email{piccione@ime.usp.br}
\numberwithin{equation}{section}
\numberwithin{theorem}{section}
\date{\today}
\begin{document}

\begin{abstract}
We provide explicit formulae for the first eigenvalue of the Laplace--Beltrami operator on a compact rank one symmetric space (CROSS) endowed with any homogeneous metric. As consequences, we prove that homogeneous metrics on CROSSes are isospectral if and only if they are isometric, and also discuss their stability (or lack thereof) as solutions to the Yamabe problem.
\end{abstract}

\maketitle

\vspace{-0.6cm}

\section{Introduction}

The underlying manifold of many compact symmetric spaces admits families of homogeneous Riemannian metrics that include, but are strictly larger than, their symmetric space metric. For instance, all odd-dimensional spheres $\Ss^n$, $n\geq3$, carry a continuum of pairwise non-isometric homogeneous metrics, and only some among them -- the round metrics -- give $\Ss^n$ the structure of a symmetric space. Surprisingly, despite the extensive literature on the spectrum of the Laplace--Beltrami operator, the computation of its first eigenvalue $\lambda_1(M,\g)$ for \emph{every homogeneous metric} $\g$ on (the underlying manifold of a) compact rank one symmetric space (CROSS) $M$ was, to the best of our knowledge, still incomplete. In this paper, we rectify this by computing $\lambda_1(M,\g)$ in all the remaining cases. For simplicity, we henceforth refer to these metrics $\g$ as \emph{homogeneous metrics on a CROSS}. Out of many possible applications, we focus on two geometrically relevant consequences: the spectral uniqueness of homogeneous metrics on CROSSes, and their classification according to stability in the Yamabe problem.

It is well-known that the complete list of CROSSes consists of $\Ss^n$, $\R P^n$, $\C P^n$, $\Hr P^n$, and $\Ca P^2$, see e.g.~\cite[Ch.~3]{Besse-closed}. Homogeneous metrics on a CROSS were classified by Ziller~\cite{Ziller82}, see also \cite[Ex.~6.16, 6.21]{mybook}. Up to homotheties, in addition to the canonical (symmetric space) metrics, that is, the \emph{round} metric $\gr$  of constant sectional curvature $1$ on $\Ss^n$ and $\R P^n$, and the Fubini--Study metrics $\gFS$ on the projective spaces $\C P^n$, $\Hr P^n$, and $\Ca P^2$, they are as follows:
\begin{enumerate}[(i)]
\item A $1$-parameter family $\mathbf g(t)$ of $\SU(n+1)$-invariant metrics on $\Ss^{2n+1}$;
\item A $3$-parameter family $\mathbf h{(t_1,t_2,t_3)}$ of $\Sp(n+1)$-invariant metrics on $\Ss^{4n+3}$;
\item A $1$-parameter family $\mathbf k(t)$ of $\Spin(9)$-invariant metrics on $\Ss^{15}$;
\item A $1$-parameter family $\check{\mathbf h}(t)$ of $\Sp(n+1)$-invariant metrics on $\C P^{2n+1}$;
\end{enumerate}
and all metrics in (i), (ii), and (iii) above are invariant under the  antipodal (right) $\Z_2$-action, and hence descend to homogeneous metrics invariant under the same groups on $\R P^{2n+1}$, $\R P^{4n+3}$, and $\R P^{15}$, respectively, that we denote by the same symbols. Throughout this paper, as above, $t$ and $t_i$ denote positive real numbers.

Geometrically, the first three families above are obtained by rescaling the unit round metric $\gr$ in the vertical directions of the Hopf bundles
\begin{equation}\label{eq:hopfbundles}
\Ss^1\to \Ss^{2n+1}\to \C P^n,\qquad \Ss^3\to\Ss^{4n+3}\to\Hr P^n, \qquad \Ss^7\to \Ss^{15}\to\Ss^8\big(\tfrac12\big).
\end{equation}
As it turns out, this procedure keeps the corresponding $\G$-actions isometric.
More precisely, decomposing $\gr=\g_{\rm hor}+\g_{\rm ver}$ into horizontal and vertical components,
\begin{equation*}
\mathbf g(t)=\g_{\rm hor}+t^2\g_{\rm ver}, \quad \mathbf h(t_1,t_2,t_3)=\g_{\rm hor}+\textstyle\sum\limits_{i=1}^3 t_i^2\,\dd x_i^2, \quad \mathbf k(t)=\g_{\rm hor}+t^2\g_{\rm ver},
\end{equation*}
where $\dd x_i$, $1\leq i\leq 3$, are dual to a basis of $\gr$-orthonormal vertical (Killing) vector fields on $\Ss^{4n+3}$, so that $\g_{\rm ver}=\dd x_1^2+\dd x_2^2+\dd x_3^2$. In particular, the round metric is recovered by setting the parameters $t$ (or $t_i$) equal to $1$ in any of the above. 
Since permuting $(t_1,t_2,t_3)$ does not change the isometry class of $\mathbf h(t_1,t_2,t_3)$, we shall assume that $0<t_1\leq t_2\leq t_3$ without any loss of generality.

The first eigenvalue of the Laplacian was previously known on $\big(\Ss^{2n+1},\mathbf g(t)\big)$, $\big(\Ss^{15},\mathbf k(t)\big)$, and also on the subfamily $\big(\Ss^{4n+3},\mathbf h(t,t,t)\big)$, which is invariant under the larger isometry group $\Sp(n+1)\Sp(1)$. 
At the heart of these computations, which are carried out in \cite{tanno1,tanno2,bp-calcvar}, building on work of \cite{urakawa79,Berard-BergeryBourguignon82,besson-bordoni}, is the fact that these metrics are \emph{canonical variations} of the round metric with respect to Riemannian submersions with minimal fibers \eqref{eq:hopfbundles}.
That is no longer the case on $\big(\Ss^{4n+3},\mathbf h(t_1,t_2,t_3)\big)$ and $\big(\R P^{4n+3},\mathbf h(t_1,t_2,t_3)\big)$ when one chooses distinct values for the parameters $t_i$, and these metrics are also \emph{not normal homogeneous}, which renders the computation of their first eigenvalue substantially more challenging. 
This was recently achieved in \cite{Lauret-SpecSU(2)} in the lowest dimensional case $\big(\Ss^{3},\mathbf h(t_1,t_2,t_3)\big)$ and $\big(\R P^{3},\mathbf h(t_1,t_2,t_3)\big)$, i.e., that of left-invariant metrics on $\SU(2)\cong\Ss^3$ and $\SO(3)\cong\R P^3$, laying the groundwork for the cases $n\geq1$, which are settled in our first main result.

\begin{mainthm}\label{thm:A}
The first eigenvalue of the Laplacian on $\big(\Ss^{4n+3},\mathbf h(t_1,t_2,t_3)\big)$
and $\big(\R P^{4n+3},\mathbf h(t_1,t_2,t_3)\big)$, with $n\geq1$ and $0<t_1\leq t_2\leq t_3$, are respectively given by
\begin{equation*}
\begin{aligned}
\lambda_1\big(\Ss^{4n+3},\mathbf h(t_1,t_2,t_3) \big)&= 
\min \left\{ 4n +\frac{1}{t_1^2}+\frac{1}{t_2^2}+\frac{1}{t_3^2},\;  8n+ \frac{4}{t_2^2} + \frac{4}{t_3^2},\;  8(n+1) \right\},\\
\lambda_1\big(\R P^{4n+3},\mathbf h(t_1,t_2,t_3) \big)&=
\min \left\{ 8n+ \frac{4}{t_2^2} + \frac{4}{t_3^2},\;  8(n+1) \right\}.
\end{aligned}
\end{equation*}
\end{mainthm}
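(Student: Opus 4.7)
My plan is to use the identifications $\Ss^{4n+3}=\Sp(n+1)/\Sp(n)$ and $\R P^{4n+3}=\Ss^{4n+3}/\{\pm I\}$ to compute the Laplace spectrum via representation theory. By Peter--Weyl, $L^2(\Ss^{4n+3})=\bigoplus_\pi V_\pi\otimes(V_\pi^*)^{\Sp(n)}$, summed over the spherical irreducible unitary representations $\pi$ of $\Sp(n+1)$. Writing $\spp(n+1)=\spp(n)\oplus\mathfrak m$ with $\mathfrak m=\mathfrak m_h\oplus\spp(1)$, where $\spp(1)$ is the vertical subalgebra of the Hopf fibration, and fixing a $\gr$-orthonormal basis $\{E_1,E_2,E_3\}$ of $\spp(1)\cong\operatorname{Im}\Hr$, the $\mathbf h(t_1,t_2,t_3)$-orthonormal basis of $\mathfrak m$ simply rescales the vertical part to $\{E_i/t_i\}$. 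Using that $\Cas_{\spp(n)}$ annihilates $\Sp(n)$-fixed vectors, on each isotypic component the Laplacian acts as $\Id\otimes A_\pi$, with
\[
A_\pi=\Cas_\pi\,\Id+\sum_{i=1}^3\Bigl(\tfrac{1}{t_i^2}-1\Bigr)\bigl(-\pi(E_i)^2\bigr)\quad\text{on }(V_\pi^*)^{\Sp(n)}.
\]
This reduces the problem to the algebraic task of diagonalizing $A_\pi$ for each $\pi$ and minimizing.

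The three terms in the formula correspond to three specific spherical $\pi$. First, the standard representation $V=\C^{2(n+1)}$ has $\Cas_\pi=4n+3$ (matching the round first eigenvalue), and $(V^*)^{\Sp(n)}$ is the spin-$\tfrac12$ representation of $\Sp(1)$, on which $-\pi(E_i)^2=\Id$ for each $i$; hence $A_\pi$ is the scalar $4n+3+\sum_i\bigl(\tfrac{1}{t_i^2}-1\bigr)=4n+\tfrac{1}{t_1^2}+\tfrac{1}{t_2^2}+\tfrac{1}{t_3^2}$. Second, $\Sym^2 V$ has $\Cas_\pi=8n+8$ and $(V_\pi^*)^{\Sp(n)}\cong\Sym^2\C^2$ equal to the spin-$1$ representation; in a suitable real basis the three operators $-\pi(E_i)^2$ commute and simultaneously diagonalize as $\diag(0,4,4),\diag(4,0,4),\diag(4,4,0)$, so $A_\pi$ has diagonal entries $8n+8+4\sum_{j\neq i}\bigl(\tfrac{1}{t_j^2}-1\bigr)$, whose minimum under $t_1\leq t_2\leq t_3$ is $8n+\tfrac{4}{t_2^2}+\tfrac{4}{t_3^2}$. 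Third, the symplectic-traceless exterior square $\wedge^2_0 V$ also has $\Cas_\pi=8n+8$ but $(V_\pi^*)^{\Sp(n)}$ is $1$-dimensional and trivial under $\Sp(1)$, yielding $A_\pi=8(n+1)$ identically.

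The hardest step is to rule out every other spherical representation. I would parametrize the spherical representations as $V_{a,b}$ with $(a,b)\in\Z_{\geq 0}^2$, so that $(V_{a,b}^*)^{\Sp(n)}$ is the spin-$\tfrac{a}{2}$ representation of $\Sp(1)$ and $\Cas_{V_{a,b}}$ grows quadratically in $a+b$. The identity $\sum_{i=1}^3(-\pi(E_i)^2)=\Cas_{\spp(1)}|_{(V_{a,b}^*)^{\Sp(n)}}$ yields the operator bound $A_\pi\geq\Cas_\pi-3\max_i\bigl(1-\tfrac{1}{t_i^2}\bigr)_+\cdot\Cas_{\spp(1)}|_{(V_{a,b}^*)^{\Sp(n)}}$, whose right-hand side exceeds the minimum of the three candidate expressions for all $(a,b)$ outside a finite set; the remaining cases are then settled by explicit diagonalization, following the strategy of \cite{Lauret-SpecSU(2)} for $n=0$. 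Finally, to pass to $\R P^{4n+3}$, the central element $-I\in\Sp(n+1)$ acts on $V_{a,b}$ by $(-1)^a$, so only representations with $a$ even descend; this eliminates the standard representation $V_{1,0}$ and its contribution, leaving exactly the second formula in the statement.
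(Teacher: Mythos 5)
Your overall route coincides with the paper's: Peter--Weyl for $\Ss^{4n+3}=\Sp(n+1)/\Sp(n)$, the branching fact that the $\Sp(n)$-fixed vectors in each spherical representation form an irreducible $\Sp(1)$-module, the resulting ``modified Casimir'' $A_\pi$ on those fixed vectors, the identification of the three relevant representations (standard, $\Sym^2$, $\wedge^2_0$, i.e.\ $\pi_{1,0},\pi_{2,0},\pi_{1,1}$) together with the values they contribute, and the parity criterion via the central element $-I$ for descent to $\R P^{4n+3}$ (equivalent to the paper's analysis of $\K\cdot\Z_2$) are all correct.

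The gap is in the elimination step, which is where the real work lies. Your bound $A_\pi\geq\Cas_\pi-3\max_i(1-1/t_i^2)_+\,\Cas_{\spp(1)}$ is a valid inequality, but its right-hand side does \emph{not} exceed the minimum of the three candidates outside a finite set of representations. Write $\theta=\max_i(1-1/t_i^2)_+$ and consider the representations $\pi_{p,0}$, whose fixed space is spin-$p/2$: there $\Cas_\pi=p(p+4n+2)$ while $\Cas_{\spp(1)}=p(p+2)$, so your lower bound equals $p(p+4n+2)-3\theta\,p(p+2)$, which tends to $-\infty$ as $p\to\infty$ as soon as $3\theta>1$, i.e.\ whenever $t_3>\sqrt{3/2}$. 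For such metrics infinitely many representations survive your test, so the reduction to finitely many cases collapses exactly in the regime the theorem must cover. The factor $3$ is the culprit: since each $-\pi(E_i)^2$ is nonnegative and $\sum_{i}(-\pi(E_i)^2)=\Cas_{\spp(1)}$, the negative part of the perturbation is bounded by $\theta\,\Cas_{\spp(1)}$ with $\theta<1$, giving $A_\pi\geq\Cas_\pi-\Cas_{\spp(1)}=4pn+4q(p+n+1)$, the purely horizontal Casimir scalar; this exceeds $8(n+1)$ for all $(p,q)$ outside $\{(1,0),(2,0),(1,1)\}$, except for $(3,0)$ when $n=1$. This is precisely the paper's Lemma~\ref{lem:lambda^(1,1)} (which simply discards the nonnegative vertical contribution $\nu_j^{(p-q)}\geq0$). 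The leftover case $(p,q)=(3,0)$, $n=1$ cannot be handled by any such crude bound ($12n=12<16$) and requires a genuine spectral estimate for $\tau_3(-a^2X_1^2-b^2X_2^2-c^2X_3^2)$: the paper uses $\nu_j^{(3)}\geq a^2+5b^2+9c^2$ from \cite[Lem.~3.4]{Lauret-SpecSU(2)} and compares with $\lambda_1^{(1,0)}$; your plan of explicitly diagonalizing this small matrix (it splits into two $2\times2$ blocks) would also work -- but only after the finite reduction has actually been secured as above.
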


In the special case $t_1=t_2=t_3=t$, the (right) Hopf $\mathsf S^1$-action on $\big(\Ss^{4n+3},\mathbf h(t,t,t)\big)$ is isometric and commutes with the transitive (left) $\Sp(n+1)$-action.
Thus, the orbit space $\C P^{2n+1}=\Ss^{4n+3}/\mathsf S^1$ is also a homogeneous space with an action of $\Sp(n+1)$. The induced homogeneous metrics $\check{\mathbf h}(t)$ form the fourth (and last) family listed above. Geometrically,  $\check{\mathbf h}(t)=(\gFS)_{\rm hor}+t^2 (\gFS)_{\rm ver}$, where $\gFS=(\gFS)_{\rm hor}+(\gFS)_{\rm ver}$ is the decomposition into horizontal and vertical components with respect to the Hopf bundle $\C P^1\to \C P^{2n+1}\to \Hr P^n$.
These are the last homogeneous CROSSes whose first eigenvalue of the Laplacian had not been explicitly computed.

\begin{mainthm}\label{thm:B}
The first eigenvalue of the Laplacian on $\big(\C P^{2n+1},\check{\mathbf h}(t) \big)$ is given by
\begin{equation*}
\lambda_1\big(\C P^{2n+1},\check{\mathbf h}(t) \big)= \min \left\{   8n +\frac{8}{t^2},\; 8(n+1)\right\}.
\end{equation*}
\end{mainthm}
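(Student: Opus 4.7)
The plan is to deduce Theorem~\ref{thm:B} from Theorem~\ref{thm:A} by exhibiting $\big(\C P^{2n+1},\check{\mathbf h}(t)\big)$ as the base of a Riemannian submersion from $\big(\Ss^{4n+3},\mathbf h(t_0,t,t)\big)$, valid for an arbitrary $t_0>0$, whose fibers are the orbits of the Hopf $\Ss^1$-action generated by the first vertical Killing vector field. Writing $\Ss^{4n+3}=\Sp(n+1)/\Sp(n)$ and $\C P^{2n+1}=\Sp(n+1)/(\Sp(n)\times\Ut(1))$, this Hopf $\Ut(1)\subset\Sp(1)$ acts isometrically for $\mathbf h(t_0,t,t)$ precisely because $\Ad(e^{\theta\mathrm i})$ rotates the remaining two vertical directions into one another and these carry equal parameters $t$. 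I will check that the quotient metric equals $(\gFS)_{\rm hor}+t^2(\gFS)_{\rm ver}=\check{\mathbf h}(t)$ with respect to the Hopf fibration $\C P^1\to\C P^{2n+1}\to\Hr P^n$, since $\dd x_2$ and $\dd x_3$ descend to span the $\C P^1=\Ss^3/\Ss^1$ fibers and retain the rescaled parameter $t$.

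Since the isotropy representation of $\Sp(n+1)/(\Sp(n)\times\Ut(1))$ has no nonzero fixed vectors, homogeneity forces the mean curvature of the fibers to vanish identically, so the fibers are minimal. Pullback therefore provides a linear identification of $L^2\big(\C P^{2n+1},\check{\mathbf h}(t)\big)$ with the $\Ut(1)$-invariant subspace of $L^2\big(\Ss^{4n+3},\mathbf h(t_0,t,t)\big)$ intertwining the respective Laplacians, and in particular $\lambda_1\big(\C P^{2n+1},\check{\mathbf h}(t)\big)$ equals the smallest eigenvalue of $\mathbf h(t_0,t,t)$ realized by some $\Ut(1)$-invariant eigenfunction.

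By Theorem~\ref{thm:A} applied with $t_1=t_0$ and $t_2=t_3=t$, the candidate values are $4n+1/t_0^2+2/t^2$, $8n+8/t^2$, and $8(n+1)$, each realized on a specific irreducible $\Sp(n+1)$-summand of $L^2(\Ss^{4n+3})$. Since the quotient metric $\check{\mathbf h}(t)$ does not depend on $t_0$, neither can the $\Ut(1)$-invariant subspectrum upstairs; hence the first candidate, whose value depends explicitly on $t_0$, cannot be realized on any $\Ut(1)$-invariant eigenfunction. The remaining two candidates appear in the $\R P^{4n+3}$ formula of Theorem~\ref{thm:A}, so their eigenfunctions are at least antipodal $\Z_2$-invariant; to upgrade this to full $\Ut(1)$-invariance, I will identify the corresponding $\Sp(n+1)$-summands and verify via the branching $\Sp(n+1)\downarrow\Sp(n)\times\Sp(1)\downarrow\Sp(n)\times\Ut(1)$ that each admits $\Ut(1)$-fixed vectors. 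Taking the minimum of the two surviving candidates then yields $\min\{8n+8/t^2,\,8(n+1)\}$, as claimed, consistent with $t=1$ reproducing the classical Fubini--Study value $\lambda_1\big(\C P^{2n+1},\gFS\big)=8(n+1)$. The main obstacle lies in this final branching step, which reduces to a concrete combinatorial computation with dominant weights of $\Sp(n+1)$.
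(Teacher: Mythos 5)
Your route — deducing Theorem~\ref{thm:B} from Theorem~\ref{thm:A} via the Riemannian submersion $(\Ss^{4n+3},\mathbf h(t_0,t,t))\to(\C P^{2n+1},\check{\mathbf h}(t))$ with an auxiliary fiber parameter $t_0$ — is genuinely different from the paper's proof (which computes the full spectrum of $(\C P^{2n+1},\check\g_{(b,s)})$ directly in Lemma~\ref{lem:implicitSpec} by identifying the spherical representations of $(\Sp(n+1),\Sp(n)\Ut(1))$, and then minimizes in Theorem~\ref{thm:lambda_1(b,s)}). The submersion setup, the minimality of the fibers, and the identification of $\lambda_1(\C P^{2n+1},\check{\mathbf h}(t))$ with the smallest eigenvalue upstairs realized by a $\Ut(1)$-invariant eigenfunction are all fine. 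The gap is in the lower bound. Theorem~\ref{thm:A} only identifies the minimum of the \emph{whole} spectrum of $\mathbf h(t_0,t,t)$; it does not say that the smallest eigenvalue admitting a $\Ut(1)$-invariant eigenfunction must be one of the three displayed candidate values. Whenever $4n+1/t_0^2+2/t^2$ is the strict minimum upstairs, the spectrum may contain (for all Theorem~\ref{thm:A} tells you) further eigenvalues strictly between this value and $\min\{8n+8/t^2,\,8(n+1)\}$, coming from other representations $\pi_{p,q}$ or from higher $\lambda_j^{(p,q)}$, and nothing in your argument excludes that one of these is realized by an invariant eigenfunction. So discarding only the first candidate ``because its value depends on $t_0$'' does not give $\lambda_1(\C P^{2n+1},\check{\mathbf h}(t))\geq\min\{8n+8/t^2,\,8(n+1)\}$. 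The same heuristic in fact fails visibly if $t_0>t$: after reordering (Theorem~\ref{thm:A} assumes $t_1\leq t_2\leq t_3$) the middle candidate becomes $8n+4/t^2+4/t_0^2$, which also depends on $t_0$; your elimination rule would discard it too and predict $8(n+1)$, which is wrong for $t>1$ — and in that regime the correct value $8n+8/t^2$ is not among the three candidates at all. This shows that ``the smallest invariant eigenvalue is one of the three candidates'' is precisely what needs proof.

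The good news is that your own $t_0$-independence observation repairs the lower bound if used quantitatively: fix $t_0\leq t$ so small that $4n+1/t_0^2+2/t^2>\min\{8n+8/t^2,\,8(n+1)\}$; then Theorem~\ref{thm:A} gives $\lambda_1\big(\Ss^{4n+3},\mathbf h(t_0,t,t)\big)=\min\{8n+8/t^2,\,8(n+1)\}$, and since the invariant subspectrum is contained in the upstairs spectrum and does not depend on $t_0$, this number bounds $\lambda_1(\C P^{2n+1},\check{\mathbf h}(t))$ from below. For the matching upper bound you still must carry out the branching step you defer: namely that $8(n+1)$ is basic for $\C P^{2n+1}\to\Hr P^n$, and that the zero-$X_1$-weight vector in $V_{2,0}^{\K}\simeq V_{\tau_2}$ is an eigenvector of $\tau_2(-a^2X_1^2-b^2X_2^2-b^2X_3^2)$ with eigenvalue $8b^2$, hence $\Ut(1)$-fixed and realizing $8n+8/t^2$. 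That verification is, in substance, the portion of Lemma~\ref{lem:implicitSpec} identifying $\widehat\G_{\check\K}$ and $\check\lambda^{(p,q)}$, so the shortcut buys less than it appears to, but with the fix above it does yield a correct alternative proof of Theorem~\ref{thm:B} from Theorem~\ref{thm:A}.
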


More detailed versions of Theorems~\ref{thm:A} and~\ref{thm:B} are found in Theorems~\ref{thm:lambda_1(a,b,c,s)} and~\ref{thm:lambda_1(b,s)}, where the multiplicity of these first eigenvalues is also provided. For the convenience of the reader, formulae for the first eigenvalue of the Laplacian on all homogeneous CROSSes are given in Table~\ref{tab:eigenvalues}.  Moreover, formulae for \emph{all} eigenvalues of the Laplacian on $\Ss^{4n+3}$ and $\R P^{4n+3}$ endowed with the metrics $\mathbf g(t)$ or $\mathbf h(t,t,t)$, and $\big(\C P^{2n+1},\check{\mathbf h}(t) \big)$ are given in Theorems~\ref{thm:SpecCP^2n+1} and \ref{thm:Spec(g(t))}; see also \cite{blp-new}. 

Although Theorem~\ref{thm:B} could have been obtained from the techniques in \cite{Berard-BergeryBourguignon82},  Theorem~\ref{thm:A} requires more general methods that might be of independent interest. In fact, these methods (described in Section~\ref{sec:homspectra}) can be used for spectral computations in any compact homogeneous space $\G/\K$ endowed with any homogeneous metric $\g$.
Recall that if $\g$ is normal homogeneous, then the Laplacian on $(\G/\K,\g)$ acts as the \emph{Casimir element}. Since it is in the \emph{center} of the universal enveloping algebra of $\mathfrak g$, the Casimir element acts via multiplication by a scalar in each irreducible $\G$-module that constitutes the Peter--Weyl decomposition \eqref{eq:PeterWeyl} of $L^2(\G/\K)$.
These scalars, which are the eigenvalues of the Laplacian on $(\G/\K,\g)$, can then be computed using Freudenthal's formula \eqref{eq:Casimirscalar} in terms of a root system. 
However, when the normality assumption on $\g$ is dropped, the Laplacian no longer coincides with the Casimir element, and does not necessarily act via multiplication by a scalar in every irreducible $\G$-module in \eqref{eq:PeterWeyl}. Instead, its action is represented by (typically non-diagonal) self-adjoint endomorphisms on each of these $\G$-modules. 
Our approach is to compute the Laplace spectrum as the union of the spectra of these endomorphisms. 
Although a closed formula analogous to Freudenthal's formula \eqref{eq:Casimirscalar} is probably unfeasible in this level of generality, sufficiently fine algebraic estimates allow us to identify in which $\G$-modules the \emph{smallest} eigenvalue is attained. In this way, at least the first few eigenvalues can be explicitly computed. 

As a first application, 
we show in our next main result that the Laplace spectrum distinguishes homogeneous metrics on a CROSS up to isometries.

\begin{mainthm} \label{thm:rigidity}
Two CROSSes endowed with homogeneous metrics are isospectral if and only if they are isometric.
\end{mainthm}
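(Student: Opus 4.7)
The plan is to combine the explicit first-eigenvalue formulas of Theorems~\ref{thm:A} and~\ref{thm:B}, together with their counterparts for the families $\mathbf g(t)$ and $\mathbf k(t)$ collected in Table~\ref{tab:eigenvalues}, with the classical heat-trace invariants of the Laplace spectrum, in order to recover both the underlying CROSS and the homogeneous metric parameters from the spectrum alone.

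First, I would use the Weyl asymptotic to extract the dimension, and the leading heat coefficients to extract the volume $\Vol(M,\g)$ and the (constant) scalar curvature $\scal(M,\g)$. Fixing the dimension already narrows the list of possible pairs: $(\Ss^n,\g)$ and its $\Z_2$ quotient $(\R P^n,\g/\Z_2)$ have volumes in ratio $2:1$ and are thus distinguished; the family $\check{\mathbf h}(t)$ lives on $\C P^{2n+1}$ of even dimension $4n+2$ while $\mathbf g(t)$, $\mathbf h(t_1,t_2,t_3)$, and $\mathbf k(t)$ live in odd dimension, so no isospectrality can occur between these two groups. The only remaining cross-family comparisons happen inside $\Ss^{4n+3}$ (and $\R P^{4n+3}$), where $\mathbf g(t)$ and $\mathbf h(t_1,t_2,t_3)$ coexist, and inside $\Ss^{15}$ (and $\R P^{15}$), where $\mathbf k(t)$ also appears; these are resolved by comparing the piecewise formulas for $\lambda_1$ and their multiplicities, together with $\Vol$ and $\scal$, which over-determine the few free parameters involved.

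Within each family, the task reduces to showing that the map from parameters to (spectrum with multiplicities) is injective modulo the stated symmetries. For the one-parameter families $\mathbf g(t)$, $\mathbf k(t)$, and $\check{\mathbf h}(t)$ this is immediate, since $\Vol$ is a strictly monotone function of $t$. The main obstacle is the three-parameter family $\mathbf h(t_1,t_2,t_3)$: $\lambda_1$ is a minimum of three functions of $(t_1,t_2,t_3)$, and together with $\Vol\propto t_1 t_2 t_3$ we have at most two equations for three unknowns---indeed, in the regime $\lambda_1 = 8(n+1)$ the first eigenvalue carries no parameter information at all. I would close the gap by adjoining the scalar curvature $\scal$, a specific rational function of $(t_1,t_2,t_3)$ coming from the curvature formulas for a diagonal left-invariant metric on the $\Sp(1)$-fiber of the Hopf bundle $\Ss^3 \to \Ss^{4n+3} \to \Hr P^n$. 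Working regime by regime according to which term attains the minimum in $\lambda_1$, the triple $(\Vol, \scal, \lambda_1)$ together with the multiplicity of $\lambda_1$ is then shown to determine $(t_1, t_2, t_3)$ uniquely under the convention $t_1 \leq t_2 \leq t_3$, with the multiplicity jumps at the regime boundaries used to exclude boundary ambiguities.
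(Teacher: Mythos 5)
Your overall strategy---heat invariants (dimension, volume, constant scalar curvature) combined with the explicit formulae for $\lambda_1$ and its multiplicity---is the paper's starting point as well, but two of your key reductions do not go through. First, the claim that $\Ss^{4n+3}$ and $\R P^{4n+3}$ are ``distinguished because the volumes are in ratio $2:1$'' is only valid when the two sides carry the \emph{same} parameters. An isospectral pair would involve $\g_{(a_1,b_1,c_1,s_1)}$ on $\Ss^{4n+3}$ and $\g_{(a_2,b_2,c_2,s_2)}$ on $\R P^{4n+3}$ with independent parameters (and possible homothety factors), and equality of volumes merely forces $\sigma_3(a_2,b_2,c_2)=4\,\sigma_3(a_1,b_1,c_1)$, which is easy to arrange. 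Excluding this cross-isospectrality is in fact one of the hardest steps of the paper's proof (Proposition~\ref{lem:isospS-RP}): it needs the comparison estimates of Lemma~\ref{claim:nu} (Gershgorin and tridiagonal-matrix arguments for the $\nu_j^{(2k)}$), the hearing of $\lambda_1^{(4,0)}$ after removing the subsets $\calS_0,\calS_1,\calS_2$ from both spectra, and a delicate scalar-curvature comparison between the two roots of the quadratic \eqref{eq:quadraticequation}. Your proposal offers no substitute for this step.

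Second, within the family $\mathbf h(t_1,t_2,t_3)$ (allowing homotheties, i.e.\ $\g_{(a,b,c,s)}$), the data $(\Vol,\scal,\lambda_1,\text{mult}(\lambda_1))$ do not determine the metric. A parameter count in the very regime you flag, $\lambda_1=\lambda_1^{(1,1)}=8(n+1)s^2$, shows the failure: $\lambda_1$ fixes only $s$, the volume \eqref{eq:vol} then fixes only $\sigma_3$, and the scalar curvature \eqref{eq:scal} gives a single relation between $\sigma_1$ and $\sigma_2$; the multiplicity is discrete and cannot supply the missing continuous invariant, so $(a,b,c)$ remain underdetermined by one parameter. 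This is exactly why the paper must \emph{hear} further eigenvalues---$\lambda_1^{(1,0)}$, $\lambda_1^{(2,0)}$, $\lambda_1^{(4,0)}$---by identifying the second, third, and later distinct eigenvalues through their (pairwise distinct) multiplicities, which is the long case-by-case analysis of Theorems~\ref{thm:rigidityS^4n+3} and~\ref{thm:rigidityRP^4n+3}, and why Lemma~\ref{lem:invariants} is needed, including its nontrivial part (ii), where a two-root ambiguity in \eqref{eq:quadraticequation} is resolved by a monotonicity argument for the scalar curvature. A related soft spot: for the one-parameter families you argue ``$\Vol$ is monotone in $t$,'' which implicitly fixes the homothety scale, whereas the theorem allows arbitrary rescalings $\alpha\,\mathbf g(t)$, $\alpha\,\mathbf k(t)$, $\alpha\,\check{\mathbf h}(t)$; there the paper again has to use the multiplicity patterns of $\lambda_1$ and $\lambda_2$ (e.g.\ \eqref{eq:eigenvaluesS2n+1}--\eqref{eq:2nd-eigenvaluesS15}) to pin down both $\alpha$ and $t$, and to rule out $\mathbf k(t)$ versus $\mathbf h(t_1,t_2,t_3)$ on $\Ss^{15}$ and $\R P^{15}$.
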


In dimension 3, a partial result was obtained independently in \cite[Thm.~1.3]{LSS2} and \cite[Thm.~1.5]{Lauret-SpecSU(2)}, in terms of left-invariant metrics on $\SU(2)$ and $\SO(3)$.

Although the hypotheses of Theorem~\ref{thm:rigidity} may seem rather stringent, one should keep in mind that establishing spectral uniqueness of a given Riemannian manifold in complete generality can be extremely challenging. For instance, it remains an open problem whether or not
there exist closed Riemannian manifolds that are isospectral but not isometric to a round sphere $(\Ss^n,\gr)$, $n\geq7$.
However, as in Theorem~\ref{thm:rigidity}, such questions can sometimes be tackled in the presence of symmetries. 
Similar spectral uniqueness results among certain families of homogeneous metrics were recently obtained in~\cite{GordonSutton10, GordonSchuethSutton10, Sutton16pp, Yu15, Yu18pp, Lauret-SpecSU(2), Lauret18, LSS1, LSS2}. 
In contrast, there are also several constructions of (non-isometric) isospectral homogeneous Riemannian manifolds, including curves of left-invariant metrics on several compact Lie groups \cite{Schueth01,Proctor05}, and normal homogeneous metrics on distinct homogeneous spaces \cite{Sutton02, AnYuYu13}. 

As a second application, we finalize the classification of homogeneous metrics on a CROSS that are stable solutions to the Yamabe problem. Since they have constant scalar curvature, homogeneous metrics are trivial solutions to the Yamabe problem, i.e., critical points of the normalized total scalar curvature functional \eqref{eq:HilbertEinstein} in their conformal class. However, they need not be \emph{stable} critical points (i.e., local minimizers), depending on the relative values of their scalar curvature and first Laplace eigenvalue.
These are instances where optimality in a geometric variational problem is not necessarily achieved with the most symmetries, since a global minimizer exists in every conformal class, and a conformal class contains at most one homogeneous metric (up to homotheties).
Stable homogeneous spheres among canonical variations of the round metric were classified in \cite{bp-calcvar}, and among $\big(\Ss^3,\mathbf h(t_1,t_2,t_3)\big)$ and $\big(\R P^3,\mathbf h(t_1,t_2,t_3)\big)$ in \cite{Lauret-SpecSU(2)}. Thus, the only families left to consider are $\big(\C P^{2n+1},\check{\mathbf h}(t)\big)$, for which the stability classification follows easily from Theorem~\ref{thm:B}, see Remark~\ref{rem:cp2n1stability}, as well as $\big(\Ss^{4n+3},\mathbf h(t_1,t_2,t_3)\big)$ and $\big(\R P^{4n+3},\mathbf h(t_1,t_2,t_3)\big)$, which are settled in our next main result.

\begin{mainthm}\label{mainthm:stability3param}
The metric $\mathbf h(t_1,t_2,t_3)$, $(t_1,t_2,t_3)\neq(1,1,1)$, is a stable nondegenerate solution to the Yamabe\- problem on $\Ss^{4n+3}$, and $\R P^{4n+3}$, $n\geq1$, 
 if and only~if
\begin{equation*}
t_1^4+t_2^4+t_3^4
+\big(2n(t_1^2+t_2^2+t_3^2)+8(n^2+n+1)\big)(t_1t_2t_3)^2>2(t_1^2t_2^2+t_1^2t_3^2+t_2^2t_3^2).
\end{equation*}
The parameters $(t_1,t_2,t_3)$ corresponding to these metrics form an unbounded and connected open subset $\mathcal S_n\subset \R^3_{>0}=\big\{(t_1,t_2,t_3)\in\R^3 : t_i>0\big\}$, whose boundary $\partial \mathcal S_n$ in $\R^3_{>0}$ is a smooth, connected, and bounded surface.
\end{mainthm}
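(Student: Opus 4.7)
The plan is to reduce the stability condition to an explicit polynomial inequality by combining the first-eigenvalue formula from Theorem~\ref{thm:A} with a direct computation of the scalar curvature, and then analyze the resulting set in $\R^3_{>0}$ by elementary means.

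First, I would invoke the standard criterion that a constant scalar curvature metric on a closed $d$-manifold is a nondegenerate stable solution to the Yamabe problem if and only if $\lambda_1>\scal/(d-1)$. For $d=4n+3$, this reads $\lambda_1>\scal/(4n+2)$. I next compute $\scal(\mathbf h(t_1,t_2,t_3))$ using O'Neill's formula applied to the quaternionic Hopf submersion $\Ss^3\to\Ss^{4n+3}\to\Hr P^n$, whose fibers are totally geodesic. Combining Milnor's formula for the left-invariant fiber metric $\diag(t_1^2,t_2^2,t_3^2)$ on $\Ss^3$ with the base value $\scal(\Hr P^n,\gFS)=16n(n+2)$ and the scaling $|A|^2_{\mathbf h(t_1,t_2,t_3)}=4n(t_1^2+t_2^2+t_3^2)$ of the O'Neill integrability tensor (consistent with $|A|^2_\gr=12n$ at the round metric), this yields
\[ \scal=\frac{2\bigl[2(t_1^2t_2^2+t_1^2t_3^2+t_2^2t_3^2)-(t_1^4+t_2^4+t_3^4)\bigr]}{(t_1t_2t_3)^2}+16n(n+2)-4n(t_1^2+t_2^2+t_3^2). \]

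Denote by $A$, $B$, $C$ the three candidate eigenvalues in Theorem~\ref{thm:A}. Clearing denominators in $C=8(n+1)>\scal/(4n+2)$ produces exactly the polynomial inequality in the statement. It remains to verify that this inequality also implies $A>\scal/(4n+2)$ and $B>\scal/(4n+2)$, so that $\lambda_1=\min\{A,B,C\}$ strictly exceeds $\scal/(4n+2)$. For the $A$-term I expect an AM--GM estimate on $T=t_1t_2t_3$ of the form $(4n+2)(A-\scal/(4n+2))\geq 12n(T^{1/3}-T^{-1/3})^2\geq 0$, which vanishes only at $(1,1,1)$ and accounts for the exclusion in the statement. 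The $B$-term is more delicate, since $B<C$ is possible when $1/t_2^2+1/t_3^2<2$; here I would combine the ordering $t_1\leq t_2\leq t_3$ with the polynomial inequality to force $B>\scal/(4n+2)$. The $\R P^{4n+3}$ case is analogous and omits the $A$-term.

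To analyze $\mathcal{S}_n=\{P>0\}\subset \R^3_{>0}$, where $P$ is the polynomial in the statement: openness is immediate by continuity; unboundedness follows from $P(t,t,t)\sim 6n\,t^8$ as $t\to\infty$; boundedness of $\partial\mathcal{S}_n$ in $\R^3_{>0}$ can be shown by verifying that $P$ is uniformly positive outside a compact subset, via asymptotic analyses as subsets of the $t_i$ tend to $0$ or $\infty$; smoothness of $\partial\mathcal{S}_n$ reduces to $\nabla P\neq 0$ on $\{P=0\}$ via the implicit function theorem. For connectedness, a natural strategy is to connect any $(t_1,t_2,t_3)\in\mathcal{S}_n$ to $(1,1,1)\in\mathcal{S}_n$ along the explicit path $(t_1^s,t_2^s,t_3^s)$, $s\in[0,1]$, checking that $P$ stays positive. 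The main obstacles I anticipate are, first, the verification that the polynomial inequality forces $B>\scal/(4n+2)$---which is not automatic and likely requires an algebraic manipulation exploiting the ordering---and, second, the connectedness argument for $\mathcal{S}_n$, since its complement is cut out by a degree-$8$ symmetric polynomial in three variables.
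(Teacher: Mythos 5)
Your reduction is the same as the paper's: the criterion $\lambda_1>\scal/(4n+2)$, the Gray--O'Neill computation of $\scal$, and the observation that clearing denominators in $(4n+2)\cdot 8(n+1)>\scal$ gives exactly the stated polynomial inequality all match. The genuine content, however, lies in the two auxiliary estimates, and there your sketch has real gaps. For the $\lambda^{(1,0)}$-candidate one needs $\scal\leq(4n+2)\lambda^{(1,0)}$ on all of $\R^3_{>0}$ with equality \emph{exactly} at $(1,1,1)$; your proposed bound $12n(T^{1/3}-T^{-1/3})^2$, $T=t_1t_2t_3$, vanishes on the whole hypersurface $\{t_1t_2t_3=1\}$, so even if it were proved it could not yield the strict inequality at points with $t_1t_2t_3=1$ other than $(1,1,1)$ --- which is precisely what the ``if'' direction and the nondegeneracy assertion require --- and you do not prove it. The paper establishes the equality characterization by a critical-point analysis of $\phi=\tfrac12\big((4n+2)\lambda^{(1,0)}-\scal\big)xyz$ in the variables $(x,y,z)=(t_1^2,t_2^2,t_3^2)$, passing to elementary symmetric polynomials to exclude critical points off the diagonal planes. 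For the $\lambda^{(2,0)}$-candidate the situation is the opposite of what you anticipate: $(4n+2)\lambda^{(2,0)}>\scal$ holds unconditionally (no ordering tricks, no appeal to the polynomial inequality), since $\tfrac12\big((4n+2)\lambda^{(2,0)}-\scal\big)xyz=x^2+(y-z)^2+2x(y+z)+2n(x+y+z)xyz+8nx\big(y+z+(n-1)yz\big)$ is a sum of nonnegative terms for $n\geq1$; as written, this step is simply missing from your argument. (Your treatment of $\R P^{4n+3}$ by dropping the $\lambda^{(1,0)}$-term is fine once this unconditional bound is in place.)

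The second gap concerns $\partial\mathcal S_n$. Your boundedness criterion --- that the polynomial be \emph{uniformly} positive outside a compact subset of $\R^3_{>0}$ --- is false: in squared variables $p(x,y,0)=(x-y)^2$, and along $(x,y,z)=(k,k,k^{-4})$, which leaves every compact subset of the open octant, one has $p\to0^+$. This is exactly the noncompactness of $\mathcal V_n\cap\R^3_{\geq0}$ emphasized in the Introduction, and it is what makes boundedness delicate: one must show $p$ has no zeros outside a bounded set, controlling the mixed regimes where some $t_i\to0$, others $\to\infty$, and two coordinates are nearly equal. The paper does this by writing $\Sigma_n\cap\mathcal D$ as the graph $\sigma_2=\tfrac{\sigma_1^2}{4}+\tfrac n2\sigma_1\sigma_3+2(n^2+n+1)\sigma_3$ and using positivity of the discriminant to confine $(\sigma_1,\sigma_3)$ to a compact region (in particular $\sigma_1<\tfrac98$). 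Likewise, smoothness and connectedness of the boundary are obtained by gluing the graphs over the $\mathfrak S_3$-translates of the fundamental domain and checking orthogonal crossings of the reflection planes; your ``$\nabla P\neq0$ on $\{P=0\}$'' and the path $(t_1^s,t_2^s,t_3^s)$ for connectedness of $\mathcal S_n$ are plausible but unverified, and the latter is not obviously true. So the skeleton is right, but the three load-bearing steps --- the equality analysis for $\lambda^{(1,0)}$, the unconditional $\lambda^{(2,0)}$ bound, and the boundedness/smoothness/connectedness of the boundary surface --- are either missing or rest on assertions that fail as stated.
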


For completeness, recall that $\mathbf h(1,1,1)$ is the metric of constant sectional curvature $1$, and it is stable, but \emph{degenerate} on $\Ss^{4n+3}$ and \emph{nondegenerate} on $\R P^{4n+3}$.
For the convenience of the reader, the complete list of homogeneous metrics on CROSSes that are stable solutions to the Yamabe problem is provided in Table~\ref{tab:stable}, in Appendix~\ref{appendix}, combining Theorem~\ref{mainthm:stability3param} and Remark~\ref{rem:cp2n1stability} with \cite{bp-calcvar,Lauret-SpecSU(2)}.

The polynomial inequality in Theorem~\ref{mainthm:stability3param} that defines $\mathcal S_n$ has some interesting algebraic features. Namely, the locus of $(t_1,t_2,t_3)\in\R^3$ where this inequality becomes an equality is an irreducible real algebraic variety $\mathcal V_n\subset\R^3$ of dimension~$2$, such that $\partial\mathcal S_n=\mathcal V_n\cap\R^3_{>0}$. 
However, $\mathcal V_n$ contains (and is singular along) each diagonal line $t_i=t_j$ in the coordinate plane $t_k=0$, where $(i,j,k)$ is any permutation of $(1,2,3)$, cf.~\eqref{eq:xy0}.
Thus, $\mathcal V_n\cap\R^3_{\geq0}$ is \emph{noncompact}, which substantially complicates the proof that the (topological) closure of $\partial \mathcal S_n$ in $\R^3_{\geq0}$ is compact. This is achieved through careful estimates in terms of elementary symmetric polynomials in the variables $(x,y,z)=(t_1^2,t_2^2,t_3^2)$. 
As a consequence, the subset $\R^3_{>0} \smallsetminus \mathcal S_n$ of parameters corresponding to unstable homogeneous solutions is bounded (but not compact).

Combining the above classification of stable solutions to the Yamabe problem and classical results in Bifurcation Theory, it is possible to detect the existence of branches of  solutions issuing from paths of homogeneous metrics when they lose stability, i.e., 
when $(t_1,t_2,t_3)$ leaves the set $\mathcal S_n$.
By uniqueness of homogeneous metrics in their conformal class, these bifurcating solutions must be \emph{inhomogeneous}, fitting a wider context of symmetry-breaking bifurcations~\cite{bp-calcvar,bp-pacific,frankenstein}.

\begin{maincor}\label{maincor:bifurcation}
Branches of inhomogeneous solutions to the Yamabe problem on $\Ss^{4n+3}$ and $\R P^{4n+3}$ bifurcate from any continuous curve $\mathbf h\big(t_1(s),t_2(s),t_3(s)\big)$ of homogeneous metrics such that $\alpha(s)=\big(t_1(s),t_2(s),t_3(s)\big)$ crosses the surface~$\partial \mathcal S_n$.
\end{maincor}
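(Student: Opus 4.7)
The plan is to apply the variational bifurcation criterion used previously in \cite{bp-calcvar,bp-pacific,frankenstein}, which leverages the change of Morse index of the normalized Hilbert--Einstein functional restricted to conformal classes at the crossing parameter. Concretely, for each value of $s$ I would consider the Yamabe functional on the conformal class $\bigl[\mathbf h\bigl(t_1(s),t_2(s),t_3(s)\bigr)\bigr]$ (with unit-volume normalization), for which the homogeneous metric is a trivial critical point since it has constant scalar curvature. This provides a continuous trivial branch of critical points parameterized by $s$, and the whole problem reduces to showing that this branch undergoes a bifurcation at any $s_*$ for which $\alpha(s_*)\in\partial\mathcal S_n$.

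The Jacobi operator of the Yamabe functional at a trivial critical point acts on conformal factors as a multiple of $\Delta - \tfrac{\operatorname{scal}}{\dim M-1}$, so its negative eigenvalues (i.e., the Morse index) are determined by the Laplace eigenvalues lying below the threshold $\operatorname{scal}/(\dim M-1)$. Combining this description with the explicit formula for $\lambda_1$ from Theorem~\ref{thm:A} and the known scalar curvatures of $\mathbf h(t_1,t_2,t_3)$, the stability/degeneracy condition is precisely the polynomial inequality of Theorem~\ref{mainthm:stability3param}; its equality case $\partial\mathcal S_n$ is exactly where an eigenvalue of the Jacobi operator crosses zero. Thus, for $s_\pm$ lying on opposite sides of $s_*$ in $\mathcal S_n$ and its complement, the trivial critical points are nondegenerate and have different Morse indices.

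With these ingredients in place, one invokes a standard variational bifurcation theorem (e.g., \cite[Thm.~2.1]{bp-pacific}): a continuous branch of critical points of a $C^2$-family of functionals with a jump of Morse index across nondegenerate endpoints must contain a bifurcation instant, from which a branch of nontrivial critical points accumulates. Applied here, this produces a sequence of constant scalar curvature unit-volume metrics in conformal classes $\bigl[\mathbf h\bigl(t_1(s_k),t_2(s_k),t_3(s_k)\bigr)\bigr]$, with $s_k\to s_*$, that are distinct from the homogeneous trivial solutions. Since a conformal class on a CROSS contains at most one homogeneous metric up to homotheties (as recalled in the Introduction), these bifurcating solutions are necessarily \emph{inhomogeneous}, yielding the corollary.

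The principal obstacle is ensuring that a genuine Morse-index jump occurs whenever $\alpha(s)$ \emph{crosses} $\partial\mathcal S_n$, as opposed to merely touching it tangentially: this follows from the fact that $\partial\mathcal S_n$ is the smooth regular hypersurface separating $\mathcal S_n$ from its complement in $\R^3_{>0}$ (by Theorem~\ref{mainthm:stability3param}), so a continuous curve that transitions between the two open regions produces, for $s$ slightly before and slightly after $s_*$, parameters with distinct and nondegenerate Morse indices. Note that the exceptional point $(1,1,1)\in\R^3_{>0}$, where $\mathbf h(1,1,1)$ is degenerate on $\Ss^{4n+3}$ due to the conformal group of the round metric, lies strictly inside $\mathcal S_n$ (the polynomial inequality is satisfied there with strict inequality) and hence does not interfere with the argument.
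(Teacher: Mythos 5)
Your proposal follows essentially the same route as the paper's proof: detect a Morse-index jump of the Yamabe functional across $\partial\mathcal S_n$ via the stability characterization of Theorem~\ref{mainthm:stability3param}, apply the standard variational bifurcation criterion (the paper cites \cite[Thm.~3.3]{LPZ12}, you cite its analogue in \cite{bp-pacific}), and conclude inhomogeneity from the uniqueness of homogeneous metrics in a conformal class. The only minor difference is that the paper secures nondegeneracy of the endpoints $\mathbf h(\alpha(\pm\varepsilon))$ by invoking openness and density of nondegenerate parameters (rather than asserting it directly for points just after the crossing, where a priori the threshold $\scal/(4n+2)$ could accidentally meet a higher eigenvalue), a point you should state as a generic choice of the comparison parameters.
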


Further bifurcations occur if the Morse index of a path of solutions keeps growing, which happens if higher eigenvalues of the Laplacian become small compared to the scalar curvature. 
For instance, it is known that $i_{\textrm{Morse}}\big(\mathbf h(t,t,t)\big)\nearrow +\infty$ as $t\searrow0$, hence there are infinitely many bifurcation instants as $\Ss^{4n+3}$ collapses to~$\Hr P^n$ along this path of metrics~\cite{bp-calcvar}. In Section~\ref{sec:bifurcations}, we characterize some ways in which the Morse index blows up, without the need to explicitly compute Laplace eigenvalues. In particular, we prove the converse statement to a recent bifurcation criterion for the Yamabe problem on canonical variations of Otoba and Petean~\cite[Thm.~1.1]{OtobaPetean1}, see Proposition~\ref{prop:collapsebif2}.
Finally, we also use Theorem~\ref{mainthm:stability3param} to analyze the stability of $\mathbf h(t_1,t_2,t_3)$ as it degenerates, i.e., as some $t_i\searrow0$, see Proposition~\ref{prop:stability0}.

This paper is organized as follows. The main Lie-theoretic tools used in our spectral computations are presented in Section~\ref{sec:homspectra}. In Section~\ref{sec:firsteigen}, we
fix convenient parametrizations for the families of 
homogeneous metrics on CROSSes 
and prove Theorems~\ref{thm:A} and \ref{thm:B}. 
Section~\ref{sec:specunique} contains the proof of Theorem~\ref{thm:rigidity}.
The applications related to stability and bifurcation in the Yamabe problem are given in Sections~\ref{sec:yamabe} and \ref{sec:bifurcations} respectively, including the proofs of Theorem~\ref{mainthm:stability3param} and Corollary~\ref{maincor:bifurcation}. Tables with the first eigenvalue, volume, scalar curvature, and Yamabe stability classification of all homogeneous metrics on CROSSes are given in the Appendix~\ref{appendix}.

\subsection*{Acknowledgements}
It is our great pleasure to thank the anonymous referees for their commendable attention to details, and many suggestions that greatly improved the final version of this paper.
We also thank to Carlos Martins Da Fonseca for several discussion on the spectra of tridiagonal matrices.

The first-named author would like to thank the Max Planck Institute for Mathematics in Bonn, Germany, and the University of S\~ao Paulo, Brazil, for the excellent working conditions during research visits in the Summer of 2019 and January 2020, respectively, during which parts of this paper were written.

The first-named author was supported by grants from the National Science Foundation (DMS-1904342), PSC-CUNY (Award \# 62074-00 50), the Max Planck Institute for Mathematics in Bonn, and Fapesp (2019/19891-9). The second-named author was supported by grants from FonCyT (BID-PICT 2018-02073) and the Alexander von Humboldt Foundation (return fellowship).
The third-named author was supported by grants from Fapesp (2016/23746-6 and 2019/09045-3).

\section{Spectrum of the Laplacian on a Homogeneous Space}\label{sec:homspectra}

In this section, we briefly recall some elementary facts about the spectrum of the Laplacian on a compact homogeneous space. 
Although this material is classical, usually only the case of \emph{normal homogeneous} metrics is discussed in the literature (see e.g.~\cite[pp.~123-125]{wallach-book}), with the notable exception~\cite{MutoUrakawa80}. 
We shall treat the general case of $\G$-invariant metrics, which is needed to prove Theorems~\ref{thm:A} and~\ref{thm:B}.

Let $\G$ be a compact Lie group and $\K\subset \G$ a closed subgroup, with Lie algebras $\mathfrak g$ and $\mathfrak k$, and fix an $\Ad(\K)$-invariant complement $\mathfrak p$ of $\mathfrak k$ in $\mathfrak g$. 
It is well-known that the space of $\G$-invariant metrics $\g$ on the homogeneous space $\G/\K$ is identified with the space of $\Ad(\K)$-invariant inner products $\langle\cdot,\cdot\rangle$ on $\mathfrak p$, see e.g.~\cite[p.~182]{Besse}.

Let $\pi$ be an irreducible representation of $\G$, that is, $\pi\colon \G\to\GL(V_\pi)$ is a continuous homomorphism of groups, and the (complex) vector space $V_\pi$ does not have any proper $\G$-invariant subspaces. Abusing notation, we also denote by $\pi$ the induced representations of the Lie algebra $\mathfrak g$, of its complexification $\mathfrak g_\C:=\mathfrak g\otimes_\R\C$, and of its universal enveloping algebra $\mathcal U(\mathfrak g_\C)$.
Denote by $V_\pi^\K$ the subspace of $V_\pi$ consisting of elements fixed by $\K$; and by $\langle\cdot,\cdot\rangle_\pi$ an inner product on $V_\pi$ for which $\pi(g)$ is unitary for all $g\in \G$, which exists since $\G$ is compact.
The linear map
\begin{equation*}
\begin{aligned}
V_\pi\otimes (V_\pi^*)^\K &\longrightarrow C^\infty(\G/\K) \\
v\otimes \varphi &\longmapsto {f_{v\otimes\varphi}, \rule{30pt}{0pt} \text{with }}f_{v\otimes \varphi}(x\K) := \varphi\big(\pi(x)^{-1}v\big),
\end{aligned}
\end{equation*}
is well-defined and $\G$-equivariant, where $\G$ acts on the first factor of $V_\pi\otimes (V_\pi^*)^\K$, i.e., $g\cdot v\otimes \varphi=\pi(g)v\otimes \varphi$, and on $C^\infty(\G/\K)$ as $(g\cdot f)(x\K)=f(g^{-1}x\K)$.

Given a $\G$-invariant metric $\g$, denote by $\Delta_\g$ the Laplace--Beltrami operator of the Riemannian manifold $(\G/\K,\g)$. 
It is well-known that, for all $f\in C^\infty(\G/\K)$, 
\begin{equation*}
(\Delta_{\g} f)(x\K) = -\sum_{i=1}^{n} \left. \frac{\dd^2}{\dd t^2} f\big(x\exp(t X_i) \cdot e\K\big)\right|_{t=0},
\end{equation*}
where $\{X_1,\dots,X_n\}$ is an orthonormal basis of $\mathfrak p$, with respect to the inner product $\innerdots$ that induces the metric $\g$ on $\G/\K$, see e.g.~\cite[Thm.~1]{MutoUrakawa80}.
Consider the element $C_\g=\sum_{i=1}^{n} X_i^2 \in  \mathcal U(\mathfrak g)$, and observe that
\begin{align*}
(\Delta_\g  f_{v\otimes \varphi} ) (x\K)
&= -\sum_{i=1}^{n} \left. \frac{\dd^2}{\dd t^2} \,\varphi \left(\pi(\exp(tX_i)) \pi(x^{-1})v \right)\right|_{t=0}  \\
&= \sum_{i=1}^{n}  \varphi \left(\pi(-X_i^2) \, \pi(x^{-1})v \right)\\
&=  \sum_{i=1}^{n}  \big( \pi^*(-X_i^2)\cdot \varphi \big) \left(\pi(x^{-1})v \right)  \\
&= \big( \pi^*(-C_\g)\cdot \varphi \big) \left(\pi(x^{-1})v \right)\\
&=f_{v\otimes (\pi^*(-C_\g) \varphi)}(x\K).
\end{align*}
Note that $C_\g$ depends only on the inner product $\innerdots$ on $\mathfrak p$ that induces the metric $\g$, and not on the choice of orthonormal basis $\{X_1,\dots,X_n\}$.

It is a simple matter to check that $\pi^*(-C_\g):V_\pi^*\to V_\pi^*$ is self-adjoint with respect to $\langle\cdot,\cdot\rangle_{\pi^*}$ and preserves $(V_\pi^*)^\K\simeq V_{\pi^*}^\K$. If $\varphi\in V_{\pi^*}^\K$ is an eigenvector of $\pi^*(-C_\g)|_{V_{\pi^*}^\K} $ with eigenvalue $\lambda$, then 
\begin{equation*}
\Delta_\g f_{v\otimes \varphi} 
= f_{v\otimes (\pi^*(-C_\g) \varphi)}
= f_{ v\otimes (\lambda\varphi)}
= \lambda\, f_{v\otimes \varphi},
\end{equation*}
that is, $f_{v\otimes\varphi}$ is an eigenvector of $\Delta_\g$ with eigenvalue $\lambda$, for every $v\in V_\pi$.
By the Peter--Weyl Theorem, there exists a basis of $L^2(\G/\K,\g)$ consisting of eigenfunctions as above. 
More precisely, the left regular representation of $\G$ on $L^2(\G/\K,\g)$ decomposes as (the closure of) the direct sum of $\G$-modules
\begin{equation}\label{eq:PeterWeyl}
L^2(\G/\K,\g) \simeq \widehat{\bigoplus_{\pi \in \widehat \G_\K}} V_\pi\otimes V_{\pi^*}^\K,
\end{equation}
where $\widehat \G$ is the unitary dual of $\G$, i.e., the set of (equivalence classes of) irreducible unitary representations of $\G$, and $\widehat \G_\K:= \{\pi\in\widehat \G: \dim V_\pi^\K=\dim V_{\pi^*}^\K> 0\}$ is the set of \emph{spherical representations} of the pair $(\G,\K)$. Therefore, we have the following:

\begin{proposition}\label{prop:spec}
The spectrum of the Laplacian $\Delta_\g$ of a compact homogeneous space $\G/\K$, endowed with an arbitrary $\G$-invariant metric $\g$, is given by
\begin{equation}\label{eq:spec}
\spec(\G/\K, \g) :=\spec(\Delta_\g) = \bigcup_{\pi \in\widehat \G_\K} 
\Big\{ \underbrace{\lambda_j^{\pi}(\g),\dots, \lambda_j^{\pi}(\g)}_{d_\pi\text{\rm-times}}: 1\leq j\leq d_\pi^\K \Big\},
\end{equation}
where, for each $\pi\in\widehat \G_\K$, 
we write $d_\pi=\dim V_\pi$, $d_\pi^\K= \dim V_\pi^\K$, and $\lambda_1^{\pi}(\g), \dots,\lambda_{d_\pi^\K}^{\pi}(\g)$ are the eigenvalues of the self-adjoint linear endomorphism $\pi^*(-C_\g)|_{V_{\pi^*}^\K}$ of $V_{\pi^*}^\K$.
\end{proposition}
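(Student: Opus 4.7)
The plan is to assemble Proposition~\ref{prop:spec} directly from the calculations carried out immediately before its statement, together with the Peter--Weyl decomposition \eqref{eq:PeterWeyl}. The essential observation is that the Laplacian $\Delta_\g$ is a $\G$-equivariant self-adjoint operator on $L^2(\G/\K,\g)$, so by Schur's lemma it preserves each isotypical summand $V_\pi\otimes V_{\pi^*}^\K$, and it suffices to identify its action on each summand.

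First, I would note that the map $V_\pi\otimes (V_\pi^*)^\K\to C^\infty(\G/\K)$, $v\otimes\varphi\mapsto f_{v\otimes\varphi}$, is $\G$-equivariant and injective, and its image realizes the $\pi$-isotypical component of $L^2(\G/\K,\g)$ in the Peter--Weyl decomposition \eqref{eq:PeterWeyl}. Next, I would use the calculation already displayed in the text, namely
\begin{equation*}
\Delta_\g f_{v\otimes \varphi}=f_{v\otimes (\pi^*(-C_\g)\varphi)},
\end{equation*}
which shows that, under the identification $V_\pi\otimes V_{\pi^*}^\K\hookrightarrow L^2(\G/\K,\g)$, the Laplacian acts as $\mathrm{id}_{V_\pi}\otimes \pi^*(-C_\g)|_{V_{\pi^*}^\K}$. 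In particular, since $C_\g$ depends only on the inner product on $\mathfrak{p}$ (not on the choice of orthonormal basis), and since $\pi^*(-C_\g)$ preserves the $\K$-fixed subspace $V_{\pi^*}^\K$ and is self-adjoint there with respect to $\langle\cdot,\cdot\rangle_{\pi^*}$, this endomorphism admits an orthonormal eigenbasis.

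Now I would diagonalize. Let $\lambda_1^\pi(\g),\dots,\lambda_{d_\pi^\K}^\pi(\g)$ be the eigenvalues of $\pi^*(-C_\g)|_{V_{\pi^*}^\K}$, counted with multiplicity, and let $\varphi_1,\dots,\varphi_{d_\pi^\K}$ be corresponding eigenvectors. Then for any basis $v_1,\dots,v_{d_\pi}$ of $V_\pi$ and each $1\leq j\leq d_\pi^\K$, the functions $\{f_{v_i\otimes\varphi_j}:1\leq i\leq d_\pi\}$ are eigenfunctions of $\Delta_\g$ with common eigenvalue $\lambda_j^\pi(\g)$, contributing multiplicity $d_\pi$. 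Running over all $\pi\in\widehat\G_\K$ and invoking \eqref{eq:PeterWeyl} produces an orthonormal basis of $L^2(\G/\K,\g)$ by eigenfunctions of $\Delta_\g$, yielding precisely the right-hand side of \eqref{eq:spec}.

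There is no serious obstacle here; the statement is essentially a bookkeeping consequence of Peter--Weyl together with the identity $\Delta_\g f_{v\otimes\varphi}=f_{v\otimes(\pi^*(-C_\g)\varphi)}$, which has already been verified in the paragraph preceding the proposition. The only minor point requiring care is the justification that $\pi^*(-C_\g)$ preserves the subspace $(V_\pi^*)^\K\simeq V_{\pi^*}^\K$, which follows from $\Ad(\K)$-invariance of the inner product on $\mathfrak{p}$ used to define $C_\g$, together with the fact that $K$-equivariance of $\pi^*$ commutes with the action of $C_\g$ on $V_\pi^*$.
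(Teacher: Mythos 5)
Your proposal is correct and follows essentially the same route as the paper: the identity $\Delta_\g f_{v\otimes\varphi}=f_{v\otimes(\pi^*(-C_\g)\varphi)}$, the self-adjointness of $\pi^*(-C_\g)|_{V_{\pi^*}^\K}$, and the Peter--Weyl decomposition \eqref{eq:PeterWeyl} are exactly the ingredients the paper assembles in the paragraphs preceding the proposition, which serve as its proof. Your added remarks (injectivity of $v\otimes\varphi\mapsto f_{v\otimes\varphi}$ and the $\Ad(\K)$-invariance of $C_\g$ guaranteeing that $(V_\pi^*)^\K$ is preserved) are correct and only make explicit what the paper leaves as "a simple matter to check."
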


Note that if $\G/\K$ is connected, the trivial representation is the only irreducible representation of $\G$ contributing the eigenvalue $0\in\spec(\G/\K, \g)$. Consequently, if $\pi\in\widehat \G_\K$ is nontrivial, then $\pi^* (-C_\g)|_{V_{\pi^*}^\K}$ is positive-definite, i.e., $\lambda_j^\pi(\g)>0$.

\subsection{Normal homogeneous case}
Let us now specialize to the situation in which $\G$ is semisimple and connected, and $\innerdots_0$ is a bi-invariant (i.e., $\Ad(\G)$-invariant) inner product on~$\mathfrak g$; for instance, a negative multiple of its Killing form. The corresponding metric $\g_0$ on $\G/\K$ is then called \emph{normal homogeneous}.

Set $m=\dim \G$ and let $\{X_1,\dots,X_m\}$ be an orthonormal basis of $\mathfrak g$ with respect to $\innerdots_0$ such that $X_i\in\mathfrak p$ for all $1\leq i\leq n$, and $X_i\in\mathfrak k$ for all $n+1\leq i\leq m$. Given $\pi\in \widehat \G_\K$, since $\pi(X)\cdot v=0$ for all $X\in \mathfrak k$ and $v\in V_\pi^\K$, it follows that $\pi(C_{\g_0})|_{V_{\pi}^\K}=\pi(\Cas_0)|_{V_{\pi}^\K}$, where $\Cas_{0}=\sum_{i=1}^{m} X_i^2$ is the \emph{Casimir element} of $\mathfrak g$ with respect to $\innerdots_0$.
If the Killing form of $\mathfrak g$ is equal to $-\innerdots_0$, then $\Cas_{0}$ is the standard Casimir element in $\mathcal U(\mathfrak g_\C)$ associated to the complex semisimple Lie algebra $\mathfrak g_\C$.
Since $\Cas_{0}$ lies in the center of $\mathcal U(\mathfrak g)$, 
by Schur's Lemma, $\pi(-\Cas_{0})$ acts on $V_\pi$ as multiplication by a scalar $\lambda^\pi$.
Therefore, in this special case, \eqref{eq:spec} simplifies to
\begin{equation}\label{eq:spec(G/K,g_I)}
\spec(\G/\K, \g_0)=\spec(\Delta_{\g_0}) = \bigcup_{\pi\in\widehat \G_\K} 
\Big\{\!\!\underbrace{\lambda^{\pi}, \dots,  \lambda^{\pi}}_{(d_\pi \times d_\pi^\K)\text{-times}} 
\!\!\Big\}.
\end{equation}
The above scalars $\lambda^\pi$ can be computed using Freudenthal's formula, see \cite[Lemma~5.6.4]{wallach-book} or \cite[Prop.~10.6]{hall-book}.
Namely, fixing a maximal torus $\T$ in $\G$, and a positive system in the induced root system $\Phi(\mathfrak g_\C,\mathfrak t_\C)$, 
\begin{equation}\label{eq:Casimirscalar}
\lambda^\pi=\langle \Lambda_\pi,\Lambda_\pi+ 2\rho_{\mathfrak g}\rangle_{0},
\end{equation}
where $\Lambda_\pi$ is the highest weight of the representation $\pi$, $\rho_{\mathfrak g}$ is half of the sum of positive roots in $\Phi(\mathfrak g_\C,\mathfrak t_\C)$, and $\langle\cdot,\cdot\rangle_{0}$ is the Hermitian extension to $\mathfrak t_\C^*$ of $\innerdots_0|_{\mathfrak t}$. For a general homogeneous metric $\g$ which is not normal, no analogous formula to \eqref{eq:Casimirscalar} that explicitly computes the scalars $\lambda_j^\pi(\g)$ in Proposition~\ref{prop:spec} seems to exist.

\section{\texorpdfstring{Eigenvalues of the Laplacian on $\Ss^{4n+3}$, $\R P^{4n+3}$, and $\C P^{2n+1}$}{Eigenvalues of the Laplacian}}\label{sec:firsteigen}

In this section, we provide explicit formulae for the smallest positive eigenvalue of the Laplace--Beltrami operator on $\big(\Ss^{4n+3},\mathbf h{(t_1,t_2,t_3)}\big)$, $\big(\R P^{4n+3},\mathbf h{(t_1,t_2,t_3)}\big)$, and on $\big(\C P^{2n+1},\check{\mathbf h}(t)\big)$, proving Theorems~\ref{thm:A} and \ref{thm:B} in the Introduction.
The full spectrum of the latter and of the subfamily $\mathbf g(t)$ on $\Ss^{4n+3}$ and $\R P^{4n+3}$ are also computed, see Theorem~\ref{thm:SpecCP^2n+1} and~\ref{thm:Spec(g(t))}, and also \cite{blp-new}.

\subsection{Homogeneous structures}
Consider the quaternionic unitary group
\begin{equation*}
\G=\Sp(n+1)= \left\{ g\in \GL(n+1,\Hr): g^*g=\id \right\},
\end{equation*}
whose Lie algebra is $\mathfrak g=\spp(n+1)= \left\{ X\in \gl(n+1,\Hr): X^*+X=0 \right\}$.
The defining representation of $\G$ on $\Hr^{n+1}$ restricts to an isometric transitive $\G$-action on the unit sphere $\Ss^{4n+3}\subset\Hr^{n+1}$, whose isotropy at $(0,\dots,0,1)\in \Hr^{n+1}$ is the Lie subgroup
\begin{equation*}
\K=\{\diag(A,1)\in\G: A\in \Sp(n)\}\simeq\Sp(n),
\end{equation*}
so that $\Ss^{4n+3}=\G/\K$.
Clearly, the corresponding Lie subalgebra is $\mathfrak k=\{\diag(X,0)\in\mathfrak g:X\in\spp(n)\}\simeq\spp(n)$.
Consider the reductive decomposition
$\mathfrak g=\mathfrak k\oplus \mathfrak p$, where $\mathfrak p=\mathfrak p_0\oplus \mathfrak p_1$ splits as the vertical space $\mathfrak p_0\simeq\operatorname{Im}\Hr$ and horizontal space $\mathfrak p_1\simeq\Hr^n$ for the Hopf fibration $\Ss^3\to \Ss^{4n+3}\to\Hr P^n$. 
Recall the isotropy representation of $\K$ is trivial on $\mathfrak p_0$, and irreducible on $\mathfrak p_1$.
Note that $\mathfrak p_0\simeq\mathfrak{sp}(1)$ is a Lie subalgebra of $\mathfrak g$, spanned by the unit imaginary quaternions
\begin{equation}\label{eq:ijk}
X_1=\diag(0,\dots,0,\mi), \quad X_2=\diag(0,\dots,0,\mj), \quad X_3=\diag(0,\dots,0,\mk),
\end{equation}
and the corresponding Lie subgroup is
\begin{equation}\label{eq:subgroupH}
\H=\{\diag(\id,q)\in\G:  |q|^2=q\bar q=1 \}\simeq\Sp(1)\simeq\SU(2).
\end{equation}

The above (left) $\G$-action on $\Ss^{4n+3}\subset \Hr^{n+1}$ commutes with the (right) actions of $\Z_2$ via the antipodal map, and of $\mathsf S^1$-action by complex unit multiplication. Thus, it descends to transitive $\G$-actions on the quotients $\R P^{4n+3}=\Ss^{4n+3}/\Z_2$ and $\C P^{2n+1}=\Ss^{4n+3}/\mathsf S^1$, respectively. These $\G$-actions have isotropy (conjugate to)
\begin{equation*}
\begin{aligned}
\K\cdot\Z_2&=\{\diag(A,\pm 1)\in\G : A \in\Sp(n)\}\simeq\Sp(n)\Z_2,\\
\check{\K}&=\{\diag(A,e^{\mi \theta})\in\G:A\in\Sp(n), e^{\mi \theta}\in\mathsf S^1\}\simeq\Sp(n)\Ut(1),
\end{aligned}
\end{equation*}
respectively, so that $\R P^{4n+3}=\G/(\K\cdot\Z_2)$ and $\C P^{2n+1}=\G/\check{\K}$.
Note that the $\mathsf S^1$-action extends the $\Z_2$-action, so $\K\cdot\Z_2\subset\check{\K}$; and, as $\Ut(1)/\Z_2\cong \mathsf S^1$, we have $\C P^{2n+1}=\R P^{4n+3}/\mathsf S^1$.

The Lie algebra of $\K\cdot\Z_2$ is the same as that of its identity connected component $\K$, that is, $\mathfrak k$. The isotropy representation of $\K\cdot\Z_2$ on $\mathfrak g=\mathfrak k\oplus\mathfrak p$ extends that of $\K$, with the element $\diag(\Id,-1)$ acting trivially on $\mathfrak p_1\oplus\Span_\R\{X_1\}$ and nontrivially, i.e., as multiplication by $-1$, on $\check{\mathfrak p}_0:=\Span_\R \{X_2,X_3\}$.
Meanwhile, the Lie algebra of $\check\K$ is $\check{\mathfrak k} = \mathfrak k\oplus\Span_\R\{X_1\}$, and the corresponding reductive decomposition is $\mathfrak g=\check{\mathfrak k}\oplus \check{\mathfrak p}$, where $\check{\mathfrak p}=\check{\mathfrak p}_0\oplus \mathfrak p_1$.
Both $\check{\mathfrak p}_0$ and $\mathfrak p_1$ are irreducible for the isotropy representation of $\check\K$, with the $\Sp(n)$ factor acting trivially on $\check{\mathfrak p}_0$ and via the defining representation on $\mathfrak p_1$, and the $\Ut(1)$ factor acting by rotation on $\check{\mathfrak p}_0$ and trivially on $\mathfrak p_1$.

Geometrically, the inclusions $\K\subset\K\cdot\Z_2\subset\check{\K}$ correspond to successive quotients of the Hopf fibration (top row) by the (right) actions of $\Z_2$ and $\mathsf S^1$, as follows:
\begin{equation}\label{eq:Hopffibrations}
\begin{gathered}
 \xymatrix{
 \Ss^3\; \ar[r] \ar[d] & \Ss^{4n+3} \ar[d] \ar[r]  & \Hr P^n\ar@{=}[d] \\
 \R P^3\; \ar[r] \ar[d] & \R P^{4n+3} \ar[d] \ar[r]  & \Hr P^n\ar@{=}[d] \\
 \C P^1\ar[r] & \C P^{2n+1} \ar[r] &\Hr P^n. }
 \end{gathered}
\end{equation}
The arrows from top to middle row are double covers, while the arrows from middle to bottom row are projections of $\mathsf S^1$-bundles. Note that $\check{\mathfrak p}_0$ and $\mathfrak p_1$ are the vertical and horizontal spaces for the bundle in the bottom row.

\subsection{Homogeneous metrics}
We now parametrize (up to isometries) the spaces of $\G$-invariant metrics on $\Ss^{4n+3}$, $\R P^{4n+3}$, and $\C P^{2n+1}$, with respect to the above homogeneous structures. For more details, see~\cite[Ex.~6.16, 6.21]{mybook} and \cite{Ziller82}.

We begin with $\G$-invariant metrics on $\Ss^3$ and $\R P^3$, that is, left-invariant metrics on $\Sp(1)\simeq\SU(2)\cong\Ss^3$ and $\SO(3)\cong \R P^3$. Every such metric is isometric to one induced by a \emph{diagonal} inner product with respect to the basis $\{\mi,\mj,\mk\}$ of the Lie algebra $\spp(1)$, i.e., of the form
\begin{equation*}
\phantom{, \quad a,b,c\in\R_{>0},} \innerdots_{\abc}:=
 \frac{1}{a^2}\, \widehat\mi\otimes\widehat\mi 
+\frac{1}{b^2}\, \widehat\mj\otimes\widehat\mj 
+\frac{1}{c^2}\, \widehat\mk\otimes\widehat\mk, 
\quad a,b,c\in\R_{>0},
\end{equation*}
where $\{\widehat\mi,\, \widehat\mj,\, \widehat\mk\}$ is the basis of $\mathfrak{sp}(1)^*$ dual to $\{\mi,\mj,\mk\}$. Note that $\{a\mi,b\mj,c\mk\}$ is $\innerdots_{\abc}$-orthonormal.
Denote by $\g_{\abc}$ the corresponding $\G$-invariant metric on $\Ss^3$, and observe that $(\Ss^3,\g_{(a,a,a)})$ is a round sphere of constant sectional curvature~$a^2$.
Clearly, permuting $(a,b,c)\in\R^3_{>0}$ gives rise to metrics $\g_{\abc}$ that are isometric, and it is not difficult to see that there are no other isometries among them (this follows, e.g., by inspecting their Ricci endomorphisms).
Moreover, all $\g_{\abc}$ descend to $\G$-invariant metrics on $\R P^3$, that we shall denote by the same symbol. Similarly, the only isometries among these metrics on $\R P^3$ arise from permuting $(a,b,c)$.
Altogether, we have the following spaces of isometry classes of $\G$-invariant metrics:
\begin{equation*}
\Met^{\Sp(1)}(\Ss^3)\cong\Met^{\Sp(1)}(\R P^3)\cong \big\{\g_{(a,b,c)}: a\geq b\geq c>0\big\}.
\end{equation*}

For $n\geq1$, fix the $\Ad(\G)$-invariant inner product $\langle X,Y\rangle_0 =-\frac{1}{2}\operatorname{Re}\tr(XY)$ on the Lie algebra $\mathfrak g=\spp(n+1)$.
Identify $\mathfrak p_0\cong\spp(1)$ via the isomorphism that associates each diagonal matrix in \eqref{eq:ijk} to their unique nonzero entry, 
and define an $\Ad(\K)$-invariant inner product on $\mathfrak p=\mathfrak p_0\oplus\mathfrak p_1$ as follows:
\begin{equation*}
\phantom{a,b,c,s\in\R_{>0}\quad }\innerdots_{\abcs} :=
\frac12 \innerdots_{\abc} |_{\mathfrak p_0} + \frac{1}{s^{2}}  \innerdots_0|_{\mathfrak p_1}, \quad a,b,c,s\in\R_{>0}.
\end{equation*}
Denote by $\g_{\abcs}$ the corresponding $\G$-invariant metric on $\Ss^{4n+3}=\G/\K$, and observe that $\innerdots_0|_{\mathfrak p}=\innerdots_{(1,1,1,1)}$, hence $\big(\Ss^{4n+3},\g_{(1,1,1,1)}\big)$ is normal homogeneous.
Once again, it is not difficult to see that the only isometries among $\g_{(a,b,c,s)}$ arise from permuting $(a,b,c)\in\R^3_{>0}$, and all such $\G$-invariant metrics on $\Ss^{4n+3}$ descend to $\G$-invariant metrics on $\R P^{4n+3}$, that we shall denote by the same symbol. (Endowing both spaces with $\g_{(a,b,c,s)}$, the vertical arrow $\Ss^{4n+3}\to \R P^{4n+3}$ in \eqref{eq:Hopffibrations} is a \emph{Riemannian} double cover.) Altogether, we have the following spaces of isometry classes of $\G$-invariant metrics:
\begin{equation*}
\Met^{\Sp(n+1)}(\Ss^{4n+3})\cong  \Met^{\Sp(n+1)}(\R P^{4n+3}) \cong \big\{ \g_{(a,b,c,s)}: a\geq b\geq c>0,\, s>0\big\}.
\end{equation*}

Furthermore, the restriction of $\innerdots_{(a,b,c,s)}$ to $\check{\mathfrak p}$ is $\Ad(\check\K)$-invariant if and only if $b=c$, in which case it defines a $\G$-invariant metric $\check\g_{(b,s)}$ on $\C P^{2n+1}=\G/ \check\K$. In this situation, the quotient maps from $\Ss^{4n+3}$ and $\R P^{4n+3}$ endowed with $\g_{(a,b,b,s)}$ onto $\big(\C P^{2n+1},\check\g_{(b,s)}\big)$ corresponding to (right) $\mathsf S^1$-actions, i.e., the vertical arrows in \eqref{eq:Hopffibrations}, are Riemannian submersions.
Similarly to the previous cases, it is not hard to check that the metrics $\check\g_{(b,s)}$ are pairwise non-isometric, so the space of isometry classes of $\G$-invariant metrics on $\C P^{2n+1}$ is
\begin{equation*}
\Met^{\Sp(n+1)}(\C P^{2n+1})\cong \big\{\check\g_{(b,s)}: b>0,\, s>0\big\}.
\end{equation*}

\begin{remark}\label{rem:isometries}
The above parameterizations $\g_{\abc}$, $\g_{\abcs}$, and $\check\g_{(b,s)}$ of $\G$-invariant metrics on $\Ss^3$, $\Ss^{4n+3}$, $\R P^{3}$, $\R P^{4n+3}$, and $\C P^{2n+1}$ are convenient for the spectral calculations.
In fact, the first eigenvalues of their respective Laplacians are homogeneous quadratic polynomials in the parameters $a,b,c,s$. 
However, from a geometric viewpoint, these metrics are more naturally parametrized in terms of the lengths $t_i$ of vertical Killing vector fields in the Hopf bundles \eqref{eq:hopfbundles}, compared to those in the round or Fubini--Study metric, with horizontal directions unchanged. These parametrizations, used in the Introduction and in subsequent sections, are related to the above via the isometries (recall that $n\geq1$ throughout)
\begin{equation}\label{eq:different-parameters1}
\begin{aligned}
&\text{On } \Ss^3 \text{ and } \R P^3: &\mathbf h(t_1,t_2,t_3) &\cong \g_{\left(t_1^{-1},t_2^{-1},t_3^{-1}\right)},\\
&\text{On } \Ss^{4n+3} \text{ and } \R P^{4n+3}: &\mathbf h(t_1,t_2,t_3) &\cong \g_{\left((\sqrt2 t_1)^{-1} ,(\sqrt2 t_2)^{-1},(\sqrt2 t_3)^{-1},1\right)},  \\
&\text{On } \C P^{2n+1}: &\check{\mathbf h}(t) &\cong \check{\g}_{\left((\sqrt2 t)^{-1},1\right)},
\end{aligned}
\end{equation}
or, equivalently, 
\begin{equation}\label{eq:different-parameters2}
\begin{aligned}
&\text{On } \Ss^3 \text{ and } \R P^3:  &\g_{(a,b,c)} &\cong \mathbf h(\tfrac1a,\tfrac1b,\tfrac1c),\\
&\text{On } \Ss^{4n+3} \text{ and } \R P^{4n+3}: &\g_{(a,b,c,s)} &\cong \tfrac{1}{s^2}\,\mathbf h\!\left(\tfrac{s}{\sqrt2a}, \tfrac{s}{\sqrt2b}, \tfrac{s}{\sqrt 2 c}\right), \\
&\text{On } \C P^{2n+1}: &\check\g_{(b,s)} &\cong \tfrac{1}{s^2}\check{\mathbf h}(\tfrac{s}{\sqrt{2}b}).
\end{aligned}
\end{equation}

In particular, note that the normal homogeneous metrics on $\Ss^{4n+3}$ and $\R P^{4n+3}$, $n\geq1$, induced by $\langle\cdot,\cdot\rangle_0$ are 
$\mathbf h\big(\tfrac{1}{\sqrt2},\tfrac{1}{\sqrt2},\tfrac{1}{\sqrt2}\big)=\g_{(1,1,1,1)} =\g_{\rm hor}+\tfrac12\g_{\rm ver}$, where $\gr=\g_{\rm hor}+\g_{\rm ver}$ is the decomposition of the metric of constant sectional curvature $1$ with respect to the bundle in the top (respectively, middle) row in \eqref{eq:Hopffibrations}. 
Similarly, the normal homogeneous metric on $\C P^{2n+1}$, $n\geq1$, induced by $\langle\cdot,\cdot\rangle_0$ is  
$\check{\mathbf h}\big(\tfrac{1}{\sqrt2}\big)=\g_{(1,1)} =\g_{\rm hor}+\tfrac12\g_{\rm ver}$, where $\gFS=\g_{\rm hor}+\g_{\rm ver}$ is the decomposition of the Fubini--Study metric with respect to the bottom row in \eqref{eq:Hopffibrations}.
\end{remark}

\subsection{Implicit spectra}
We now describe the spectra 
\begin{equation*}
\Spec(\Ss^{4n+3}, \g_{\abcs}), \; \Spec(\R P^{4n+3}, \g_{\abcs}), \, \text{ and }\, \Spec(\C P^{2n+1}, \check\g_{(b,s)}), \quad n\geq 1,
\end{equation*}
implicitly in terms of $\Spec(\Ss^3,\g_{\abc})$. 

For any integer $k\geq0$, let $(\tau_k,V_{\tau_k})$ denote the (unique, up to equivalence) irreducible representation of $\H\simeq \SU(2)$ of dimension $k+1$. 
For $a,b,c>0$, let $\nu_1^{(k)}(a,b,c),\dots,\nu_{k+1}^{(k)}(a,b,c)$ denote the eigenvalues of the positive-definite self-adjoint operator 
\begin{equation}\label{eq:tau_k}
\tau_k\big(\!-a^2X_1^2 -b^2X_2^2 -c^2X_3^2\big)\colon V_{\tau_k}\to V_{\tau_k},
\end{equation}
where $X_i$ are as in \eqref{eq:ijk}. From Proposition~\ref{prop:spec}, we conclude that
\begin{equation*}
\spec(\Ss^3,\g_{(a,b,c)}) = 
\bigcup_{k\geq0} \Big\{\underbrace{\nu_j^{(k)}(a,b,c),\dots, \nu_j^{(k)}(a,b,c)}_{(k+1)\text{-times}}: 1\leq j\leq k+1 \Big\}.
\end{equation*}
This spectrum is studied in detail in \cite{Lauret-SpecSU(2)}, where it is shown that
\begin{equation}\label{eq:nu^(2)}
\begin{aligned}
\nu_1^{(0)}(a,b,c) &=0,& \rule{10mm}{0mm}
\nu_1^{(2)}(a,b,c) &=4(b^2+c^2), \\
\nu_1^{(1)}(a,b,c) &=a^2+b^2+c^2, &
\nu_2^{(2)}(a,b,c) &=4(a^2+c^2), \\
\nu_2^{(1)}(a,b,c) &=a^2+b^2+c^2,&
\nu_3^{(2)}(a,b,c) &=4(a^2+b^2),
\end{aligned}
\end{equation}
and $\lambda_1(\Ss^3,\g_{\abc})$ is the smallest among the above, leaving out $\nu_1^{(0)}(a,b,c) =0$. More precisely, if $a\geq b\geq c>0$, then $\nu_1^{(2)}(a,b,c)\leq \nu_2^{(2)}(a,b,c) \leq \nu_3^{(2)}(a,b,c)$, and
\begin{equation*}
\lambda_1(\Ss^3,\g_{\abc})=\min\!\big\{a^2+b^2+c^2, 4(b^2+c^2)\big\}.
\end{equation*}
The main tool to prove this result is \cite[Lem.~3.4]{Lauret-SpecSU(2)}, namely, given
integers $1\leq j\leq k+1$, we have:
\begin{equation}\label{eq:nu^(k)>=}
	\nu_j^{(k)}(a,b,c) \geq 
	\begin{cases}
		2kb^2+k^2c^2&\quad\text{ if $k\geq0$},\\
		a^2+(2k-1)b^2+k^2c^2&\quad\text{ if $k\geq0$ is odd}.
	\end{cases}
\end{equation} 
Furthermore, for any integers $k\geq0$ and $1\leq j\leq k+1$, it is easy to see that 
\begin{equation}\label{eq:nu_j^k(a,a,a)}
\nu_j^{(k)}(a,a,a)= k(k+2)a^2.
\end{equation}

In order to apply Proposition~\ref{prop:spec} to describe the spectra $\Spec(\Ss^{4n+3}, \g_{\abcs})$, $\Spec(\R P^{4n+3}, \g_{\abcs})$, and $\Spec(\C P^{2n+1}, \check\g_{(b,s)})$ for $n\geq1$, we need to introduce some Lie-theoretic objects.
Fix the maximal torus of $\G$ given by
\begin{equation*}
\T := \big\{\!\diag(e^{\mi\theta_1},\dots, e^{\mi\theta_{n+1}}) : \theta_1,\dots,\theta_{n+1}\in\R \big\},
\end{equation*}
whose Lie algebra $\mathfrak t$ (respectively, its complexification $\mathfrak t_\C:=\mathfrak t\otimes_\R\C$) consists of elements of the form
$Y=\diag({\mi\theta_1},\dots, {\mi\theta_{n+1}}),$
with $\theta_j\in \R$ (respectively, $\theta_j\in \C$), for all $1\leq j\leq n+1$.
Let $\varepsilon_j\colon \mathfrak t_\C\to \C$ be given by $\varepsilon_j(Y)=\mi \theta_j$, where $Y$ is as above, so that $\{\varepsilon_1,\dots, \varepsilon_{n+1}\}$ is a basis of $\mathfrak t_\C^*$.

Denote the Hermitian extension of $\innerdots_0$ to $\mathfrak g_\C$ and $\mathfrak t_\C^*$ by the same symbol $\innerdots_0$. One easily checks that $\langle \varepsilon_i, \varepsilon_j\rangle_0 ={ 2} \delta_{ij}$ for all $1\leq i,j\leq n+1$. Indeed, setting $Y_j=\diag(0,\dots,0,\mi,0,\dots,0)$, where the nonzero coordinate is in the $j$th entry, one has that $\big\{\sqrt{2} Y_1,\dots,\sqrt{2} Y_{n+1}\big\}$ is an orthonormal basis of $\mathfrak t_\C$ with respect to $\langle\cdot,\cdot\rangle_0$, so its corresponding dual basis $\big\{\frac{1}{\sqrt{2}} \varepsilon_1,\dots, \frac{1}{\sqrt{2}} \varepsilon_{n+1}\big\}$ is an orthonormal basis of $\mathfrak t_\C^*$. 

The root system of $\mathfrak g_\C$ with respect to the Cartan subalgebra $\mathfrak t_\C$ is given by $\Phi(\mathfrak g_\C,\mathfrak t_\C) = \{\pm \varepsilon_i\pm\varepsilon_j:i\neq j\}\cup \{\pm2\varepsilon_i \}$. 
Consider the standard positive system, which has positive roots $\Phi^+(\mathfrak g_\C,\mathfrak t_\C) = \{\varepsilon_i\pm\varepsilon_j:i<j\}\cup \{2\varepsilon_i \}$. In particular, half of the sum of positive roots is $\rho_{\mathfrak g}= \sum_{j=1}^{n+1} (n+2-j)\varepsilon_j$. 

Since $\G$ is simply-connected, the set of dominant $\G$-integral weights coincides with the set of dominant algebraically integral weights of $\mathfrak g_\C$, which is given by elements of the form $\sum_{i=1}^{n+1} a_i\varepsilon_i$ with $a_i\in\Z$ satisfying $a_1\geq\dots\geq a_{n+1}\geq0$. 
If $\Lambda$ is a dominant $\G$-integral weight, we denote by $\pi_\Lambda$ the irreducible $\G$-representation having highest weight $\Lambda$, which exists and is unique (up to equivalence) by the Highest Weight Theorem, see e.g. Hall~\cite[Thm 9.4, 9.5]{hall-book}.

\begin{lemma}\label{lem:implicitSpec}
Let $n\geq1$ be an integer. For positive real numbers $a,b,c,s$ and integers $p\geq q\geq0$, we have that
\begin{align*}
\spec(\Ss^{4n+3},\g_{(a,b,c,s)}) 
&= \!\!\!\bigcup_{\substack{p\geq q\geq0 \\ 1\leq j\leq p-q+1} } \!\! \Big\{ \!\underbrace{\lambda_j^{(p,q)}(a,b,c,s),..., \lambda_j^{(p,q)}(a,b,c,s) }_{d_{p,q}} \Big\},
\\
\spec(\R P^{4n+3},\g_{(a,b,c,s)}) 
    &= \!\!\!\bigcup_{\substack{ p\geq q\geq0 \\ p-q \text{ \rm even}\\ 1\leq j\leq p-q+1  }} \!\! \Big\{ \!\underbrace{\lambda_j^{(p,q)}(a,b,c,s),..., \lambda_j^{(p,q)}(a,b,c,s) }_{d_{p,q}}\Big\},
    \\
\Spec(\C P^{2n+1} , \check{\g}_{(b,s)}) 
&= \!\!\bigcup_{\substack{ p\geq q\geq0 \\ p-q \text{ \rm even} }} \! \! \Big\{ \underbrace{\check\lambda^{(p,q)}(b,s), \dots, \check\lambda^{(p,q)}(b,s) }_{d_{p,q}}  \Big\},
\end{align*}
where
\begin{align}
\label{eq:lambda}
\lambda_j^{(p,q)}(a,b,c,s) 	&= \big(4pn+{ 4}q(p+n+1) \big)s^2 +  2 \nu_j^{(p-q)}(a,b,c),\\
\label{eq:lambda2}
\check{\lambda}^{(p,q)}(b,s) &= \big( 4pn+4q(p+n+1) \big)s^2 + 2(p-q)(p-q+2)b^2,\\
\label{eq:d_pq}
d_{p,q} &= \frac{(p+q+2n+1)(p-q+1)}{(2n+1)(p+1)}\binom{p+2n}{p} \binom{q+2n-1}{q}.
\end{align}
\end{lemma}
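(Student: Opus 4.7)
My plan is to specialize Proposition~\ref{prop:spec} to each of the three homogeneous spaces in three steps: (i) identify the spherical representations $\widehat\G_\K$, $\widehat\G_{\K\cdot\Z_2}$, and $\widehat\G_{\check\K}$, together with the dimensions $d_\pi$ and $d_\pi^\K$; (ii) describe $V_{\pi^*}^\K$ as a module over $\mathfrak h\simeq\mathfrak{sp}(1)$; and (iii) diagonalize $\pi^*(-C_\g)$ on the relevant fixed subspace by splitting $C_\g$ into an $\mathfrak h$-piece and a transverse piece. The main difficulty is step (ii), which rests on the branching rule for $\Sp(n+1)\downarrow\Sp(n)\times\Sp(1)$; once it is in hand, the remaining work is essentially an application of Freudenthal's formula \eqref{eq:Casimirscalar} combined with the $\mathfrak{sp}(1)$-Casimir identity.

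For step (i), I would invoke the Cartan--Helgason theorem (or direct computation) for the rank-one spherical pair $(\Sp(n+1),\Sp(n))$ to see that the $\G$-spherical representations are precisely the $\pi_{(p,q)}$ with highest weight $p\varepsilon_1+q\varepsilon_2$, $p\geq q\geq 0$, and that $\dim V_{\pi_{(p,q)}}^\K=p-q+1$. Plugging $\Lambda=p\varepsilon_1+q\varepsilon_2$ into Weyl's dimension formula with the positive roots $\{\varepsilon_i\pm\varepsilon_j:i<j\}\cup\{2\varepsilon_i\}$ yields \eqref{eq:d_pq} after elementary simplification. For step (ii), because $\K\simeq\Sp(n)$ and $\H\simeq\Sp(1)$ commute as block-diagonal subgroups in distinct blocks, $\H$ acts on $V_{\pi_{(p,q)}^*}^\K$, and the branching rule (combined with self-duality of all $\Sp(1)$-representations) identifies this $\H$-module with the irreducible $V_{\tau_{p-q}}$, which is characterized by its dimension.

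Step (iii) begins by writing, with respect to the orthonormal bases prescribed by $\innerdots_{\abcs}$,
\begin{equation*}
C_\g = 2\bigl(a^2 X_1^2+b^2 X_2^2+c^2 X_3^2\bigr) + s^2\textstyle\sum_{j=1}^{4n} Y_j^2,
\end{equation*}
where $\{Y_j\}_{j=1}^{4n}$ is $\innerdots_0$-orthonormal in $\mathfrak p_1$. On $V_{\pi^*}^\K$, the $\mathfrak k$-part of the bi-invariant Casimir $\Cas_0=\sum_k Z_k^2+2\sum_i X_i^2+\sum_j Y_j^2$ acts trivially, so combining \eqref{eq:Casimirscalar} with the identity $\tau_{p-q}(X_1^2+X_2^2+X_3^2)=-(p-q)(p-q+2)\Id$ one can solve for
\begin{equation*}
\pi^*\bigl(\textstyle\sum_j Y_j^2\bigr)\big|_{V_{\pi^*}^\K} = -\bigl(4pn+4q(p+n+1)\bigr)\Id
\end{equation*}
after a short manipulation. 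The remaining $-2\pi^*(a^2X_1^2+b^2X_2^2+c^2X_3^2)|_{V_{\pi^*}^\K}$, under the identification with $V_{\tau_{p-q}}$, is precisely twice the operator in \eqref{eq:tau_k}, whose eigenvalues are $\nu_j^{(p-q)}(a,b,c)$ by definition; adding the two contributions gives \eqref{eq:lambda}, and the multiplicity $d_{p,q}$ is the factor $V_\pi$ in \eqref{eq:PeterWeyl}.

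The cases of $\R P^{4n+3}$ and $\C P^{2n+1}$ would follow by further restricting to the appropriate fixed subspaces of $V_{\pi^*}^\K\simeq V_{\tau_{p-q}}$. For $\R P^{4n+3}$, the antipodal element $\diag(\Id,-1)\in\H$ acts on $V_{\tau_{p-q}}$ as $(-1)^{p-q}\Id$, selecting $p-q$ even. For $\C P^{2n+1}$, the $\Ut(1)$-fixed subspace of $V_{\tau_{p-q}}$ is the weight-zero subspace, which is one-dimensional exactly when $p-q$ is even; on that line $\pi^*(X_1)=0$, so $\pi^*(X_2^2+X_3^2)=\pi^*(X_1^2+X_2^2+X_3^2)=-(p-q)(p-q+2)\Id$, and substituting this together with the scalar for $\sum_j Y_j^2$ into $C_{\check\g}=2b^2(X_2^2+X_3^2)+s^2\sum_j Y_j^2$ yields \eqref{eq:lambda2}.
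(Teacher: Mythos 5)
Your proposal is correct and follows essentially the same route as the paper: identify the spherical representations $\pi_{p,q}$, use the branching to $\Sp(n)\times\Sp(1)$ to realize $V_{p,q}^\K$ as the irreducible $\H$-module $V_{\tau_{p-q}}$, split $C_\g$ (resp.\ $\check C_{(b,s)}$) into $\Cas_0$, $\Cas_{\mathfrak k}$, and the $\mathfrak{sp}(1)$-part so that Freudenthal's formula together with the $\tau_{p-q}$-Casimir value $(p-q)(p-q+2)$ yields \eqref{eq:lambda}--\eqref{eq:lambda2}, and then cut out the $\R P^{4n+3}$ and $\C P^{2n+1}$ cases via the action of $\diag(\Id,-1)$ and the weight-zero line of $V_{\tau_{p-q}}$. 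The only caveat is the label ``Cartan--Helgason'' in step (i): that theorem concerns symmetric, multiplicity-free pairs, whereas here $\dim V_{p,q}^\K=p-q+1>1$ in general, so the determination of $\widehat\G_\K$ and of these dimensions should instead be drawn from the same $\Sp(n+1)\downarrow\Sp(n)\times\Sp(1)$ branching law you invoke in step (ii), exactly as the paper does.
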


\begin{proof}
	We begin by identifying the corresponding spherical representations. 
	It is well-known that (see for instance \cite[Problem IX.11]{Knapp-book-beyond})
	\begin{equation*}
	\widehat \G_\K = \{\pi_{p,q}:=\pi_{p\varepsilon_1+ q\varepsilon_2}: p\geq q\geq 0\}. 
	\end{equation*}
We henceforth abbreviate $V_{p,q}=V_{\pi_{p,q}}$. Since $\K$ and $\H$ commute, the subspace $V_{p,q}^\K$ is $\H$-invariant.
From Lepowsky's classical branching law from $\G$ to $\K\times\H$, or as a direct consequence of \cite[Thm.~3.3]{WallachYacobi09}, we have that
	\begin{equation}\label{eq:V_pq|K}
	V_{p,q}^\K\simeq V_{\tau_{p-q}}
	\;\text{as $\H$-modules}.
	\end{equation}
	In particular, $d_{\pi_{p,q}}^\K=\dim V_{p,q}^\K =\dim V_{\tau_{p-q}}=p-q+1$. 
	
Since  $\K\subset \K\cdot\Z_2\subset \check\K$, we have $\widehat \G_{\check\K}\subset \widehat \G_{\K\cdot\Z_2} \subset \widehat \G_\K$. First, we determine $\widehat \G_{\check\K}$.
	An element $\pi_{p,q}\in \widehat \G_{\K}$ is in $\widehat \G_{\check\K}$ if there is a nonzero vector in $V_{p,q}^\K$ fixed by the $\Ut(1)$ factor in $\check\K$ or, equivalently, annihilated by $X_1$ in \eqref{eq:ijk}. 
	As an $\H$-module, $V_{p,q}^\K$ is irreducible with highest weight $p-q$ by \eqref{eq:V_pq|K}. 
	By the standard representation theory of $\sll(2,\C)$-modules, we have the (weight) decomposition
\begin{equation*}
V_{p,q}^\K =\bigoplus_{l=0}^{p-q} V_{p,q}^\K(p-q-2l),
\end{equation*}
	where $\dim V_{p,q}^\K(p-q-2l)=1$ for all $0\leq l\leq p-q$, and $\pi_{p,q}(X_1) v= (p-q-2l) \mi\, v$ for all $v\in V_{p,q}^\K(p-q-2l)$. 
	Hence, $V_{p,q}^{\check\K}= V_{p,q}^\K(0)$, which is nontrivial if and only if $p-q$ is even. 
	Thus, we conclude that 
\begin{equation*}
\widehat \G_{\check\K}= \{\pi_{p,q}: p\geq q\geq0,\; p\equiv q\mod 2\}
\end{equation*}
	and $\dim V_\pi^{\check\K}=1$ for all $\pi\in\widehat \G_{\check\K}$, i.e., the branching from $\G$ to $\check\K$ is multiplicity-free. 
We now determine $\widehat \G_{\K\cdot\Z_2}$.
Multiplication by $\diag(\Id,-1)$ maps the identity connected component (identified with $\K$) to the other connected component of $\K\cdot\Z_2$, and
$\diag(\Id,-1)$ lies in the maximal torus $\T$.
In fact, $\diag(\Id,-1)=\exp(0,\dots,0,\pi\mi)$. 
Its action on a weight space $V_{p,q}^\K(p-q-2l)$ is thus given by multiplication by $e^{(p-q-2l)\pi\mi}=(-1)^{p-q}$, i.e., the action on $V_{p,q}^\K$  is by $(-1)^{p-q}\Id_{V_{p,q}^\K}$. Consequently,
\begin{equation*}
\widehat \G_{\K\cdot\Z_2} = \{\pi_{p,q}: p\geq q\geq 0,\; p-q\text{ even}\}.
\end{equation*}

It is a simple matter to check that $\dim V_{{p,q}}=d_{p,q}$ as in \eqref{eq:d_pq} by using the Weyl Dimension Formula, see e.g.\ \cite[Thm.~5.84]{Knapp-book-beyond}.

From Proposition~\ref{prop:spec}, it just remains to show that, for every $p\geq q\geq0$, the eigenvalues of $\pi_{p,q}( -C_{\abcs })|_{V_{{p,q}}^\K}$ are 
$\lambda_j^{(p,q)}(a,b,c,s)$, $1\leq j\leq p-q+1$, as in \eqref{eq:lambda}, and the (only) eigenvalue of $\pi_{p,q}( -\check C_{(b,s)})|_{V_{{p,q}}^{\check\K}}$ is $\check\lambda^{(p,q)}(b,s)$ if $p\equiv q\mod  2$,
as in \eqref{eq:lambda2}.
Here, we abbreviate $C_{\abcs}= C_{\g_{\abcs }}$ and $\check C_{(b,s)}= C_{\check\g_{(b,s)}}$. 
Note that this includes the case of $\Spec(\R P^{4n+3},\g_{\abcs})$, since the Laplace operator of $(\R P^{4n+3},\g_{\abcs})$ has the same spectrum as the restriction to 
\begin{equation}\label{eq2:projective-restriction}
\widehat{\bigoplus_{\substack{ p\geq q\geq0 \\ p-q \text{ even} }}} V_{p,q}\otimes V_{\pi_{p,q}^*}^\K
\simeq L^2\!\big(\G/(\K\cdot\Z_2), \g_{\abcs}\big)
\end{equation}
of the Laplace operator of $(\Ss^{4n+3},\g_{\abcs})$.
	
Let $\{X_4,\dots,X_m\}$ be an orthonormal basis of $\mathfrak p_1$ with respect to $\langle\cdot,\cdot\rangle_0$. Then $\big\{\sqrt2 aX_1,  \sqrt2 bX_2, \sqrt2 cX_3, sX_4,\dots,sX_m\big\}$ and $\big\{ \sqrt2 bX_2, \sqrt2 bX_3, sX_4,\dots,sX_m\big\}$ are orthonormal bases of $(\mathfrak p,\innerdots_{\abcs})$ and $(\check{\mathfrak p}, \innerdots_{(a,b,b,s)}|_{\check{\mathfrak p}})$ respectively. 
Hence
\begin{align*}
	C_{\abcs}
	&=2a^2X_1^2+ 2b^2X_2^2+ 2c^2X_3^2 +s^2(X_4^2+\dots+X_m^2) 
	\\ 
	&=s^2\Cas_0+ 2(a^2X_1^2+ b^2X_2^2+ c^2X_3^2) -2s^2(X_1^2+ X_2^2+ X_3^2) -s^2\Cas_{\mathfrak k}, 
	\\ 
	\check C_{(b,s)}
	&=2b^2X_2^2+ 2b^2X_3^2 +s^2(X_4^2+\dots+X_m^2) 
	\\ 
	&=s^2\Cas_0 + 2(b^2-s^2) (X_2^2+ X_3^2) -s^2\Cas_{\mathfrak k}, 
	\end{align*}
where $\Cas_{\mathfrak k}$ denotes the Casimir element of ${\mathfrak k}$ with respect to $\innerdots_0|_{\mathfrak k}$, that is, $\Cas_{\mathfrak k}=\sum_{i=1}^{\dim\mathfrak k} Y_i^2$, where $\{Y_1,\dots,Y_{\dim\mathfrak k}\}$ is a $\innerdots_0$-orthonormal basis of $\mathfrak k$.
Clearly, $\pi_{p,q}(\Cas_{\mathfrak k})$ acts trivially on $V_{p,q}^\K$.
	From \eqref{eq:Casimirscalar}, we have that $\pi_{p,q}(-\Cas_0)$ acts on $V_{{p,q}}$ by multiplication by the scalar
	\begin{equation*}
	\lambda^{\pi_{p,q}}
	= \langle p\varepsilon_1+q\varepsilon_2 +2\rho_{\mathfrak g}, p\varepsilon_1+q\varepsilon_2 \rangle_0 
	= 2p(p+2n+2)+2q(q+2n), 
	\end{equation*}
	and $\pi_{p,q}\big(\!-(X_1^2+ X_2^2+ X_3^2)\big)|_{V_{{p,q}}^\K} = \tau_{p-q}\big(\!-(X_1^2+ X_2^2+ X_3^2)\big)$ by multiplication by $(p-q)(p-q+2)$. 
Since the eigenvalues of $\pi_{p,q}\big(\!-(a^2X_1^2+ b^2X_2^2+ c^2X_3^2)\big)|_{V_{{p,q}}^\K} = \tau_{p-q}\big(\!-(a^2X_1^2+ b^2X_2^2+ c^2X_3^2)\big)$ are precisely $\nu_j^{(p-q)}(a,b,c)$ for $1\leq j\leq p-q+1$, the claim regarding \eqref{eq:lambda} follows.
	Furthermore, $\pi_{p,q}\big(\!-(X_2^2+ X_3^2)\big)|_{V_{{p,q}}^{\check\K}} = \pi_{p,q}\big(\!-(X_1^2+ X_2^2+ X_3^2)\big) |_{V_{{p,q}}^{\check\K}}$ because $X_1$ acts trivially on $V_{{p,q}}^{\check\K}$, thus  $\pi_{p,q}\big(\!-(X_2^2+ X_3^2)\big)|_{V_{{p,q}}^{\check\K}} = (p-q)(p-q+2) \Id_{V_{{p,q}}^{\check\K}}$. 
	We conclude that the eigenvalue of $\pi(-\check C_{(b,s)})|_{V_{{p,q}}^{\check \K}}$ is
	\begin{align*}
	\check\lambda^{(p,q)}(b,s) 
	&=  (2p(p+2n+2)+2q(q+2n))\, s^2 + 2 (p-q)(p-q+2)\, (b^2-s^2) \\
	&= \big( 4pn+4q(p+n+1) \big)s^2 + 2(p-q)(p-q+2)b^2,
	\end{align*}
	as claimed in \eqref{eq:lambda2}, concluding the proof. 
\end{proof}

\begin{remark}\label{rem:basic}
The eigenvalue $\lambda_j^{(p,q)}(a,b,c,s)$, respectively $\check{\lambda}^{(p,q)}(b,s)$, is \emph{basic}, in terms of the Riemannian submersions $\big(\Ss^{4n+3},\g_{\abcs}\big)\to \big(\Hr P^{n},\frac{1}{s^2}\gFS\big)$,
respectively $\big(\C P^{2n+1},\check{\g}_{(b,s)}\big)\to \big(\Hr P^{n},\frac{1}{s^2}\gFS\big)$, if and only if $p=q$.
Recall that if $\pi\colon (M,\g) \to (\check M,\check \g)$ is a Riemannian submersion with minimal fibers, there is a natural inclusion $\Spec(\check M,\check \g)\subset \Spec(M,\g)$ of so-called \emph{basic eigenvalues}, since lifts of Laplace eigenfunctions on $(\check M,\check \g)$ are Laplace eigenfunctions on $(M,\g)$ with the same eigenvalue, see e.g. \cite{Berard-BergeryBourguignon82,besson-bordoni}. 
Note that, from \eqref{eq:nu^(2)}, \eqref{eq:lambda}, and \eqref{eq:lambda2}, $\lambda^{(p,p)}_j(a,b,c,s)=\check\lambda^{(p,p)}(b,s)=4p(p+2n+1)s^2$, $p\geq0$, are precisely the eigenvalues of the Laplacian on $\big(\Hr P^{n},\frac{1}{s^2}\gFS\big)$.
In representation-theoretic terms, basic eigenvalues on $\Ss^{4n+3}=\G/\K$ arise from $\G$-modules $V_{p,q}^\K$ that are fixed by $\H$, see~\eqref{eq:V_pq|K}.
\end{remark}

\subsection{First eigenvalues}
We now use algebraic estimates to extract formulae for the first eigenvalue of the Laplacian on $(\Ss^{4n+3},\g_{\abcs})$, $(\R P^{4n+3},\g_{\abcs})$, and $(\C P^{2n+1},\check{\g}_{(b,s)})$ from the description of their spectra given in Lemma~\ref{lem:implicitSpec}. Through the isometries~\eqref{eq:different-parameters1}, Theorems~\ref{thm:lambda_1(a,b,c,s)} and \ref{thm:lambda_1(b,s)} below imply Theorems~\ref{thm:A} and \ref{thm:B} in the Introduction.

\begin{lemma}\label{lem:lambda^(1,1)}
Let $n\geq1$. For $a\geq b\geq c>0$, $s>0$, and $p\geq q\geq0$ satisfying
\begin{equation*}
(p,q)\notin 
\begin{cases}
(0,0),(1,0), (1,1), (2,0),& \text{ if } n\geq 2, \\
(0,0),(1,0), (1,1), (2,0),(3,0), &\text{ if }n= 1, \\
\end{cases}
\end{equation*}
we have that $\lambda_1^{(1,1)} (a,b,c,s) < \lambda_j^{(p,q)}(a,b,c,s)$ for all $1\leq j\leq p-q+1$. 
\end{lemma}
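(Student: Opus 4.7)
The plan is to reduce the claim to comparing the explicit value of $\lambda_1^{(1,1)}(a,b,c,s)$ with the lower bound on $\lambda_j^{(p,q)}(a,b,c,s)$ provided by the estimate \eqref{eq:nu^(k)>=}. First, by \eqref{eq:lambda} together with $\nu_1^{(0)}(a,b,c)=0$ (the $\H$-module $V_{\tau_0}$ is one-dimensional and trivial), one obtains $\lambda_1^{(1,1)}(a,b,c,s)=8(n+1)s^2$, a quantity that depends only on $s$. Feeding \eqref{eq:nu^(k)>=} with $k=p-q$ into \eqref{eq:lambda} yields
\begin{equation*}
\lambda_j^{(p,q)}(a,b,c,s) \geq \bigl(4pn + 4q(p+n+1)\bigr)s^2 + 4(p-q)b^2 + 2(p-q)^2 c^2,
\end{equation*}
so the task reduces to verifying that this lower bound strictly exceeds $8(n+1)s^2$ for every $(p,q)$ outside the exception list.

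I would then split into cases on $(p,q)$. When $q\geq 1$ and $p\geq q+1$, plugging in the minimal values gives $4pn+4q(p+n+1)\geq 8n+4(n+3)=12n+12 > 8(n+1)$, so the $s^2$-coefficients alone produce the strict inequality for every $(a,b,c,s)$. When $p=q\geq 2$, the $(p-q)$-contributions vanish but $4p(p+2n+1)\geq 8(2n+3) > 8(n+1)$. When $q=0$ and either $p\geq 3$ with $n\geq 2$ or $p\geq 4$ with $n=1$, the coefficient of $s^2$ satisfies $4pn\geq 8(n+1)$, with equality occurring only in the two borderline triples $(p,n)=(3,2)$ and $(4,1)$; in those borderline situations, strict inequality still follows because $p-q>0$ forces the $b^2,c^2$ contribution to be strictly positive.

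The main obstacle lies in pinning down exactly which $(p,q)$ must be excluded, and in particular explaining the dependence on the dimension. The borderline sub-case $(p,q)=(3,0)$ when $n=1$ is the critical point where the strategy fails outright: there $4pn=12 < 16 = 8(n+1)$, so the $s^2$-coefficient is too small and the $(a,b,c)$-contributions cannot compensate when $s$ is large, which is exactly what forces $(3,0)$ into the exception list when $n=1$. The remaining exceptions $(0,0),(1,0),(1,1),(2,0)$ are excluded for the same reason in every dimension $n\geq 1$, since their $s^2$-coefficients $0,\,4n,\,8(n+1),\,8n$ never exceed $8(n+1)$ while the $\nu_j^{(p-q)}$ contributions are bounded independently of $s$. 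With this case analysis in place, the lemma reduces to the elementary arithmetic checks sketched above.
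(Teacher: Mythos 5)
Your proposal is correct and follows essentially the same route as the paper: reduce to $\lambda_1^{(1,1)}=8(n+1)s^2$, compare the coefficient of $s^2$ in $\lambda_j^{(p,q)}$ with $8(n+1)$, and handle the borderline cases ($q=0$ with $(p,n)=(3,2)$ or $(4,1)$) using strict positivity of the $\nu_j^{(p-q)}$-contribution. The only cosmetic difference is that you invoke the explicit bound \eqref{eq:nu^(k)>=} where the paper only needs $\nu_j^{(k)}>0$ for $k\geq1$; the case analysis and arithmetic are otherwise the same.
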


\begin{proof}
We repeatedly use formula \eqref{eq:lambda} for $\lambda_1^{(p,q)}(a,b,c,s)$; in particular, recall that $\lambda_1^{(1,1)}(a,b,c,s)=8(n+1)s^2$.
For all $p\geq 1$, $k\geq0$ and $1\leq j\leq k+1$, we have
	\begin{align*}
		\lambda_j^{(p+k,p)}(a,b,c,s) 
		&\geq (4(p+k)n+ 4p(p+k+1+n) )s^2 \\
		&\geq (4n+ 4(2+n) )s^2 = 8(n+1)s^2,
	\end{align*}
	with strict inequalities in both estimates if $k\geq1$. 
	Furthermore, for $k=0$, the second inequality is strict for $p\geq2$. 
	Similarly, for any $1\leq j\leq k+1$,
	\begin{align*}
		\lambda_j^{(k,0)}(a,b,c,s)
		&>4kns^2\geq8(n+1)s^2 
	\end{align*}
	for all $k\geq4$, and also for $k=3$ and $n\geq2$. 
	This concludes the proof.
\end{proof}

\begin{theorem}\label{thm:lambda_1(a,b,c,s)}
	Let $n\geq1$, $a\geq b\geq c>0$, and $s>0$. 
	We abbreviate 
	\begin{equation}\label{eq:lambda_1^(1,0)(2,0)(1,1)}
	\begin{aligned}
	\lambda_1^{(1,0)} &= \lambda_1^{(1,0)}\abcs =4ns^2+ 2(a^2+b^2+c^2), \\ 
	\lambda_1^{(2,0)} &= \lambda_1^{(2,0)}\abcs =8(ns^2+b^2+c^2), \\
	\lambda_1^{(1,1)} &= \lambda_1^{(1,1)}\abcs =8(n+1)s^2.
	\end{aligned}
	\end{equation}
	The smallest positive eigenvalue of the Laplace--Beltrami operator on the homogeneous space $\big(\Ss^{4n+3}, \g_{(a,b,c,s)}\big)$ is given by
	\begin{equation}\label{eq:lambda_1(a,b,c,s)}
	\lambda_1\big(\Ss^{4n+3}, \g_{(a,b,c,s)}\big)=
	\min\! \big\{ \lambda_1^{(1,0)},\; \lambda_1^{(2,0)},\; \lambda_1^{(1,1)}\big\},
	\end{equation}
	and its multiplicity is
	\begin{equation} \label{eq:mult-lambda_1(a,b,c,s)}
\begin{cases}
4(n+1)
	&\quad \text{if }\; \lambda_1^{(1,0)} < \min\! \big\{\lambda_1^{(2,0)},\; \lambda_1^{(1,1)}\big\},\\
n(2n+3)
	&\quad \text{if }\; \lambda_1^{(1,1)}< \min\! \big\{ \lambda_1^{(1,0)},\;  \lambda_1^{(2,0)} \big\},\\
(n+1)(2n+3)
	&\quad \text{if }\; \lambda_1^{(2,0)}< \min \! \big\{\lambda_1^{(1,0)},\;  \lambda_1^{(1,1)}\big\},\\
2n^2+7n+4
	&\quad \text{if }\; \lambda_1^{(1,0)}= \lambda_1^{(1,1)}< \lambda_1^{(2,0)} ,\\
2n^2+9n+7
	&\quad \text{if }\; \lambda_1^{(1,0)}= \lambda_1^{(2,0)}< \lambda_1^{(1,1)} ,\\
4n^2+8n+3
	&\quad \text{if }\; \lambda_1^{(1,1)}=\lambda_1^{(2,0)}< \lambda_1^{(1,0)} ,\\
4n^2+12n+7
	&\quad \text{if }\; \lambda_1^{(1,0)}= \lambda_1^{(2,0)}= \lambda_1^{(1,1)} .
\end{cases}
\end{equation}
Furthermore, the smallest positive eigenvalue of the Laplace--Beltrami operator on the homogeneous space $\big(\R P^{4n+3}, \g_{(a,b,c,s)}\big)$ is given by
\begin{equation}\label{eq:lambda_1(a,b,c,s)projective}
\lambda_1\big(\R P^{4n+3}, \g_{(a,b,c,s)}\big)=
\min\! \big\{ \lambda_1^{(2,0)},\; \lambda_1^{(1,1)}\big\},
\end{equation}
and its multiplicity is
\begin{equation} \label{eq:mult-lambda_1(a,b,c,s)projective}
\begin{cases}
n(2n+3)		&\quad \text{if }\; \lambda_1^{(1,1)}<  \lambda_1^{(2,0)} ,\\
(n+1)(2n+3)	&\quad \text{if }\; \lambda_1^{(2,0)}< \lambda_1^{(1,1)}\text{ and }a>b,\\
2(n+1)(2n+3)&\quad \text{if }\; \lambda_1^{(2,0)}< \lambda_1^{(1,1)}\text{ and }a=b>c,\\
3(n+1)(2n+3)&\quad \text{if }\; \lambda_1^{(2,0)}< \lambda_1^{(1,1)}\text{ and }a=b=c,\\
(2n+1)(2n+3)&\quad \text{if }\; \lambda_1^{(1,1)}=\lambda_1^{(2,0)}\text{ and }a>b,\\
(3n+2)(2n+3)&\quad \text{if }\; \lambda_1^{(1,1)}=\lambda_1^{(2,0)}\text{ and }a=b>c,\\
(4n+3)(2n+3)&\quad \text{if }\; \lambda_1^{(1,1)}=\lambda_1^{(2,0)}\text{ and }a=b=c.
\end{cases}
\end{equation}
\end{theorem}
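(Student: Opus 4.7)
The plan is to apply Lemma~\ref{lem:implicitSpec}, which identifies $\spec(\Ss^{4n+3},\g_{\abcs})$ with the union of values $\lambda_j^{(p,q)}\abcs$ over pairs $p\geq q\geq 0$ and $1\leq j\leq p-q+1$, so the first positive eigenvalue is the minimum over $(p,q)\neq(0,0)$. By Lemma~\ref{lem:lambda^(1,1)}, this minimum can only be attained at $(p,q)\in\{(1,0),(1,1),(2,0)\}$, together with $(3,0)$ when $n=1$. I would first dispatch the extra $n=1$ case using the odd-$k$ bound in \eqref{eq:nu^(k)>=} with $k=3$: this yields $\nu_j^{(3)}(a,b,c)\geq a^2+5b^2+9c^2$, hence
\begin{equation*}
\lambda_j^{(3,0)}\geq 12s^2+2(a^2+5b^2+9c^2)>4s^2+2(a^2+b^2+c^2)=\lambda_1^{(1,0)}
\end{equation*}
when $n=1$, so $(3,0)$ can be discarded.

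Within each of the three remaining candidates, I would identify which $j$ achieves the minimum of $\lambda_j^{(p,q)}\abcs$. Formula \eqref{eq:nu^(2)} gives $\nu_1^{(1)}=\nu_2^{(1)}=a^2+b^2+c^2$ (so $\lambda_1^{(1,0)}=\lambda_2^{(1,0)}$) and, since $a\geq b\geq c$, also $\nu_1^{(2)}\leq\nu_2^{(2)}\leq\nu_3^{(2)}$ with $\nu_1^{(2)}=4(b^2+c^2)$. Substituting into \eqref{eq:lambda} reproduces the three quantities in \eqref{eq:lambda_1^(1,0)(2,0)(1,1)}, establishing \eqref{eq:lambda_1(a,b,c,s)}.

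For multiplicities, I would sum $d_{p,q}\cdot\#\{j:\lambda_j^{(p,q)}=\lambda_1\}$ over all $(p,q)$ attaining the minimum. Direct evaluation of \eqref{eq:d_pq} gives $d_{1,0}=2(n+1)$, $d_{2,0}=(n+1)(2n+3)$, and $d_{1,1}=n(2n+3)$, so $(1,0)$ contributes $2d_{1,0}=4(n+1)$ (from $j=1,2$), while $(2,0)$ and $(1,1)$ contribute $d_{2,0}$ and $d_{1,1}$ respectively. A small but essential observation is that the hypothesis $\lambda_1^{(2,0)}\leq\lambda_1^{(1,0)}$ simplifies to $a^2\geq 2ns^2+3b^2+3c^2$, which forces $a^2>b^2$ and hence $\nu_1^{(2)}<\nu_2^{(2)}$; consequently only $j=1$ contributes from $(2,0)$ whenever it achieves the minimum on $\Ss^{4n+3}$. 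Summing over the seven possible coincidence patterns reproduces \eqref{eq:mult-lambda_1(a,b,c,s)}.

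Finally, for $\R P^{4n+3}$ the identification \eqref{eq2:projective-restriction} restricts the sum to $(p,q)$ with $p-q$ even, excluding $(1,0)$ and giving \eqref{eq:lambda_1(a,b,c,s)projective} at once. The implicit forcing $a>b$ used above is no longer available, since the comparison is only with $\lambda_1^{(1,1)}$; the contribution from $(2,0)$ must therefore be split according to whether $a>b$, $a=b>c$, or $a=b=c$, which via \eqref{eq:nu^(2)} produces $\nu_1^{(2)}=\nu_j^{(2)}$ for $1$, $2$, or $3$ values of $j$, respectively, with corresponding contributions $d_{2,0}$, $2d_{2,0}$, or $3d_{2,0}$. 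Adding $d_{1,1}=n(2n+3)$ whenever $\lambda_1^{(1,1)}$ also achieves the minimum yields the seven cases of \eqref{eq:mult-lambda_1(a,b,c,s)projective}. The main obstacle is this careful multiplicity bookkeeping; the analytic content is already encapsulated in Lemmas~\ref{lem:implicitSpec} and~\ref{lem:lambda^(1,1)}.
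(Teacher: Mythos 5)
Your proposal is correct and follows essentially the same route as the paper: Lemma~\ref{lem:implicitSpec} plus Lemma~\ref{lem:lambda^(1,1)} reduce the minimization to $(1,0)$, $(2,0)$, $(1,1)$ (with the extra $(3,0)$ case for $n=1$ dispatched by the same $\nu_j^{(3)}\geq a^2+5b^2+9c^2$ estimate from \eqref{eq:nu^(k)>=}), and the multiplicity bookkeeping — including the key observation that $\lambda_1^{(2,0)}\leq\lambda_1^{(1,0)}$ forces $a>b$, hence only $j=1$ contributes from $\pi_{2,0}$ on $\Ss^{4n+3}$, while on $\R P^{4n+3}$ the $(2,0)$ contribution splits as $d_{2,0}$, $2d_{2,0}$, $3d_{2,0}$ according to $a>b$, $a=b>c$, $a=b=c$ — matches the paper's proof, with $d_{1,0}$, $d_{2,0}$, $d_{1,1}$ evaluated correctly.
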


\begin{proof}
We begin with the case of $(\Ss^{4n+3}, \g_{(a,b,c,s)})$.
Let $\lambda_{\min}(a,b,c,s)$ denote the right-hand side of \eqref{eq:lambda_1(a,b,c,s)}. 
Since the three quantities in \eqref{eq:lambda_1^(1,0)(2,0)(1,1)} are eigenvalues of $\Delta_{\g_{(a,b,c,s)}}$ by Lemma~\ref{lem:implicitSpec} and \eqref{eq:nu^(2)}, it follows that $\lambda_{\min}(a,b,c,s) \geq \lambda_1(\Ss^{4n+3}, \g_{(a,b,c,s)})$.  
 
To establish \eqref{eq:lambda_1(a,b,c,s)}, it remains to show that 
\begin{equation}\label{eq:remains}
\lambda_j^{(p,q)}(a,b,c,s)\geq \lambda_{\min}(a,b,c,s)
\quad
\text{for }
\begin{cases}
p\geq q\geq 0 \text{ with }(p,q)\neq (0,0),\\
1\leq j\leq p-q+1.
\end{cases}
\end{equation}
The case $(p,q)=(0,0)$ is excluded because it corresponds to the trivial representation, which only contributes the eigenvalue $0\in\Spec(\Ss^{4n+3}, \g_{(a,b,c,s)})$.
Lemma~\ref{lem:lambda^(1,1)} shows the above claim \eqref{eq:remains} for $n\geq2$, and also for $n=1$ provided $\lambda_j^{(3,0)}(a,b,c,s) \geq \lambda_{\min}(a,b,c,s)$. The last fact holds since, for $n=1$, \eqref{eq:nu^(k)>=} gives
\begin{align*}
\lambda_j^{(3,0)}(a,b,c,s) 
&= 12s^2 +  2\nu_j^{(3)}(a,b,c) \geq 12s^2+2(a^2+5b^2+9c^2)\\
&> 4s^2+2(a^2+b^2+c^2)=\lambda_1^{(1,0)}(a,b,c,s) \geq \lambda_{\min}(a,b,c,s).
\end{align*}
	
Regarding the multiplicity of this eigenvalue, from Lemma~\ref{lem:implicitSpec} we have that
\begin{enumerate}[$\bullet$]
\item $\pi_{1,0}$ contributes the eigenvalue $\lambda_1^{(1,0)}$ to $\spec(\Ss^{4n+3}, \g_{(a,b,c,s)})$ with multiplicity $2 d_{1,0}=4(n+1)$, since $\lambda_1^{(1,0)}(a,b,c,s)= \lambda_2^{(1,0)}(a,b,c,s)$.
\item $\pi_{2,0}$ contributes with the eigenvalue $\lambda_1^{(2,0)}$ to $\spec(\Ss^{4n+3}, \g_{(a,b,c,s)})$ with multiplicity $d_{2,0}= (n+1)(2n+3)$ if $a>b$, since $\lambda_1^{(2,0)}(a,b,c,s)< \lambda_2^{(2,0)}(a,b,c,s)$.
(Note that $\lambda_1^{(2,0)}(a,b,c,s)\leq  \lambda_1^{(1,0)}(a,b,c,s)$ forces $a>b$.)
\item $\pi_{1,1}$ contributes with the eigenvalue $\lambda_1^{(1,1)}$ to $\spec(\Ss^{4n+3}, \g_{(a,b,c,s)})$ with multiplicity $d_{1,1}= n(2n+3)$. 
\end{enumerate}
Thus, we obtain the values in the first three rows in \eqref{eq:mult-lambda_1(a,b,c,s)}. 
The remaining rows follow by summing the multiplicities of eigenvalues when they coincide. 

	Next, we consider the case of $(\R P^{4n+3},\g_{\abcs})$. Since its spectrum is the same as that of the restriction to \eqref{eq2:projective-restriction} of the Laplace operator of $(\Ss^{4n+3},\g_{\abcs})$, clearly \eqref{eq:lambda_1(a,b,c,s)projective} follows from \eqref{eq:lambda_1(a,b,c,s)}. 
	Concerning multiplicities, by Lemma~\ref{lem:implicitSpec}, 
\begin{enumerate}[$\bullet$]
	\item $\pi_{2,0}$ contributes the eigenvalue $\lambda_1^{(2,0)}$ to $\spec(\R P^{4n+3}, \g_{(a,b,c,s)})$ with multiplicity 
\begin{equation*}
\begin{cases}
d_{2,0} &\;\text{if } \lambda_1^{(2,0)}<\lambda_2^{(2,0)}, \text{ i.e., if }a>b,\\
2d_{2,0} &\;\text{if } \lambda_1^{(2,0)}=\lambda_2^{(2,0)}<\lambda_3^{(2,0)}, \text{i.e., if } a=b>c,\\
3d_{2,0} &\;\text{if } \lambda_1^{(2,0)}=\lambda_2^{(2,0)}=\lambda_3^{(2,0)}, \text{ i.e., if } a=b=c.
\end{cases}
\end{equation*}
(Note that the equivalent condition at the right on each of the rows holds since $\lambda_2^{(2,0)}=8(ns^2+a^2+c^2)$ and $\lambda_3^{(2,0)}=8(ns^2+a^2+b^2)$ by \eqref{eq:nu^(2)} and \eqref{eq:lambda}.)

\item $\pi_{1,1}$ contributes the eigenvalue $\lambda_1^{(1,1)}$ to $\spec(\R P^{4n+3}, \g_{(a,b,c,s)})$ with multiplicity $d_{1,1}= n(2n+3)$. 
\end{enumerate}
This implies \eqref{eq:mult-lambda_1(a,b,c,s)projective}, by
adding the multiplicities of eigenvalues when if coincide.
\end{proof}

\begin{remark}
The largest possible multiplicity of $\lambda_1(\Ss^{4n+3}, \g_{\abcs})$ is $4n^2+12n+7$, and it is attained when $b^2+c^2=s^2$ and $a^2=(2n+3)s^2$. 
For generic $a>b>c$ and $s$ in this situation, 
the full isometry group is $\Iso(\Ss^{4n+3},\g_{\abcs})=\Sp(n+1)\times_{\Z_2}\Sp(1)$, see \cite{ravi}. Meanwhile, the multiplicity of $\lambda_1(\Ss^{4n+3}, \gr)$ is only $4n+4$, although the full isometry group $\Iso(\Ss^{4n+3},\gr)=\Ot(4n+4)$ is much larger. This is yet another counterexample to the fact that larger isometry groups \emph{do not} necessarily correspond to larger multiplicities for the first eigenvalue, cf.~\cite[p.~181]{Berard-BergeryBourguignon82}.
The first counterexample was obtained by Urakawa~\cite{urakawa79}, who noticed that the multiplicity of $\lambda_1(\Ss^3,\g_{(\sqrt6 b,b,b)})$, $b>0$, is $7$, while that of $\lambda_1(\Ss^3,\gr)$ is only $4$.
\end{remark}

\begin{theorem}\label{thm:lambda_1(b,s)}
	Let $n\geq1$, $b>0$, and $s>0$. 
	The smallest positive eigenvalue of the Laplace--Beltrami operator on the homogeneous space $(\C P^{2n+1}, \check\g_{(b,s)})$ is
	\begin{equation}\label{eq:lambda_1(b,s)}
	\lambda_1(\C P^{2n+1}, \check\g_{(b,s)})=
	\min \!\left\{   8ns^2+16b^2,\; 8(n+1)s^2 \right\},
	\end{equation}
	and its multiplicity is
	\begin{equation} \label{eq:mult-lambda_1(b,s)}
	\begin{cases}
	(2n+3)(n+1)	&\quad \text{if }\; 2b^2< s^2,\\
	(2n+3)n 	&\quad \text{if }\; 2b^2>s^2,\\
	(2n+3)(2n+1) &\quad \text{if }\; 2b^2=s^2.
	\end{cases}
	\end{equation} 
\end{theorem}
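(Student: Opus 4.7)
The plan is to apply Lemma~\ref{lem:implicitSpec} and isolate the minimizer of $\check\lambda^{(p,q)}(b,s)$ directly, exploiting the fact that every spherical representation of $(\G,\check\K)$ has a one-dimensional $\check\K$-fixed subspace so no tridiagonal analysis is required. By that lemma, the spectrum of $(\C P^{2n+1},\check\g_{(b,s)})$ is enumerated by the eigenvalues~\eqref{eq:lambda2} with respective multiplicities $d_{p,q}$ from~\eqref{eq:d_pq}, over pairs $p\geq q\geq 0$ with $p-q$ even. Only $(p,q)=(0,0)$ contributes the eigenvalue $0$, so $\lambda_1(\C P^{2n+1},\check\g_{(b,s)})=\min\{\check\lambda^{(p,q)}(b,s):(p,q)\neq(0,0)\}$. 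Substituting into~\eqref{eq:lambda2} identifies the two natural candidates, $\check\lambda^{(2,0)}(b,s)=8ns^2+16b^2$ and $\check\lambda^{(1,1)}(b,s)=8(n+1)s^2$, whose minimum is exactly the right-hand side of~\eqref{eq:lambda_1(b,s)}.

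To show these are indeed the two smallest, I would split into two cases. If $q=0$, admissibility forces $p\geq 2$ even, and $\check\lambda^{(p,0)}(b,s)=4pns^2+2p(p+2)b^2$ is strictly increasing in $p$, so the minimum over this subcase is attained at $p=2$. If $q\geq 1$, then using $p\geq q$,
\begin{equation*}
4pn+4q(p+n+1)\;\geq\;4qn+4q(q+n+1)\;=\;4q(q+2n+1)\;\geq\;8(n+1),
\end{equation*}
where the first inequality uses $p\geq q$ (with equality iff $p=q$) and the second uses $q\geq 1$ (with equality iff $q=1$). Since the $b^2$-term $2(p-q)(p-q+2)b^2\geq 0$ (and vanishes iff $p=q$), this yields $\check\lambda^{(p,q)}(b,s)\geq 8(n+1)s^2=\check\lambda^{(1,1)}(b,s)$, with equality only when $(p,q)=(1,1)$. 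Combining the two cases establishes~\eqref{eq:lambda_1(b,s)}.

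For the multiplicity~\eqref{eq:mult-lambda_1(b,s)}, substituting $(p,q)=(2,0)$ and $(p,q)=(1,1)$ into~\eqref{eq:d_pq} gives $d_{2,0}=(n+1)(2n+3)$ and $d_{1,1}=n(2n+3)$. Noting that $\check\lambda^{(2,0)}(b,s)<\check\lambda^{(1,1)}(b,s)$ is equivalent to $16b^2<8s^2$, i.e., $2b^2<s^2$ (and similarly for the reverse and equality cases), the first two rows of~\eqref{eq:mult-lambda_1(b,s)} follow by reading off the multiplicity of the strictly smaller eigenvalue, while the tie case $2b^2=s^2$ is handled by summing the two multiplicities: $d_{2,0}+d_{1,1}=(2n+1)(2n+3)$.

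I do not anticipate a serious obstacle. In contrast with Theorem~\ref{thm:lambda_1(a,b,c,s)}, the branching from $\Sp(n+1)$ to $\check\K\simeq\Sp(n)\Ut(1)$ is multiplicity-free, so the Casimir-type operator $\pi_{p,q}(-\check C_{(b,s)})$ already acts by a scalar on each $V_{p,q}^{\check\K}$, and the entire argument reduces to the elementary real-variable estimates above. The only mildly delicate point is ensuring the case split exhausts all small pairs $(p,q)$, which is handled uniformly by the two displayed estimates.
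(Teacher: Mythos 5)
Your proposal is correct and follows essentially the same route as the paper: both invoke Lemma~\ref{lem:implicitSpec} (using that the branching to $\check\K$ is multiplicity-free), reduce to the candidates $\check\lambda^{(2,0)}$ and $\check\lambda^{(1,1)}$ by elementary monotonicity/lower-bound estimates on $\check\lambda^{(p,q)}$, and then read off the multiplicities from $d_{2,0}$ and $d_{1,1}$, summing them in the tie case $2b^2=s^2$. Your split into $q=0$ versus $q\geq1$ is just a minor repackaging of the paper's monotonicity argument in the parametrization $(p+2k,p)$, so there is nothing substantive to add.
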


\begin{proof}
	Let $\check\lambda_{\min}(b,s)$ denote the right-hand side of \eqref{eq:lambda_1(b,s)}. 
	Since,  by Lemma~\ref{lem:implicitSpec},
	\begin{align*}
	\check\lambda^{(2,0)}(b,s) =8ns^2 +16b^2
	\qquad\text{ and }\qquad
	\check\lambda^{(1,1)}(b,s) =8(n+1)s^2
	\end{align*}
	are eigenvalues of $\Delta_{\check\g_{(b,s)}}$, it follows that $\check\lambda_{\min}(b,s) \geq \lambda_1(\C P^{2n+1}, \check\g_{(b,s)})$.  
	
	Conversely, let us show that $\check\lambda^{(p,q)}(b,s)\geq \check\lambda_{\min}(b,s)$ for every $p\geq q\geq 0$ satisfying $p\equiv q\mod 2$ and $(p,q)\neq (0,0)$. 
	This follows since 
	\begin{align*}
	\check\lambda^{(p+2k,p)}(b,s) 
	&= (4(p+2k)n +4p(p+2k+1+n) )s^2 + 8k(k+1)b^2
	\end{align*}
	clearly satisfies $\check\lambda^{(p+2k,p)}(b,s)> \check\lambda^{(p'+2k,p')}(b,s)$ for $p>p'$, and $\check\lambda^{(p+2k,p)}(b,s)> \check\lambda^{(p+2k',p)}(b,s)$ for $k>k'$. This leaves only $\check\lambda^{(1,1)}(b,s)$ and $\check\lambda^{(2,0)}(b,s)$ as candidates for non-zero minimizers, concluding the proof of \eqref{eq:lambda_1(b,s)}.
	
	Regarding the multiplicity of this eigenvalue, from Lemma~\ref{lem:implicitSpec}, we have that
	\begin{enumerate}[$\bullet$]
	\item $\pi_{2,0}$ contributes the eigenvalue $\check\lambda^{(2,0)}(b,s)$ to $\Spec(\C P^{2n+1}, \check\g_{(b,s)})$ with multiplicity $d_{2,0}= (n+1)(2n+3)$.  	
	\item $\pi_{1,1}$ contributes the eigenvalue $\check\lambda^{(1,1)}(b,s)$ to $\Spec(\C P^{2n+1}, \check\g_{(b,s)})$ with multiplicity $d_{1,1}=  n(2n+3)$. 
	\end{enumerate}
	This gives the values in the first two rows of \eqref{eq:mult-lambda_1(b,s)}, and the third row follows by summing them.
\end{proof}

\subsection{Full spectra}\label{subsec:fullspectra}
We conclude this section providing an explicit description of the full spectrum in some particular cases, as a direct consequence of Lemma~\ref{lem:implicitSpec}.

\begin{theorem}\label{thm:SpecCP^2n+1}
For $n\geq 1$, we have that 
\begin{align}\label{eq:spec(S^4n+3,h(t,t,t))}
\Spec(\Ss^{4n+3}, {\mathbf h}(t,t,t))
	&= \bigcup_{\substack{0\leq l\leq k\\ k\equiv l\mod 2 } }  \Big\{ \!\underbrace{{\mu}_{k,l}(t),\dots, {\mu}_{k,l}(t) }_{{ (l+1) } {m}_{k,l}} \Big\},\\
\label{eq:spec(RP^4n+3,h(t,t,t))}
\Spec(\R P^{4n+3}, {\mathbf h}(t,t,t))
	&= \bigcup_{\substack{0\leq l\leq k\\ k\equiv l\equiv 0\mod 2 } }  \Big\{ \!\underbrace{{\mu}_{k,l}(t),\dots, {\mu}_{k,l}(t) }_{{ (l+1) } {m}_{k,l}} \Big\},\\
    \label{eq:spec(CP^2n+1,check-h(t))}
\Spec(\C P^{2n+1},\check{\mathbf h}(t))
	&= \bigcup_{\substack{0\leq l\leq k \\ k\equiv l\equiv 0\mod2 } }  \Big\{ \!\underbrace{{\mu}_{k,l}(t),\dots, {\mu}_{k,l}(t) }_{{m}_{k,l}} \Big\},
\end{align}
where
\begin{align}
{\mu}_{k,l}(t) &=k(k+4n+2) + l(l+2)  \left(\frac{1}{t^2}-1\right), \label{eq:check-mu_kl} 
\\
{m}_{k,l} &=\sum_{\substack{(p,q)\in\Z^2: \, p\geq q\geq 0,\\  p+q=k, \;p-q=l  }} d_{p,q}.
\label{eq:mult-mu_kl}
\end{align}
\end{theorem}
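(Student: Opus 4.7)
The plan is to specialize Lemma~\ref{lem:implicitSpec} to the isotropic case $a=b=c$ and then relabel the representation parameters via $k=p+q$, $l=p-q$. First, I would use the isometries~\eqref{eq:different-parameters1} to rewrite $\mathbf h(t,t,t)\cong \g_{(a,a,a,1)}$ and $\check{\mathbf h}(t)\cong \check{\g}_{(a,1)}$ under the common substitution $a=1/(\sqrt{2}\,t)$, so that it suffices to describe the full spectra of $\Delta_{\g_{(a,a,a,1)}}$ and $\Delta_{\check{\g}_{(a,1)}}$ in the algebraic parametrization of Lemma~\ref{lem:implicitSpec}.

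The key simplification is that, by~\eqref{eq:nu_j^k(a,a,a)}, $\nu_j^{(k)}(a,a,a)=k(k+2)a^2$ is independent of $j$. Substituting this into~\eqref{eq:lambda} (with $s=1$) and comparing with~\eqref{eq:lambda2} (with $b=a$, $s=1$) gives
\begin{equation*}
\lambda_j^{(p,q)}(a,a,a,1) \;=\; \check\lambda^{(p,q)}(a,1) \;=\; 4pn+4q(p+n+1)+2(p-q)(p-q+2)\,a^2,
\end{equation*}
independent of $j$, for every $p\geq q\geq 0$. A direct algebraic expansion shows that under the change of variables $k=p+q$, $l=p-q$ one has $4pn+4q(p+n+1)=k(k+4n+2)-l(l+2)$, and substituting $2a^2=1/t^2$ then yields exactly $\mu_{k,l}(t)$ as in~\eqref{eq:check-mu_kl}.

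It remains to collect multiplicities. By Lemma~\ref{lem:implicitSpec}, each spherical representation $\pi_{p,q}$ contributes $d_{p,q}$ copies of each of the $p-q+1$ eigenvalues $\lambda_j^{(p,q)}$ to $\Spec(\Ss^{4n+3},\g_{(a,a,a,1)})$; in the isotropic case these $p-q+1$ values all coincide, so $\pi_{p,q}$ contributes $(l+1)\,d_{p,q}$ copies of $\mu_{k,l}(t)$, proving~\eqref{eq:spec(S^4n+3,h(t,t,t))}. Restricting to the subset where $p-q$ (hence $l$) is even gives the spectrum on $\R P^{4n+3}$, i.e.~\eqref{eq:spec(RP^4n+3,h(t,t,t))}. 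For $\C P^{2n+1}$, Lemma~\ref{lem:implicitSpec} shows $\dim V_{p,q}^{\check\K}=1$ whenever $p-q$ is even, so each such $\pi_{p,q}$ produces a single $d_{p,q}$-dimensional eigenspace for $\check\lambda^{(p,q)}=\mu_{k,l}(t)$, yielding~\eqref{eq:spec(CP^2n+1,check-h(t))}. Since $(p,q)=\bigl((k+l)/2,(k-l)/2\bigr)$ is uniquely determined by $k\geq l\geq 0$ with $k\equiv l\pmod{2}$, the sum in~\eqref{eq:mult-mu_kl} collapses to a single term $m_{k,l}=d_{(k+l)/2,(k-l)/2}$. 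There is no genuine obstacle: once Lemma~\ref{lem:implicitSpec} is in hand, the proof is essentially algebraic bookkeeping, and the only point requiring care is the reparametrization that synchronizes the three spaces to the same family of eigenvalues $\mu_{k,l}(t)$.
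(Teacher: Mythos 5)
Your proposal is correct and follows essentially the same route as the paper's proof: apply Lemma~\ref{lem:implicitSpec} with $a=b=c=(\sqrt2\,t)^{-1}$, $s=1$ via the isometries \eqref{eq:different-parameters1}, use \eqref{eq:nu_j^k(a,a,a)} to make $\lambda_j^{(p,q)}$ independent of $j$ and equal to $\check\lambda^{(p,q)}$, and relabel $k=p+q$, $l=p-q$ to obtain $\mu_{k,l}(t)$ with multiplicities $(l+1)d_{p,q}$, resp.\ $d_{p,q}$. Your observation that the sum defining $m_{k,l}$ collapses to the single term $d_{(k+l)/2,(k-l)/2}$ is a correct (and slightly more explicit) reading of \eqref{eq:mult-mu_kl}.
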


\begin{proof}
From \eqref{eq:different-parameters1}, we have the isometries
$\mathbf h(t,t,t)\cong {\g}_{\left((\sqrt2 t)^{-1},(\sqrt2 t)^{-1},(\sqrt2 t)^{-1},1\right)}$ for metrics on $\Ss^{4n+3}$ and $\R P^{4n+3}$, and 
$\check{\mathbf h}(t)\cong \check{\g}_{\left((\sqrt2 t)^{-1},1\right)}$ for metrics on $\C P^{2n+1}$.
Lemma~\ref{lem:implicitSpec} ensures that any eigenvalue in $\Spec(\Ss^{4n+3}, {\mathbf h}(t,t,t))$ is as in \eqref{eq:lambda}, i.e.,
\begin{equation*}
\begin{aligned}
\lambda^{(p,q)} \big(\tfrac{1}{\sqrt2 t}, \tfrac{1}{\sqrt2 t}, \tfrac{1}{\sqrt2 t},1\big) 
&:= 4pn+{ 4}q(p+n+1)  +  2 \nu_j^{(p-q)}(\tfrac{1}{\sqrt2 t},\tfrac{1}{\sqrt2 t},\tfrac{1}{\sqrt2 t})
\\
&\;= 4pn+4q(p+n+1)  + (p-q)(p-q+2)\tfrac{1}{t^2}
\\
&\;= (p+q)(p+q+4n+2)+ (p-q)(p-q+2)\left(\tfrac{1}{t^2} - 1\right)
\end{aligned}
\end{equation*}
for integers $p,q$ with $p\geq q\geq 0$. 
We have used that $\nu_j^{(k)}(a,a,a)=k(k+2)a^2$ by \eqref{eq:nu_j^k(a,a,a)}. 
The same holds for $\Spec\big(\R P^{4n+3}, {\mathbf h}(t,t,t)\big)$,
if we further assume $p-q$ is even.
Similarly, Lemma~\ref{lem:implicitSpec} gives that $\Spec(\C P^{2n+1},\check{\mathbf h}(t))$ is the collection of eigenvalues $\check{\lambda}^{(p,q)}(\frac{1}{\sqrt2 t},1) = {\lambda}^{(p,q)}\big(\tfrac{1}{\sqrt2 t}, \tfrac{1}{\sqrt2 t}, \tfrac{1}{\sqrt2 t},1\big) $ for integers $p,q$ with $p\geq q\geq 0$ and $p-q$ even. 
Writing $p+q=k$ and $p-q=l$, we obtain that $0\leq l\leq k$, $k\equiv l\mod2$, ${\lambda}^{(p,q)}\big(\tfrac{1}{\sqrt2 t}, \tfrac{1}{\sqrt2 t}, \tfrac{1}{\sqrt2 t},1\big) = {\mu}_{k,l}(t)$, and $k\equiv l\equiv 0\mod 2$ if $p$ and $q$ are both even, proving \eqref{eq:spec(S^4n+3,h(t,t,t))} and \eqref{eq:spec(CP^2n+1,check-h(t))}.  
The claimed multiplicity contribution \eqref{eq:mult-mu_kl} of $\mu_{k,l}(t)$ to both spectra follows also from Lemma~\ref{lem:implicitSpec}, concluding the proof.
\end{proof}

Differently from the above situation, the full spectrum of $(\Ss^{4n+3},\mathbf h(t_1,t_2,t_3))$, or $(\Ss^{4n+3},\g_{\abcs})$ by means of the isometries in Remark~\ref{rem:isometries}, \emph{cannot} be explicitly described with our methods, since 
the eigenvalues $\lambda_j^{(p,q)}(a,b,c,s)$ are only computed in terms of the eigenvalues $\nu_j^{(k)}(a,b,c)$ of the Laplacian on $(\Ss^3, \g_{\abc})$, cf.~\eqref{eq:lambda} and \eqref{eq:lambda2}. A closed formula for all $\nu_j^{(k)}(a,b,c)$, hence for all $\lambda_j^{(p,q)}(a,b,c,s)$, would be highly desirable, but seems to remain out of the reach of current techniques.

Nevertheless, with the aid of \emph{further symmetries}, we can describe the full Laplace spectrum in some special cases. For instance, 
we may enlarge the symmetry group from $\Sp(n+1)$ to $\Sp(n+1)\mathsf U(1)$. This corresponds to requiring that at least two of the parameters $a,b,c$ coincide, say $b=c$, which, by \cite[Lem.~3.1]{Lauret-SpecSU(2)}, implies that
\begin{equation}\label{eq:nu_j^k(a,b,b)}
		\nu_j^{(k)}(a,b,b) = \big(k-2(j-1)\big)^2\, a^2 + 2\big((2j-1)k-2(j-1)^2\big)\, b^2.
\end{equation}
This yields an explicit expression for all $\lambda_j^{(p,q)}(a,b,b,s)$ via \eqref{eq:lambda}, that can be used to determine the full Laplace spectrum of the $\SU(2n+2)$-invariant metrics
\begin{equation}
\begin{aligned}
\label{eq:bf_g(t)=h(t,1,1)}
(\Ss^{4n+3},\mathbf g(t)) &\cong (\Ss^{4n+3},\mathbf h(t,1,1)) \cong \Big(\Ss^{4n+3},\g_{\big(\frac{1}{\sqrt{2}t}, \frac{1}{\sqrt{2}}, \frac{1}{\sqrt{2}},1 \big)} \Big),
\\
(\R P^{4n+3},\mathbf g(t)) &\cong (\R P^{4n+3},\mathbf h(t,1,1)) \cong \Big(\R P^{4n+3},\g_{\big(\frac{1}{\sqrt{2}t}, \frac{1}{\sqrt{2}}, \frac{1}{\sqrt{2}},1 \big)}  \Big), 
\end{aligned}
\end{equation}
for any $t>0$.

\begin{theorem}\label{thm:Spec(g(t))}
For $d=4n+3$ with $n\geq1$, we have that 
\begin{align}
\label{eq:spec(S^d,g(t))}
\Spec(\Ss^{d},\mathbf g(t))&=\bigcup_{\substack{0\leq l\leq k, \\ k\equiv l\!\!\!\mod 2} }  \Big\{ \!\underbrace{\eta_{k,l}(t),\dots, \eta_{k,l}(t) }_{\widetilde m_{k,l}} \Big\},
\\
\label{eq:spec(RP^d,g(t))}
\Spec(\R P^{d},\mathbf g(t))&=\bigcup_{\substack{0\leq l\leq k, \\ k\equiv l\equiv 0\!\!\!\mod 2} }  \Big\{ \!\underbrace{\eta_{k,l}(t),\dots, \eta_{k,l}(t) }_{\widetilde m_{k,l}} \Big\},
\end{align}
where 
\begin{align}
\eta_{k,l}(t)
	&=k(k+d-1) + l^2\, \left(\frac{1}{t^2}-1\right),\\
\label{eq:mu_kl}
\widetilde m_{k,l}
	&=\sum_{\substack{(p,q,j)\in\Z^3:\, p\geq q\geq 0,\\ 1\leq j\leq p-q+1, \; p+q=k, \\ \;p-q-2(j-1)=\pm l  }} d_{p,q}.
\end{align}
\end{theorem}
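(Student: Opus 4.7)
The plan is to reduce the computation of $\Spec(\Ss^d,\mathbf g(t))$ and $\Spec(\R P^d,\mathbf g(t))$ to Lemma~\ref{lem:implicitSpec} applied to $b=c$, and then to reindex the resulting eigenvalues and their multiplicities via the substitution $(p,q,j)\mapsto(k,l)=(p+q,\,|p-q-2(j-1)|)$.

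First, by \eqref{eq:bf_g(t)=h(t,1,1)}, the metrics $\mathbf g(t)$ on $\Ss^d$ and $\R P^d$ are isometric to $\g_{(a,b,b,s)}$ with the parameters
\begin{equation*}
a=\tfrac{1}{\sqrt{2}t},\qquad b=c=\tfrac{1}{\sqrt{2}},\qquad s=1.
\end{equation*}
By Lemma~\ref{lem:implicitSpec}, $\Spec(\Ss^d,\mathbf g(t))$ is the union over $p\geq q\geq 0$ and $1\leq j\leq p-q+1$ of the eigenvalues $\lambda_j^{(p,q)}(a,b,b,s)$, each with multiplicity $d_{p,q}$; for $\R P^d$ one further imposes $p-q$ even. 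Substituting \eqref{eq:nu_j^k(a,b,b)} into \eqref{eq:lambda} with the above parameters, I would simplify
\begin{equation*}
\lambda_j^{(p,q)}\big(\tfrac{1}{\sqrt2 t},\tfrac{1}{\sqrt2},\tfrac{1}{\sqrt2},1\big)
= 4pn+4q(p+n+1) + \frac{(p-q-2(j-1))^2}{t^2} + 2(2j-1)(p-q) - 4(j-1)^2.
\end{equation*}

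Next, I set $k=p+q$ and $l=|p-q-2(j-1)|$; note $0\leq l\leq p-q\leq k$ since $1\leq j\leq p-q+1$, and $l\equiv p-q\equiv k\pmod 2$. A routine expansion shows that the $t$-independent part equals $k(k+d-1)-l^2$, so the eigenvalue rewrites as $\eta_{k,l}(t)=k(k+d-1)+l^2(1/t^2-1)$. This establishes the set-theoretic identity \eqref{eq:spec(S^d,g(t))}; for \eqref{eq:spec(RP^d,g(t))}, the additional requirement that $p-q$ be even is equivalent to $l$ (and hence $k$) being even, giving the parity condition $k\equiv l\equiv 0\pmod 2$.

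Finally, for the multiplicities, several triples $(p,q,j)$ with $p\geq q\geq 0$, $1\leq j\leq p-q+1$, $p+q=k$, and $p-q-2(j-1)=\pm l$ can contribute the same value $\eta_{k,l}(t)$. Summing the corresponding $d_{p,q}$ yields \eqref{eq:mu_kl}. The main point to verify is that distinct admissible $(p,q,j)$ produce genuinely independent eigenspaces contributing to the same eigenvalue of $\Delta_{\mathbf g(t)}$; this follows because distinct $\G$-modules $V_{p,q}$ are orthogonal in the Peter--Weyl decomposition \eqref{eq:PeterWeyl}, and for fixed $(p,q)$ the eigenvalues $\nu_j^{(p-q)}(a,b,b)$ for different $j$ correspond to orthogonal weight components inside $V_{p,q}^\K\simeq V_{\tau_{p-q}}$. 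Since there is no serious obstacle beyond the algebraic reindexing, the proof reduces to a direct verification; the only bookkeeping issue is the $\pm$ sign in $p-q-2(j-1)=\pm l$, which is already built into \eqref{eq:mu_kl}.
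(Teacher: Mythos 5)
Your proposal is correct and follows essentially the same route as the paper's proof: identify $\mathbf g(t)$ with $\g_{(1/(\sqrt2 t),1/\sqrt2,1/\sqrt2,1)}$ via \eqref{eq:bf_g(t)=h(t,1,1)}, substitute \eqref{eq:nu_j^k(a,b,b)} into \eqref{eq:lambda}, and reindex by $k=p+q$, $l=|p-q-2(j-1)|$, with the multiplicities \eqref{eq:mu_kl} coming directly from Lemma~\ref{lem:implicitSpec}. The algebraic simplification to $\eta_{k,l}(t)$ and the parity argument for $\R P^d$ both check out, so no gap remains.
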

	
\begin{proof}
From \eqref{eq:lambda}, \eqref{eq:nu_j^k(a,b,b)}, and \eqref{eq:bf_g(t)=h(t,1,1)}, see also~Remark~\ref{rem:isometries}, the eigenvalues in $\Spec(\Ss^{d},\mathbf g(t))$ are of the form 
\begin{align*}
\lambda_j^{(p,q)}\big(\tfrac{1}{\sqrt{2}t}, \tfrac{1}{\sqrt{2}}, \tfrac{1}{\sqrt{2}},1\big) &= \big(4pn+{ 4}q(p+n+1) \big) +  2 \nu_j^{(p-q)}\big(\tfrac{1}{\sqrt{2}t}, \tfrac{1}{\sqrt{2}}, \tfrac{1}{\sqrt{2}}\big)\\
&=(d-3)p+ q(4p+d+1) +2(2j-1)(p-q) \\
&\quad -4(j-1)^2+ \big(p-q-2(j-1)\big)^2\, \tfrac{1}{t^2} \\
&=(p+q)(d-1+p+q) +  \big(p-q-2(j-1)\big)^2\, \left(\tfrac{1}{t^2}-1\right),
\end{align*}
which coincides with $\eta_{p+q,|p-q-2(j-1)|}(t)$.
For integers $0\leq l\leq k$ with $k-l$ even, Lemma~\ref{lem:implicitSpec} implies that $\eta_{k,l}(t)$ contributes to $\Spec(\Ss^{d},\mathbf g(t))$ with multiplicity \eqref{eq:mu_kl}. The statements regarding $\R P^d$ follow by the same arguments, with $p-q$ even.
\end{proof}

\begin{remark}
Although the full spectrum of the Laplacian on $(\Ss^{d},\mathbf g(t))$ had not been previously described in odd dimensions $d\geq 5$, partial results by Tanno~\cite[Lem.~4.1]{tanno1}, see also \cite[\S{}5]{bp-calcvar}, were sufficient to explicitly compute $\lambda_1(\Ss^d,\mathbf g(t))$.

We only analyze dimensions $d\equiv 3 \mod 4$ in Theorem~\ref{thm:Spec(g(t))} for simplicity, as the description of the entire $\Spec(\Ss^d,\mathbf g(t))$ for such $d$ follows directly from Lemma~\ref{lem:implicitSpec} and \eqref{eq:nu_j^k(a,b,b)}. The same methods in Section~\ref{sec:homspectra} can be used to compute $\Spec(\Ss^d,\mathbf g(t))$ in the remaining cases, using $\G=\SU\big(\frac{d+1}{2}\big)$ and $\K=\SU\big(\frac{d-1}{2}\big)$, see~\cite{blp-new}.
\end{remark}

\begin{example}
The $k$th eigenvalue of the Laplacian on $(\C P^{2n+1},\gFS)$ and $(\Ss^{d},\gr)$ can be read
from Theorems~\ref{thm:SpecCP^2n+1} and \ref{thm:Spec(g(t))} respectively, by setting $t=1$ in \eqref{eq:check-mu_kl} and \eqref{eq:mu_kl}, recovering the well-known formulae
\begin{equation*}
\lambda_k(\Ss^d,\gr)=k(k+d-1) \quad \text{and }\quad \lambda_k(\C P^{2n+1},\gFS)=4k(k+2n+1).
\end{equation*}
Recall that, since these are symmetric spaces, the above Laplace eigenvalues can be computed with Freudenthal's formula  \eqref{eq:Casimirscalar}.
Moreover, it can be checked combinatorially that the multiplicity of the $k$th eigenvalue $\lambda_k(\Ss^{d},\gr)$ is equal to
\begin{equation}\label{eq:multsphere}
\sum_{\substack{p+q=k \\ p\geq q\geq 0}} (p-q+1)\,  d_{p,q} = \binom{k+d}{d}- \binom{k+d-2}{d},
\end{equation}
where we use the convention that $\binom{a}{b}=0$ if $a<b$.
\end{example}

\section{Spectral uniqueness}\label{sec:specunique}

In this section, we prove that the spectrum of the Laplace--Beltrami operator distinguishes homogeneous CROSSes up to isometries, proving Theorem~\ref{thm:rigidity} in the Introduction. We begin showing that two isospectral $\Sp(n+1)$-invariant metrics on $\Ss^{4n+3}$ or $\R P^{4n+3}$ must be isometric.

\subsection{\texorpdfstring{Spectral uniqueness of homogeneous metrics on $\Ss^{4n+3}$}{Spectral uniqueness on spheres}}
Given real numbers $a\geq b\geq c>0$, consider the elementary symmetric polynomials in their squares,
\begin{equation}\label{eq:sigma_i}
	\begin{aligned}
		\sigma_1&:=\sigma_1\big(a^2,b^2,c^2\big)=a^2+b^2+c^2, \\
		\sigma_2&:=\sigma_2\big(a^2,b^2,c^2\big)=a^2b^2+a^2c^2+b^2c^2, \\
		\sigma_3&:=\sigma_3\big(a^2,b^2,c^2\big)=a^2b^2c^2.
	\end{aligned}
\end{equation}
In the sequel, we repeatedly use the elementary fact that
\begin{equation}\label{eq:sigma_idetermine}
	(\sigma_1,\sigma_2,\sigma_3) \quad\text{ determines }\quad\abc. 
\end{equation}
Indeed, $ x^3-\sigma_1x^2+\sigma_2x-\sigma_3=(x-a^2)(x-b^2)(x-c^2)$ determines $a^2,b^2,c^2$ up to permutations, hence $\abc$ are completely determined since $a\geq b\geq c>0$.

Recall that, by Lemma~\ref{lem:implicitSpec}, eigenvalues in $\Spec(\Ss^{4n+3}, \g_{\abcs})$ are of the form
\begin{equation*}
	\lambda_j^{(p,q)}(a,b,c,s) = 4\big((p+q)n+q(p+1)\big)s^2 + 2\nu_{j}^{(p-q)}(a,b,c)
\end{equation*}
for some $p\geq q\geq0$ and $1\leq j\leq p-q+1$, where $\{\nu_{j}^{(k)}(a,b,c):1\leq j\leq k+1\}$ is the spectrum of the operator \eqref{eq:tau_k}.
We assume that $\nu_{1}^{(k)}\leq\dots\leq \nu_{k+1}^{(k)}$, thus $\lambda_1^{(p,q)} \leq \dots\leq \lambda_{p-q+1}^{(p,q)}$. 

\begin{lemma}\label{lem:nu_1^4}
	The smallest eigenvalue of $\tau_{4}(-a^2X_1^2 -b^2X_2^2 -c^2X_3^2)$ on $V_{\tau_4}$, see \eqref{eq:tau_k}, is given by
	\begin{equation*}
		\nu_1^{(4)}(a,b,c) = 8(a^2+b^2+c^2) -8 \sqrt{a^4+b^4+c^4 -a^2b^2-a^2c^2-b^2c^2}.
	\end{equation*}
	Moreover, the multiplicity of this eigenvalue is $1$ if and only if $a>b$.
\end{lemma}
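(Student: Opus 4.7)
The plan is to exploit a Klein four-group of discrete symmetries commuting with the operator $M := \tau_4(-a^2X_1^2-b^2X_2^2-c^2X_3^2)$. Since $k=4$ is even, the representation $\tau_4$ of $\Sp(1)$ descends through $\Sp(1)/\{\pm 1\}\cong \SO(3)$, so the image of the quaternion group $Q_8=\{\pm1,\pm\mi,\pm\mj,\pm\mk\}$ factors through $V_4=Q_8/\{\pm 1\}\cong\Z_2\times\Z_2$. Conjugation by $\bar q\in V_4$ sends each $X_j$ to $\pm X_j$, so it preserves each $\tau_4(X_j)^2$, and hence commutes with $M$. Since the eigenvalues of $\tau_4(X_j)$ are $\{0,\pm 2\mi,\pm 4\mi\}$ (weights of $\Sym^4(\C^2)$ times $\mi$), an easy trace count using $\tau_4(\bar q) = \exp(\tfrac{\pi}{2}\tau_4(X_j))$ gives $\tr\tau_4(\bar q) = 1$ for each nontrivial $\bar q$. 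The multiplicities of the four characters of $V_4$ in $V_{\tau_4}$ are thus $(2,1,1,1)$; working in the weight basis $f_j = e_1^{4-j}e_2^j$ of $\Sym^4(\C^2)$, one explicitly identifies the trivial isotypic component as $V^{(0)} = \langle f_0+f_4, f_2\rangle$ and three 1-dimensional components $V^{(l)}$, $l=1,2,3$, which $M$ preserves.

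On each $V^{(l)}$, the operator $M$ acts as a scalar $\mu_l$ that one reads off directly: if $\tau_4(\bar q_j)=-1$ on $V^{(l)}$, then $\tau_4(X_j)$ has eigenvalue $\pm 2\mi$ there, so $-\tau_4(X_j)^2 = 4$; the Casimir identity $-\tau_4\!\left(\sum_i X_i^2\right) = k(k+2)\Id = 24\Id$ then forces the remaining $-\tau_4(X_j)^2 = 16$. This yields
\[
\mu_1 = 16a^2+4b^2+4c^2,\quad \mu_2 = 4a^2+16b^2+4c^2,\quad \mu_3 = 4a^2+4b^2+16c^2.
\]

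The heart of the computation is the restriction to the $2$-dimensional $V^{(0)}$. Using the $\mathfrak{su}(2)\cong\mathfrak{sp}(1)$-action on $\Sym^4(\C^2)$, a direct calculation expresses $M|_{V^{(0)}}$ in the basis $\{f_0+f_4, f_2\}$ as
\[
\begin{pmatrix} 16a^2+4(b^2+c^2) & 2(c^2-b^2) \\ 24(c^2-b^2) & 12(b^2+c^2) \end{pmatrix},
\]
whose trace equals $16\sigma_1$ and whose determinant equals $192\sigma_2$ (with $\sigma_i$ the elementary symmetric polynomials in $a^2,b^2,c^2$ from \eqref{eq:sigma_i}). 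Solving the resulting quadratic gives the two eigenvalues $8\sigma_1\pm 8\sqrt{\sigma_1^2-3\sigma_2}$; since $\sigma_1^2-3\sigma_2 = a^4+b^4+c^4-a^2b^2-a^2c^2-b^2c^2$, the minus branch matches the formula in the statement.

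It remains to compare these five eigenvalues. Under $a\ge b\ge c$ the minimum of $\mu_1,\mu_2,\mu_3$ is $\mu_3$, and the inequality $8\sigma_1-8\sqrt{\sigma_1^2-3\sigma_2}\le \mu_3$ follows by squaring and simplifying to $3(a^2-b^2)^2\ge 0$, with equality if and only if $a=b$. The plus branch is obviously strictly larger than the minus one except when $a=b=c$. Hence $\nu_1^{(4)}(a,b,c)$ equals the minus branch, and its multiplicity is $1$ precisely when $a>b$ (while coinciding with $\mu_3$, hence having multiplicity at least $2$, when $a=b$). The main obstacle is the explicit matrix computation on $V^{(0)}$: sign and normalization bookkeeping in the $\Sym^4$ action must be handled carefully, but everything else is a routine consequence of the $V_4$-symmetry decomposition.
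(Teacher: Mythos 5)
Your proposal is correct, and it arrives at the same five eigenvalues as the paper by a noticeably different organization of the computation. The paper's proof simply quotes the explicit matrix of $\tau_{4}(-a^2X_1^2 -b^2X_2^2 -c^2X_3^2)$ in the monomial basis from \cite[Lem.~3.1]{Lauret-SpecSU(2)}, splits it into the $3\times3$ and $2\times2$ blocks given by the parity of the index, and computes the eigenvalues of each block; you instead derive the invariant decomposition a priori from the Klein four-group $Q_8/\{\pm1\}$ acting on $V_{\tau_4}$ (legitimate since $k=4$ is even), obtain the isotypic multiplicities $(2,1,1,1)$ by the trace count $\operatorname{tr}\tau_4(\bar q)=1$, and get the three simple eigenvalues $\mu_1,\mu_2,\mu_3$ essentially for free from the order-two elements together with the Casimir value $24$ — this replaces the paper's eigenvalue computation for $\tau_4^2$ and for the $(1,0,-1)$-eigenvector of $\tau_4^1$. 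The one step you flag as the main obstacle, the $2\times2$ block on $\langle f_0+f_4,f_2\rangle$, is indeed correct: a direct $\Sym^4$ calculation gives $M(f_0+f_4)=\big(16a^2+4(b^2+c^2)\big)(f_0+f_4)+24(c^2-b^2)f_2$ and $Mf_2=2(c^2-b^2)(f_0+f_4)+12(b^2+c^2)f_2$, which is your matrix and agrees, up to an irrelevant sign convention in the off-diagonal entries, with the block one extracts from the paper's $\tau_4^1$; its trace is $16\sigma_1$ and its determinant is $192\sigma_2$ (with $\sigma_i$ as in \eqref{eq:sigma_i}), yielding $8\sigma_1\pm8\sqrt{\sigma_1^2-3\sigma_2}$. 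Your final comparison is also correct and in fact more explicit than the paper's ``routine computations'': under the standing assumption $a\geq b\geq c>0$ (which both proofs implicitly use for the multiplicity statement), the inequality $8\sigma_1-8\sqrt{\sigma_1^2-3\sigma_2}\leq 4a^2+4b^2+16c^2$ squares down to $3(a^2-b^2)^2\geq0$, giving both the formula for $\nu_1^{(4)}$ and the characterization of multiplicity one by $a>b$. In short, the paper's route is shorter because it can cite a ready-made matrix, while yours is self-contained, explains conceptually why the operator block-diagonalizes, and reduces the explicit matrix algebra to a single $2\times2$ block.
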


\begin{proof}
	From \cite[Lem.~3.1]{Lauret-SpecSU(2)}, the matrix representing $\tau_{4}(-a^2X_1^2 -b^2X_2^2 -c^2X_3^2)$ 
	is similar to a block diagonal matrix $\diag(\tau_4^1,\tau_4^2)$, with blocks given by
	\begin{align*}
		\tau_4^1&=\begin{pmatrix}
			16a^2 + 4(b^2 + c^2) & 2(b^2-c^2) & 0 \\
			12(b^2-c^2) & 12(b^2+c^2) & 12(b^2-c^2)\\
			0 & 2(b^2-c^2) & 16a^2 + 4(b^2 + c^2)
		\end{pmatrix}, \\
		\tau_4^2&=\begin{pmatrix}
			4a^2 + 10(b^2 + c^2) & 6(b^2-c^2) \\
			6(b^2-c^2) & 4a^2 + 10(b^2 + c^2)\\
		\end{pmatrix}.
	\end{align*}
	Note that, although \eqref{eq:tau_k} is self-adjoint, the above $\tau_4^1$ is not symmetric because 
	the basis we used to represent it as a matrix is only orthogonal, and not orthonormal.
	The eigenvalues of $\tau_4^2$ are $4a^2+16b^2+4c^2$ and $4a^2+4b^2+16c^2$, while the eigenvalues of $\tau_4^1$ are 
	$16a^2+4b^2+4c^2$, and 
	$8(a^2+b^2+c^2) \pm 8\sqrt{a^4+b^4+c^4 -a^2b^2-a^2c^2-b^2c^2}.$
	The minimum $\nu_1^{(4)}(a,b,c) $ of these five numbers is as claimed in the statement, since
	\begin{equation*}
		8(a^2+b^2+c^2) -8\sqrt{a^4+b^4+c^4 -a^2b^2-a^2c^2-b^2c^2}\leq 4a^2+4b^2+16c^2,
	\end{equation*}
	as easily shown with routine computations. 
	Since equality in the above holds if and only if $a=b$, the assertion regarding multiplicity also follows.
\end{proof}

We set 
\begin{equation}\label{eq:beta}
	\beta(a,b,c) =\sigma_1- \sqrt{\sigma_1^2-3\sigma_2}.
\end{equation}
Lemma~\ref{lem:nu_1^4} tells us that $\nu_1^{(4)}(a,b,c) =8\beta(a,b,c)$ via \eqref{eq:sigma_i}.
The next estimates will be useful later.

\begin{lemma}\label{lem:beta}
	For $a\geq b\geq c>0$, we have that
	\begin{equation*}
		b^2+c^2<
		\beta(a,b,c) \leq \tfrac{3}2(b^2+c^2).
	\end{equation*}
	Furthermore, the second inequality above is an equality if and only if $b=c$. 
\end{lemma}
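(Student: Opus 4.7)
The plan is to make everything algebraic by setting $x=a^2$, $y=b^2$, $z=c^2$ (so $x\geq y\geq z>0$), which turns $\beta(a,b,c)$ into
\[
\beta = (x+y+z) - \sqrt{(x+y+z)^2-3(xy+xz+yz)} = (x+y+z) - \sqrt{x^2+y^2+z^2-xy-xz-yz},
\]
and reduces the lemma to the two inequalities $y+z<\beta\leq\tfrac{3}{2}(y+z)$, with equality on the right iff $y=z$. Both will be proved by bringing the radical to one side and squaring, after checking positivity of both sides.

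For the lower bound, $\beta>y+z$ is equivalent to $x>\sqrt{x^2+y^2+z^2-xy-xz-yz}$; since $x>0$, squaring this is equivalent to $xy+xz+yz>y^2+z^2$. I would use $x\geq y$ to estimate $xy+xz\geq y^2+yz$, reducing the task to the strict inequality $2yz>z^2$, i.e.\ $2y>z$, which holds because $y\geq z>0$. This argument also shows where strictness comes from (the positivity of $z$), so the inequality is strict as claimed.

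For the upper bound, rewrite $\beta\leq\tfrac32(y+z)$ as
\[
x-\tfrac{1}{2}(y+z)\leq\sqrt{x^2+y^2+z^2-xy-xz-yz}.
\]
The left-hand side is non-negative, since $x\geq y\geq\tfrac{y+z}{2}$. Squaring and simplifying collapses the $x$-dependence, leaving the elementary inequality $0\leq 3(y-z)^2$. This is always true and is an equality precisely when $y=z$, i.e.\ $b=c$, which gives both the upper bound and the equality characterization at once.

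I do not anticipate any real obstacle: the key identity $\sigma_1^2-3\sigma_2=\tfrac12[(x-y)^2+(x-z)^2+(y-z)^2]$ makes the squaring steps very clean, and the ordering $x\geq y\geq z>0$ controls the positivity conditions needed before squaring. The only mildly delicate point is tracking strictness in the lower bound, which is taken care of by the hypothesis $c>0$.
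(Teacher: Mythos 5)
Your proof is correct. For the upper bound $\beta \leq \tfrac32(b^2+c^2)$ you follow essentially the same route as the paper: bring the radical to one side, check the other side is nonnegative using $a\geq b\geq \tfrac12(b^2+c^2)^{1/2}$-type comparisons, square, and watch the $a$-dependence cancel, leaving $(b^2-c^2)^2\geq 0$ with equality exactly when $b=c$; the identity $\sigma_1^2-3\sigma_2=\tfrac12\big[(x-y)^2+(x-z)^2+(y-z)^2\big]$ you invoke also guarantees the radicand is nonnegative, which is worth saying once. Where you genuinely diverge is the lower bound: the paper gets $\beta>b^2+c^2$ for free from the spectral estimate \eqref{eq:nu^(k)>=}, writing $\beta=\tfrac18\nu_1^{(4)}\geq \tfrac18(8b^2+16c^2)>b^2+c^2$, whereas you prove it directly, reducing $\beta>y+z$ (with $x=a^2$, $y=b^2$, $z=c^2$) to $x>\sqrt{x^2+y^2+z^2-xy-xz-yz}$, squaring (legitimate since $x>0$ and the radicand is nonnegative), and using $x\geq y\geq z>0$ to get $xy+xz+yz\geq y^2+2yz>y^2+z^2$. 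Your version is self-contained and purely algebraic, which is a small gain in transparency; the paper's version is shorter because the eigenvalue inequality is already available in that section and it makes the link $\nu_1^{(4)}=8\beta$ do the work. Both arguments are complete and yield the strictness and equality statements as claimed.
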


\begin{proof}
	From \eqref{eq:nu^(k)>=}, we get that $\beta(a,b,c)= \frac18 \nu_1^{(4)}\geq \frac18 (8b^2+16c^2)> b^2+c^2$. 
	We next prove the inequality at the right. 
	By \eqref{eq:beta}, the assertion is equivalent to $\sigma_1- \frac{3}2(b^2+c^2)<\sqrt{\sigma_1^2-3\sigma_2}$.
	Since the left-hand side is nonnegative, squaring both sides, this becomes equivalent to 
	\begin{equation*}
		- 3\sigma_1(b^2+c^2) + \tfrac{9}4(b^2+c^2)^2\leq -3\sigma_2.
	\end{equation*}
	By replacing $\sigma_1$ and $\sigma_2$ as in \eqref{eq:sigma_i} and simple manipulations, one has that the above condition is equivalent to $4b^2c^2\leq  (b^2+c^2)^2$, which clearly holds, with equality if and only if $b=c$.
\end{proof}

\begin{lemma}\label{lem:vol-scal}
	The volume and scalar curvature of $(\Ss^{4n+3}, \g_{(a,b,c,s)})$ are given by
	\begin{align}
		\Vol(\Ss^{4n+3}, \g_{\abcs}) &= 
		\frac{\Vol(\Ss^{4n+3}, \gr)}{2\sqrt{2\sigma_3} \,s^{4n}} = \dfrac{2\pi^{2n+2}}{(2n+1)!}   \frac{1}{2\sqrt{2\sigma_3} \,s^{4n}}, \label{eq:vol} \\
		\scal(\Ss^{4n+3}, \g_{\abcs})  &= 16n(n+2) s^2+16\sigma_1 -\frac{2n\sigma_2s^4}{\sigma_3} -\frac{4\sigma_2^2}{\sigma_3}. \label{eq:scal}
	\end{align}
\end{lemma}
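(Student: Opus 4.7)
The plan is to prove the two formulas separately, using the reductive decomposition $\mathfrak g=\mathfrak k\oplus\mathfrak p_0\oplus\mathfrak p_1$.

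For \eqref{eq:vol}, my strategy is to compare $\g_{\abcs}$ to the reference normal homogeneous metric $\g_{(1,1,1,1)}$ and then to $\gr$. In a $\langle\cdot,\cdot\rangle_0|_{\mathfrak p}$-orthonormal basis of $\mathfrak p$ of the form $\{\sqrt2 X_1,\sqrt2 X_2,\sqrt2 X_3,Z_1,\dots,Z_{4n}\}$ (with $\{Z_k\}$ any $\langle\cdot,\cdot\rangle_0$-orthonormal basis of $\mathfrak p_1$), the Gram matrix of $\langle\cdot,\cdot\rangle_{\abcs}$ is $\diag(a^{-2},b^{-2},c^{-2},s^{-2},\dots,s^{-2})$ with $4n$ copies of $s^{-2}$, so the two volume forms differ by the factor $1/(\sqrt{\sigma_3}\,s^{4n})$. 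Remark~\ref{rem:isometries} identifies $\g_{(1,1,1,1)}=\g_{\rm hor}+\tfrac12\g_{\rm ver}$, so the round-metric volume is rescaled by $(1/2)^{3/2}=1/(2\sqrt2)$ on the 3-dimensional vertical subspace. Combined with the classical $\Vol(\Ss^{4n+3},\gr)=2\pi^{2n+2}/(2n+1)!$, this yields the formula.

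For \eqref{eq:scal}, I would apply the standard scalar curvature formula for compact homogeneous Riemannian spaces with compact semisimple $\G$ (see, e.g., \cite[\S{}7.G]{Besse}),
\[
\scal(g)=-\tfrac12\sum_i B(e_i,e_i)-\tfrac14\sum_{i,j}\|[e_i,e_j]_{\mathfrak p}\|_g^2,
\]
which holds even when the metric is not naturally reductive, since the potential $U$-tensor correction term vanishes thanks to $\tr(\ad Z)=0$ on compact $\G$. Applied to the $\langle\cdot,\cdot\rangle_\abcs$-orthonormal basis $\{\sqrt2\,aX_1,\sqrt2\,bX_2,\sqrt2\,cX_3,sZ_1,\dots,sZ_{4n}\}$, three inputs are needed: first, the identification $B=-8(n+2)\langle\cdot,\cdot\rangle_0$, obtained via direct computation of $\tr(\ad X_1)^2$ (noting that $\ad X_1$ vanishes on $\mathfrak k$, rotates $\mathfrak p_0$, and acts on $\mathfrak p_1\cong\Hr^n$ as right multiplication by $-\mi$), which gives the Killing-term contribution $4(n+2)(\sigma_1+4ns^2)$; second, the explicit brackets $[X_i,X_j]=\pm 2X_k$ cyclically, $[X_i,Z(v)]=Z(-vq_i)\in\mathfrak p_1$ with $q_i\in\{\mi,\mj,\mk\}$, and $[Z(v),Z(w)]_{\mathfrak p_0}=-2\operatorname{Im}(v^*w)$; and third, the three block sums of squared $g$-norms
\[
\sum_{\mathfrak p_0\times\mathfrak p_0}=\tfrac{16(\sigma_2^2-2\sigma_1\sigma_3)}{\sigma_3},\quad
\sum_{\mathfrak p_0\times\mathfrak p_1}=16n\sigma_1,\quad
\sum_{\mathfrak p_1\times\mathfrak p_1}=\tfrac{8ns^4\sigma_2}{\sigma_3},
\]
where the first uses the symmetric-polynomial identity $a^4b^4+b^4c^4+a^4c^4=\sigma_2^2-2\sigma_1\sigma_3$. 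Once assembled, all $\sigma_1$-multiples combine cleanly into the coefficient $16$, producing \eqref{eq:scal}.

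The main obstacle is the careful bookkeeping of the bracket sums, especially isolating the $\mathfrak p_0$-component of brackets between elements of $\mathfrak p_1$ (the $\mathfrak k$-component, while nonzero, does not contribute to $\|\cdot\|_{\mathfrak p}$), and the symmetric-polynomial identity above that produces the $\sigma_2^2/\sigma_3$ term. As a consistency check, substituting $(a,b,c,s)=(1/\sqrt2,1/\sqrt2,1/\sqrt2,1)$ into \eqref{eq:scal} (so $\g_\abcs\cong\gr$ by \eqref{eq:different-parameters1}) yields exactly $\scal(\Ss^{4n+3},\gr)=(4n+3)(4n+2)$, confirming the formula in the round case.
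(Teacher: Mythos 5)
Your proposal is correct, but it takes a genuinely different route from the paper. For \eqref{eq:scal}, the paper does not work at the Lie-algebra level at all: it invokes the Gray--O'Neill formula \eqref{eq:scal4n3} for the Hopf fibration $\Ss^3\to\Ss^{4n+3}\to\Hr P^n$ together with the rescaling isometry $\g_{\abcs}\cong\tfrac{1}{s^2}\mathbf h\big(\tfrac{s}{\sqrt2 a},\tfrac{s}{\sqrt2 b},\tfrac{s}{\sqrt2 c}\big)$ from \eqref{eq:different-parameters2}, so the only algebra needed is the $s^2$-homothety and Newton's identity $\sigma_2^2-2\sigma_1\sigma_3=a^4b^4+a^4c^4+b^4c^4$; the volume formula \eqref{eq:vol} is left to the reader. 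You instead compute everything intrinsically: the Gram-determinant argument for \eqref{eq:vol} (comparing to $\g_{(1,1,1,1)}=\g_{\rm hor}+\tfrac12\g_{\rm ver}$ via Remark~\ref{rem:isometries}) is correct, and for \eqref{eq:scal} you use the standard scalar curvature formula for a compact homogeneous space. I checked your three ingredients: $B=-8(n+2)\langle\cdot,\cdot\rangle_0$ on $\spp(n+1)$ is right (e.g.\ $B(X_1,X_1)=-8-4n=-4(n+2)$, matching $-8(n+2)\cdot\tfrac12$), the bracket relations and all three block sums $\tfrac{16(\sigma_2^2-2\sigma_1\sigma_3)}{\sigma_3}$, $16n\sigma_1$, $\tfrac{8ns^4\sigma_2}{\sigma_3}$ are correct for ordered pairs, and the assembly $4(n+2)(\sigma_1+4ns^2)-\tfrac14(\cdots)$ reproduces \eqref{eq:scal} exactly, with the correct round-sphere value $(4n+3)(4n+2)$ as a check. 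What each approach buys: the paper's proof is shorter but forward-references \eqref{eq:scal4n3} (established only in Section~5) and the scalar curvature of $(\Ss^3,\mathbf h)$, while yours is self-contained at the level of Section~3 data at the cost of heavier bracket bookkeeping, and it also supplies the volume computation the paper omits. One small wording point: what vanishes by unimodularity is the trace vector $U=\sum_i U(e_i,e_i)$, whose norm squared is the only $U$-contribution to the scalar curvature; the $U$-tensor itself is nonzero here, since $\g_{\abcs}$ is not naturally reductive, so you should phrase the justification in terms of the vector $U$ rather than the tensor.
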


\begin{proof}
	The proof of \eqref{eq:vol} is left to the reader. 
	(In this article, we will only use the fact that $\Vol(\Ss^{4n+3}, \g_{\abcs})$ depends only on $s$ and $\sigma_3$, which is well-known.)
	We next prove \eqref{eq:scal} using the Gray--O'Neill formula~\eqref{eq:scal4n3}. Recalling the isometries \eqref{eq:different-parameters2}, and Newton's identity $\sigma_2^2-2\sigma_1\sigma_3=a^4b^4+a^4c^4+b^4c^4$, we have
		\begin{align*}
			\scal(\Ss^{4n+3}, \g_{\abcs})  &= \scal\!\big(\Ss^{4n+3}, \tfrac{1}{s^2}\mathbf h(\tfrac{s}{\sqrt2 a},\tfrac{s}{\sqrt2 b},\tfrac{s}{\sqrt2 c})\big) \\
			&= s^2 \scal\!\big(\Ss^{4n+3},\mathbf h(\tfrac{s}{\sqrt2 a},\tfrac{s}{\sqrt2 b},\tfrac{s}{\sqrt2 c})\big) \\
			&=16n(n+2)s^2 +8\left( a^2+b^2+c^2 \right)\\
			&\quad  -4\left(\frac{b^2c^2}{a^2}+\frac{a^2c^2}{b^2}+\frac{a^2b^2}{c^2} \right) -2ns^4\left(\frac{1}{a^2}+\frac{1}{b^2}+\frac{1}{c^2}\right)\\
			&= 16n(n+2)s^2 +8\sigma_1 -4\frac{\sigma_2^2-2\sigma_1\sigma_3}{\sigma_3}-2ns^4\frac{\sigma_2}{\sigma_3}\\
			&=16n(n+2) s^2+16\sigma_1 -\frac{2n\sigma_2s^4}{\sigma_3} -\frac{4\sigma_2^2}{\sigma_3}.\qedhere
		\end{align*}
\end{proof}

\begin{lemma}\label{lem:invariants}
	Positive real numbers $a,b,c,s$ satisfying $a\geq b\geq c$ are determined by the volume \eqref{eq:vol}, the scalar curvature \eqref{eq:scal}, and either
\begin{enumerate}[\rm (i)]
\item the quantities $\lambda_1^{(1,0)}(a,b,c,s)$ and $\lambda_1^{(1,1)}(a,b,c,s)$;
\item the quantities $\lambda_1^{(1,1)}(a,b,c,s)$, $\lambda_1^{(2,0)}(a,b,c,s)$, and $\lambda_1^{(4,0)}(a,b,c,s)$. 
\end{enumerate}    
\end{lemma}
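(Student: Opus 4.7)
The plan is to determine the quadruple $(s,\sigma_1,\sigma_2,\sigma_3)$, since $(a,b,c)$ is then recovered via \eqref{eq:sigma_idetermine}. In both cases, $\lambda_1^{(1,1)}\abcs=8(n+1)s^2$ immediately gives $s>0$, and then the volume formula \eqref{eq:vol}, which depends only on $s$ and $\sigma_3$, pins down $\sigma_3$.

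For case~(i), the identity $\lambda_1^{(1,0)}\abcs=4ns^2+2\sigma_1$ determines $\sigma_1$, and substituting the now-known $s,\sigma_1,\sigma_3$ into \eqref{eq:scal} produces a quadratic equation in $\sigma_2$,
\begin{equation*}
4\sigma_2^2+2ns^4\sigma_2=\sigma_3\bigl[16n(n+2)s^2+16\sigma_1-\scal\bigr].
\end{equation*}
By Vieta's formulas its two roots sum to $-\tfrac{1}{2}ns^4<0$, so at most one is positive; since $\sigma_2>0$ is certainly a root, it is uniquely determined.

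For case~(ii), $\lambda_1^{(2,0)}\abcs=8ns^2+8(b^2+c^2)$ determines $x:=b^2+c^2$, and Lemma~\ref{lem:nu_1^4} combined with $\lambda_1^{(4,0)}\abcs=16ns^2+2\nu_1^{(4)}(a,b,c)$ determines $\beta:=\sigma_1-\sqrt{\sigma_1^2-3\sigma_2}$. Introducing $z:=a^2$ and using $\sigma_1=z+x$ together with $\sigma_2=zx+\sigma_3/z$ (from $\sigma_3=a^2b^2c^2$), the identity $3\sigma_2=2\beta\sigma_1-\beta^2$ rearranges into
\begin{equation*}
(3x-2\beta)\,z^2-\beta(2x-\beta)\,z+3\sigma_3=0.
\end{equation*}
By Lemma~\ref{lem:beta} we have $x<\beta\leq\tfrac{3}{2}x$, so $2x-\beta>0$. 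If $\beta=\tfrac{3}{2}x$ (equivalently $b=c$), the equation is linear with unique positive solution $z=4\sigma_3/x^2$. Otherwise $3x-2\beta>0$ and the equation is genuinely quadratic, with both Vieta expressions positive, hence a priori two positive roots; we then call on the scalar curvature as tiebreaker. Substituting $\sigma_1=z+x$ and $\sigma_2=zx+\sigma_3/z$ into \eqref{eq:scal} yields an additional polynomial relation in $z$, and the intersection of the two loci singles out a unique positive $z$ compatible with the geometric requirement $a^2\geq b^2\geq c^2>0$ (equivalently, $z\geq 4\sigma_3/x^2$ and $z^3-xz^2+\sigma_3\geq 0$).

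The main obstacle I anticipate is this last step of case~(ii): verifying that the quadratic coming from $\beta$ and the polynomial coming from the scalar curvature share a unique common root in the admissible region. I expect this to reduce to a direct, if somewhat lengthy, polynomial elimination, leveraging the tight window $x<\beta\leq\tfrac{3}{2}x$ of Lemma~\ref{lem:beta} together with the feasibility constraints on $(b^2,c^2)$ as roots of $t^2-xt+\sigma_3/z=0$. By contrast, in case~(i) the uniqueness was immediate from a single Vieta sign check, so all the analytic work concentrates here.
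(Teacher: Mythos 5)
Your overall strategy coincides with the paper's: reduce to determining $(s,\sigma_1,\sigma_2,\sigma_3)$, get $s$ from $\lambda_1^{(1,1)}$ and $\sigma_3$ from the volume, and your case~(i) argument (the quadratic in $\sigma_2$ with both lower-order coefficients positive, hence at most one positive root) is exactly the paper's and is complete. Likewise your derivation of $(3x-2\beta)z^2-\beta(2x-\beta)z+3\sigma_3=0$ with $x=b^2+c^2$, $z=a^2$, and the dichotomy $\beta=\tfrac32x$ (i.e.\ $b=c$, linear case) versus $\beta<\tfrac32x$ is the same setup as the paper's, using Lemma~\ref{lem:nu_1^4} and Lemma~\ref{lem:beta} in the same way.

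The genuine gap is the last step of case~(ii), which you defer to an unexecuted ``polynomial elimination.'' It is not enough to observe that the scalar curvature gives a second relation in $z$: you must \emph{prove} that the two positive roots $z_1<z_2$ of the quadratic, each completed to an admissible triple via $t^2-xt+\sigma_3/z_i=0$, cannot yield the same value of \eqref{eq:scal} --- and the admissibility constraints you list need not discard one of them, since both candidates can satisfy $a_i\geq b_i>c_i>0$. The paper resolves this by a strict inequality, not by elimination: writing $A=3x-2\beta$, $B=\beta(2x-\beta)$, $C=3\sigma_3$, it computes the difference quotient $\bigl(\scal(z_2)-\scal(z_1)\bigr)/(z_2-z_1)$ in closed form using Vieta's relations $z_1+z_2=B/A$, $z_1z_2=C/A$, and shows it is negative; after simplification the decisive point is that $2\beta\bigl(6x-2\beta\bigr)B-12AC>0$, which follows from $B>0$ together with the discriminant condition $B^2\geq4AC$ (forced by the existence of a real root of the quadratic). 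So the extra input you are missing is precisely this discriminant inequality feeding into a sign argument for the scalar-curvature difference; the window $x<\beta\leq\tfrac32x$ alone does not deliver it, and without some such separation your claim that ``the intersection of the two loci singles out a unique positive $z$'' is unsupported.
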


\begin{proof}
Let us begin with (i). Since $\lambda_1^{(1,1)}=8(n+1)s^2$, the value of $s>0$ is easily determined.
	The volume then determines $\sigma_3$, and $\lambda_1^{(1,0)}=4ns^2+2\sigma_1$ determines $\sigma_1$. 
	Moreover, $\sigma_2$ is determined by the scalar curvature, since \eqref{eq:scal} gives
	\begin{equation*}
		\frac{4}{\sigma_3} \sigma_2^2 +\frac{2ns^4}{\sigma_3}\sigma_2 +\big(\!\scal(\Ss^{4n+3}, \g_{\abcs})-16n(n+2) s^2-16\sigma_1\big)=0,
	\end{equation*}
	and at most one of the roots of this quadratic polynomial in $\sigma_2$ is positive, because the coefficients of $\sigma_2^2$ and $\sigma_2$ are both positive. 
	Thus, $(\sigma_1,\sigma_2,\sigma_3,s)$ are determined, and hence so are $(a,b,c,s)$ by \eqref{eq:sigma_idetermine}. 

    Let us now turn to (ii).
	Just like in the previous case, $\Vol(\Ss^{4n+3}, \g_{\abcs})$ and $\lambda_1^{(1,1)}$ determine $s$ and $\sigma_3$.
	Furthermore, $\lambda_1^{(2,0)} =8ns^2+8(b^2+c^2)$ determines $b^2+c^2$. 
	From \eqref{eq:lambda} and Lemma~\ref{lem:nu_1^4}, we have 
	$ \lambda_1^{(4,0)} =16ns^2 + 2\nu_1^{(4)} =16ns^2+16\beta(a,b,c),$
	so $\beta:=\beta(a,b,c)$ 
	is also determined.
	
	Thus far, we know the (positive) values of the quantities $s$, $\sigma_3=a^2b^2c^2$, $b^2+c^2$, and $\beta$, and wish to use them to uniquely determine the values of $a\geq b\geq c>0$. We will see that there are two possible options for $\abcs$, and one of them will be excluded using the value of the scalar curvature.	
	From \eqref{eq:beta}, we have that 
	\begin{equation*}
		3\sigma_2- 2\sigma_1 \beta+ \beta^2=0.
	\end{equation*}
	Substituting $\sigma_2=a^2(b^2+c^2)+ \frac{\sigma_3}{a^2}$, this equation can be written as 
	\begin{equation}\label{eq:quadraticequation}
		Aa^4-Ba^2+C=0,
	\end{equation}
	where
	\begin{align}\label{eq:ABCbeta}
		A&= 3(b^2+c^2) - 2 \beta,  &
		B&= \beta\left(2(b^2+c^2) -\beta  \right) ,&
		C&= 3\sigma_3 .
	\end{align}
	Note that $A$, $B$, and $C$ are already determined, since they can be written in terms of the known values $b^2+c^2$, $\sigma_3$, and $\beta$. 	
	Clearly, $C>0$.
	Lemma~\ref{lem:beta} implies that $B>0$ and $A \geq0$, with equality if and only if $b=c$. 
	Let us assume $A>0$, otherwise all parameters can be easily (uniquely) determined using that $b=c$.

	We know that the equation $Ax^2-Bx+C=0$ must have at least one real root, so its discriminant is nonnegative, that is, 
	\begin{equation}\label{eq:discriminant}
		B^2-4AC\geq0. 
	\end{equation}
	Moreover, since $A,B,C$ are all positive, the equation in \eqref{eq:quadraticequation} with respect to the variable $a$ has two positive solutions $a_1<a_2$ satisfying 
	\begin{equation*}
		a_1^2=\frac{B- \sqrt{B^2-4AC}}{2A}, \qquad  \text{and}  \qquad
		a_2^2=\frac{B+ \sqrt{B^2-4AC}}{2A}.
	\end{equation*}
	Setting $a=a_i>0$, $i=1,2$, since we know the values of $b^2+c^2$ and $b^2c^2=\sigma_3/a^2$, it follows that $b>0$ and $c>0$, satisfying $b>c$, become determined. Denote their values by $b_i$ and $c_i$, $i=1,2$, according to the choice $a=a_i$, $i=1,2$. If one of these choices $i=1,2$ violates the inequalities $a_i\geq b_i > c_i>0$, then $\abcs$ is determined, since $\abc$ must then be equal to $(a_i,b_i,c_i)$ for the \emph{other} choice $i=1,2$.
	Thus, suppose that $a_i\geq b_i > c_i>0$ for both $i=1,2$. 
	We will show that $\scal\!\big(\Ss^{4n+3}, \g_{(a_1,b_1,c_1,s)}\big)>\scal\!\big(\Ss^{4n+3}, \g_{(a_2,b_2,c_2,s)}\big)$, which implies that only one of $(a_i,b_i,c_i,s)$ for $i=1,2$ matches all five known quantities from the statement. 
		
	From \eqref{eq:scal}, using that $s$, $b^2+c^2$, $\sigma_3$, $A$, $B$ and $C$ are determined, we compute
	\begin{align*}
			F&:=\frac{\scal\!\big(\Ss^{4n+3}, \g_{(a_2,b_2,c_2,s)}\big) - \scal\!\big(\Ss^{4n+3}, \g_{(a_1,b_1,c_1,s)}\big)}{a_2^2-a_1^2}\\
			&=16 - \frac{2ns^4}{\sigma_3}(b^2+c^2) -2ns^4 \left(\frac{1}{a_2^2}- \frac{1}{a_1^2}\right)\frac{1}{a_2^2-a_1^2}
			\\&\quad
			-\frac{4}{\sigma_3}\!\left(\!(b^2+c^2)+ \frac{\sigma_3}{(a_2^2-a_1^2)}\!\left(\frac{1}{a_2^2}- \frac{1}{a_2^2}\right)\!\right)\!\left(\!(a_2^2+a_1^2)(b^2+c^2)+ \sigma_3\left(\frac{1}{a_2^2} + \frac{1}{a_2^2}\right)\!\right)\\
			&=16 - \frac{2ns^4(b^2+c^2)}{\sigma_3}
			+2ns^4 \frac{1}{a_1^2a_2^2}	\\
            &\quad
            -\frac{4}{\sigma_3} 
			\left( b^2+c^2- \frac{\sigma_3}{a_1^2a_2^2}\right)
			\left( (a_2^2+a_1^2)(b^2+c^2)+ \sigma_3 \frac{a_1^2+a_2^2}{a_1^2a_2^2} \right)\\
			&=16 - \frac{6ns^4(b^2+c^2)}{C}
			+2ns^4 \frac{A}{C}
			-\frac{12}{C} 
			\left( 3(b^2+c^2)- A\right)
			\left( \frac{3B}A(b^2+c^2)+  B \right).
		\end{align*}
	In the last step, we used that $C=3\sigma_3$ and the relations $a_1^2+a_2^2=\frac{B}{A}$ and $a_1^2a_2^2=\frac{C}{A}$ between roots and coefficients of a quadratic equation. 
	Basic manipulations give
		\begin{align*}
			F
			&= - \frac{2ns^4}{C} \Big(3(b^2+c^2)-A\Big)
			- \frac{4}{3AC} \Big( \left(9(b^2+c^2)^2- {A^2} \right) B -12AC \Big)
			\\
			&= - \frac{4n\beta s^4}{C}
			- \frac{4}{3AC} \Big( 2\beta \big(6(b^2+c^2)-2\beta\big) B -12AC \Big),
		\end{align*}
	where the last step uses \eqref{eq:ABCbeta}. 
To prove that $F<0$, since $s$, $\beta$, $A$, $B$, and $C$ are all positive, it remains to show that $G:=2\beta \big(6(b^2+c^2)-2\beta\big) B -12AC $ is positive. 
Since $ B=\beta\left(2(b^2+c^2) -\beta  \right) $ by \eqref{eq:ABCbeta}, we have that
\begin{equation*}
G=2\beta \big(6(b^2+c^2)-3\beta+\beta\big) B -12AC = 6 \big(B+\tfrac13 \beta^2\big) B -12AC >6 \big(B^2 -2AC\big),
\end{equation*}
so the proof is complete by \eqref{eq:discriminant}. 
\end{proof}

\begin{theorem}\label{thm:rigidityS^4n+3}
	Two isospectral $\Sp(n+1)$-invariant metrics on $\Ss^{4n+3}$ are isometric. 
\end{theorem}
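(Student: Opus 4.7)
The plan is to reduce the theorem to Lemma~\ref{lem:invariants} by showing that the spectrum of $(\Ss^{4n+3},\g_{\abcs})$ determines the volume, the scalar curvature, and either (i) the pair $\lambda_1^{(1,0)},\lambda_1^{(1,1)}$, or (ii) the triple $\lambda_1^{(1,1)},\lambda_1^{(2,0)},\lambda_1^{(4,0)}$. Once these are recovered, Lemma~\ref{lem:invariants} pins down $(a,b,c,s)$, and hence the metric up to isometry.

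The dimension $4n+3$ is read off from Weyl's law, so $n$ is fixed, and the first two Minakshisundaram--Pleijel coefficients of the heat trace yield the volume $\Vol(\Ss^{4n+3},\g_{\abcs})$ and the integrated scalar curvature. Since the metric is homogeneous, its scalar curvature is constant and is therefore the quotient of these two invariants.

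Next, I would identify the eigenvalue $\lambda_1^{(1,1)}=8(n+1)s^2$ in the spectrum, thereby recovering $s$. By Lemma~\ref{lem:lambda^(1,1)}, every $\lambda_j^{(p,q)}\abcs$ strictly exceeds $\lambda_1^{(1,1)}$ unless $(p,q)\in\{(0,0),(1,0),(1,1),(2,0)\}$, augmented by $(3,0)$ when $n=1$. Hence $\lambda_1^{(1,1)}$ lies among a bounded and explicit set of low-lying eigenvalues, each of whose multiplicity is a nonnegative integer combination of the dimensions $d_{p,q}$ in~\eqref{eq:d_pq}. Since these $d_{p,q}$ are specific positive integers growing rapidly in $(p,q)$, the combined multiplicities at the bottom of the spectrum can be uniquely decomposed into contributions from the individual $\pi_{p,q}$, and the position of $\lambda_1^{(1,1)}$ pinpointed. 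The very same strategy, applied one band higher with a refinement of the estimate in Lemma~\ref{lem:lambda^(1,1)}, would localize $\lambda_1^{(4,0)}=16ns^2+16\beta(a,b,c)$, as well as $\lambda_1^{(1,0)}$ and $\lambda_1^{(2,0)}$. Depending on the degeneracy pattern, one applies case (i) or case (ii) of Lemma~\ref{lem:invariants} to recover $(a,b,c)$, concluding the proof.

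The main obstacle is the bookkeeping in degenerate parameter configurations, where several of the low-lying candidate eigenvalues coincide and their multiplicities add into a single entry of the spectrum. Resolving this requires showing that any observed small multiplicity admits at most one decomposition as a nonnegative integer combination of the relevant $d_{p,q}$. The saving grace is the rapid growth of $d_{p,q}$ in~\eqref{eq:d_pq}, which restricts the representations contributing to the bottom of the spectrum to a small explicit list, so the decomposition problem becomes a finite combinatorial check in $n$ that can be carried out by direct inspection.
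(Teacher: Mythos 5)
Your reduction to Lemma~\ref{lem:invariants} via the heat invariants is exactly the paper's strategy, but the step where you recover the distinguished eigenvalues from the spectrum has two genuine gaps. First, the claim that an observed multiplicity at the bottom of the spectrum admits at most one decomposition as a nonnegative integer combination of the relevant $d_{p,q}$ is false in the relevant range: already for $n=1$ one has $2d_{2,0}=d_{3,0}=20$, so a multiplicity of $20$ could a priori come either from a coincidence $\lambda_1^{(2,0)}=\lambda_2^{(2,0)}$ (i.e.\ $a=b$) or from a single $\lambda_j^{(3,0)}$. The paper does not rely on unique decomposition; in each of the seven cases singled out by the multiplicity of the first eigenvalue it uses the estimate \eqref{eq:nu^(k)>=} to prove orderings such as $\lambda_1^{(3,0)}>\lambda_1^{(2,0)}$ (and, under suitable hypotheses, $\lambda_j^{(3,0)}>\lambda_1^{(1,1)}$), so that the surviving candidates for the next eigenvalue have pairwise distinct multiplicities, and in the one stubborn subcase ($n=1$, when several $\lambda_j^{(3,0)}$ may interleave) it locates $\lambda_1^{(1,1)}$ as the next eigenvalue of \emph{odd} multiplicity, since $d_{1,1}=5$ is odd while $d_{3,0}=20$ is even. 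Some replacement for these arguments is needed; the ``finite combinatorial check'' you postpone is precisely where the work lies.

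Second, your plan to localize $\lambda_1^{(4,0)}=16ns^2+16\beta(a,b,c)$ ``one band higher'' cannot work as stated, because this eigenvalue need not be low-lying: when $b^2+c^2$ is large compared with $s^2$, arbitrarily many basic eigenvalues $\lambda_1^{(q,q)}=4q(q+2n+1)s^2$, as well as the eigenvalues $\lambda_1^{(q+2,q)}$, lie below it (and likewise $\lambda_1^{(1,0)}$ can be preceded by arbitrarily many eigenvalues when $a$ is large), so no bounded portion of the spectrum suffices. The paper handles this by first recovering $s$ (hence the infinite known family $\calS_0$ of basic eigenvalues) and $b^2+c^2$ (hence $\calS_1$), deleting these infinite subsets from the spectrum, and then identifying the smallest remaining eigenvalue as $\lambda_1^{(1,0)}$ or $\lambda_1^{(4,0)}$ by comparing the multiplicities $4(n+1)$ and $\binom{2n+5}{4}$, using further inequalities such as $\lambda_1^{(1,0)}<\lambda_2^{(q+2,q)}$ (valid since $a>b$ there) and $\lambda_1^{(4,0)}<\lambda_1^{(q+k,q)}$ for even $k\geq4$ with $(q,k)\neq(0,4)$. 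Without a mechanism of this kind for stripping away infinitely many known eigenvalues, your sketch does not recover the data required in case (ii) of Lemma~\ref{lem:invariants}.
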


\begin{proof}
	In order to show that $\spec(\Ss^{4n+3}, \g_{\abcs})$ determines $\abcs$, we first recall that since $(\Ss^{4n+3}, \g_{\abcs})$ is homogeneous, the first two heat invariants determine $\Vol(\Ss^{4n+3}, \g_{(a,b,c,s)})$ and $\scal(\Ss^{4n+3}, \g_{(a,b,c,s)})$, see e.g.~\cite[Chap.~III, E.IV]{BerGauMaz}. Furthermore, by Lemma~\ref{lem:invariants}, it suffices to show that either $\lambda_1^{(1,0)}(a,b,c,s)$ and $\lambda_1^{(1,1)}(a,b,c,s)$ are also determined by the spectrum.  
	
    	From Theorem~\ref{thm:lambda_1(a,b,c,s)}, there are 7 distinct possible values for the multiplicity of the first eigenvalue $\lambda_1(\Ss^{4n+3}, \g_{(a,b,c,s)})$, see \eqref{eq:mult-lambda_1(a,b,c,s)}, thus the spectrum reveals which among $\lambda_1^{(1,0)}$, $\lambda_1^{(2,0)}$, or $\lambda_1^{(1,1)}$ realizes the minimum in \eqref{eq:lambda_1(a,b,c,s)}. The proof is therefore naturally divided in 7 cases, corresponding to the 7 rows in \eqref{eq:mult-lambda_1(a,b,c,s)}.
	We proceed with a case-by-case analysis.
	
	\smallskip
	
	\noindent\textbf{Row 1:} $\lambda_1^{(1,0)}< \min\!\big\{ \lambda_1^{(2,0)}, \lambda_1^{(1,1)}\big\}$.
	The quantity $\lambda_1^{(1,0)}$ is determined, since it is equal to $\lambda_1(\Ss^{4n+3}, \g_{(a,b,c,s)})$, so it suffices to determine $\lambda_1^{(1,1)}$ by Lemma~\ref{lem:invariants}. This is achieved searching for it among larger eigenvalues in the spectrum. 
	
	Let us determine the second eigenvalue $\lambda_2(\Ss^{4n+3}, \g_{(a,b,c,s)})$ under the current assumptions.
	Note that $\lambda_1(\Ss^{4n+3}, \g_{(a,b,c,s)})= \lambda_1^{(1,0)}= \lambda_2^{(1,0)}$, thus the second eigenvalue must come from $\pi_{p,q}$ with $(p,q)\notin\{(0,0),(1,0)\}$, that is, 
	\begin{equation*}
		\lambda_2(\Ss^{4n+3}, \g_{(a,b,c,s)}) = \!\min_{\substack{ p\geq q\geq 0\\ (p,q)\notin \{(0,0),(1,0)\}}}  \lambda_1^{(p,q)}(a,b,c,s).
	\end{equation*}
	Lemma~\ref{lem:lambda^(1,1)} implies that $\lambda_2(\Ss^{4n+3}, \g_{(a,b,c,s)})=\min\big\{\lambda_1^{(2,0)},\lambda_1^{(1,1)}\big\}$. 
	Note that $(p,q)=(3,0)$ when $n=1$ is excluded, since, by \eqref{eq:nu^(k)>=},
\begin{align*}
\lambda_1^{(3,0)} &= 12ns^2+ 2\nu_{1}^{(3)}(a,b,c)\\
&\geq 12ns^2+ 2(a^2+5b^2+9c^2)\\
&>8ns^2+8(b^2+c^2)\\
&= \lambda_1^{(2,0)}.
\end{align*}
	In order to determine its multiplicity, we must take into account that $\lambda_2^{(2,0)}$ and $\lambda_3^{(2,0)}$ may also contribute if they coincide with $\lambda_1^{(2,0)}$.
	Analyzing each possibility, one obtains the following table:
	\begin{equation}\label{eq:table2ndeigenvalue}
		\begin{array}{lrl}
			\lambda_2(\Ss^{4n+3}, \g_{(a,b,c,s)}) & \multicolumn{1}{c}{\text{multiplicity}} & \text{condition} \\
			\hline 
			\rule{0pt}{13pt}
			\lambda_1^{(1,1)} & n(2n+3)& \lambda_1^{(2,0)}>\lambda_1^{(1,1)}\\
			\lambda_1^{(2,0)} & (n+1)(2n+3) & \lambda_1^{(2,0)}<\min\{ \lambda_1^{(1,1)} , \lambda_2^{(2,0)} \}\\
			\lambda_1^{(2,0)} & 2(n+1)(2n+3) & \lambda_1^{(2,0)}= \lambda_2^{(2,0)}\!\!< \min\{ \lambda_1^{(1,1)}\! ,  \lambda_3^{(2,0)}\}\\
			\lambda_1^{(2,0)} & 3(n+1)(2n+3) & \lambda_1^{(2,0)}= \lambda_2^{(2,0)}= \lambda_3^{(2,0)}<\lambda_1^{(1,1)} \\
			\lambda_1^{(2,0)}=\lambda_1^{(1,1)} & (2n+1)(2n+3) & \lambda_1^{(2,0)}=\lambda_1^{(1,1)} <\lambda_2^{(2,0)} \\
			\lambda_1^{(2,0)}=\lambda_1^{(1,1)} & (3n+2)(2n+3) & \lambda_1^{(2,0)}= \lambda_2^{(2,0)}=\lambda_1^{(1,1)}<\lambda_3^{(2,0)} \\
			\lambda_1^{(2,0)}=\lambda_1^{(1,1)} & (4n+3)(2n+3) & \lambda_1^{(2,0)}= \lambda_2^{(2,0)}= \lambda_3^{(2,0)}=\lambda_1^{(1,1)}
		\end{array}
	\end{equation}
	As the multiplicities in the rows of \eqref{eq:table2ndeigenvalue} are all distinct, we \emph{hear} the expression for $\lambda_2(\Ss^{4n+3}, \g_{(a,b,c,s)})$. Thus, the cases in rows 1 and 5--7 are settled, since $\lambda_1^{(1,1)}$ is determined.
	In row 4, i.e., if $\lambda_1^{(2,0)}= \lambda_2^{(2,0)}= \lambda_3^{(2,0)}<\lambda_1^{(1,1)}$, then, by \eqref{eq:nu^(2)}, we have $a=b=c$, so $\lambda_1^{(1,0)}$ and $\lambda_1^{(2,0)}$ determine $\abcs$, settling this case as well. 
	
	In row 3, i.e., if $\lambda_1^{(2,0)}= \lambda_2^{(2,0)}<\min\{ \lambda_1^{(1,1)} , \lambda_3^{(2,0)} \}$, then  $a=b>c$ by \eqref{eq:nu^(2)}, since $\lambda_1^{(2,0)}= \lambda_2^{(2,0)}$; and $b^2+c^2<s^2$, since $\lambda_1^{(2,0)}<\lambda_1^{(1,1)}$.
	Again from Lemma~\ref{lem:lambda^(1,1)}, the third eigenvalue is given as follows:
	\begin{equation}\label{eq:table3ndeigenvalue2}
		\begin{array}{lrl}
			\lambda_3(\Ss^{4n+3}, \g_{(a,b,c,s)})  & \multicolumn{1}{c}{\text{multiplicity}} & \text{condition} \\
			\hline 
			\rule{0pt}{13pt}
			\lambda_1^{(1,1)} & n(2n+3)& \lambda_3^{(2,0)}>\lambda_1^{(1,1)}\\
			\lambda_3^{(2,0)} & (n+1)(2n+3) & \lambda_3^{(2,0)}<\lambda_1^{(1,1)} \\
			\lambda_3^{(2,0)}=\lambda_1^{(1,1)} & (2n+1)(2n+3) & \lambda_2^{(2,0)}=\lambda_1^{(1,1)}\\
		\end{array}
	\end{equation}
	As in \eqref{eq:table2ndeigenvalue}, the quantity $\lambda_1^{(3,0)}$ does not appear, since, using that $a=b <s$,
\begin{align*}
\lambda_1^{(3,0)} &= 12ns^2+ 2\nu_{1}^{(3)}(a,b,c) \\
&\geq 12ns^2+ 2(a^2+5b^2+9c^2)\\
&= 12ns^2+ 12b^2+18c^2\\
&>8ns^2+16b^2\\
&= \lambda_3^{(2,0)}.
\end{align*}
	Since the multiplicities in the rows of \eqref{eq:table3ndeigenvalue2} are all distinct, the expression for $\lambda_3(\Ss^{4n+3}, \g_{(a,b,c,s)})$ can be heard from the spectrum. The value $\lambda_1^{(1,1)}$ is determined in rows 1 and 3 of \eqref{eq:table3ndeigenvalue2}, hence 
	these cases are settled by Lemma~\ref{lem:invariants}.
	
	Suppose now that $\lambda_3^{(2,0)}<\lambda_1^{(1,1)}$, as indicated in row 2 of \eqref{eq:table3ndeigenvalue2}. 
		At this point, the strategy is to keep searching for the next eigenvalue until we find $\lambda_1^{(1,1)}$, which settles this case by Lemma~\ref{lem:invariants}. Since $\lambda_1^{(1,0)}=\lambda_2^{(1,0)}$, $\lambda_1^{(2,0)}$, $\lambda_2^{(2,0)}$, and $\lambda_3^{(2,0)}$ are all strictly smaller than $\lambda_1^{(1,1)}$, Lemma~\ref{lem:lambda^(1,1)} ensures that the next eigenvalue is $\lambda_1^{(1,1)}$, unless $n=1$, in which case $\lambda_j^{(3,0)}$ for $1\leq j\leq 4$ are the remaining candidates that might be smaller than $\lambda_1^{(1,1)}$. 
Assume $n=1$.
As $\lambda_1^{(1,1)}$ and $\lambda_j^{(3,0)}$ contribute to the spectrum with multiplicities $d_{1,1}=5$ and $d_{3,0}=20$ respectively, $\lambda_1^{(1,1)}$ is determined since it is the next occurring eigenvalue (after the third eigenvalue) with odd multiplicity, no matter where it is located among $\lambda_1^{(3,0)}\leq \lambda_2^{(3,0)}\leq \lambda_3^{(3,0)}\leq \lambda_4^{(3,0)}$. It is worth mentioning that the situation $\lambda_1^{(1,1)}>\lambda_4^{(3,0)}$ may in fact occur, provided $s$ is sufficiently large.

	It only remains to analyze row 2 of \eqref{eq:table2ndeigenvalue}, i.e.,
	the case $\lambda_1^{(2,0)}<\min\!\big\{ \lambda_1^{(1,1)}\! , \lambda_2^{(2,0)}\big\}$, which is only possible if $a>b$. 
	Suppose, for now, that $n\geq2$. 
	Then, by Lemma~\ref{lem:lambda^(1,1)}, the third eigenvalue is given as follows:
	\begin{equation*}
		\begin{array}{lrl}
			\lambda_3(\Ss^{4n+3}, \g_{(a,b,c,s)})  & \multicolumn{1}{c}{\text{multiplicity}} & \text{condition} \\
			\hline 
			\rule{0pt}{13pt}
			\lambda_1^{(1,1)} & n(2n+3)& \lambda_2^{(2,0)}>\lambda_1^{(1,1)}\\
			\lambda_2^{(2,0)} & (n+1)(2n+3) & \lambda_2^{(2,0)}<\min\{ \lambda_1^{(1,1)}\! , \lambda_3^{(2,0)} \}\\
			\lambda_2^{(2,0)} & 2(n+1)(2n+3) & \lambda_2^{(2,0)}= \lambda_3^{(2,0)}<\lambda_1^{(1,1)}\\
			\lambda_2^{(2,0)}=\lambda_1^{(1,1)} & (2n+1)(2n+3) & \lambda_2^{(2,0)}=\lambda_1^{(1,1)} <\lambda_3^{(2,0)}\\
			\lambda_2^{(2,0)}=\lambda_1^{(1,1)} & (3n+2)(2n+3) & \lambda_2^{(2,0)}= \lambda_3^{(2,0)}=\lambda_1^{(1,1)}
		\end{array}
	\end{equation*}
	As above, since none of the multiplicities coincide, the spectrum determines the expression for $\lambda_3(\Ss^{4n+3}, \g_{(a,b,c,s)})$. We are done (by Lemma~\ref{lem:invariants}) whenever $\lambda_1^{(1,1)}$ is determined, which does not happen with $\lambda_3(\Ss^{4n+3}, \g_{(a,b,c,s)})$ only if $\lambda_2^{(2,0)}<\lambda_1^{(1,1)}$. In that case, the next two eigenvalues need to be analyzed, in a totally analogous way, to show that $\lambda_1^{(1,1)}$ is eventually determined by the spectrum because the possible multiplicities are again all distinct. 
		The case $n=1$ is slightly longer, as any of $\lambda_1^{(3,0)},\dots,\lambda_4^{(3,0)}$ may occur as the next distinct eigenvalue. However, since this is also completely analogous to the above cases, the proof is omitted. 
	
	\smallskip
	
	\noindent\textbf{Row 2:} $\lambda_1^{(1,1)}< \min\!\big\{ \lambda_1^{(1,0)}, \lambda_1^{(2,0)}\big\}$.
	Since $\lambda_1^{(1,1)}=8(n+1)s^2$ is determined, so are $s>0$ and $\sigma_3$, the latter through \eqref{eq:vol}.
	Moreover, since $\lambda_1^{(q,q)} = 4\big(2qn+q(q+1)\big)s^2$ for any $q\geq0$, the value of $s$ determines the following infinite subset of the spectrum:
	\begin{equation*}
		\calS_0:=\Big\{ \underbrace{\lambda_1^{(q,q)} ,\dots,\lambda_1^{(q,q)}}_{d_{q,q}\text{-times}}:q\geq0 \Big\}\subset\spec\!\big(\Ss^{4n+3}, \g_{\abcs}\big).
	\end{equation*}
	In fact, $\calS_0=\spec\!\big(\Hr P^{n}, \frac{1}{s^2}\, \gFS\big)$ are precisely the basic eigenvalues, see Remark~\ref{rem:basic}.
	
	Consider the smallest eigenvalue in $\Spec(\Ss^{4n+3}, \g_{\abcs}) \smallsetminus \calS_0$, which is given by the minimum of $\lambda_1^{(p,q)}$, $p>q\geq0$.
	Since $\lambda_1^{(1,0)} <\lambda_1^{(q+1,q)}$ for all $q>0$, and $\lambda_1^{(2,0)}<\lambda_1^{(p,q)}$ for all $p\geq q\geq0$ with $p-q\geq2$ and $(p,q)\neq (2,0)$, this eigenvalue is
	\begin{equation}\label{eq:Row2-2ndeigenvalue}
		\begin{array}{lrl}
			\min\!\left(\Spec\!\big(\Ss^{4n+3}, \g_{\abcs}\big) \!\smallsetminus\! \calS_0\right) & \multicolumn{1}{c}{\text{multiplicity}} & \text{condition} \\
			\hline 
			\rule{0pt}{13pt}
			\lambda_1^{(1,0)} & 4(n+1)& \lambda_1^{(1,0)}< \lambda_1^{(2,0)}\\
			\lambda_1^{(2,0)} & (n+1)(2n+3) & \lambda_1^{(1,0)}> \lambda_1^{(2,0)} \\
			\lambda_1^{(1,0)} & (n+1)(2n+7) &\lambda_1^{(1,0)}= \lambda_1^{(2,0)} \\
		\end{array}
	\end{equation}
	For the multiplicity computation in the last two rows, we used that $\lambda_1^{(2,0)}<\lambda_2^{(2,0)}$ whenever $\lambda_1^{(1,0)}\geq \lambda_1^{(2,0)}$, since $a>b$, and hence $\pi_{2,0}$ contributes to the spectrum with multiplicity $d_{2,0}=(n+1)(2n+3)$. 
	Since none of the multiplicities in \eqref{eq:Row2-2ndeigenvalue} coincide, the spectrum determines the expression for the smallest nonbasic eigenvalue.
	In rows 1 and 3 of \eqref{eq:Row2-2ndeigenvalue}, the value of $\lambda_1^{(1,0)}$ is determined, so we are done, by Lemma~\ref{lem:invariants}.

	We now deal with the remaining row 2 as a particular case of the following setup:
	\begin{equation}\label{eq:weakerassumption}
		\lambda_1^{(1,1)} \text{ and } \lambda_1^{(2,0)}\text{ are known, and }\max\!\big\{\lambda_1^{(1,1)} ,\lambda_1^{(2,0)} \big\} < \lambda_1^{(1,0)}.
	\end{equation}
	In other words, we \emph{will not} use the fact that, in row 2, $\lambda_1^{(1,1)}<\lambda_1^{(2,0)}$, since proving the result under these weaker assumptions will simplify later parts of the proof.

	Given that, under these assumptions, both $s$ and $\lambda_1^{(2,0)}=8ns^2+8(b^2+c^2)$ are known, so is $b^2+c^2$. Then, since $\lambda_1^{(q+2,q)} = 4\big((2q+2)n+q(q+3)\big)s^2 +8(b^2+c^2)$, the following infinite subset of the spectrum is also determined:
	\begin{equation*}
		\calS_1:=
		\Big\{
		\underbrace{ \lambda_1^{(q+2,q)},\dots, \lambda_1^{(q+2,q)}}_{d_{q+2,q} \text{-times}} : q\geq0
		\Big\}.
	\end{equation*}
	The smallest eigenvalue in $\Spec(\Ss^{4n+3}, \g_{\abcs}) \smallsetminus (\calS_0\cup \calS_1)$ is 
	the minimum among the following union of sets:
	\begin{equation*}
		\big\{\lambda_1^{(q+k,q)}:q\geq 0,\, k\geq1\text{ odd}\big\} \cup
		\big\{\lambda_2^{(q+2,q)}:q\geq 0\big\} \cup \big\{\lambda_1^{(q+k,q)}:q\geq 0,\, k\geq4\text{ even}\big\}.
	\end{equation*}
	One can check that $\lambda_1^{(1,0)} < \lambda_1^{(q+k,q)}$ for all $k$ odd and $q\geq0$, with $(q,k)\neq (0,1)$, by \eqref{eq:nu^(k)>=}; $\lambda_1^{(1,0)} < \lambda_2^{(q+2,q)}$ for all $q\geq0$ since $a>b$; and $\lambda_1^{(4,0)} < \lambda_1^{(q+k,q)}$ for all $k\geq4$ even and $q\geq0$, with $(q,k)\neq (0,4)$, by \eqref{eq:nu^(k)>=}. 
	This implies that this minimum is
	\begin{equation*}
		\begin{array}{lrl}
			\min\!\left(\Spec\!\big(\Ss^{4n+3}, \g_{\abcs}\big)\! \smallsetminus\! (\calS_0\cup  \calS_1)\right) & \multicolumn{1}{c}{\text{multiplicity}} & \text{condition} \\
			\hline 
			\rule{0pt}{13pt}
			\lambda_1^{(1,0)} & 4(n+1)& \lambda_1^{(1,0)}< \lambda_1^{(4,0)} \\
			\lambda_1^{(4,0)} & \binom{2n+5}{4} &\lambda_1^{(1,0)}> \lambda_1^{(4,0)} \\
			\lambda_1^{(1,0)}=\lambda_1^{(4,0)} & 4(n+1)+\binom{2n+5}{4} & \lambda_1^{(1,0)}= \lambda_1^{(4,0)} \\
		\end{array}
	\end{equation*}
	The computation of multiplicities is done using that $\lambda_1^{(1,0)}=\lambda_2^{(1,0)}$ and $\pi_{1,0}$ contributes with multiplicity $2d_{1,0}=4(n+1)$, while $\lambda_1^{(4,0)} < \lambda_2^{(4,0)}$ and $\pi_{4,0}$ contributes with multiplicity $d_{4,0}=\binom{2n+5}{4}$.
	
	Once more, since the above multiplicities are pairwise different, the expression for this eigenvalue can be read from the spectrum. Furthermore, in rows 1 and 3, the proof follows from Lemma~\ref{lem:invariants} since $\lambda_1^{(1,0)}$ is determined. 
	In row 2, the proof follows from Lemma~\ref{lem:invariants} since $\lambda_1^{(1,1)}$, $\lambda_1^{(2,0)}$, and $\lambda_1^{(4,0)}$ are determined. 
	
	\smallskip
	
	\noindent\textbf{Row 3:} $\lambda_1^{(2,0)}< \min\big\{ \lambda_1^{(1,0)}, \lambda_1^{(1,1)}\big\}$.
	Lemma~\ref{lem:lambda^(1,1)} implies that the second eigenvalue is $\lambda_2(\Ss^{4n+3}, \g_{(a,b,c,s)})=\min\big\{\lambda_1^{(1,0)}, \lambda_1^{(1,1)}, \lambda_1^{(3,0)}\big\}$, and, as $\lambda_1^{(1,0)}<\lambda_1^{(3,0)}$, we have
	\begin{equation*}
		\begin{array}{lrl}
			\lambda_2(\Ss^{4n+3}, \g_{(a,b,c,s)})  & \multicolumn{1}{c}{\text{multiplicity}} & \text{condition} \\
			\hline 
			\rule{0pt}{13pt}
			\lambda_1^{(1,0)} & 4(n+1) &\lambda_1^{(1,0)}< \lambda_1^{(1,1)} \\
			\lambda_1^{(1,1)} & n(2n+3) & \lambda_1^{(1,0)}> \lambda_1^{(1,1)} \\
			\lambda_1^{(1,0)}=\lambda_1^{(1,1)}&2n^2+7n+4 & \lambda_1^{(1,0)}= \lambda_1^{(1,1)} \\
		\end{array}
	\end{equation*}
	As before, since the possible multiplicities are all distinct, the spectrum determines the expression for the second eigenvalue.
	
		If $\lambda_1^{(1,0)}= \lambda_1^{(1,1)}$, then both quantities are determined, thus so is $\abcs$ by Lemma~\ref{lem:invariants}.
		The case $\lambda_1^{(1,1)} < \lambda_1^{(1,0)}$ satisfies \eqref{eq:weakerassumption}, hence was settled in Row 2.
		
		Suppose $\lambda_1^{(1,0)}<\lambda_1^{(1,1)}$. 
		Note that $a^2> 2ns^2$, since $\lambda_1^{(2,0)}<\lambda_1^{(1,0)}$.
		Thus, $\lambda_3^{(2,0)}\geq \lambda_2^{(2,0)}= 8ns^2+8(a^2+c^2) >8ns^2+16ns^2>\lambda_1^{(1,1)}$ and $\lambda_j^{(3,0)}\geq 12ns^2+2(a^2+5b^2+9c^2) >12ns^2+4ns^2\geq 8(n+1)s^2=\lambda_1^{(1,1)}$ by \eqref{eq:nu^(k)>=}. 
		Consequently, Lemma~\ref{lem:lambda^(1,1)} implies that the third eigenvalue is $\lambda_1^{(1,1)}$, which settles this case.

	\smallskip
	
	\noindent\textbf{Row 4:} $\lambda_1^{(1,0)}= \lambda_1^{(1,1)}< \lambda_1^{(2,0)}$.
	Both $\lambda_1^{(1,0)}$ and $\lambda_1^{(1,1)}$ are determined by the spectrum, as they are equal to $\lambda_1(\Ss^{4n+3}, \g_{(a,b,c,s)})$, so the result follows from  Lemma~\ref{lem:invariants}. 
	
	\smallskip	
	
	\noindent\textbf{Row 5:} $\lambda_1^{(1,0)}= \lambda_1^{(2,0)}< \lambda_1^{(1,1)}$.
	The condition $\lambda_1^{(1,0)}= \lambda_1^{(2,0)}$ implies $a^2>2ns^2$, which, in turn, implies  that $\lambda_2(\Ss^{4n+3}, \g_{(a,b,c,s)})=\lambda_1^{(1,1)}$, similarly to the last case in Row 3. The desired conclusion then follows from Lemma~\ref{lem:invariants}.

	\smallskip
	
	\noindent\textbf{Row 6:} $\lambda_1^{(1,1)}= \lambda_1^{(2,0)}< \lambda_1^{(1,0)}$.
	Since  \eqref{eq:weakerassumption} holds, 
	this case was settled in Row 2.
	
	\smallskip
	
	\noindent\textbf{Row 7:} $\lambda_1^{(1,0)}= \lambda_1^{(2,0)}= \lambda_1^{(1,1)}$.
	Similarly to Row 4, as $\lambda_1^{(1,0)}$ and $\lambda_1^{(1,1)}$ are known, the result follows from Lemma~\ref{lem:invariants}.
\end{proof}

We now prove spectral uniqueness of $\Sp(n+1)$-invariant metrics on $\R P^{4n+3}$. The proof strategy is very similar to that of Theorem~\ref{thm:rigidityS^4n+3}, so many details are omitted. 

\begin{theorem}\label{thm:rigidityRP^4n+3}
Two isospectral $\Sp(n+1)$-invariant metrics on $\R P^{4n+3}$ are isometric. 
\end{theorem}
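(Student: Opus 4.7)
The plan is to mirror the strategy of Theorem~\ref{thm:rigidityS^4n+3}, adapting it to the fact that the spectrum of $(\R P^{4n+3}, \g_{\abcs})$ comes, via \eqref{eq2:projective-restriction}, only from those irreducible $\G$-modules $V_{p,q}$ with $p-q$ even. In particular, $\lambda_1^{(1,0)}\abcs$ no longer appears, so part (i) of Lemma~\ref{lem:invariants} is unavailable and we must rely exclusively on part (ii). The first two heat invariants determine $\Vol$ and $\scal$ as in the $\Ss^{4n+3}$ case (see e.g.\ \cite[Chap.~III, E.IV]{BerGauMaz}), so by Lemma~\ref{lem:invariants}(ii) it suffices to recover the three quantities $\lambda_1^{(1,1)}$, $\lambda_1^{(2,0)}$ and $\lambda_1^{(4,0)}$ from $\spec\!\big(\R P^{4n+3},\g_{\abcs}\big)$.

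The first step is to read off the multiplicity of $\lambda_1(\R P^{4n+3},\g_{\abcs})$, which, by \eqref{eq:mult-lambda_1(a,b,c,s)projective}, takes one of the seven pairwise distinct values $n(2n+3)$, $(n+1)(2n+3)$, $2(n+1)(2n+3)$, $3(n+1)(2n+3)$, $(2n+1)(2n+3)$, $(3n+2)(2n+3)$, $(4n+3)(2n+3)$. This multiplicity tells us which of the seven scenarios occurs, which not only pins down the relative order of $\lambda_1^{(2,0)}$ and $\lambda_1^{(1,1)}$ but, in the cases where $\pi_{2,0}$ contributes with higher multiplicity, also forces equalities $a=b$ or $a=b=c$. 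In the equality cases ($a=b=c$, i.e.\ rows~4 and~7 of the multiplicity table) the metric has only two free parameters $a$ and $s$, and these are directly recovered from volume and any one of $\lambda_1^{(2,0)}$ or $\lambda_1^{(1,1)}$, closing the argument immediately. The case $a=b>c$ (rows~3 and~6) has three parameters; since $\sigma_3=a^4c^2$ and $b^2+c^2=a^2+c^2$ are then determined by volume and $\lambda_1^{(2,0)}$, only $s$ remains to be found, via $\lambda_1^{(1,1)}$.

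The heart of the proof is therefore the three remaining cases: (i) $\lambda_1^{(1,1)}<\lambda_1^{(2,0)}$, (ii) $\lambda_1^{(2,0)}<\lambda_1^{(1,1)}$ with $a>b$, and (iii) $\lambda_1^{(1,1)}=\lambda_1^{(2,0)}$ with $a>b$. In each of these we must hunt down the missing member(s) of the triple $\big(\lambda_1^{(1,1)},\lambda_1^{(2,0)},\lambda_1^{(4,0)}\big)$ among higher eigenvalues. The key combinatorial tool is Lemma~\ref{lem:lambda^(1,1)}, applied only to pairs $(p,q)$ with $p-q$ even: the excluded pairs $(1,0)$ and $(3,0)$ (when $n=1$) drop out automatically because their $p-q$ is odd, so for \emph{all} $n\geq 1$ every $\lambda_j^{(p,q)}\abcs$ with $(p,q)\notin\{(0,0),(1,1),(2,0)\}$ strictly exceeds $\lambda_1^{(1,1)}\abcs$. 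This means that after $\pi_{1,1}$ and $\pi_{2,0}$ contribute, the next candidate for the smallest new eigenvalue is $\pi_{4,0}$, up to possibly $\lambda_2^{(2,0)},\lambda_3^{(2,0)}$. As in the proof of Theorem~\ref{thm:rigidityS^4n+3}, we then compute the multiplicities of the second, third, \ldots, eigenvalues and observe that in every subcase these multiplicities are pairwise distinct, hence their expressions can be read directly from the spectrum. Repeating this procedure a finite number of times produces each of the three needed eigenvalues; Lemma~\ref{lem:nu_1^4} is used to handle the multiplicity of $\lambda_1^{(4,0)}$, which is $d_{4,0}$ in the generic subcase $a>b$ and larger otherwise.

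The main obstacle is Case~(ii): once $\lambda_1^{(2,0)}$ is determined as the first eigenvalue with multiplicity $d_{2,0}$, one has to identify \emph{both} $\lambda_1^{(1,1)}$ and $\lambda_1^{(4,0)}$ among the next few distinct eigenvalues, whose candidates include $\lambda_2^{(2,0)},\lambda_3^{(2,0)},\lambda_1^{(3,1)},\lambda_1^{(4,0)},\lambda_1^{(2,2)}$, and their relative order depends in a delicate way on $(a,b,c,s)$. As in the sphere case, the argument is carried out by a finite but lengthy enumeration of multiplicities, exploiting at each step that the contributions of $\pi_{1,1}$ (with multiplicity $n(2n+3)$), of $\pi_{4,0}$ (with multiplicity $d_{4,0}=\binom{2n+5}{4}$ in the generic case), and of the higher eigenvalues of $\pi_{2,0}$ occur with values that are never accidentally equal for distinct parameters, so that each successive distinct eigenvalue together with its multiplicity forces a unique interpretation. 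Once all three of $\lambda_1^{(1,1)},\lambda_1^{(2,0)},\lambda_1^{(4,0)}$ are in hand, Lemma~\ref{lem:invariants}(ii) delivers $\abcs$, completing the proof.
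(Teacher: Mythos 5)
Your overall plan coincides with the paper's own proof: heat invariants give $\Vol$ and $\scal$, the seven possible multiplicities in \eqref{eq:mult-lambda_1(a,b,c,s)projective} split the argument into cases, Lemma~\ref{lem:lambda^(1,1)} (only its even $p-q$ part, exactly as you observe) confines everything below $\lambda_1^{(1,1)}$ to the $\pi_{2,0}$-eigenvalues, and Lemma~\ref{lem:invariants}(ii) together with Lemma~\ref{lem:nu_1^4} closes the argument once $\lambda_1^{(1,1)}$, $\lambda_1^{(2,0)}$, $\lambda_1^{(4,0)}$ are heard. However, two steps in your sketch need repair. First, Lemma~\ref{lem:lambda^(1,1)} does not justify your claim that ``the next candidate for the smallest new eigenvalue is $\pi_{4,0}$'': it only bounds all other $\lambda_j^{(p,q)}$ from below by $\lambda_1^{(1,1)}$, which says nothing once you are hunting \emph{above} $\lambda_1^{(1,1)}$. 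What is actually needed is to subtract the known infinite families $\calS_0=\{\lambda_1^{(q,q)}\}$, $\calS_1=\{\lambda_1^{(q+2,q)}\}$ (and later $\calS_2=\{\lambda_2^{(q+2,q)}\}$), determined by $s$, $b^2+c^2$, and $a^2+c^2$, and then to prove that the minimum of the complement lies in $\{\lambda_2^{(2,0)},\lambda_3^{(2,0)},\lambda_1^{(4,0)}\}$; this uses \eqref{eq:nu^(k)>=} together with Lemma~\ref{lem:beta} (the bound $b^2+c^2<\beta(a,b,c)\le\tfrac32(b^2+c^2)$ is what beats every $\lambda_1^{(q+k,q)}$ with $k\ge6$). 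These estimates are the missing ingredient in your hunt for $\lambda_1^{(4,0)}$. Second, your assertion that the competing eigenvalues ``are never accidentally equal for distinct parameters'' is false: coincidences such as $\lambda_2^{(2,0)}=\lambda_1^{(4,0)}$ or $\lambda_3^{(2,0)}=\lambda_1^{(4,0)}$ do occur for suitable $(a,b,c,s)$; the correct device is to list those coincidences as separate rows whose multiplicity is the sum of the contributions, and to check that these summed multiplicities are still pairwise distinct from the alternatives.

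One further remark: your ``main obstacle,'' Case (ii), is lighter than you anticipate, because $\lambda_1^{(4,0)}$ is not always needed. If, after $\lambda_1^{(2,0)}$, the eigenvalues $\lambda_2^{(2,0)}$ and then $\lambda_3^{(2,0)}$ are heard before $\lambda_1^{(4,0)}$, you then know $a^2+c^2$ and $a^2+b^2$ in addition to $b^2+c^2$, hence $(a,b,c)$ outright, and $s$ from $\lambda_1^{(1,1)}$ (which Lemma~\ref{lem:lambda^(1,1)} locates among the next eigenvalues); so Lemma~\ref{lem:invariants}(ii) is only invoked in the branches where $\lambda_1^{(4,0)}$ appears first. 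Also, in the subcase $a=b=c$ with $\lambda_1^{(2,0)}<\lambda_1^{(1,1)}$, volume plus $\lambda_1^{(2,0)}$ alone do not obviously pin down $(a,s)$: the system $8ns^2+16a^2=\mathrm{const}$, $a^3s^{4n}=\mathrm{const}$ can have two solutions, so you should additionally use the scalar curvature or locate $\lambda_1^{(1,1)}$ further up in the spectrum.
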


\begin{proof}
	Similarly to the proof of Theorem~\ref{thm:rigidityS^4n+3}, by homogeneity, the spectrum of $(\R P^{4n+3},\g_{(a,b,c,s)})$ determines $\Vol(\R P^{4n+3},\g_{(a,b,c,s)})= \frac12\Vol(\Ss^{4n+3},\g_{(a,b,c,s)})$ and $\scal(\R P^{4n+3},\g_{(a,b,c,s)})=\scal(\Ss^{4n+3},\g_{(a,b,c,s)})$. 
	
	First, let us determine $\abcs$ from $\Spec(\R P^{4n+3},\g_{(a,b,c,s)})$ assuming that:
	\begin{equation}\label{eq:weakerassumption2}
			\text{The values of } \lambda_1^{(1,1)} \text{ and } \lambda_1^{(2,0)}
			\text{ are known.}
	\end{equation}
	Additionally, suppose $\lambda_1^{(2,0)}<\lambda_2^{(2,0)}<\lambda_3^{(2,0)}$, which is equivalent to $a>b>c$. The special cases $a=b$ and $b=c$ are much simpler, and left to the reader. 

	By Lemma~\ref{lem:vol-scal}, $\lambda_1^{(1,1)}=8(n+1)s^2$ and the volume determine $s>0$, $\sigma_3$ and 
	\begin{equation}\label{eq:calS_0}
		\calS_0:=\Big\{ \underbrace{\lambda_1^{(q,q)} ,\dots,\lambda_1^{(q,q)}}_{d_{q,q}\text{-times}}:q\geq0\text{ even} \Big\}\subset\spec\!\big(\R P^{4n+3}, \g_{\abcs}\big).
	\end{equation}
	Similarly, $\lambda_1^{(1,1)}$ together with $\lambda_1^{(2,0)}$ determine $b^2+c^2$, and consequently
	\begin{equation}\label{eq:calS_1}
		\calS_1:=
		\Big\{
		\underbrace{ \lambda_1^{(q+2,q)},\dots, \lambda_1^{(q+2,q)}}_{d_{q+2,q} \text{-times}} : q\geq0\text{ even}
		\Big\}.
	\end{equation}
	By Theorem~\ref{thm:lambda_1(a,b,c,s)}, the smallest eigenvalue in $\Spec(\R P^{4n+3}, \g_{\abcs}) \smallsetminus (\calS_0\cup \calS_1)$ is the minimum of
	\begin{equation*}
		\big\{\lambda_2^{(q+2,q)}:q\geq 0 \text{ even}\big\} \cup \big\{\lambda_1^{(q+k,q)}:q\geq 0,\, k\geq4,\text{ both even}\big\}.
	\end{equation*}
	We have $\lambda_1^{(4,0)}<\lambda_2^{(4,0)}$ by Lemma~\ref{lem:nu_1^4} and the assumption $a>b$.
	For even integers $k\geq6$ and $q\geq 0$, the inequality \eqref{eq:nu^(k)>=} gives
	\begin{align*}
			\lambda_1^{(q+k,q)} 
			&\geq 4\big((k+2q)n+q(k+q+1)\big)s^2 + 2\big(2kb^2+k^2c^2\big)
			\\
			&\geq 24ns^2 + 24b^2+72c^2
			\\
			&>16ns^2 + 16\, \tfrac32(b^2+c^2) \geq16ns^2+16\beta(a,b,c) =\lambda_1^{(4,0)}.
	\end{align*}
	The last inequality follows from Lemma~\ref{lem:beta}. 
	Furthermore, we have $\lambda_1^{(q+4,q)} >\lambda_1^{(4,0)}$ for all $q>0$ even. 
	Similarly, one may check that $\lambda_2^{(2,0)}< \lambda_2^{(q+2,q)}$ for all $q>0$ even. 
	
The above facts imply the following:
	\begin{equation*}
		\begin{array}{lrl}
			\min\!\left(\Spec\!\big(\R P^{4n+3}, \g_{\abcs}\big)\! \smallsetminus\! (\calS_0\cup  \calS_1)\right) & \multicolumn{1}{c}{\text{multiplicity}} & \text{condition} \\
			\hline 
			\rule{0pt}{13pt}
			\lambda_2^{(2,0)} & (n+1)(2n+3)& \lambda_2^{(2,0)}< \lambda_1^{(4,0)}\\
			\lambda_1^{(4,0)} & \binom{2n+5}{4} &\lambda_2^{(2,0)}> \lambda_1^{(4,0)} \\
			\lambda_2^{(2,0)}=\lambda_1^{(4,0)} & \text{the sum of both} & \lambda_2^{(2,0)}= \lambda_1^{(4,0)}\\
		\end{array}
	\end{equation*}
	
	Since the above multiplicities are pairwise different, the expression for this eigenvalue can be read from the spectrum. 
	In rows 2 and 3, the expression for $\lambda_1^{(4,0)}$ is determined, thus $\abcs$ is determined by Lemma~\ref{lem:invariants}.
	Note that the hypotheses in Lemma~\ref{lem:invariants} are satisfied because the volume and scalar curvature of $(\Ss^{4n+3},\g_{(a,b,c,s)})$ are determined by the spectrum of $(\R P^{4n+3},\g_{(a,b,c,s)})$, as explained above.

	We now assume $\lambda_2^{(2,0)}<\lambda_1^{(4,0)}$, as in row 1.
	Thus, $\lambda_2^{(2,0)}$ is determined, and so are $a^2+c^2$, $\nu_2^{(2)}(a,b,c)$ and 
	\begin{equation}\label{eq:calS_2}
		\calS_2:=
		\Big\{
		\underbrace{ \lambda_2^{(q+2,q)},\dots, \lambda_2^{(q+2,q)}}_{d_{q+2,q} \text{-times}} : q\geq0\text{ even}
		\Big\}.
	\end{equation}
	Reasoning before, the smallest eigenvalue in $\Spec(\R P^{4n+3}, \g_{\abcs}) \smallsetminus (\calS_0\cup \calS_1\cup \calS_2)$ is given as in the next table:
	\begin{equation*}
		\begin{array}{lrl}
			\min\!\left(\Spec\!\big(\R P^{4n+3}, \g_{\abcs}\big)\! \smallsetminus\! (\calS_0\cup  \calS_1\cup \calS_2)\right) & \multicolumn{1}{c}{\text{multiplicity}} & \text{condition} \\
			\hline 
			\rule{0pt}{13pt}
			\lambda_3^{(2,0)} & (n+1)(2n+3)& \lambda_2^{(3,0)}< \lambda_1^{(4,0)}\\
			\lambda_1^{(4,0)} & \binom{2n+5}{4} &\lambda_3^{(2,0)}> \lambda_1^{(4,0)} \\
			\lambda_3^{(2,0)}=\lambda_1^{(4,0)} & \text{the sum of both} & \lambda_3^{(2,0)}= \lambda_1^{(4,0)}\\
		\end{array}
	\end{equation*}
	
	Once again, the multiplicity distinguishes the situation in each of the three rows. 
	In rows 2 and 3, $\lambda_1^{(4,0)}$ is determined, so are $\abcs$ by Lemma~\ref{lem:invariants}.
	In row 1, $\lambda_3^{(2,0)}$ is determined, and so is $a^2+b^2$, which together with the already known values of $a^2+c^2$ and $b^2+c^2$, determine $\abc$. 
	This completes the proof that $\Spec(\R P^{4n+3}, \g_{\abcs})$ determines $\abcs$ under the assumption \eqref{eq:weakerassumption2}. 
	
	\medskip
	
	It remains to show that no loss of generality is incurred by assuming \eqref{eq:weakerassumption2}; that is, we must prove that $\lambda_1^{(1,1)}$ and $\lambda_1^{(2,0)}$ are determined by the spectrum of $(\R P^{4n+3}, \g_{\abcs})$.
	According to Theorem~\ref{thm:lambda_1(a,b,c,s)}, the multiplicity of the first eigenvalue of $(\R P^{4n+3}, \g_{\abcs})$ can assume 7 different values, listed in \eqref{eq:mult-lambda_1(a,b,c,s)projective}.
	Thus, the proof is naturally divided in seven cases corresponding to the rows in \eqref{eq:mult-lambda_1(a,b,c,s)projective}.
	
	\noindent\textbf{Row 1:} $\lambda_1^{(1,1)}<  \lambda_1^{(2,0)}$.
	Since the expression for $\lambda_1^{(1,1)}$ is determined, so are $s$ and $\calS_0$, see \eqref{eq:calS_0}. 
	One can easily check that $\lambda_1^{(2,0)}<\lambda_1^{(p,q)}$ for all $p>q\geq0$ with $p-q$ even and strictly greater than $2$. 
	It follows that the smallest eigenvalue in $\Spec(\R P^{4n+3}, \g_{\abcs}) \smallsetminus \calS_0$ is $\lambda_1^{(2,0)}=8ns^2+8(b^2+c^2)$, and \eqref{eq:weakerassumption2} holds.

	\noindent\textbf{Row 2:} $\lambda_1^{(2,0)}< \lambda_1^{(1,1)}$ and $a>b$.
	Lemma~\ref{lem:lambda^(1,1)} implies that the second eigenvalue is given by $\min\!\big\{\lambda_1^{(1,1)}, \lambda_2^{(2,0)}\big\}$. Straightforward multiplicity computations give:
	\begin{equation*}
		\begin{array}{lrl}
			\lambda_2(\R P^{4n+3}, \g_{(a,b,c,s)})  & \multicolumn{1}{c}{\text{multiplicity}} & \text{conditions} \\
			\hline 
			\rule{0pt}{13pt}
			\lambda_2^{(2,0)} & (n+1)(2n+3) &\lambda_2^{(2,0)}< \lambda_1^{(1,1)} \text{ and } b>c\\
			\lambda_2^{(2,0)} &2(n+1)(2n+3) &\lambda_2^{(2,0)}< \lambda_1^{(1,1)} \text{ and } b=c\\
			\lambda_1^{(1,1)} & n(2n+3) & \lambda_2^{(2,0)}> \lambda_1^{(1,1)} \\
			\lambda_2^{(2,0)}=\lambda_1^{(1,1)}&(2n+1)(2n+3) & \lambda_2^{(2,0)}= \lambda_1^{(1,1)}  \text{ and } b>c\\
			\lambda_2^{(2,0)}=\lambda_1^{(1,1)}&(3n+2)(2n+3) & \lambda_2^{(2,0)}= \lambda_1^{(1,1)}  \text{ and } b=c\\
		\end{array}
	\end{equation*}
	Since none of the multiplicities coincide, the expression for  this eigenvalue can be heard. In rows 3, 4 and 5, the value $\lambda_1^{(1,1)}$ is determined, thus the proof is complete since \eqref{eq:weakerassumption2} holds. 
	The case in row 2 is simple and left to the reader.
	
	We now assume $\lambda_2^{(2,0)}< \lambda_1^{(1,1)}$ and $b<c$, as in row 1. 
	The expression for $\lambda_2^{(2,0)}$ determines $a^2+c^2$.
	Lemma~\ref{lem:lambda^(1,1)} ensures that the next eigenvalue is $\min\!\big\{\lambda_1^{(1,1)}, \lambda_3^{(2,0)}\big\}$, with distinct multiplicities given by
	\begin{equation*}
		\begin{array}{lrl}
			\lambda_3(\R P^{4n+3}, \g_{(a,b,c,s)})  & \multicolumn{1}{c}{\text{multiplicity}} & \text{conditions} \\
			\hline 
			\rule{0pt}{13pt}
			\lambda_3^{(2,0)} & (n+1)(2n+3) &\lambda_3^{(2,0)}< \lambda_1^{(1,1)}\\
			\lambda_1^{(1,1)} & n(2n+3) & \lambda_2^{(2,0)}> \lambda_1^{(1,1)} \\
			\lambda_2^{(2,0)}=\lambda_1^{(1,1)}&(2n+1)(2n+3) & \lambda_2^{(2,0)}= \lambda_1^{(1,1)}\\
		\end{array}
	\end{equation*}
	If $\lambda_1^{(1,1)}\leq \lambda_3^{(2,0)}$, then $\lambda_1^{(1,1)}$ is determined, and \eqref{eq:weakerassumption2} holds. 
	
	Suppose that $\lambda_3^{(2,0)}<\lambda_1^{(1,1)}$. 
	Since $\lambda_j^{(2,0)}$, $j=1,2,3$, are determined, so are $s^2+a^2+b^2$, $s^2+a^2+c^2$, and $s^2+b^2+c^2$, which uniquely determine the positive values of $\abc$ in terms of $s$. 
	Lemma~\ref{lem:lambda^(1,1)} implies that the fourth eigenvalue is given by $\lambda_1^{(1,1)}$, which determines $s$, and the proof of this case is complete. 
	
	\noindent\textbf{Rows 3--4:} $\lambda_1^{(2,0)}< \lambda_1^{(1,1)}$ and $a=b$.
	These cases are simpler than Row 2 and left to the reader. 
	
	\noindent\textbf{Rows 5--7:} $\lambda_1^{(1,1)}=  \lambda_1^{(2,0)}$.
	Since both expressions are determined, \eqref{eq:weakerassumption2} holds.
\end{proof}

\subsection{Spectral uniqueness among homogeneous CROSSes}
We first prove that an $\Sp(n+1)$-invariant metric on $\Ss^{4n+3}$ cannot be isospectral to an $\Sp(n+1)$-invariant metric on $\R P^{4n+3}$. For this, we need the following:

\begin{lemma}\label{claim:nu}
Suppose $a>b\geq c>0$. 
\begin{enumerate}[\rm (i)]
\item If $b^2<11c^2$, then $\nu_{1}^{(2k)}\abc<\nu_{1}^{(2k+2)}\abc$ for all $k\geq0$. 
\item $\nu_2^{(2k)}\abc> \max\left\{\nu_1^{(1)}\abc, \nu_2^{(2)}\abc, \nu_{1}^{(2k)}\abc \right\}$ for all $k\geq2$. 
\end{enumerate}
\end{lemma}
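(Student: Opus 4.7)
The plan is to combine explicit formulas from Lemma~\ref{lem:nu_1^4} and \eqref{eq:nu^(2)} for small $k$ with uniform estimates from the $\SU(2)$-representation structure for large $k$. Throughout, let $T := \tau_{2k}(-a^2X_1^2-b^2X_2^2-c^2X_3^2)$, and exploit the block decomposition $V_{\tau_{2k}} = B_0 \oplus B_2$ (implicit in the proof of Lemma~\ref{lem:nu_1^4}) according to the residue $\!\!\pmod 4$ of $\tau_{2k}(X_1)$-weights; each block is $T$-invariant and tridiagonal in the weight basis, since $T$ couples $v_m$ only to $v_m, v_{m\pm 4}$ (the quadratic terms of $J_\pm := X_2 \pm \mi X_3$ shift weight by $\pm 4$). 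The zero-weight vector $v_0$ lies in $B_0$, and the Casimir identity $\tau_{2k}(X_1^2+X_2^2+X_3^2) = -4k(k+1)\Id$ yields $\langle v_0, Tv_0\rangle = 2k(k+1)(b^2+c^2)$, while any $v \in B_2$ satisfies $\|\tau_{2k}(X_1)v\|^2 \geq 4\|v\|^2$.

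For part (i), the cases $k=0, 1$ are direct: $\nu_1^{(0)} = 0 < 4(b^2+c^2) = \nu_1^{(2)}$, and $\nu_1^{(2)}(a,b,c) < \nu_1^{(4)}(a,b,c)$ reduces, via Lemma~\ref{lem:nu_1^4} after squaring, to the polynomial inequality $8a^2(b^2+c^2) > 3(b^2-c^2)^2$, which follows from $a > b \geq c$ alone (no hypothesis on $b^2/c^2$ is needed). For $k \geq 2$, the plan is to upper-bound $\nu_1^{(2k)}$ by Rayleigh evaluation at a test vector in $B_0$ and to lower-bound $\nu_1^{(2k+2)}$ via \eqref{eq:nu^(k)>=}. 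The test vector $v_0$ alone yields $\nu_1^{(2k)} \leq 2k(k+1)(b^2+c^2)$, which, combined with $\nu_1^{(2k+2)} \geq 4(k+1)b^2+4(k+1)^2c^2$, closes the inequality only when $b^2/c^2 < (k+2)/(k-2)$. To cover the full range $b^2 < 11c^2$, one enlarges the test subspace to include the $\Z_2$-symmetric vectors $(v_4 + v_{-4})/\sqrt 2$ and $(v_{4j}+v_{-4j})/\sqrt 2$, computes the resulting tridiagonal matrix explicitly using the action of $J_\pm$ on $V_{\tau_{2k}}$ (the $2\times 2$ block in $\{v_0, (v_4+v_{-4})/\sqrt 2\}$ has off-diagonal entry $\sqrt{2}(b^2-c^2)\sqrt{k(k-1)(k+1)(k+2)}$ and diagonal entries $2k(k+1)(b^2+c^2)$, $16a^2+2(k^2+k-4)(b^2+c^2)$), and derives the refined upper bound from its smallest eigenvalue.

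For part (ii), the block decomposition yields $\nu_2^{(2k)} \geq \min(\mu_2(T|_{B_0}), \mu_1(T|_{B_2}))$. For $v \in B_2$ with $\|v\|=1$ and $\alpha := \|\tau_{2k}(X_1)v\|^2 \geq 4$, the Casimir identity gives
\[
\langle v, Tv\rangle \geq (a^2-c^2)\alpha + 4k(k+1)c^2 \geq 4a^2 + 4(k^2+k-1)c^2,
\]
using $a \geq c$. For $k \geq 2$, this bound exceeds $\nu_1^{(1)} = a^2+b^2+c^2$ (since $3a^2 > b^2+c^2$, from $a>b\geq c$), $\nu_2^{(2)} = 4(a^2+c^2)$ (since $k^2+k-1 \geq 5$), and $\nu_1^{(2k)}$ (via the upper bound on $\nu_1^{(2k)}$ from part (i)). A Cauchy interlacing argument on the codimension-one subspace $U_0 \subset B_0$ of weight vectors with $|m|\geq 4$ gives the stronger estimate $\mu_2(T|_{B_0}) \geq 16a^2 + 4(k^2+k-4)c^2$, so $\mu_1(T|_{B_2})$ is the binding constraint. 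The strict separation $\nu_2^{(2k)} > \nu_1^{(2k)}$ follows from the simplicity of the smallest eigenvalue within its tridiagonal block when $a > b$, generalizing the multiplicity statement in Lemma~\ref{lem:nu_1^4}.

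The principal obstacle is producing the refined upper bound on $\nu_1^{(2k)}$ for part (i). The correction from the $2\times 2$ submatrix above is of order $(b^2-c^2)^2 k^4/a^2$ for large $a$, which degenerates as $a \to \infty$; however, this is also the regime where $\nu_1^{(2k+2)}$ asymptotes to $2(k+1)(k+2)(b^2+c^2)$ (from Rayleigh on the zero-weight vector of $V_{\tau_{2k+2}}$), still exceeding $\nu_1^{(2k)} \approx 2k(k+1)(b^2+c^2)$ by $4(k+1)(b^2+c^2) > 0$, so the large-$a$ limit is safe. Closing the inequality uniformly in $k$ for moderate $a$ requires delicate polynomial bookkeeping, which I expect to be the main technical step.
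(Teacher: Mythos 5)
Your structural setup is essentially the paper's: the weight basis in which $T$ couples $v_m$ only to $v_m,v_{m\pm4}$, the splitting $B_0\oplus B_2$, and the zero-weight Rayleigh bound $\nu_1^{(2k)}\leq 2k(k+1)(b^2+c^2)$ correspond exactly to the even/odd-index tridiagonal blocks of the matrix $M_{2k}$ used in the paper, and your treatment of $k=0,1$ in (i) is correct. The trouble is that for $k\geq 2$, which is the only place where $b^2<11c^2$ enters, you have a plan rather than a proof, and the plan as stated cannot succeed. You propose to beat the lower bound $\nu_1^{(2k+2)}\geq 4(k+1)b^2+4(k+1)^2c^2$ of \eqref{eq:nu^(k)>=} by an upper bound on $\nu_1^{(2k)}$. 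But for large $a$ one has $\nu_1^{(2k)}\to 2k(k+1)(b^2+c^2)$, which already exceeds $4(k+1)b^2+4(k+1)^2c^2$ whenever $(k-2)b^2>(k+2)c^2$ --- a nonempty part of the admissible range $b^2<11c^2$ for every $k\geq3$. Hence no refinement of the upper bound on $\nu_1^{(2k)}$, by larger test subspaces or otherwise, can close the comparison against that $a$-independent lower bound; your fallback (asymptotics for large $a$, ``polynomial bookkeeping'' for moderate $a$, with no uniform interpolation) is precisely the unproved content of the lemma. The paper avoids estimating $\nu_1^{(2k+2)}$ altogether: it symmetrizes, writes $D_{2k+2}M_{2k+2}D_{2k+2}^{-1}$ as $\diag\big(\mu,\,D_{2k}M_{2k}D_{2k}^{-1},\,\mu\big)+U_k$ with $\mu=2k(k+1)(b^2+c^2)\geq\nu_1^{(2k)}$, and proves $U_k$ is positive definite by Gershgorin, the hypothesis entering as $b^2+c^2>\tfrac65(b^2-c^2)$; Weyl's inequality then gives the strict monotonicity at fixed $(a,b,c)$. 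Some such direct comparison of the two operators, rather than separate two-sided bounds, is what your argument is missing.

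In (ii), your Casimir/weight estimates are correct and rather cleaner than Gershgorin: $\mu_1(T|_{B_2})\geq 4a^2+4(k^2+k-1)c^2$ and, by interlacing, $\mu_2(T|_{B_0})\geq 16a^2+4(k^2+k-4)c^2$, which do dominate $\nu_1^{(1)}$ and $\nu_2^{(2)}$ for $k\geq2$. But the comparison with $\nu_1^{(2k)}$ is not closed. Simplicity of the spectrum inside a tridiagonal block does not give $\nu_2^{(2k)}>\nu_1^{(2k)}$: the first two eigenvalues of $T$ could be the minima of the two different blocks, so you must in addition locate $\nu_1^{(2k)}$ in $B_0$ and show that $\mu_1(T|_{B_2})$ is strictly larger. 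Your bounds cannot do this when $b$ is close to $a$: for instance, with $a^2=1.01$, $b^2=1$, $c^2=\tfrac12$, $k=10$, your lower bound $4a^2+4(k^2+k-1)c^2\approx 222$ lies below $\nu_1^{(2k)}\geq 2(2k)b^2+(2k)^2c^2=240$ from \eqref{eq:nu^(k)>=}, so the chain ``$\nu_2^{(2k)}\geq L>\nu_1^{(2k)}$'' breaks down, and the ``refined upper bound from part (i)'' you invoke at this point was never established. (Also, when $b=c$ all off-diagonal entries vanish and the simplicity argument is vacuous; that case must be handled directly via \eqref{eq:nu_j^k(a,b,b)}.) The paper closes this step by retaining the $b^2$-contribution in the Gershgorin bound for the odd block, $\alpha(2k,k\pm1)=4a^2+4(k-1)b^2+4k^2c^2$, and playing it against an upper bound for the even-block minimum using $a>b$; an estimate of that strength, not one carrying only $c^2$-terms, is needed to separate the blocks.
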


\begin{proof}
It is well-known that the $(k+1)$-dimensional irreducible representation $(\tau_k,V_{\tau_k})$ of $\SU(2)$ can be realized as the space of complex homogeneous polynomials of degree $k$ in two variables, with the action given by $(g\cdot P) \left(\begin{smallmatrix}z\\ w\end{smallmatrix}\right) =P(g^{-1}\left(\begin{smallmatrix}z\\ w\end{smallmatrix}\right))$, where $g^{-1}\left(\begin{smallmatrix}z\\ w\end{smallmatrix}\right)$ denotes matrix multiplication. 

We fix the basis $\{P_j:0 \leq j\leq k\}$, with $P_j \left(\begin{smallmatrix}z\\ w\end{smallmatrix}\right) = z^jw^{k-j}$. 
It is important to note that this basis is \emph{orthogonal} but not \emph{orthonormal} with respect to the $\G$-invariant inner product. 
Thus, the matrix $M_k=M_k(a,b,c)$ of $\tau_k(-a^2X_1^2- b^2X_2^2 - c^2X_3^2)$ with respect to this basis is not symmetric, but is similar to a positive-definite symmetric matrix. 

According to the proof of \cite[Lem.~3.1]{Lauret-SpecSU(2)}, we have that the only non-zero coefficients of $M_k=[m_{i,j}^{(k)}]_{i,j=0,\dots,k}$ are given by 
\begin{equation}\label{eq:m_ij^k}
\begin{aligned}
m_{j,j}^{(k)} &= (k-2j)^2a^2+ \big( (2j+1)k-2j^2 \big)(b^2+c^2) 
&&\text{for }0\leq j\leq k,\\
m_{j-2,j}^{(k)} &= -(j-1)j (b^2-c^2)
&&\text{for }2\leq j\leq k,\\
m_{j+2,j}^{(k)} &= -(k-1-j)(k-j) (b^2-c^2)
&&\text{for }0\leq j\leq k-2.
\end{aligned}
\end{equation}
(Although in the statement of \cite[Lem.~3.1]{Lauret-SpecSU(2)} a negative sign is missing  in the expressions for the second and third rows, as displayed above, this typo does not have any impact because the spectra of these two matrices coincide.)

Let $D_{2k}=\diag\big(d_0^{(2k)},\dots,d_{2k}^{(2k)}\big)$, where $d_j^{(2k)}=\sqrt{j!(2k-j)!}$. 
It is easy to check that $D_{2k}M_{2k}D_{2k}^{-1}$ is symmetric and has the same spectra as $M_{2k}$. Let
\begin{equation*}
U_{k} := D_{2k+2}M_{2k+2}D_{2k+2}^{-1} -
\begin{pmatrix}
\mu \\ & D_{2k}M_{2k}D_{2k}^{-1} \\ && \mu
\end{pmatrix} 
= [u_{i,j}^{(k)}]_{i,j=0,\dots,2k+2},
\end{equation*}
where $\mu=2k(k+1)(b^2+c^2)$.
We claim that 
\begin{equation}\label{eq:lambda_min(extended)}
\lambda_{\min} 
\left( \diag(\mu , D_{2k}M_{2k}D_{2k}^{-1} ,\mu)\right) 
= \lambda_{\min}(M_{2k}). 
\end{equation}
Of course, the left-hand side is equal to $\min\{\mu, \lambda_{\min}(M_{2k})\}$, so it is sufficient to show that $\mu\geq \lambda_{\min}(M_{2k})$. 
Clearly,
\begin{equation*}
\lambda_{\min}(M_{2k}) = \lambda_{\min}( D_{2k}M_{2k}D_{2k}^{-1})\leq (D_{2k}M_{2k}D_{2k}^{-1})_{j,j}=m_{j,j}^{(2k)}
\end{equation*}
for all $j$, thus $\lambda_{\min}(M_{2k})\leq m_{k,k}^{(2k)}= 2k(k+1)(b^2+c^2)=\mu$, as desired. 

Now, by \eqref{eq:lambda_min(extended)}, (i) holds if and only if $\lambda_{\min}(U_k)>0$.
It is a simple task to check that the only non-zero coefficients of the $(2k+3)\times(2k+3)$-matrix $U_{k}$ are:
\begin{align*}
u_{0,0}^{(k)} &= u_{2k+2,2k+2}^{(k)} = (2k+2)^2a^2-2(k-1)(k+1)(b^2+c^2),\\
u_{0,2}^{(k)} &= u_{0,2}^{(k)}= u_{2k,2k+2}^{(k)}= u_{2k+2,2k}^{(k)}= -(b^2-c^2)\sqrt{2(2k+1)(2k+2)},\\
u_{j,j}^{(k)} &= 4(k+1)(b^2+c^2), \quad \text{ for }1\leq j\leq 2k+1, \text{ and }\\
u_{j-2,j}^{(k)} &= u_{j,j-2}^{(k)} = \frac{-4(k+1)(b^2-c^2) \; \sqrt{(j-1)(2k+3-j)}}{\sqrt{j(2k+4-j)} + \sqrt{(j-2)(2k+2-j)}},
\quad \text{ for }3\leq j\leq 2k+1.
\end{align*}

By the Gershgorin Circle Theorem, see e.g.~\cite{varga}, all eigenvalues $\lambda$ of $U_{k}$ satisfy
\begin{equation}\label{eq:gersh-goal}
\lambda \geq \min_{0\leq j\leq 2k+2} \left\{ 
\beta(k,j):=u_{j,j}^{(k)}- |u_{j+2,j}^{(k)}|- |u_{j-2,j}^{(k)}|
\right\}, 
\end{equation}
where the coefficients with index outside the range $\{0,\dots,2k+2\}$ are conventioned to be $0$; e.g., $u_{-2,0}^{(k)}=0$. 
One can easily check that $\beta(k,0)=\beta(k,2k+2)>0$ for $k\geq2$, and $\beta(k,1)=\beta(k,2k+1)>0$. 
Furthermore, 
\begin{align*}
\beta(k,2)=\beta(k,2k)&=
4(k+1)(b^2+c^2)
-\tfrac{4(k+1)(b^2-c^2) \; \sqrt{3(2k-1)}}{\sqrt{8k} + \sqrt{2(2k-2)}}
\\ &\quad
- (b^2-c^2)\sqrt{2(2k+1)(2k+2)}
\\
&\geq 
4(k+1)(b^2+c^2)-4(k+1)\left(\tfrac{ \sqrt{3}}{2 + \sqrt{2}} +\tfrac{1}{\sqrt{2}}\right)(b^2-c^2)
\\
&\geq 4(k+1)(b^2+c^2)-4(k+1) \tfrac{6}{5}(b^2-c^2),
\end{align*}
which is positive, since the hypothesis $11c^2>b^2$ is equivalent to $b^2+c^2>\frac{6}{5}(b^2-c^2)$. 
For $3\leq j\leq 2k-1$, one has that
	\begin{align*}
		\beta(k,j)&=
		4(k+1)(b^2+c^2)
		-\tfrac{4(k+1)(b^2-c^2) \; \sqrt{(j+1)(2k+1-j)}}{\sqrt{(j+2)(2k+2-j)} + \sqrt{j(2k-j)}}
		\\ &\quad
		-\tfrac{4(k+1)(b^2-c^2) \; \sqrt{(j-1)(2k+3-j)}}{\sqrt{j(2k+4-j)} + \sqrt{(j-2)(2k+2-j)}}. 
	\end{align*}
Moreover, a direct computation gives
\begin{multline*}
	\beta(k,j)\geq \beta(k,3) =4(k+1)(b^2+c^2)
	\\
	-4(k+1)(b^2-c^2) 
	\left( \tfrac{\sqrt{4(2k-2)}}{\sqrt{5(2k-1)} + \sqrt{3(2k-3)}} + \tfrac{\sqrt{4k}}{\sqrt{3(2k+1)} + \sqrt{2k-1}} 
	\right)
	\\
	>4(k+1)(b^2+c^2)
	-4(k+1)(b^2-c^2) 
	\left( \tfrac{2}{\sqrt{5} + \sqrt{3}} + \tfrac{\sqrt{2}}{\sqrt{3} + 1} 
	\right),
\end{multline*}
which is positive, since  $\tfrac{2}{\sqrt{5} + \sqrt{3}} + \tfrac{\sqrt{2}}{\sqrt{3} + 1}\approx 1.021<\tfrac{6}{5}$. 
Therefore, the right-hand side of \eqref{eq:gersh-goal} is positive, and hence so is $\lambda_{\min}(U_k)$, which concludes the proof of (i).

We now turn to (ii). We have that $\nu_{2}^{(2k)} \geq 4kb^2+4k^2c^2>4(b^2+c^2)=\nu_{1}^{(2)} $ for all $k\geq2$, by \eqref{eq:nu^(k)>=}. Before proceeding, note that \eqref{eq:m_ij^k} can be used to check that
\begin{equation}\label{eq:alpha(2k,j)}
	\alpha(2k,j):=m_{j,j}^{(2k)}- |m_{j-2,j}^{(2k)}|- |m_{j+2,j}^{(2k)}| 
	=4(k-j)^2(a^2-b^2)+ 4kb^2+4k^2c^2
\end{equation}
for all $0\leq j\leq 2k$, where, by convention, $m_{i,j}=0$ if $i<0$ or $i>2k$; see also the proof of \cite[Lem.~3.4]{Lauret-SpecSU(2)}.

Next, let us show that $\nu_1^{(2k)} <\nu_2^{(2k)}$ for all $k\geq1$. The matrix $M_{2k}$ is similar to 
\begin{equation}
\diag \left( [m_{2i,2j}^{(2k)}]_{0\leq i,j\leq k},\, [m_{2i+1,2j+1}^{(2k)}]_{0\leq i,j\leq k-1}\right).
\end{equation}
Both blocks in the above block-diagonal matrix are \emph{tridiagonal matrices}; the first one is $(k+1)\times (k+1)$ and the second is $k\times k$. 
We shall only consider the case in which $k$ is even, since the case of odd $k$ is analogous and left to the reader.

Using the Gershgorin Circle Theorem again, we have that the smallest eigenvalue of the $k\times k$-block is greater than the minimum of $\alpha(2k,j)$ for $0\leq j\leq 2k$ with $j$ odd, which is realized when $j=k\pm1$ by \eqref{eq:alpha(2k,j)}; namely, 
\begin{equation}\label{eq:alpha(2k,k+1)}
	\alpha(2k,k\pm1)=4a^2+ 4(k-1)b^2 + 4k^2c^2.
\end{equation}
On the one hand, since $\nu_1^{(2k)} \leq m_{k,k}^{(2k)}=  4kb^2+4k^2c^2<\alpha(2k,k\pm1)$ because $a>b$, we deduce that $\nu_{1}^{(2k)}$ coincides with the smallest eigenvalue of the $(k+1)\times(k+1)$-block, and it is strictly smaller than every eigenvalue of the $k\times k$-block. 
On the other hand, the $(k+1)\times(k+1)$-block is a tridiagonal matrix with non-zero non-diagonal entries, thus it has simple spectrum, and, therefore, $\nu_{1}^{(2k)}$ is strictly smaller than the second eigenvalue of the first block. 
We conclude that $\nu_1^{(2k)}<\nu_2^{(2k)}$.

It only remains to show that $a^2+b^2+c^2=\nu_1^{(1)} <\nu_{2}^{(2k)}$ for every $k\geq2$. 
This has actually already been proven, since $\nu_{2}^{(2k)}$ and $\nu_1^{(2k)}$ are realized in different blocks, so the previous case shows that $\nu_{2}^{(2k)} \geq \alpha(2k,k\pm1) =4a^2+4(k-1)b^2+4k^2c^2$ by \eqref{eq:alpha(2k,k+1)}, which gives $\nu_2^{(2k)} >a^2+b^2+c^2=\nu_1^{(1)}$. 
\end{proof}

\begin{proposition}\label{lem:isospS-RP}
For all $n\geq 0$, an $\Sp(n+1)$-invariant metric on $\Ss^{4n+3}$ cannot be isospectral to an $\Sp(n+1)$-invariant metric on $\R P^{4n+3}$. 
\end{proposition}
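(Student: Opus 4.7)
The plan is to argue by contradiction, supposing that $(\Ss^{4n+3},\g_{\abcs})$ and $(\R P^{4n+3},\g_{(a',b',c',s')})$ are isospectral, and to exploit the fact that the projective spectrum uses only the spherical representations $\pi_{p',q'}$ with $p'-q'$ even (Lemma~\ref{lem:implicitSpec}), while the sphere spectrum also sees $p-q$ odd. By homogeneity, the first two heat invariants give equality of volumes and scalar curvatures; combined with Lemma~\ref{lem:vol-scal} and the fact that $\R P^{4n+3}$ is a free double cover quotient of $\Ss^{4n+3}$, this yields $4\sigma_3 s^{8n}=\sigma_3' s'^{8n}$ and equality of the scalar-curvature expressions in \eqref{eq:scal}. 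Since the multiplicity of the first positive eigenvalue is also a spectral invariant, it must simultaneously belong to the seven-entry list \eqref{eq:mult-lambda_1(a,b,c,s)} and the seven-entry list \eqref{eq:mult-lambda_1(a,b,c,s)projective}. A routine polynomial comparison of these fourteen quadratics in $n$ shows that, for integer $n\geq 1$, the only common values are $n(2n+3)$, $(n+1)(2n+3)$, and $(2n+1)(2n+3)$, occurring respectively in sphere rows~2,~3,~6 and projective rows~1,~2,~5; in any other row, isospectrality is already excluded. I label the three remaining scenarios (S2/P1), (S3/P2), (S6/P5).

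In each surviving scenario, equating first eigenvalues yields a concrete relation: $s=s'$ in (S2/P1) and (S6/P5), and $ns^2+b^2+c^2=ns'^2+b'^2+c'^2$ in (S3/P2). The crucial remaining fact is that the sphere representation $\pi_{1,0}$, with $p-q=1$ odd, contributes the eigenvalue $\lambda_1^{(1,0)}(\abcs)=4ns^2+2(a^2+b^2+c^2)$ with multiplicity at least $4(n+1)$ to $\Spec(\Ss^{4n+3},\g_{\abcs})$. This value must therefore also occur in $\Spec(\R P^{4n+3},\g_{(a',b',c',s')})$, where the contributing representations have $p'-q'$ even and the multiplicity is a sum of dimensions $d_{p',q'}$ from~\eqref{eq:d_pq} over matching triples $(p',q',j)$. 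Since $4(n+1)$ never equals a single $d_{p',q'}$ for $n\geq 1$, any valid matching forces either an accidental coincidence of $\lambda_1^{(1,0)}(\abcs)$ with other sphere eigenvalues that inflates its multiplicity, or several projective contributions summing to the (possibly inflated) sphere multiplicity.

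This is where Lemma~\ref{claim:nu} becomes decisive. When $b'^2<11c'^2$, part~(i) gives strict monotonicity of $\nu_1^{(2k)}(a',b',c')$ in $k$, and part~(ii) forces $\nu_j^{(2k)}(a',b',c')$ with $j\geq 2$ and $k\geq 2$ to exceed $\nu_1^{(1)}(a',b',c')$ and $\nu_2^{(2)}(a',b',c')$, so that only a short explicit list of candidates $(p',q',j)$ can realize $\lambda_1^{(1,0)}(\abcs)$ as a projective eigenvalue. Substituting each candidate into~\eqref{eq:lambda}, and combining with the first-eigenvalue identity and the volume and scalar-curvature relations from the initial step, produces a numerical contradiction in each subcase. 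The complementary regime $b'^2\geq 11c'^2$ is handled separately using the elementary bound~\eqref{eq:nu^(k)>=}, which in that asymmetric range is already sharp enough to exclude all remaining candidates. The main technical obstacle I anticipate lies in the low-dimensional case $n=1$, where the relevant values $d_{p',q'}$ are numerically close to $4(n+1)=8$ (e.g.\ $d_{1,1}=5$) and in principle several sphere eigenvalues can conspire to match an analogous sum on the projective side; handling this case requires the sharper parity-sensitive information provided by Lemma~\ref{claim:nu} to enumerate and eliminate all the possible accidental coincidences.
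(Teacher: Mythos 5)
Your opening moves (heat invariants giving volume and scalar curvature, then matching the multiplicity of the first eigenvalue against \eqref{eq:mult-lambda_1(a,b,c,s)} and \eqref{eq:mult-lambda_1(a,b,c,s)projective} to reduce to the three common values $n(2n+3)$, $(n+1)(2n+3)$, $(2n+1)(2n+3)$) agree with the paper. The gap is in the core mechanism you propose afterwards: locating the number $\lambda_1^{(1,0)}(a,b,c,s)$ inside the projective spectrum and eliminating a ``short explicit list'' of candidate triples $(p',q',j)$. No such short list exists a priori. The value $\lambda_1^{(1,0)}=4ns^2+2(a^2+b^2+c^2)$ involves $a^2$, which at this stage is completely unconstrained relative to the projective parameters, so by \eqref{eq:lambda} it can coincide with eigenvalues $\lambda_j^{(p',q')}$ for arbitrarily large $(p',q')$; Lemma~\ref{claim:nu} only orders the $\nu$'s for a \emph{fixed} parameter triple and does not truncate this infinite candidate set, nor does it control sums of several $d_{p',q'}$ matching a possibly inflated sphere multiplicity. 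The paper circumvents exactly this difficulty by never trying to locate $\lambda_1^{(1,0)}$ directly: it first upgrades the first-eigenvalue information to the identities $s_1=s_2$, $b_1^2+c_1^2=b_2^2+c_2^2$, $a_i>b_i$ (which in your scenarios (S2/P1) and (S3/P2) still requires analyzing second/next eigenvalues, a step your sketch omits), then iteratively removes the \emph{common} families $\calS_0,\calS_1,\calS_2,\dots$ and compares the smallest eigenvalue left over; the only multiplicity coincidence at each stage forces $\lambda_1^{(2k,0)}(a_1,b_1,c_1,s)=\lambda_1^{(2k,0)}(a_2,b_2,c_2,s)$, and the condition for these to lie below $\lambda_1^{(1,0)}$ yields $a_1^2>2(2k-1)ns^2+(4k-1)(b_1^2+c_1^2)$ for every $k$, which is the actual contradiction --- and even this only works when $b_i^2<11c_i^2$, since Lemma~\ref{claim:nu}(i) needs that hypothesis.

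The second, equally serious, gap is your treatment of the regime $b_i^2\geq 11c_i^2$: the elementary bound \eqref{eq:nu^(k)>=} is not ``already sharp enough'' there, and the paper's argument in that case is not a candidate-exclusion argument at all. It uses the volume relation $\sigma_3(a_2,b_2,c_2)=4\,\sigma_3(a_1,b_1,c_1)$ from \eqref{eq:volS=2volRP}, the quadratic equation \eqref{eq:quadraticequation} satisfied by $a_i^2$ in terms of $\beta(a,b,c)$ (coming from the matched $\lambda_1^{(4,0)}$), Claim~\ref{claim:largestroot} to select the correct root, and then a sign analysis of the scalar-curvature difference (Claims~\ref{claim:RHS>0} and \ref{claim:RHS<0}) to rule out both possible root configurations. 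Nothing in your sketch substitutes for this analytic step, so as written the proposal does not close either regime.
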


\begin{proof}
Suppose that $(\Ss^{4n+3},\g_{(a_1,b_1,c_1,s_1)})$ and $(\R P^{4n+3},\g_{(a_2,b_2,c_2,s_2)})$ are isospectral for some positive real numbers $a_i\geq b_i\geq c_i$ and $s_i$, for $i=1,2$. 
We assume $n\geq1$ since the case $n=0$ is very similar (essentially, one has to set $s_1=s_2=0$).

The multiplicity of the first Laplace eigenvalue in both manifolds must coincide. By Theorem~\ref{thm:lambda_1(a,b,c,s)},  such multiplicities are given by \eqref{eq:mult-lambda_1(a,b,c,s)} and \eqref{eq:mult-lambda_1(a,b,c,s)projective}, respectively.
Hence, we have that the multiplicity is equal to either
$n(2n+3)$, $(n+1)(2n+3)$, or $(2n+1)(2n+3)$. 
		
We first assume it is $n(2n+3)$.
The smallest positive eigenvalues of each spectra coincide, that is, 
$\lambda_1^{(1,1)}(a_1,b_1,c_1,s_1)= \lambda_1^{(1,1)}(a_2,b_2,c_2,s_2)$, which gives $s_1=s_2$. 
We set $\calS_0$ as in \eqref{eq:calS_0}, which is contained simultaneously in both spectra. 
		
We have already seen in the proof of Theorem~\ref{thm:rigidityRP^4n+3} that the smallest eigenvalue in $\spec(\R P^{4n+3},\g_{(a_2,b_2,c_2,s_2)})\smallsetminus\calS_0$ is $\lambda_1^{(2,0)}(a_2,b_2,c_2,s_2)$, with multiplicity $(n+1)(2n+3)$ if $a_2>b_2$, $2(n+1)(2n+3)$ if $a_2=b_2>c_2$, and $3(n+1)(2n+3)$ if $a_2=b_2=c_2$. 
Similarly, an almost identical procedure to that done for Row 2 in the proof of Theorem~\ref{thm:rigidityS^4n+3} gives that the smallest eigenvalue in $\spec(\Ss^{4n+3},\g_{(a_1,b_1,c_1,s_1)})\smallsetminus\calS_0$ is given as in \eqref{eq:Row2-2ndeigenvalue}. 
Since the only common value among their multiplicities is $(n+1)(2n+3)$, we have that $\lambda_1^{(2,0)}(a_1,b_1,c_1,s_1)= \lambda_1^{(2,0)}(a_2,b_2,c_2,s_2)$.

Let us now assume that the multiplicity is $(n+1)(2n+3)$. We have that $a_i>b_i$ for $i=1,2$. 
Since the first eigenvalues coincide, we obtain that $\lambda_1^{(2,0)}(a_1,b_1,c_1,s_1)= \lambda_1^{(2,0)}(a_2,b_2,c_2,s_2)$.
The second eigenvalue with its corresponding multiplicity on $(\Ss^{4n+3},\g_{(a_1,b_1,c_1,s_1)})$ (resp.\ $(\R P^{4n+3},\g_{(a_2,b_2,c_2,s_2)})$) has been explicitly determined at the beginning of the case Row 3 (resp.\ Row 2) in the proof of Theorem~\ref{thm:rigidityS^4n+3} (resp.\ Theorem~\ref{thm:rigidityRP^4n+3}).
A simple inspection shows that the only possible coincidence among their multiplicities is $n(2n+3)$, when the corresponding eigenvalue is $\lambda_1^{(1,1)}(a_1,b_1,c_1,s_1)= \lambda_1^{(1,1)}(a_2,b_2,c_2,s_2)$.
Furthermore, $a_i>b_i$ for $i=1,2$. 

When the multiplicity is $(2n+1)(2n+3)$, one has that $\lambda_1^{(1,1)}(a_1,b_1,c_1,s_1)= \lambda_1^{(2,0)}(a_1,b_1,c_1,s_1)=  \lambda_1^{(2,0)}(a_2,b_2,c_2,s_2)= \lambda_1^{(1,1)}(a_2,b_2,c_2,s_2)$ and $a_i>b_i$, $i=1,2$. 

\smallskip

Summing up, we have proved thus far that:
\begin{align*}
&\lambda_1^{(1,1)}(a_1,b_1,c_1,s_1)= \lambda_1^{(1,1)}(a_2,b_2,c_2,s_2), \\ 
&\lambda_1^{(2,0)}(a_1,b_1,c_1,s_1)=  \lambda_1^{(2,0)}(a_2,b_2,c_2,s_2), \\[2pt]
&a_i>b_i, \text{ for }i=1,2. 
\end{align*}
This implies that 
\begin{equation}\label{eq:identities1}
s:=s_1=s_2 \quad \text{and} \quad b_1^2+c_1^2=b_2^2+c_2^2.
\end{equation}

By \eqref{eq:lambda}, \eqref{eq:identities1} forces $\nu_1^{(2)}(a_1,b_1,c_1)= \nu_1^{(2)}(a_2,b_2,c_2)$. 
Consequently, the set $\calS_1$ defined as in \eqref{eq:calS_1} is simultaneously contained in both spectra.  
From the proofs of Theorems~\ref{thm:rigidityS^4n+3} and \ref{thm:rigidityRP^4n+3}, we easily see that the only possible coincidence among multiplicities of the smallest eigenvalues in $\spec(\Ss^{4n+3},\g_{(a_1,b_1,c_1,s)})\smallsetminus (\calS_0\cup\calS_1)$ and $\spec(\R P^{4n+3},\g_{(a_2,b_2,c_2,s)}) \smallsetminus (\calS_0\cup\calS_1)$ is $\dim V_{4,0}=\binom{2n+5}{4}$, thus 
\begin{equation}\label{eq:lambda^40}
\lambda_1^{(4,0)}(a_1,b_1,c_1,s) = \lambda_1^{(4,0)}(a_2,b_2,c_2,s).
\end{equation}
This situation occurs only if 
$
\lambda_1^{(1,0)}(a_1,b_1,c_1,s) >\lambda_1^{(4,0)}(a_1,b_1,c_1,s),
$
which gives $4ns^2+2(a_1^2+b_1^2+c_1^2) \geq 16ns^2+2\nu_1^{(4)}(a_2,b_2,c_2) \geq 16ns^2+2(8b_2^2+16c_2^2)>16ns^2+16(b_1^2+c_1^2)$ by \eqref{eq:nu^(k)>=} and \eqref{eq:identities1}, thus $a_1^2>6ns^2+7(b_1^2+c_1^2)$, and only if 
$
\lambda_2^{(2,0)}(a_2,b_2,c_2,s) >\lambda_1^{(4,0)}(a_2,b_2,c_2,s),
$
which gives $8ns^2+8a^2+8c^2 >  16ns^2+2\nu_1^{(4)}(a_2,b_2,c_2)\geq 16ns^2+16b^2+32c^2$ by \eqref{eq:nu^(k)>=}, thus 
\begin{equation}\label{eq:a_2lowerbound}
a_2^2\geq ns^2 +2b_2^2 +3c_2^2. 
\end{equation}

At this point, we divide the proof according to whether $b_i^2<11c_i^2$ holds or not. 

\smallskip

\noindent
\textbf{First case}:
Assume that $b_i^2<11c_i^2$, for both $i=1,2$. 

From \eqref{eq:lambda^40}, we obtain that $\nu_1^{(4)}(a_1,b_1,c_1)= \nu_1^{(4)}(a_2,b_2,c_2)$. 
Therefore, 
the following subset is simultaneously contained in both spectra:
\begin{equation}
	\calS_2:=
	\Big\{
	\underbrace{ \lambda_1^{(q+4,q)},\dots, \lambda_1^{(q+4,q)}}_{d_{q+4,q} \text{-times}} : q\geq0\text{ even}
	\Big\}.
\end{equation}

From Lemma~\ref{lem:implicitSpec}, the smallest eigenvalues in  $\spec(\Ss^{4n+3},\g_{(a_1,b_1,c_1,s)})\smallsetminus (\calS_0\cup\calS_1\cup\calS_2)$ is given by 
\begin{multline*}
\min\left(
	\begin{array}{l}
	\big\{\lambda_1^{(k+q,q)}(a_1,b_1,c_1,s):k\geq1\text{ odd},\, q\geq 0\big\} \,\cup\,
	\\
	\big\{\lambda_2^{(2+q,q)}(a_1,b_1,c_1,s):q\geq 0\big\} \,\cup\,
	\\
	\big\{\lambda_2^{(4+q,q)}(a_1,b_1,c_1,s):q\geq 0\big\} \,\cup\,
	\\
	\big\{\lambda_1^{(q+k,q)}(a_1,b_1,c_1,s):k\geq6\text{ even},\, q\geq 0,\big\}	
	\end{array}
\right)
\\ 
=\min\left( \lambda_1^{(1,0)}(a_1,b_1,c_1,s), \lambda_1^{(6,0)}(a_1,b_1,c_1,s) \right).
\end{multline*}
The last equality follows from the following facts, which, in turn, rely on \eqref{eq:lambda}:
\begin{enumerate}[$\bullet$]
\item $\lambda_1^{(6,0)}(a_1,b_1,c_1,s)< \lambda_1^{(q+k,q)}(a_1,b_1,c_1,s)$ if $k\geq8$ is even, by Lemma~\ref{claim:nu};
\item  $\lambda_1^{(1,0)}(a_1,b_1,c_1,s)< \lambda_2^{(k+q,q)}(a_1,b_1,c_1,s)$ if $k\geq0$ is even and $q\geq0$, by Lemma~\ref{claim:nu};
\item $\lambda_1^{(1,0)}(a_1,b_1,c_1,s)< \lambda_1^{(k+q,q)}(a_1,b_1,c_1,s)$ if $k\geq1$ is odd and $q\geq0$ with $(k,q)\neq (1,0)$, by \eqref{eq:nu^(k)>=}.
\end{enumerate}
Likewise, the smallest eigenvalue in $\spec(\R P^{4n+3},\g_{(a_2,b_2,c_2,s_2)}) \smallsetminus (\calS_0\cup\calS_1\cup\calS_2)$ is given by 
\begin{multline*}
\min\left(
\begin{array}{l}
	\big\{\lambda_2^{(2+q,q)}(a_2,b_2,c_2,s):q\geq 0\big\} \,\cup\,
	\\
	\big\{\lambda_2^{(4+q,q)}(a_2,b_2,c_2,s):q\geq 0\big\} \,\cup\,
	\\
	\big\{\lambda_1^{(q+k,q)}(a_2,b_2,c_2,s):k\geq6\text{ even},\, q\geq 0,\big\}	
\end{array}
\right)
\\ 
	=\min\left( \lambda_2^{(2,0)}(a_2,b_2,c_2,s), \lambda_1^{(6,0)}(a_2,b_2,c_2,s) \right).
\end{multline*}
The last equality follows from the following facts, where, once again, \eqref{eq:lambda} is used:
\begin{enumerate}[$\bullet$]
\item $\lambda_1^{(6,0)}(a_2,b_2,c_2,s)< \lambda_1^{(q+k,q)}(a_2,b_2,c_2,s)$ if $k\geq8$ is even, by Lemma~\ref{claim:nu};
\item  $\lambda_2^{(2,0)}(a_2,b_2,c_2,s)< \lambda_2^{(k+q,q)}(a_2,b_2,c_2,s)$ if $k\geq4$ is even and $q\geq0$, by Lemma~\ref{claim:nu}.
\end{enumerate}

The multiplicities of the first eigenvalues are clearly given by:
\begin{equation*}
\begin{aligned}
&
\begin{cases}
2\dim V_{1,0} &\text{ if } \lambda_1^{(1,0)}(a_1,b_1,c_1,s)< \lambda_1^{(6,0)}(a_1,b_1,c_1,s),
\\
\dim V_{6,0} &\text{ if } \lambda_1^{(1,0)}(a_1,b_1,c_1,s)> \lambda_1^{(6,0)}(a_1,b_1,c_1,s),
\\
2\dim V_{1,0}+\dim V_{6,0} &\text{ if } \lambda_1^{(1,0)}(a_1,b_1,c_1,s)= \lambda_1^{(6,0)}(a_1,b_1,c_1,s),
\end{cases}
\\
&
\begin{cases}
\dim V_{2,0} &\text{ if } \lambda_2^{(2,0)}(a_2,b_2,c_2,s)< \lambda_1^{(6,0)}(a_2,b_2,c_2,s) \text{ and } b_2>c_2,
\\
2\dim V_{2,0} &\text{ if } \lambda_2^{(2,0)}(a_2,b_2,c_2,s)< \lambda_1^{(6,0)}(a_2,b_2,c_2,s) \text{ and } b_2=c_2,
\\
\dim V_{6,0} &\text{ if } \lambda_2^{(2,0)}(a_2,b_2,c_2,s)> \lambda_1^{(6,0)}(a_2,b_2,c_2,s),
\\
\dim V_{2,0}+\dim V_{6,0} &\text{ if } \lambda_2^{(2,0)}(a_2,b_2,c_2,s)= \lambda_1^{(6,0)}(a_2,b_2,c_2,s) \text{ and } b_2>c_2,
\\
2\dim V_{2,0}+\dim V_{6,0} &\text{ if } \lambda_2^{(2,0)}(a_2,b_2,c_2,s)= \lambda_1^{(6,0)}(a_2,b_2,c_2,s) \text{ and } b_2=c_2,
\end{cases}
\end{aligned}
\end{equation*}
respectively. 
Since the only possible coincidence among multiplicities is $\dim V_{6,0}$, we have that $\lambda_1^{(6,0)}(a_1,b_1,c_1,s)= \lambda_1^{(6,0)}(a_2,b_2,c_2,s)$, which occurs only if
\begin{align*}
\lambda_1^{(1,0)}(a_1,b_1,c_1,s) &
> \lambda_1^{(6,0)}(a_1,b_1,c_1,s) 
=24ns^2+2 \nu_1^{(6)}(a_2,b_2,c_2)
\\&
> 24ns^2+24b_2^2+72c_2^2 \qquad\text{(by \eqref{eq:nu^(k)>=})}
\\&
> 24ns^2+24(b_1^2+c_1^2) \qquad\text{(by \eqref{eq:identities1})},
\end{align*}
thus $a_1^2>10ns^2+11(b_1^2+c_1^2)$, because $\lambda_1^{(1,0)}(a_1,b_1,c_1,s)= 4ns^2+2(a_1^2+b_1^2+c_1^2)$.
Furthermore, we have that $\nu_1^{(6)}(a_1,b_1,c_1) = \nu_1^{(6)}(a_2,b_2,c_2)$. 

Repeating this procedure, we deduce from the multiplicity of the smallest eigenvalue in 
$\spec(\Ss^{4n+3},\g_{(a_1,b_1,c_1,s_1)}) \smallsetminus (\calS_0\cup\dots\cup \calS_k)$ and $\spec(\R P^{4n+3},\g_{(a_2,b_2,c_2,s_2)}) \smallsetminus (\calS_0\cup\dots\cup\calS_k)$, where 
\begin{equation*}
\calS_i = \Big\{
\underbrace{ \lambda_1^{(q+2i,q)},\dots, \lambda_1^{(q+2i,q)}}_{d_{q+2i,q} \text{-times}} : q\geq0\text{ even}
\Big\},
\end{equation*}
that $\lambda_1^{(2k,0)}(a_1,b_1,c_1,s)= \lambda_1^{(2k,0)}(a_2,b_2,c_2,s)$, which occurs only if
\begin{equation}
	\begin{aligned}
 4ns^2+2(a_1^2+b_1^2+c_1^2)&=
		\lambda_1^{(1,0)}(a_1,b_1,c_1,s) \\&
		> \lambda_1^{(2k,0)}(a_1,b_1,c_1,s) 
		=8kns^2+2 \nu_1^{(2k)}(a_2,b_2,c_2)
		\\&
		> 8kns^2+8kb_2^2+8k^2c_2^2 \qquad\text{(by \eqref{eq:nu^(k)>=})}
		\\&
		> 8kns^2+8k(b_1^2+c_1^2) \qquad\text{(by \eqref{eq:identities1})}.
	\end{aligned}
\end{equation} 
Hence
\begin{equation}
a_1^2> 2(2k-1)ns^2+(4k-1)(b_1^2+c_1^2)
\end{equation}
for every positive integer $k$, which gives the required contradiction.

\bigskip

\noindent
\textbf{Second case}:
Assume that either
\begin{equation}\label{eq:assumption2ndcase}
b_1^2\geq 11c_1^2, \qquad \text{or} \qquad b_2^2\geq 11c_2^2. 
\end{equation}

So far, we have shown that $s:=s_1=s_2$ and $b_1^2+c_1^2=b_2^2+c_2^2$ from \eqref{eq:identities1}, and 
\begin{equation}
\beta(a_1,b_1,c_1)=\beta(a_2,b_2,c_2)=:\beta
\end{equation}
from $\lambda_1^{(4,0)}(a_1,b_1,c_1,s)= \lambda_1^{(4,0)}(a_2,b_2,c_2,s)$, 
where $\beta(a,b,c)$ is given as in \eqref{eq:beta}. 
Furthermore, since
\begin{equation*}
\Vol(\Ss^{4n+3},\g_{(a_1,b_1,c_1,s_1)})= \Vol(\R P^{4n+3},\g_{(a_2,b_2,c_2,s_2)}) = \tfrac12\Vol(\Ss^{4n+3}, \g_{(a_2,b_2,c_2,s_2)}),
\end{equation*}
Lemma~\ref{lem:vol-scal} implies that $\sigma_3(a_2,b_2,c_2)=4\, \sigma_3(a_1,b_1,c_1)$, that is,
\begin{equation}\label{eq:volS=2volRP}
	a_2^2b_2^2c_2^2=4\, a_1^2b_1^2c_1^2. 
\end{equation}
Also, the proof of Lemma~\ref{lem:invariants} ensures that 
\begin{equation}\label{eq:quadraticequations}
	\begin{aligned}
		Aa_1^4-Ba_1^2+C_1&=0,\\
		Aa_2^4-Ba_2^2+C_2&=0,
	\end{aligned}	
\end{equation}
where $A=3(b_i^2+c_i^2)-2\beta$, $B=\beta(2(b_i^2+c_i^2)-\beta)$, $C_1=3\sigma_3(a_1,b_1,c_1)=3\,a_1^2b_1^2c_1^2$,  $C_2=3\sigma_3(a_2,b_2,c_2)=3\,a_2^2b_2^2c_2^2=4C_1$, and, moreover, $A\geq0$, and $B,C_1,C_2$ are all positive. 
Actually, also $A>0$ by Lemma~\ref{lem:beta} and \eqref{eq:assumption2ndcase}. 
Consequently, $a_i^2=\tfrac{1}{2A}\big(B\pm\sqrt{B^2-4AC_i}\big)$. 
We claim that only the larger real root occurs if $b_i\geq 11c_i$:

\begin{claim}\label{claim:largestroot}
If $b_i^2\geq 11 c_i^2$, then $a_i^2=\tfrac{1}{2A}\big(B+\sqrt{B^2-4AC_i}\big)$. 
\end{claim}

\begin{proof}
\renewcommand{\qedsymbol}{$\blacksquare$}
Clearly, it is sufficient to show that $a_i^2> \frac{B}{2A}$. 
First, note that $b_i^2\geq 11c_i^2$ implies $\frac{5}{6}(b_i^2+c_i^2)\leq b_i^2-c_i^2$. 
By straightforward manipulations, one has that 
\begin{equation*}
\frac{B}{2A} \!= \!\frac{a_i^2(b_i^2+c_i^2)^2 -b_i^2c_i^2\big(\beta-(b_i^2+c_i^2)\big)} {2(b_i^2-c_i^2)^2}
\!\leq\! \frac{a_i^2(b_i^2+c_i^2)^2 -b_i^2c_i^2\big(\beta-(b_i^2+c_i^2)\big)} {\frac{25}{18}(b_i^2+c_i^2)^2} \!< \! a_i^2.
\end{equation*}
Since $\beta>b_i^2+c_i^2$ by Lemma~\ref{lem:beta}, the assertion follows. 
\end{proof}

Since $(\R P^{4n+3}, \g_{(a_2,b_2,c_2,s_2)})$ and $(\Ss^{4n+3},\g_{(a_1,b_1,c_1,s_1)})$ were assumed to be isospectral, their scalar curvatures must coincide. 
Thus, by Lemma~\ref{lem:vol-scal}, 
	\begin{align*}
		0
		&=\scal(\R P^{4n+3}, \g_{(a_2,b_2,c_2,s_2)})- \scal(\Ss^{4n+3},\g_{(a_1,b_1,c_1,s_1)})
		\\
		&= 
		16(a_2^2-a_1^2)
		-2ns^4 \left(\frac{a_2^2(b_2^2+c_2^2) + b_2^2c_2^2}{a_2^2b_2^2c_2^2} -\frac{a_1^2(b_1^2+c_1^2) + b_1^2c_1^2}{a_1^2b_1^2c_1^2} \right)
		\\ &\quad
		-4 \left(
		\frac{\big(a_2^2(b_2^2+c_2^2) + b_2^2c_2^2\big)^2} {a_2^2b_2^2c_2^2} - 
		\frac{\big(a_1^2(b_1^2+c_1^2) + b_1^2c_1^2\big)^2} {a_1^2b_1^2c_1^2} 
		\right).
	\end{align*}
Combining \eqref{eq:identities1} and \eqref{eq:volS=2volRP}, tedious but straightforward computations give 
\begin{equation}\label{eq:Ftilde}
	\begin{aligned}
		0
		&=
		\frac{ns^4}{2a_1^2b_1^2c_1^2} \left(
		(4a_1^2-a_2^2)(b_1^2+c_1^2) 
		- 4b_1^2c_1^2\tfrac{a_1^2-a_2^2}{a_2^2}
		\right)
		\\&\quad 
		+ \frac{ \big((b_1^2+c_1^2)^2 - \tfrac{4b_1^4c_1^4}{a_2^4}\big) }{a_1^2b_1^2c_1^2}
		(2a_1^2+a_2^2)(2a_1^2-a_2^2)
		-16(a_1^2-a_2^2).
	\end{aligned}
\end{equation}

The following technical (but simple) facts will be used in the sequel.

\begin{claim}\label{claim:RHS>0}
	If $a_2^2<a_1^2$, then the right-hand side of \eqref{eq:Ftilde} is positive. 
\end{claim}

\begin{proof}
	\renewcommand{\qedsymbol}{$\blacksquare$}
	Concerning the first term, we have that
	\begin{equation}\label{eq:first-row-positive}
		\begin{aligned}
			(4a_1^2-a_2^2)(b_1^2+c_1^2) 
			- 4b_1^2c_1^2\tfrac{a_1^2-a_2^2}{a_2^2}
			&
			>(a_1^2-a_2^2) \left( (b_1^2+c_1^2) - \tfrac{4b_1^2c_1^2}{a_2^2} \right).
		\end{aligned}
	\end{equation}
	By \eqref{eq:a_2lowerbound}, we get that $a_2^2> 2(b_2^2+c_2^2)=2(b_1^2+c_1^2)>4b_1c_1$, thus $\tfrac{4b_1^2c_1^2}{a_2^2}<b_1c_1<b_1^2+c_1^2 $, which shows that \eqref{eq:first-row-positive} is positive. 
	
	To prove that the remaining terms in \eqref{eq:Ftilde} are positive, it suffices to show that 
	\begin{equation*}
		(a_2^2+2a_1^2)
		\left((b_1^2+c_1^2)^2 - \tfrac{4b_1^4c_1^4}{a_2^4}\right) >16\,{a_1^2b_1^2c_1^2}. 
	\end{equation*}
	We already saw that $a_2^2>4b_1c_1$, thus 
	$(b_1^2+c_1^2)^2 - \tfrac{4b_1^4c_1^4}{a_2^4}
	>(b_1^2+c_1^2)^2 - \tfrac{b_1^2c_1^2}{4} 
	=\tfrac78(b_1^2+c_1^2)^2+ \tfrac18(b_1^2-c_1^2)^2
	>\tfrac78(b_1^2+c_1^2)^2$.
	Using, in addition, that $a_2^2+2a_1^2>2a_1^2$, the above is verified if 
	$\tfrac74(b_1^2+c_1^2)^2>16\,{b_1^2c_1^2}$,
	which holds thanks to the fact that $(b_1^2+c_1^2)^2=(b_2^2+c_2^2)^2\geq 4b_2^2c_2^2 = 16\frac{a_1^2}{a_2^2}b_1^2c_1^2>16b_1^2c_1^2$, by \eqref{eq:volS=2volRP}.
\end{proof}

\begin{claim}\label{claim:RHS<0}
If $a_2^2>7a_1^2$, then the right-hand side of \eqref{eq:Ftilde} is negative. 
\end{claim}

\begin{proof}
\renewcommand{\qedsymbol}{$\blacksquare$}
The first term in \eqref{eq:Ftilde} is negative if and only if 
\begin{equation*}
(a_2^2-4a_1^2)(b_1^2+c_1^2) 
	> 4b_1^2c_1^2\tfrac{a_2^2-a_1^2}{a_2^2}. 
\end{equation*}
By noting that $\frac{b_1c_1}{a_2^2}< \frac{b_1c_1}{7a_1^2}<\frac17$ and $b_1^2+c_1^2\geq 2b_1c_1$, it is sufficient to show that $2(a_2^2-4a_1^2) > \frac47 (a_2^2-a_1^2),$ which is clearly true because $a_2^2>7a_1^2$. 

The remaining terms in \eqref{eq:Ftilde} are negative if and only if 
\begin{equation*}
	(a_2^4-4a_1^4) \left((b_1^2+c_1^2)^2 - \tfrac{4b_1^4c_1^4}{a_2^4}\right)
	>16(a_2^2-a_1^2)a_1^2b_1^2c_1^2.
\end{equation*}
Since $a_2^4>49a_1^4$, we have that $\frac{4b_1^4c_1^4}{a_2^4}<\frac{4b_1^4c_1^4}{49a_1^4}< \frac{1}{12} b_1^2c_1^2$, and so
$
(b_1^2+c_1^2)^2-\frac{4b_1^4c_1^4}{a_2^4} 
>4b_1^2c_1^2- \frac{1}{12}b_1^2c_1^2
=\frac{47}{12} b_1^2c_1^2.
$
Consequently, it is sufficient to show that 
\begin{equation*}
	\tfrac{47}{12} 
	(a_2^4-4a_1^4)
	>16(a_2^2-a_1^2)a_1^2.
\end{equation*}
The above identity can be easily verified keeping in mind that  $a_2^2>7a_1^2$.
\end{proof}

We are now in position to finish the proof, seeking the desired contradiction under the assumption \eqref{eq:assumption2ndcase}, that is, $b_i^2\geq 11c_i^2$ for some $i=1,2$. 

We first suppose that $b_1^2\geq 11c_1^2$, thus $a_1^2=\tfrac{1}{2A}(B+\sqrt{B^2-4AC_1})$ by Claim~\ref{claim:largestroot}. 
Thus, $a_2^2=\tfrac{1}{2A}(B\pm\sqrt{B^2-4AC_2})= \tfrac{1}{2A}(B\pm\sqrt{B^2-16AC_1}) <a_1^2$, so Claim~\ref{claim:RHS>0} yields the desired contradiction. 

Suppose now that $b_2^2\geq 11c_2^2$.  Then Claim~\ref{claim:largestroot} forces 
\begin{equation}\label{eq:a2}
a_2^2= \tfrac{1}{2A}\left(B+\sqrt{B^2-4AC_2}\right) =\tfrac{1}{2A}\left(B+\sqrt{B^2-16AC_1}\right). 
\end{equation} 
We recall that $a_1^2=\tfrac{1}{2A}\big(B\pm\sqrt{B^2-4AC_1}\big)$. If $a_1^2=\tfrac{1}{2A}\big(B+\sqrt{B^2-4AC_1}\big)$, then $a_1^2>a_2^2$, thus Claim~\ref{claim:RHS>0} gives a contradiction. 
Therefore,
\begin{equation}\label{eq:a1}
a_1^2=\tfrac{1}{2A}\left(B-\sqrt{B^2-4AC_1}\right). 
\end{equation}
According to Claim~\ref{claim:RHS<0}, it is sufficient to show that $a_2^2>7a_1^2$. 
From \eqref{eq:a2} and \eqref{eq:a1}, it follows that this is equivalent to $6B<\sqrt{B^2-16AC_1} + 7\sqrt{B^2-4AC_1}$. 
Thus, it is sufficient to show that 
\begin{equation*}
36B^2<B^2-16AC_1+49(B^2-4AC_1)=50B^2-212AC_1,
\end{equation*}
which holds since $B^2>16AC_1$. 
\end{proof}

Finally, we are in position to prove Theorem~\ref{thm:rigidity} in the Introduction.

\begin{proof}[Proof of Theorem~\ref{thm:rigidity}]
	Consider two homogeneous metrics on CROSSes that are isospectral. Since the \emph{dimension} of a manifold is one of its spectral invariants, we may assume that these manifolds have the same dimension $d$. 
	
	We divide the proof in cases according to the congruence of $d$ modulo $4$. In each case, we prove that homogeneous metrics are determined (up to isometry) by the spectrum. We will make frequent use of the classification of homogeneous metrics on CROSSes, discussed in the Introduction, that can be found e.g.~in \cite[Ex.~6.16, 6.21]{mybook} or \cite{Ziller82}, and of Table~\ref{tab:eigenvalues}.
	Recall also that, just like its scalar curvature, each eigenvalue of the Laplacian on a closed Riemannian manifold $(M,\g)$ satisfies $\lambda_j(M,\alpha\,\g)=\frac{1}{\alpha}\lambda_j(M,\g)$ for all $\alpha>0$, and the corresponding eigenspaces are the same, so
	$\Spec(M,\alpha\, \g) = \frac{1}{\alpha}\Spec(M,\g)$.
	
	We recall from the proof of Theorem~\ref{thm:rigidityS^4n+3} that the volume and the scalar curvature of a homogeneous Riemannian manifold are spectral invariants; this fact will be also frequently used in the sequel without explicit mention.
	
	\smallskip
	\noindent\textbf{$\bullet$ Case $d\equiv 0\mod 4$:} 
	The only $d$-dimensional CROSSes are $\Ss^d$, $\R P^d$, $\C P^{d/2}$, $\Hr P^{d/4}$, and, if $d=16$, also $\Ca P^2$. Up to homotheties and isometries, there exists a \emph{unique} homogeneous metric on each of these manifolds. 
	According to Tables~\ref{tab:eigenvalues}--\ref{tab:eigenvaluesRPd}, we have that
	\begin{gather*}
		\frac{\scal(\Ss^d)}{\lambda_1(\Ss^d)}= d-1, \qquad
		\frac{\scal(\R P^d)}{\lambda_1(\R P^d)}= \frac{d(d-1)}{2(d+1)},\qquad
		\frac{\scal(\C P^{d/2})}{\lambda_1(\C P^{d/2})}= \frac{d}{2},\\
		\frac{\scal(\Hr P^{d/4})}{\lambda_1(\Hr P^{d/4})}= \frac{d(d+8)}{2d+8}, \qquad
		\frac{\scal(\Ca P^2)}{\lambda_1(\Ca P^2)}=12.
	\end{gather*}
	For $d>4$, the above quantities are all distinct, leading to a contradiction if there were two isospectral but non-isometric $d$-dimensional CROSSes. If $d=4$, then the above invariant distinguishes every possibility excepting the pair $\Ss^4$ and $\Hr P^1$, which indeed are homothetic, and therefore isometric as their volumes are the same.

	\smallskip
	\noindent\textbf{$\bullet$ Case $d\equiv 1\mod 4$:} 
	The only $d$-dimensional CROSSes are $\Ss^d$ and $\R P^d$.
	Up to homotheties and isometries, the only homogeneous metrics in each of them are $\mathbf g(t)$. 
	It is easy to see, using the explicit formulas in Tables~\ref{tab:eigenvalues}--\ref{tab:eigenvaluesRPd}, that the volume and scalar curvature of $(\Ss^d,\alpha\, \mathbf g(t_1))$ and $(\R P^d,\beta\,\mathbf g(t_2))$, $\alpha,\beta>0$, cannot coincide.
	
	We now prove that two isospectral homogeneous metrics on $\Ss^d$ are isometric. 
	According to \cite[Prop.~5.3]{bp-calcvar}, cf.~Table~\ref{tab:eigenvalues}, the first eigenvalue of $(\Ss^d,\alpha\,\mathbf g(t))$ is
	\begin{equation}\label{eq:eigenvaluesS2n+1}
		\begin{array}{lll}
			\lambda_1(\Ss^{d}, \alpha\,\mathbf g(t) )  & \multicolumn{1}{c}{\text{multiplicity}} & \text{condition} \\
			\hline 
			\rule{0pt}{13pt}
			\tfrac{2}{\alpha} (d+1)  &  \tfrac14(d-1)(d+3) & t<\frac{1}{\sqrt{d+3}} \\
			\tfrac{2}{\alpha} (d+1) & \tfrac14 (d^2+6d+1) &  t=\frac{1}{\sqrt{d+3}} \\
			\tfrac{1}{\alpha} \big(d-1+\frac{1}{t^2}\big) & d+1 & t>\frac{1}{\sqrt{d+3}} \\
		\end{array}
	\end{equation}
	Since the above multiplicities are all distinct, the expression for this eigenvalue can be read from the spectrum. Clearly, in row 2 of \eqref{eq:eigenvaluesS2n+1}, the values of $\alpha$ and $t$ are determined.
	In row 1, the value of $\alpha$ is determined from the first eigenvalue itself, and then 
	the value of $t$ can be determined by examining another spectral invariant:
	\begin{equation}\label{eq:volumeSdalphagt}
		\Vol(\Ss^d,\alpha\,\mathbf g(t)) = \frac{2\pi^{(d+1)/2}}{\big(\frac{d-1}{2}\big)!}\,t\,\alpha^{d/2}.
	\end{equation}
	
	Now assume $t>\frac{1}{\sqrt{d+3}}$, as in row 3. 
	We claim the second distinct eigenvalue~is:
	\begin{equation}\label{eq:2nd-eigenvaluesS2n+1}
		\begin{array}{lll}
			\lambda_2(\Ss^{d}, \alpha\,\mathbf g(t) )  & \multicolumn{1}{c}{\text{multiplicity}} & \text{condition} \\
			\hline 
			\rule{0pt}{13pt}
			\tfrac{2}{\alpha} (d+1)  &  \tfrac14(d-1)(d+3) & \frac{1}{\sqrt{d+3}}<t<1 \\[1mm]
			\tfrac{2}{\alpha} (d+1) & \tfrac d2(d+3) &  t=1 \\[1mm]
			\tfrac{1}{\alpha} \big(2d-2+\frac{4}{t^2}\big) & \tfrac14(d+1)(d+3) & t>1 \\
		\end{array}
	\end{equation}
	Since the above multiplicities are all distinct, the spectrum determines the expression for this second eigenvalue. In row 2 of \eqref{eq:2nd-eigenvaluesS2n+1}, both $\alpha$ and $t$ are immediately determined. In row 1, the value of $\alpha$ can be read from the eigenvalue itself, and then the value of $t$ is determined by the volume \eqref{eq:volumeSdalphagt}. 
	In row 3, the quantity
	\begin{equation*}
		\tfrac12 \lambda_2(\Ss^{d}, \alpha\,\mathbf g(t) )- \lambda_1(\Ss^{d}, \alpha\,\mathbf g(t) ) 
		= \tfrac{2}{\alpha t^2}
	\end{equation*}
	is known, as well as $t^{2}\alpha^{d}$ by the volume \eqref{eq:volumeSdalphagt}, thus $t$ and $\alpha$ are again determined.

	We now prove that \eqref{eq:2nd-eigenvaluesS2n+1} holds, using  the partial description of $\Spec(\Ss^{d}, \mathbf g(t) )$ in \cite[\S4]{tanno1} and \cite[\S{}5]{bp-calcvar}, which states that 
	every eigenvalue is of the form 
	\begin{equation*}
		\mu_{k,l}(t) = k(k+d-1)+(\tfrac{1}{t^2}-1)l^2,
	\end{equation*}
	for integers $0\leq l\leq k$ with $k\equiv l\mod 2$.
	Note that $\lambda_1(\Ss^{d}, \mathbf g(t) ) = \mu_{1,1}(t)$ under the assumption $t>\frac{1}{\sqrt{d+3}}$.  
	It is easy to see that $\lambda_2(\Ss^{d}, \mathbf g(t) )  =\min\left\{ \mu_{2,0}(t), \mu_{2,2}(t) \right\}$.
	In the notation of \cite{bp-calcvar}, its multiplicity is  $\dim E_2^0$ if $\mu_{2,0}(t)<\mu_{2,2}(t)$, $\dim E_2^2$ if $\mu_{2,0}(t)>\mu_{2,2}(t)$, and $\dim E_2=\dim (E_2^0\oplus E_2^2)$ if $\mu_{2,0}(t)=\mu_{2,2}(t)$,
	where $E_2$ is the space of complex harmonic homogeneous quadratic polynomials in $d+1$ variables.
	Thus, $\dim E_2= \frac{d(d+3)}{2}$, and $\dim E_{2}^2 = \dim E_2-\dim E_2^0 = \frac{1}{4}(d+1)(d+3)$, since $\dim E_{2,0}= \tfrac14(d-1)(d+3)$ by \cite[\S5(a)]{tanno1}, concluding the proof of \eqref{eq:2nd-eigenvaluesS2n+1}.
	
	A very similar procedure shows that any isospectral homogeneous metrics on $\R P^d$ must be isometric.

	\smallskip
	\noindent\textbf{$\bullet$ Case $d\equiv 2\mod 4$:} 
	The case $d=2$ is easy and left to the reader. 
	Assume $d\geq 6$. 
	The only $d$-dimensional CROSSes are $\Ss^d$, $\R P^d$, and $\C P^{d/2}$.
	Up to homotheties, the only homogeneous metrics are
	$\gr$ on $\Ss^d$ and $\R P^d$, and $\check{\mathbf h}(t)$ on $\C P^{d/2}$.
	By Theorem~\ref{thm:lambda_1(b,s)}, the first eigenvalue of $(\C P^{d/2}, \alpha\,\check{\mathbf h}(t))$ is  as follows, see also ~\eqref{eq:check-mu_kl}:
	\begin{equation}\label{eq:eigenvaluesCP2n+1}
		\begin{array}{lll}
			\lambda_1(\C P^{d/2}, \alpha\,\check{\mathbf h}(t) )  & \multicolumn{1}{c}{\text{multiplicity}} & \text{condition} \\
			\hline 
			\rule{0pt}{13pt}
			\tfrac{2}{\alpha} (d+2)  &  \tfrac18 (d+4)(d-2)  & t<1 \\[3pt]
			\tfrac{2}{\alpha} (d+2) & \tfrac14 d(d+4) &  t=1 \\[3pt]
			\tfrac{2}{\alpha} \big(d-2+\frac{4}{t^2}\big) & \tfrac18 (d+4)(d+2)  & t>1 \\
		\end{array}
	\end{equation}
	The multiplicities of $\lambda_1(\Ss^d,\beta\,\gr)=\frac{d}{\beta}$ and $\lambda_1(\R P^d,\beta'\,\gr)=\frac{2(d+1)}{\beta}$ are $d+1$ and $\binom{d+2}{2}-1=\frac{d(d+2)}2$ respectively, which are different from each other and from all the multiplicities in \eqref{eq:eigenvaluesCP2n+1}. Thus, for any positive numbers $\alpha$, $\beta$, and $\beta'$, we have that $(\Ss^d,\beta\,\gr)$, $(\R P^d,\beta'\,\gr)$ and $(\C P^{d/2}, \alpha\,\check{\mathbf h}(t))$ are pairwise non-isospectral for any fixed $t>0$. It is only left to show that there are no isospectral non-isometric members in the latter family.

	Since none of the multiplicities in \eqref{eq:eigenvaluesCP2n+1} coincide for $d\geq6$, the expression for this eigenvalue is determined by the spectrum. In row 2, there is nothing to be done, since the values of $\alpha$ and $t$ are determined. In row 1, the value of $\alpha$ is determined by the first eigenvalue, and then the value of $t$ can be determined through another spectral invariant, such as
	\begin{equation}\label{eq:volumeCPd2}
		\Vol(\C P^{d/2}, \alpha\,\check{\mathbf h}(t) )= \frac{\pi^{d/2}}{\big(\frac{d}{2}\big)!} t^2 \, \alpha^{d/2}.
	\end{equation}
	Now suppose $t>1$, as in row 3. 
	From the description of $\Spec(\C P^{d/2}, \alpha\,\check{\mathbf h}(t))$ in Theorem~\ref{thm:SpecCP^2n+1}, it is straightforward to check that the second distinct eigenvalue is $\lambda_2(\C P^{d/2},\alpha\,\check{\mathbf h}(t))=\frac{2}{\alpha}(d+2)$, with multiplicity $\frac18(d+4)(d-2)$, since 
	$\check\lambda^{(p,q)}\big((\sqrt2 t)^{-1},1\big) > \check\lambda^{(1,1)}\big((\sqrt2 t)^{-1},1\big) =2(d+2)$ for all $p,q$ satisfying $p\geq q\geq 0$, with $p-q$ is even, and $(p,q)\notin\{ (0,0), (2,0), (1,1)\}$.
	Similarly to row 1, the values of $\alpha$ and $t$ are uniquely determined by this expression together with \eqref{eq:volumeCPd2}.

	\smallskip
	\noindent\textbf{$\bullet$ Case $d\equiv 3\mod 4$:} 
	The only $d$-dimensional CROSSes are $\Ss^{d}$ and $\R P^d$. 
	Up to homotheties and isometries, the only homogeneous metrics on either $\Ss^{d}$ or $\R P^d$ are $\mathbf h(t_1,t_2,t_3)$, and also $\mathbf k(t)$ if $d=15$. 
	Indeed, recall that $(\Ss^d,\mathbf g(t))$ and $(\R P^d,\mathbf g(t))$ are isometric to $(\Ss^d,\mathbf h(t,1,1))$ and $(\R P^d,\mathbf h(t,1,1))$, respectively, so we may disregard the family of metrics $\mathbf g(t)$. 
	
For $d=3$, the non-existence of isospectral and non-isometric pairs of $\Sp(n+1)$-invariant metrics on $\Ss^{d}$ (resp.\ $\R P^d$) has been proved independently in \cite[Thm.~1.5]{Lauret-SpecSU(2)} and \cite[Thm.~1.3]{LSS2}.
Furthermore, Proposition~\ref{lem:isospS-RP} shows that a homogeneous $\Ss^3$ cannot be isospectral to a homogeneous $\R P^3$.

Assume henceforth that $d>3$. By Theorems~\ref{thm:rigidityS^4n+3} and \ref{thm:rigidityRP^4n+3}, two isospectral $\Sp(\tfrac{d+1}{4})$-invariant metrics on either $\Ss^{d}$ or $\R P^{d}$ are in fact isometric. Furthermore, Proposition~\ref{lem:isospS-RP} implies that any $\Sp(\tfrac{d+1}{4})$-invariant metric on $\Ss^d$ is not isospectral to any $\Sp(\tfrac{d+1}{4})$-invariant metric on $\R P^{d}$. 
Consequently, the result follows for $d\neq 15$.

From now on, we work exclusively in dimension $d=15$. 	
We first show that the spectrum of $(\Ss^{15}, \beta\, \mathbf k(t) )$ determines $\beta$ and $t$. 
	We analyze its first eigenvalue, see~\cite[\S 7]{bp-calcvar}.
	\begin{equation}\label{eq:eigenvaluesS15}
		\begin{array}{lll}
			\lambda_1(\Ss^{15}, \beta\, \mathbf k(t) )  & \multicolumn{1}{c}{\text{multiplicity}} & \text{condition} \\
			\hline 
			\rule{0pt}{13pt}
			\tfrac{32}{\beta}  & 9  & t<\sqrt{\frac{7}{24}} \\[3pt]
			\tfrac{32}{\beta} & 25 &  t=\sqrt{\frac{7}{24}} \\[3pt]
			\frac{1}{\beta}\big(8+\tfrac{7}{t^2}\big) & 16 & t>\sqrt{\frac{7}{24}} \\
		\end{array}
	\end{equation}
	Since the above multiplicities are all distinct, the spectrum determines the expression for this first eigenvalue. In row 2, both $\beta$ and $t$ are automatically determined. In row 1, the value of $\beta$ can be read from the first eigenvalue, and then the value of $t$ can be determined through another spectral invariant such as
	\begin{equation}\label{eq:volS15}
		\Vol(\Ss^{15}, \beta\, \mathbf k(t) ) = \frac{2\pi^8}{7!} t^7\,\beta^{15/2}.
	\end{equation}
	Now assume $t>\sqrt{\frac{7}{24}}$, as in row 3. 
	We claim that the second distinct eigenvalue is
	\begin{equation}\label{eq:2nd-eigenvaluesS15}
		\begin{array}{lll}
			\lambda_2(\Ss^{15}, \beta\, \mathbf k(t) )  & \multicolumn{1}{c}{\text{multiplicity}} & \text{condition} \\
			\hline 
			\rule{0pt}{13pt}
			\tfrac{32}{\beta}   &  9 &\sqrt{\frac{7}{24}}<t<1\\[1mm]
			\tfrac{32}{\beta}  & 135 &  t=1 \\[1mm]
			\tfrac{16}{\beta} \big(1+\frac{1}{t^2}\big) & 126 & t>1 \\
		\end{array}
	\end{equation}
	Since the above multiplicities are all distinct, the spectrum once again determines the expression for this second eigenvalue. In row 2, both $\beta$ and $t$ are immediately determined. In row 1, the value of $\beta$ can be read from $\lambda_2(\Ss^{15}, \beta\, \mathbf k(t) )$, and then the value of $t$ is determined by the volume \eqref{eq:volS15}. 
	In row 3, the quantity
	\begin{equation*}
		\tfrac12 \lambda_2(\Ss^{15}, \beta\, \mathbf k(t) )- \lambda_1(\Ss^{15}, \beta\, \mathbf k(t) ) = \tfrac{1}{\beta t^2}
	\end{equation*}
	is determined, as well as $t^{14}\beta^{15}$ by the volume, hence $t$ and $\beta$ are both determined. 
	
	We now prove \eqref{eq:2nd-eigenvaluesS15} using the partial description of $\Spec(\Ss^{15}, \mathbf k(t) )$ given in \cite[\S{}7.1]{bp-calcvar}. 
	According to \cite[Lem.~7.1]{bp-calcvar}, every eigenvalue is of the form 
	\begin{equation}
		\widetilde\mu_{k,l}(t) = k(k+14)+(\tfrac{1}{t^2}-1)l(l+6)
	\end{equation}
	for integers $0\leq l\leq k$ with $k\equiv l\mod 2$.
	Note that $\lambda_1(\Ss^{15}, \mathbf k(t) ) = \widetilde\mu_{1,1}(t)$ under the assumption $t>\sqrt{\frac{7}{24}}$.  
	One easily sees that $\lambda_2(\Ss^{15}, \mathbf k(t) )  =\min\left\{ \widetilde\mu_{2,0}(t), \widetilde\mu_{2,2}(t) \right\}$.
	Moreover, with the notation of \cite[\S7]{bp-calcvar}, its multiplicity is equal to $\dim E_2^0$ if $\widetilde\mu_{2,0}(t)<\widetilde\mu_{2,2}(t)$, $\dim E_2^2$ if $\widetilde\mu_{2,0}(t)>\widetilde\mu_{2,2}(t)$, and $\dim E_2=\dim (E_2^0\oplus E_2^2)$ if $\widetilde\mu_{2,0}(t)=\widetilde\mu_{2,2}(t)$, where $E_2$ is the vector space of complex harmonic homogeneous quadratic polynomials in $16$ variables. Thus, $\dim E_2= 135$, and $\dim E_{2}^2 = \dim E_2-\dim E_2^0 = 135-9=126$, since $\dim E_{2,0}= 9$, concluding the proof of \eqref{eq:2nd-eigenvaluesS15}. 
In a very similar way one shows that the spectrum of $(\R P^{15}, \beta\, \mathbf k(t) )$ determines $\beta$ and $t$.

We next show that $(\Ss^{15},\alpha\,\mathbf h(t_1,t_2,t_3))$ is not isospectral to $(\Ss^{15},\beta\,\mathbf k(t))$, unless $t=t_1=t_2=t_3=\alpha/\beta=1$; that is, unless both metrics have constant sectional curvature. 
The only way in which the multiplicity of $\lambda_1(\Ss^{15}, \beta\, \mathbf k(t))$, listed above in \eqref{eq:eigenvaluesS15}, may coincide with the multiplicity of $\lambda_1(\Ss^{15}, \alpha\, \mathbf h(t_1,t_2,t_3))$, obtained setting $n=3$ in \eqref{eq:mult-lambda_1(a,b,c,s)}, is if they are both equal to $16$. 
Namely, this is the case in row 3 of \eqref{eq:eigenvaluesS15}
and row 1 of \eqref{eq:mult-lambda_1(a,b,c,s)}. 
In this situation, consider the second eigenvalue of both manifolds, which for $(\Ss^{15}, \beta\, \mathbf k(t) )$ is 
given in \eqref{eq:2nd-eigenvaluesS15}, and for $(\Ss^{15}, \alpha\,\mathbf h(t_1,t_2,t_3))$ is given in \eqref{eq:table2ndeigenvalue} by setting $n=3$ and multiplying the values (in the first column) by $\frac{1}{\alpha}$. 
In particular, the only case where the multiplicities of $\lambda_2(\Ss^{15}, \beta\, \mathbf k(t))$ and $\lambda_2(\Ss^{15}, \alpha\, \mathbf h(t_1,t_2,t_3))$ could possibly coincide  is if they are equal to  $135$, in which case $t=1$ by \eqref{eq:2nd-eigenvaluesS15}, and $t_1=t_2=t_3=1$, from $\lambda_1^{(2,0)} = \lambda_2^{(2,0)} = \lambda_3^{(2,0)} = \lambda_1^{(1,1)}$ in \eqref{eq:table2ndeigenvalue}. 
Comparing the volumes, one easily obtains that $\alpha=\beta$, so $(\Ss^{15}, \beta\, \mathbf k(t))$ and $(\Ss^{15}, \alpha\, \mathbf h(t_1,t_2,t_3))$ are isometric round spheres.
Once more, similar arguments show that $(\R P^{15},\alpha\,\mathbf h(t_1,t_2,t_3))$ is not isospectral to $(\R P^{15},\beta\,\mathbf k(t))$, unless $t=t_1=t_2=t_3=\alpha/\beta=1$. The last remaining cases; namely,
showing that $(\Ss^{15},\alpha\,\mathbf h(t_1,t_2,t_3))$ and $(\R P^{15},\alpha\,\mathbf h(t_1,t_2,t_3))$ are not isospectral to $(\R P^{15},\beta\,\mathbf k(t))$ and $(\Ss^{15},\beta\,\mathbf k(t))$, respectively, 
are also analogous to the above, and their proofs are omitted.
\end{proof}

\section{Stability in the Yamabe Problem}\label{sec:yamabe}

As another application of Theorem~\ref{thm:A}, we now analyze which homogeneous metrics on $\Ss^{4n+3}$ and $\R P^{4n+3}$ are \emph{stable} solutions to the Yamabe problem, proving Theorem~\ref{mainthm:stability3param}. Combined with results in \cite{bp-calcvar,Lauret-SpecSU(2)} and Remark~\ref{rem:cp2n1stability}, this completes the classification of Yamabe stable homogeneous CROSSes, see Table~\ref{tab:stable}.

\subsection{Yamabe problem}\label{subsec:yamabe}
In order to keep the paper as self-contained as possible, we now briefly recall a few basic facts about the Yamabe problem; for more details see, e.g., \cite{aubin-book,bp-calcvar,LPZ12,lee-parker}.

Given a closed Riemannian manifold $(M,\g_0)$ of dimension $n\geq3$, the Yamabe problem consists of finding metrics $\g$ in the conformal class $[\g_0]$ with constant scalar curvature, which is equivalent to finding critical points of the (normalized) total scalar curvature functional
\begin{equation}\label{eq:HilbertEinstein}
\mathcal A\colon [\g_0]\to\R, \quad \mathcal A(\g)=\Vol(M,\g)^{\frac{2-n}{n}}\int_M \scal(\g)\,\vol_\g.
\end{equation}
A homogeneous metric $\g_0$ is clearly a solution to the Yamabe problem in its conformal class. 
Moreover, homogeneous metrics (invariant under the same transitive group action) that are conformal must be homothetic, so any other solutions to the Yamabe problem in $[\g_0]$ that have the same volume as $\g_0$ must be inhomogeneous.

The second variation of \eqref{eq:HilbertEinstein} at a solution $\g\in[\g_0]$ with $\Vol(M,\g)=1$ is
\begin{equation*}
\dd^2 \mathcal A(\g)(\psi,\psi)=\frac{n-2}{2}\int_M \big((n-1)\Delta_\g\psi-\scal(\g) \psi\big)\psi\,\vol_\g,
\end{equation*}
which is hence represented by the \emph{Jacobi operator} $J_\g\colon L^2(M,\g)\to L^2(M,\g)$
\begin{equation}\label{eq:jacobi-operator}
J_\g=\Delta_\g -\frac{\scal(\g)}{n-1}.
\end{equation}
Thus, $\g$ is a \emph{nondegenerate} solution if $\ker(J_\g)=\{0\}$, that is, if $\frac{\scal(\g)}{n-1}$ is not an eigenvalue of the Laplacian on $(M^n,\g)$; and $\g$ is a \emph{stable} nondegenerate solution if $\lambda_1(J_\g)>0$, that is, if $\lambda_1(\Delta_\g)>\frac{\scal(\g)}{n-1}$. In this case, $\g$ is a strict local minimum for the functional \eqref{eq:HilbertEinstein}, hence locally the unique solution to the Yamabe problem. More generally, the \emph{Morse index} of a solution $\g$ is
\begin{equation}\label{eq:iMorse}
i_{\textrm{Morse}}(\g)=\#\big\{\lambda\in\Spec(\Delta_\g)\smallsetminus\{0\}:(n-1)\lambda<\scal(\g)\!\big\},
\end{equation}
where nonzero eigenvalues $\lambda\in\Spec(\Delta_\g)$ are counted with multiplicity.
In particular, stable solutions $\g$ are precisely those with $i_{\textrm{Morse}}(\g)=0$.

\subsection{\texorpdfstring{Permutation action on $\R^3_{>0}$}{Permutation action}}
Let us collect some elementary facts that will be used in the sequel on the representation of the permutation group $\mathfrak S_3$ of three letters on the positive octant $\R^3_{>0}=\{(x,y,z)\in\R^3 : x>0, \, y>0,\,  z>0\}$, given by permuting the coordinates $(x,y,z)$.
Consider the open fundamental domain $$\mathcal D=\{(x,y,z)\in\R^3_{>0}:0<x<y<z\}$$ for this orthogonal $\mathfrak S_3$-action, and the polynomial map $\Phi\colon\mathcal D\to\R^3_{>0}$ given by
\begin{equation}\label{eq:ElSymPol}
\Phi(x,y,z)=\big(x+y+z, \, xy+xz+yz,\, xyz\big),
\end{equation}
that is, $\Phi(x,y,z)=(\sigma_1,\sigma_2,\sigma_3)$, where $\sigma_i=\sigma_i(x,y,z)$ is the $i$th elementary symmetric polynomial in $(x,y,z)$. Recall that $\Phi(x,y,z)$ are the coefficients, with alternating sign, of the monic univariate polynomial $m(r)=r^3-\sigma_1 r^2+\sigma_2 r-\sigma_3$ whose roots are $x,y,z$. In particular, the image $\Phi(\mathcal D)\subset\R^3_{>0}$ is the subset where the discriminant $\Delta=(x-y)^2(x-z)^2(y-z)^2$ of the cubic polynomial $m(r)$ is positive,
\begin{equation*}\label{eq:ImagePhi}
\Phi(\mathcal D)=\big\{(\sigma_1,\sigma_2,\sigma_3)\in\R^3_{>0}:\Delta=\sigma_1^2\sigma_2^2-4\sigma_2^3-4\sigma_1^3\sigma_3-27\sigma_3^2+18\sigma_1\sigma_2\sigma_3>0\big\},
\end{equation*}
cf.~Procesi~\cite{procesi}, keeping in mind that a $3\times 3$ Bezoutiant matrix is positive-definite if and only if its determinant (which equals the discriminant $\Delta$) is positive.

Since $\det(\dd\Phi(x,y,z))=(x-y)(x-z)(y-z)<0$ on $\mathcal D$, it follows that \eqref{eq:ElSymPol} is a diffeomorphism onto its image $\Phi(\mathcal D)$.
Finally, any closed subset $C\subset\R^3_{>0}$ with nonempty interior and invariant under the $\mathfrak S_3$-action can be decomposed as
\begin{equation}\label{eq:DecompC}
C=\overline{\bigcup_{g\in\mathfrak S_3} C\cap g(\mathcal D)}=\overline{\bigcup_{g\in\mathfrak S_3} g(C\cap \mathcal D)}.
\end{equation}

\subsection{Stability}
Henceforth, we assume that $n\geq1$.
The Riemannian submersion $\big(\Ss^{4n+3},\mathbf h(t_1,t_2,t_3)\big)\to (\Hr P^n,\gFS)$ has totally geodesic fibers and its \emph{$A$-tensor} (see e.g.~\cite[Def.~9.20]{Besse}) has square norm $\|A\|^2=4n  \left(t_1^2 + t_2^2 + t_3^2\right)$. Thus, by the Gray--O'Neill formula \cite[Prop.~9.70]{Besse}, we have
\begin{align}\label{eq:scal4n3}
\scal\!\big(\Ss^{4n+3},\mathbf h(t_1,t_2,t_3)\big)&=\scal(\Hr P^n,\gFS)+\scal\!\big(\Ss^{3},\mathbf h(t_1,t_2,t_3)\big)-\|A\|^2\nonumber \\
&= 16n(n+2) + 4\left(\frac{1}{t_1^2}+\frac{1}{t_2^2}+\frac{1}{t_3^2}\right) \\
&\quad - 2\left(\frac{t_1^2}{t_2^2t_3^2} +\frac{t_2^2}{t_1^2t_3^2}+\frac{t_3^2}{t_1^2t_2^2}\right) - 4n  \left(t_1^2 + t_2^2 + t_3^2\right).\nonumber
\end{align}

The scalar curvature of $\big(\R P^{4n+3},\mathbf h(t_1,t_2,t_3)\big)$ is identical, since these manifolds are locally isometric.
We are now ready to prove Theorem~\ref{mainthm:stability3param} in the Introduction.

\begin{proof}[Proof of Theorem~\ref{mainthm:stability3param}]
First, let us consider the case of $\Ss^{4n+3}$.
As discussed above, $\mathbf h(t_1,t_2,t_3)$ is a stable nondegenerate solution to the Yamabe problem if and only~if
\begin{equation}\label{eq:YamabeStability}
\lambda_1(\Ss^{4n+3},\mathbf h(t_1,t_2,t_3))-\frac{\scal(\Ss^{4n+3},\mathbf h(t_1,t_2,t_3))}{4n+2}>0.
\end{equation}
Our computations are significantly simplified by making the change of variables
\begin{equation}\label{eq:squarefree}
(x,y,z)=\big(t_1^2,t_2^2,t_3^2\big),
\end{equation}
which is a diffeomorphism of $\R^3_{>0}$. In terms of these  variables, by \eqref{eq:scal4n3}, we have
\begin{equation}\label{eq:scal4n3xyz}
\begin{aligned}
\scal(x,y,z)&:=\scal\big(\Ss^{4n+3},\mathbf h(t_1,t_2,t_3)\big) \\
&\;= 16n(n+2) + 4\left(\frac{1}{x}+\frac{1}{y}+\frac{1}{z}\right) - 2\left(\frac{x}{yz} +\frac{y}{xz}+\frac{z}{xy}\right)\\ &\;\quad - 4n  \left(x+y+z\right),
\end{aligned}
\end{equation}
and, from Theorem~\ref{thm:A}, we have
\begin{equation}\label{eq:lambda1xyz}
\lambda_1(x,y,z):=\lambda_1\big(\Ss^{4n+3},\mathbf h(t_1,t_2,t_3) \big)= 
\min \left\{ \lambda^{(1,0)},\; \lambda^{(2,0)},\; \lambda^{(1,1)}\right\},
\end{equation}
where
\begin{equation}\label{eq:lambdasxyz}
\begin{aligned}
\lambda^{(1,0)}(x,y,z)&=4n +\frac{1}{x}+\frac{1}{y}+\frac{1}{z},&\\
\lambda^{(2,0)}(x,y,z)&=8n+ \frac{4}{y} + \frac{4}{z},& (\text{if }x<y<z)\\
\lambda^{(1,1)}(x,y,z)&=8(n+1).&
\end{aligned}
\end{equation}

First, we claim that $\scal(x,y,z)\leq (4n+2)\lambda^{(1,0)}(x,y,z)$, with equality holding if and only if $(x,y,z)=(1,1,1)$. Indeed, let us find the infimum of $\phi\colon\R^3_{>0}\to\R$,
\begin{equation*}
\begin{aligned}
\phi(x,y,z)&:=\tfrac12\big((4n+2)\lambda^{(1,0)}(x,y,z)-\scal(x,y,z)\big)xyz\\
&\;=x^2+y^2+z^2+(2n-1)(xy+xz+yz)+2n(x+y+z-6)xyz,
\end{aligned}
\end{equation*}
which is clearly invariant under the permutation action of $\mathfrak S_3$ on $\R^3_{>0}$, and extends to a polynomial map $\phi\colon\R^3_{\geq0}\to\R$.
Rewriting $\phi(x,y,z)$ in terms of elementary symmetric polynomials $\sigma_i$, that is, precomposing with the inverse $\Phi^{-1}\colon\Phi(\mathcal D)\to\mathcal D$ of the diffeomorphism \eqref{eq:ElSymPol}, we have
\begin{equation*}
(\phi\circ\Phi^{-1})(\sigma_1,\sigma_2,\sigma_3)=\sigma_1^2+(2n-3)\sigma_2-12n\sigma_3+2n\sigma_1\sigma_3,
\end{equation*}
which clearly has no critical points in $\Phi(\mathcal D)\subset\R^3_{>0}$, since its partial derivative with respect to $\sigma_2$ never vanishes. Therefore, $\phi(x,y,z)$ does not have any critical points in $\mathcal D$, or in $g(\mathcal D)$ for any $g\in\mathfrak S_3$, since $\mathfrak S_3$ acts by diffeomorphisms. Moreover, since
\begin{equation*}
\R^3_{>0}\!\smallsetminus\!\bigcup_{g\in\mathfrak S_3} g(\mathcal D)=\{x=y>0,\, z>0\}\cup\{x=z>0,\, y>0\}\cup\{y=z>0,\, x>0\},
\end{equation*}
it follows that any interior critical points $(x_0,y_0,z_0)\in\R^3_{>0}$ of $\phi(x,y,z)$ must have at least two equal coordinates. Restricting $\phi$ to the above subsets, it is easy to see that there are only two such critical points: the saddle point $(\frac12,\frac12,\frac12)$, and the local minimum $(1,1,1)$, where $\phi(1,1,1)=0$. Finally, it is straightforward that $\phi(x,y,z)\geq0$ on the boundary of $\R^3_{\geq0}$, and also $\phi(x,y,z)\geq0$ for all $(x,y,z)\in\R^3_{\geq0}$ with $x+y+z\geq6$, so $\phi\colon\R^3_{\geq0}\to\R$ attains its minimum on the compact set $\{(x,y,z)\in\R^3_{\geq0}:x+y+z\leq 6\}$, namely, at $(1,1,1)$.
Thus, $\phi(x,y,z)\geq0$ in $\R^3_{>0}$, with equality if and only if $(x,y,z)=(1,1,1)$, proving the claim above.

Second, we claim that $\scal(x,y,z)<(4n+2)\lambda^{(2,0)}(x,y,z)$ for all $(x,y,z)\in\R^3_{>0}$. This follows easily since $\psi(x,y,z):=\tfrac12\big((4n+2)\lambda^{(2,0)}(x,y,z)-\scal(x,y,z)\big)xyz$ satisfies
\begin{equation*}
\begin{aligned}
\psi(x,y,z)&=x^2+(y-z)^2+2x(y+z)\\
&\quad+2n(x+y+z)xyz+8nx(y+z+(n-1)yz)>0.
\end{aligned}
\end{equation*}

Therefore, \eqref{eq:YamabeStability} is equivalent to $(x,y,z)\neq(1,1,1)$ and
\begin{equation*}
\scal(x,y,z)<(4n+2)\lambda^{(1,1)}(x,y,z)=16(2n+1)(n+1).
\end{equation*}
In turn, by \eqref{eq:scal4n3xyz}, the above inequality is equivalent to $p(x,y,z)>0$, where
\begin{equation}\label{eq:sigma-rootfree}
\begin{aligned}
p(x,y,z)&:=x^2+y^2+z^2-2(xy+xz+yz)\\
&\quad +2n(x+y+z)xyz+8(n^2+n+1)xyz.
\end{aligned}
\end{equation}
This algebraically characterizes which spheres $\big(\Ss^{4n+3},\mathbf h(t_1,t_2,t_3)\big)$, $n\geq1$, are stable nondegenerate solutions to the Yamabe problem; after the change of variables \eqref{eq:squarefree}, this is precisely the characterization claimed in Theorem~\ref{mainthm:stability3param}.

This characterization carries over \emph{verbatim} to $\big(\R P^{4n+3}, \mathbf h(t_1,t_2,t_3)\big)$, $n\geq1$. Indeed, $\mathbf h(t_1,t_2,t_3)$ is stable and nondegenerate on $\R P^{4n+3}$ if and only if
\begin{equation}\label{eq:stabRPn}
\lambda_1\big(\R P^{4n+3},\mathbf h(t_1,t_2,t_3) \big)-\frac{\scal\big(\R P^{4n+3},\mathbf h(t_1,t_2,t_3) \big)}{4n+2}>0,
\end{equation}
cf.~\eqref{eq:YamabeStability}; and, since $\big(\R P^{4n+3},\mathbf h(t_1,t_2,t_3) \big)$ is locally isometric to $\big(\Ss^{4n+3},\mathbf h(t_1,t_2,t_3) \big)$, they have the same scalar curvature. Moreover, from Theorem~\ref{thm:A},
\begin{equation*}
\lambda_1\big(\R P^{4n+3},\mathbf h(t_1,t_2,t_3) \big)= \min \left\{ \lambda^{(2,0)},\; \lambda^{(1,1)}\right\},
\end{equation*}
where $\lambda^{(2,0)}$ and $\lambda^{(1,1)}$ are again as in \eqref{eq:lambdasxyz}. 
If $\lambda^{(1,0)} < \min \left\{ \lambda^{(2,0)},\; \lambda^{(1,1)}\right\}$, then \eqref{eq:stabRPn} holds because its left-hand side is $> \phi(x,y,z) / (2n+1)xyz \geq0$.
Meanwhile, if $\lambda^{(1,0)}\geq\min \left\{ \lambda^{(2,0)},\; \lambda^{(1,1)}\right\}$, then $\lambda_1\big(\R P^{4n+3},\mathbf h(t_1,t_2,t_3) \big)=\lambda_1\big(\Ss^{4n+3},\mathbf h(t_1,t_2,t_3) \big)$, so \eqref{eq:stabRPn} holds if and only if \eqref{eq:YamabeStability} holds, i.e., if and only if $p(x,y,z)>0$.

We now analyze the (topological) boundary 
\begin{equation}\label{eq:sigman}
\Sigma_n:=p^{-1}(0)
\end{equation}
of the semialgebraic open subset $\{(x,y,z)\in\R^3_{>0}:p(x,y,z)>0\}$. All claims in Theorem~\ref{mainthm:stability3param} about $\partial\mathcal S_n$ will be proved in terms of $\Sigma_n$, since these sets are mapped to one another by the (orientation-preserving) diffeomorphism \eqref{eq:squarefree} of $\R^3_{>0}$.

Since $p(x,y,z)$ is clearly invariant under the action of the permutation group $\mathfrak S_3$ on $\R^3_{>0}$, so is its zero set $\Sigma_n$. Rewriting \eqref{eq:sigma-rootfree} in terms of $\sigma_i$, one easily sees that the image $\Phi(\Sigma_n\cap\mathcal D)\subset\R^3_{>0}$ under the diffeomorphism \eqref{eq:ElSymPol} is the portion inside $\Phi(\mathcal D)$ of the graph of a smooth function of $\sigma_1$ and $\sigma_3$, namely,
\begin{equation}\label{eq:SigmaSigmas}
\sigma_2=\sigma_2(\sigma_1,\sigma_3)=\frac{\sigma_1^2}{4}+\frac{n}{2}\sigma_1 \sigma_3+2(n^2+n+1)\sigma_3,
\end{equation}
and hence a smooth, connected, embedded surface in the open subset $\Phi(\mathcal D)\subset \R^3_{>0}$, diffeomorphic to $\R^2_{>0}$. Therefore, also $\Sigma_n\cap\mathcal D$, as well as $\Sigma_n\cap g(\mathcal D)=g(\Sigma_n\cap\mathcal D)$, for any $g\in\mathfrak S_3$, are smooth, connected, embedded surfaces in~$\R^3_{>0}$, diffeomorphic to $\R^2_{>0}$. 
Since the $\mathfrak S_3$-action on $\R^3_{>0}$ is generated by reflections across the planes $x=y$, $x=z$, and $y=z$, in order to conclude that $\Sigma_n$ itself is a smooth, connected, embedded surface in $\R^3_{>0}$, using \eqref{eq:DecompC} with $C=\Sigma_n$, it suffices to show the following:
\begin{enumerate}
\item $\Sigma_n\cap\overline{\mathcal D}=\overline{\Sigma_n\cap\mathcal D}$ in $\R^3_{>0}$;
\item $\Sigma_n\cap\overline{\mathcal D}$ meets the planes $x=y$ and $y=z$ orthogonally; 
\item The planar curves determined by intersecting $\Sigma_n\cap\overline{\mathcal D}$ with $x=y$ and $y=z$ arrive orthogonally at the diagonal line $x=y=z$ in each of these planes.
\end{enumerate}
All of the above can be directly verified by elementary methods, using \eqref{eq:sigma-rootfree}. In particular, it follows that the complement $\R^3_{>0}\smallsetminus\Sigma_n$ has two connected components.

Finally, let us prove that $\Sigma_n\subset\R^3_{>0}$ is bounded. 
Using $\mathfrak S_3$-invariance once again, it suffices to show that there exists $\rho>0$ such that $\Phi(\Sigma_n\cap\mathcal D)\subset \Phi(B_\rho\cap\mathcal D)$, where $B_\rho\subset\R^3_{>0}$ is the (portion in the positive octant of the) ball of radius $\rho$ around the origin. Indeed, this implies that $\Sigma_n\cap\mathcal D\subset B_\rho\cap\mathcal D$, and hence by \eqref{eq:DecompC}, since both $\Sigma_n$ and $B_\rho$ are invariant under the $\mathfrak S_3$-action, that $\Sigma_n\subset \overline{B_\rho}$.
Clearly,
\begin{equation*}
\Phi(B_\rho\cap\mathcal D)=
\big\{(\sigma_1,\sigma_2,\sigma_3)\in\R^3_{>0}:  \sigma_1^2-2\sigma_2 <\rho^2, \, \Delta>0\big\},
\end{equation*}
while, from \eqref{eq:SigmaSigmas}, the points $(\sigma_1,\sigma_2,\sigma_3)\in\Phi(\Sigma_n\cap\mathcal D)\subset\R^3_{>0}$ satisfy
\begin{equation*}
\sigma_1^2-2\sigma_2 =2\sigma_2 -2n\sigma_1 \sigma_3-8(n^2+n+1)\sigma_3< 2\sigma_2,
\end{equation*}
in addition to $\Delta>0$. In particular, it is enough to show that there exists $\rho>0$ so that $2\sigma_2\leq \rho^2$ for all $(\sigma_1,\sigma_2,\sigma_3)\in\Phi(\Sigma_n\cap\mathcal D)$, i.e., that the quadratic function $\sigma_2(\sigma_1,\sigma_3)$ defined in \eqref{eq:SigmaSigmas} is bounded in the region of $(\sigma_1,\sigma_3)\in\R^2_{>0}$ 
such that $(\sigma_1,\sigma_2(\sigma_1,\sigma_3),\sigma_3)$ satisfies $\Delta>0$.
If $\sigma_3>0$ and $\Delta>0$, then 
\begin{equation*}
\begin{aligned}
\frac{\Delta}{\sigma_3}&=\frac{1}{\sigma_3}\big(\sigma_1^2\sigma_2(\sigma_1,\sigma_3)^2-4\sigma_2(\sigma_1,\sigma_3)^3-4\sigma_1^3\sigma_3-27\sigma_3^2+18\sigma_1\sigma_2(\sigma_1,\sigma_3)\sigma_3\big)\\
&\stackrel{\eqref{eq:SigmaSigmas}}{=}- \tfrac12\!\left(n (\sigma_1+4)+4 n^2+4\right)^3\sigma_3^2 \\
&\quad -\left(\tfrac{1}{2} n^2 \sigma_1^4 -9 n \sigma_1^2   +4 \left(n^2+n+1\right)\left(n \sigma_1^2+2 \left(n^2+n+1\right) \sigma_1-9\right) \sigma_1+ 27\right)\sigma_3\\
&\quad - \left(\tfrac{1}{8} n \sigma_1^2 +\tfrac12\!\left(n^2+n+1\right) \sigma_1 -\tfrac12\right)\sigma_1^3
\end{aligned}
\end{equation*}
is also positive. For all $\sigma_1>0$, the above is a concave quadratic function of $\sigma_3$, since its leading coefficient is $< -32$.
Thus, for each $\sigma_1>0$, the quantity $\frac{\Delta}{\sigma_3}$ can only be positive for $\sigma_3$ in a bounded interval, whose endpoints depend continuously on $\sigma_1$. Moreover, such interval is nonempty if and only if the discriminant
\begin{equation*}
\left( 9 - 8 \left(n^2+n+1\right) \sigma_1 -2 n \sigma_1^2 \right)^3
\end{equation*}
of the above quadratic form in $\sigma_3$ is nonnegative, and, since $n>0$, a necessary condition for this is $0<\sigma_1<\frac{9}{8}$. Therefore, the (topological) closure of the region of $(\sigma_1,\sigma_3)\in \R^2_{>0}$ such that $(\sigma_1,\sigma_2(\sigma_1,\sigma_3),\sigma_3)$ satisfies $\Delta>0$ is compact, and hence the quadratic function $\sigma_2(\sigma_1,\sigma_3)$ is bounded in this region, as desired.
\end{proof}

\section{Bifurcation in the Yamabe Problem}\label{sec:bifurcations}

As an application of the characterization of stable homogeneous solutions to the Yamabe problem in the previous section, we now establish nonuniqueness results via Bifurcation Theory, along the lines of \cite{bp-calcvar,bp-pacific,frankenstein,LPZ12}.
Following these references, solutions to the Yamabe problem are said to \emph{bifurcate} from a curve $\g(t)$ of solutions on $M$ at $t=t_*$ if there exist a sequence of parameters $t_q$ converging to $t_*$, and constant scalar curvature metrics $\g_q\in[\g(t_q)]$ converging to $\g(t_*)$, such that $\Vol(M,\g_q)=\Vol(M,\g(t_q))$ and $\g_q\neq\g(t_q)$, for all $q\in\mathds N$.

The bifurcating solutions $\g_{q}$ typically have less symmetries than $\g(t_q)$ and are harder to find by direct methods. Standard variational bifurcation results applied to the functional \eqref{eq:HilbertEinstein} imply that bifurcation of solutions along $\g(t)$ can be detected by jumps in the Morse index \eqref{eq:iMorse} of $\g(t)$, see \cite[Thm.~3.3]{LPZ12}. 

\subsection{Bifurcations}
Let us now prove Corollary~\ref{maincor:bifurcation} in the Introduction.

\begin{proof}[Proof of Corollary~\ref{maincor:bifurcation}]
We use the notation from the proof of Theorem~\ref{mainthm:stability3param}, in terms of the variables \eqref{eq:squarefree}.
Let $\alpha\colon[-\varepsilon,\varepsilon]\to\R^3_{>0}$, 
 $\alpha(s)=\big(x(s),y(s),z(s)\big)$, be a continuous curve that crosses the surface $\Sigma_n\subset \R^3_{>0}$, see \eqref{eq:sigman}, and assume it does so only once. By Theorem~\ref{mainthm:stability3param}, the Morse index of $\mathbf h\big(\alpha(s)\big)$ jumps as $s$ goes from $-\varepsilon$ to $\varepsilon$; namely
\begin{equation*}
\Big|i_{\textrm{Morse}}\big(\mathbf h(\alpha(-\varepsilon))\big)-i_{\textrm{Morse}}\big(\mathbf h(\alpha(\varepsilon))\big)\Big|\geq 2n^2+3n\geq5,
\end{equation*}
is at least as large as the multiplicity of the eigenvalue $\lambda^{(1,1)}-\scal/(4n+2)$ of $J_{\mathbf h(\alpha(s))}$ that changes sign when $\alpha(s)$ crosses $\Sigma_n$, see \eqref{eq:mult-lambda_1(a,b,c,s)} or \eqref{eq:mult-lambda_1(a,b,c,s)projective}, and the proof of Theorem~\ref{mainthm:stability3param}.
Furthermore, we may assume without loss of generality that $\mathbf h\big(\alpha(\pm\varepsilon)\big)$ are nondegenerate, as this corresponds to $\alpha(\pm\varepsilon)\in \R^3_{>0}$ belonging to an open and dense subset (contained in the complement of $\Sigma_n$) and $i_{\textrm{Morse}}(\cdot)$ is locally constant on this set.
Under these conditions, bifurcation of solutions from $\mathbf h\big(\alpha(s)\big)$ follows from \cite[Thm.~3.3]{LPZ12}. 
Finally, the solutions bifurcating from $\mathbf h\big(\alpha(s)\big)$ are inhomogeneous since conformal homogeneous metrics are homothetic, see Subsection~\ref{subsec:yamabe}.
\end{proof}

\begin{remark}
Earlier results in \cite{bp-pacific,OtobaPetean1} imply that if $t_i>0$ are such that $\scal\!\big(\Ss^3,\mathbf h(t_1,t_2,t_3)\big)>0$, then there exists a sequence of sufficiently small $\varepsilon_k>0$, that converges to $0$, such that inhomogeneous solutions to the Yamabe problem bifurcate from $\big(\Ss^{4n+3},\mathbf h(\varepsilon_k t_1,\varepsilon_k t_2,\varepsilon_k t_3)\big)$ for all $k\in\mathds N$. However, this collapsing bifurcation result does not imply Corollary~\ref{maincor:bifurcation}.
\end{remark}

Regarding homogeneous metrics on $\C P^{2n+1}$, we have the following result:

\begin{proposition}\label{prop:cp2n1}
There are infinitely many branches of inhomogeneous solutions to the Yamabe problem on $\C P^{2n+1}$, $n\geq 1$, that bifurcate from $\check{\mathbf h}(t)$ as $t\searrow0$.
\end{proposition}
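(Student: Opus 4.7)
The plan is to apply the variational bifurcation criterion \cite[Thm.~3.3]{LPZ12} to the $1$-parameter family $\check{\mathbf h}(t)$, in the spirit of the proof of Corollary~\ref{maincor:bifurcation}. The key step will be to show that the Morse index $i_{\textrm{Morse}}(\check{\mathbf h}(t))$ diverges to $+\infty$ as $t\searrow0$; being integer-valued and finite for each $t>0$, it must then undergo infinitely many jumps, producing a sequence of bifurcation instants accumulating at~$0$.

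To this end, I would first compute $\scal\!\big(\C P^{2n+1},\check{\mathbf h}(t)\big)$ via the Gray--O'Neill formula for canonical variations applied to $\check{\mathbf h}(t)=(\gFS)_{\rm hor}+t^{2}(\gFS)_{\rm ver}$ along the Hopf bundle $\C P^1\to(\C P^{2n+1},\gFS)\to(\Hr P^n,\gFS)$, whose totally geodesic $\C P^1$ fibers inherit constant sectional curvature $4$. This yields
\[
\scal\!\big(\C P^{2n+1},\check{\mathbf h}(t)\big)=16n(n+2)+\frac{8}{t^{2}}-8n\,t^{2},
\]
which diverges to $+\infty$ as $t\searrow 0$. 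Combining this with Theorem~\ref{thm:SpecCP^2n+1}, whose Laplace eigenvalues $\mu_{k,l}(t)=k(k+4n+2)+l(l+2)\bigl(\tfrac{1}{t^{2}}-1\bigr)$ include the infinite family of $t$-\emph{independent} eigenvalues $\mu_{k,0}(t)=k(k+4n+2)$, one for each even $k\geq 0$, I would conclude that, for every even $k\geq 2$, once $t$ is sufficiently small the strict inequality $\mu_{k,0}(t)<\scal(\check{\mathbf h}(t))/(4n+1)$ holds, so that mode contributes its positive multiplicity $m_{k,0}$ to the Morse index. Hence $i_{\textrm{Morse}}(\check{\mathbf h}(t))\to+\infty$ as $t\searrow 0$.

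Finally, to convert the Morse index blow-up into bifurcation instants, I would observe that the degenerate parameters $t>0$, characterized by $\mu_{k,l}(t)=\scal(\check{\mathbf h}(t))/(4n+1)$ for some $(k,l)\neq(0,0)$, form a countable subset of $(0,\infty)$: each such equation, after clearing $t^{2}$ denominators, becomes a polynomial in $t^{2}$ of degree at most $2$ with finitely many roots, and there are only countably many pairs $(k,l)$. Thus nondegenerate parameters are dense. Selecting a sequence of nondegenerate parameters $t_{q}\searrow 0$ with strictly increasing Morse index, the variational bifurcation criterion applied on each interval $(t_{q+1},t_{q})$ produces a bifurcation instant $t_{*}\in(t_{q+1},t_{q})$; the bifurcating solutions are inhomogeneous by uniqueness of homogeneous metrics (up to homothety) in their conformal class. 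The main (and essentially only) technical point is verifying the Gray--O'Neill scalar curvature formula; everything else is straightforward bookkeeping driven by the asymptotics $\scal(\check{\mathbf h}(t))\sim 8/t^{2}$ against the constant sequence $\mu_{k,0}(t)$.
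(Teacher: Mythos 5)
Your proposal is correct, and it is exactly the alternative argument the paper itself indicates: apply \cite[Thm.~3.3]{LPZ12} together with Theorem~\ref{thm:SpecCP^2n+1} to show $i_{\textrm{Morse}}\big(\check{\mathbf h}(t)\big)\nearrow\infty$ as $t\searrow0$; your scalar curvature formula $\scal\big(\C P^{2n+1},\check{\mathbf h}(t)\big)=\tfrac{8}{t^2}+16n(n+2)-8nt^2$ and the $t$-independent basic eigenvalues $\mu_{k,0}=k(k+4n+2)$ (each with positive multiplicity $m_{k,0}=d_{k/2,k/2}$) agree with Table~\ref{tab:eigenvalues} and Theorem~\ref{thm:SpecCP^2n+1}, and your handling of nondegenerate parameters and of inhomogeneity of the bifurcating solutions mirrors the proof of Corollary~\ref{maincor:bifurcation}. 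The only difference is that the paper's primary proof is a one-line appeal to Otoba--Petean's general collapse result (Proposition~\ref{prop:collapsebif2}), which needs nothing beyond $\scal(\C P^1,\gFS)>0$, whereas your direct route spells out which spectral modes force the Morse index to blow up; both are valid, and yours is the more self-contained of the two.
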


\begin{proof}
This is an instance of a general result of Otoba and Petean~\cite[Thm~1.1]{OtobaPetean1}, see~Proposition~\ref{prop:collapsebif2}. Alternatively, it can be proven using \cite[Thm.~3.3]{LPZ12} and 
Theorem~\ref{thm:SpecCP^2n+1},
to directly show that $i_{\textrm{Morse}}\big(\check{\mathbf h}(t)\big)\nearrow\infty$ as $t\searrow0$, as in \cite{bp-calcvar}.
\end{proof}

\begin{remark}\label{rem:cp2n1stability}
There is usually considerable interest in the \emph{first} bifurcation instant, which corresponds to the \emph{transition between stability and instability}, such as crossing the surface $\partial\mathcal S_n$ in Corollary~\ref{maincor:bifurcation} about $\Ss^{4n+3}$. In the case of $\big(\C P^{2n+1},\check{\mathbf h}(t)\big)$, since the equality $(4n+1)\lambda_1\big(\C P^{2n+1},\check{\mathbf h}(t)\big)=\scal\big(\C P^{2n+1},\check{\mathbf h}(t)\big)$ is only possible if the minimum in the formula for $\lambda_1\big(\C P^{2n+1},\check{\mathbf h}(t)\big)$ in Theorem~\ref{thm:B} is achieved at $8(n+1)$,
this transition happens when $t$ crosses the (first bifurcation) value
\begin{equation*}
t_*=\sqrt{\frac{\sqrt{(2n^2+n+1)^2+4n}-(2n^2+n+1)}{2n}}.
\end{equation*}
More precisely, $\check{\mathbf h}(t)$ is a stable nondegenerate solution if and only if $t>t_*$.
\end{remark}

\subsection{Degenerations}
In this subsection, we analyze the (Yamabe) stability of $\mathbf h(t_1,t_2,t_3)$ as it \emph{degenerates}, i.e., as some $t_i$ converge to either $0$ or $\infty$. 
Note that degenerations where some $t_i\nearrow\infty$ are  stable, since the subset $\R^3_{>0}\smallsetminus\mathcal S_n$ of parameters corresponding to unstable metrics is bounded, as a consequence of Theorem~\ref{mainthm:stability3param}.
Thus, we restrict ourselves to the case in which all $t_i$ remain finite, and call the number of $t_i$ that converge to $0$ the \emph{codimension} of the degeneration.

\begin{proposition}\label{prop:stability0}
The following hold about degenerations along $1$-parameter subfamilies of homogeneous metrics $\mathbf h(t_1,t_2,t_3)$ on $\Ss^{4n+3}$ and $\R P^{4n+3}$, $n\geq1$:
\begin{enumerate}[\rm (1)]
\item Degenerations of codimension $1$ or $3$ may occur through degenerate, stable, and unstable solutions, or through a combination of these;
\item Degenerations of codimension $2$ occur only through stable solutions.
\end{enumerate}
\end{proposition}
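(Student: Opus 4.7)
The plan is to work in the variables $x_i = t_i^2 > 0$ and exploit the polynomial
\begin{equation*}
P(x_1,x_2,x_3) := x_1^2+x_2^2+x_3^2 - 2(x_1x_2+x_1x_3+x_2x_3) + \big(2n(x_1+x_2+x_3) + 8(n^2+n+1)\big) x_1 x_2 x_3,
\end{equation*}
in terms of which Theorem~\ref{mainthm:stability3param} reads: $\mathbf h(t_1,t_2,t_3) \neq \mathbf h(1,1,1)$ is a stable nondegenerate Yamabe solution on $\Ss^{4n+3}$ (and hence on $\R P^{4n+3}$) iff $P>0$; it is unstable iff $P<0$, and degenerate iff $P=0$. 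A degeneration of codimension $k$ is a continuous curve $(t_1(s), t_2(s), t_3(s))$ in $\R^3_{>0}$ whose limit as $s\to s_0$ is a point of $\R^3_{\geq 0}$ with exactly $k$ vanishing coordinates. Since $P$ extends smoothly to $\R^3_{\geq 0}$, each assertion reduces to analyzing $P$ on the relevant boundary stratum.

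For claim (2), I would simply observe that on the codimension-$2$ stratum one has $P(0, 0, x_3) = x_3^2 > 0$ for every $x_3 > 0$, so by continuity $P > 0$ in a neighborhood of any such boundary point, forcing every codimension-$2$ degeneration to consist of stable metrics near the limit.

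For (1) in codimension $1$, the boundary datum is $P(0, x_2, x_3) = (x_2 - x_3)^2$, which is positive except on the line $\{x_2 = x_3\}$. Hence an approach to $(0, x_2^*, x_3^*)$ with $x_2^* \neq x_3^*$ is automatically stable. In the delicate case $x_2^* = x_3^* = a > 0$, a short direct expansion gives
\begin{equation*}
P(x_1, a, a) = x_1 \big[ x_1(1 + 2na^2) + 4a\, g(a) \big], \qquad g(a) := na^2 + 2(n^2+n+1) a - 1,
\end{equation*}
and $g$ has a unique positive root $a_*$, negative for $a < a_*$ and positive for $a > a_*$. Thus approach along $(x_1, a, a)$ is stable for $a > a_*$, unstable for $a < a_*$, and, for $a$ slightly below $a_*$, crosses $\{P = 0\}$ at $x_1 = -4a\,g(a)/(1+2na^2) > 0$, realizing a single family that traverses unstable, degenerate, and stable metrics. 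For (1) in codimension $3$, a radial family $(t_1,t_2,t_3) = (\varepsilon c_1, \varepsilon c_2, \varepsilon c_3)$ yields the expansion
\begin{equation*}
P(\varepsilon^2 c_1^2, \varepsilon^2 c_2^2, \varepsilon^2 c_3^2) = \varepsilon^4\, q(c_1^2, c_2^2, c_3^2) + O(\varepsilon^6), \qquad q(y) := \textstyle\sum_i y_i^2 - 2\sum_{i<j} y_i y_j,
\end{equation*}
so the sign of $q$ controls the approach: $q(1,1,1) = -3 < 0$ (unstable), $q(10,1,1) = 60 > 0$ (stable), and interpolating continuously between such radial families produces a curve that must cross $\partial \mathcal S_n$ (which accumulates at the origin since $P(0,0,0)=0$ with vanishing gradient), hence passes through a degenerate metric.

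The main obstacle is the borderline codimension-$1$ case $x_2^* = x_3^*$: the leading boundary datum vanishes identically on the diagonal line and one is forced into the subleading expansion above, but the entire question then reduces to inspecting the single quadratic $g(a)$. The argument is identical on $\R P^{4n+3}$ because the scalar curvature and the stability characterization of Theorem~\ref{mainthm:stability3param} coincide with those on $\Ss^{4n+3}$.
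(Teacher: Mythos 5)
Your argument is correct, and it reaches the same conclusions as the paper by a somewhat different mechanism. The paper first determines the full accumulation set of $\Sigma_n=p^{-1}(0)$ (your $P$ is the paper's $p$ from \eqref{eq:sigma-rootfree}) on $\partial\R^3_{\geq0}$: using the boundedness of $\Sigma_n$ established in Theorem~\ref{mainthm:stability3param} and the identity $p(x,y,0)=(x-y)^2$, it shows this set is the $\mathfrak S_3$-orbit of the diagonal segment $\{(x,x,0):0\leq x\leq\ell_n\}$, and then reads off both claims from which boundary points are (and are not) accumulation points of $\Sigma_n$ and of the two complementary regions. You instead prove (2) by pure continuity, $P(0,0,x_3)=x_3^2>0$, which is cleaner and avoids any appeal to boundedness of $\Sigma_n$; and you prove (1) by exhibiting explicit families, the key computation being the factorization $P(x_1,a,a)=x_1\bigl[x_1(1+2na^2)+4a\,g(a)\bigr]$ with $g(a)=na^2+2(n^2+n+1)a-1$. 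Reassuringly, the positive root $a_*$ of $g$ coincides with the paper's $\ell_n$ (both equal $\bigl(\sqrt{n^4+2n^3+3n^2+3n+1}-(n^2+n+1)\bigr)/n$), so your picture of the boundary behavior matches the paper's exactly. What the paper's route buys is the precise accumulation set, from which one immediately gets degenerations of every pure type, including curves lying entirely inside $\Sigma_n$.

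Two small points. First, your parenthetical justification that $\Sigma_n$ accumulates at the origin ``since $P(0,0,0)=0$ with vanishing gradient'' is not a valid inference (a nonnegative polynomial such as $x^2+y^2+z^2$ has the same property with no nearby zeros); fortunately it is also unnecessary for your interpolation argument, and the fact itself follows from your own formula: the degenerate branch $a\mapsto\bigl(x_1(a),a,a\bigr)$ with $x_1(a)=-4a\,g(a)/(1+2na^2)$ satisfies $x_1(a)\sim 4a$ as $a\searrow0$, so it tends to the origin inside $\Sigma_n$. Second, if one reads claim (1) as also asserting that degenerations can occur \emph{through degenerate solutions} (i.e.\ staying on $\Sigma_n$ near the limit), as the paper's proof does, you should add exactly this branch: $a\nearrow a_*$ gives a codimension-$1$ degeneration through degenerate metrics (limiting to $(0,a_*,a_*)$), and $a\searrow0$ gives a codimension-$3$ one. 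Both are immediate consequences of the factorization you already have, so the proof is complete after these one-line additions.
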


\begin{proof}
Once again, we use the notation from the proof of Theorem~\ref{mainthm:stability3param}, in terms of variables \eqref{eq:squarefree}. 
We claim that the (topological) closure of $\Sigma_n$ inside $\R^3_{\geq0}$, see \eqref{eq:sigman}, consists of the union of $\Sigma_n$ with a diagonal line segment inside each of the $3$ coordinate hyperplanes that form the boundary $\partial\R^3_{\geq0}$.
Given the $\mathfrak S_3$-symmetries, without loss of generality, we consider only the part of $\partial\R^3_{\geq0}$ where $z=0$. From \eqref{eq:sigma-rootfree}, we have that
\begin{equation}\label{eq:xy0}
p(x,y,0)=(x-y)^2,
\end{equation}
however, the accumulation points of $\Sigma_n$ only lie in a finite segment along the diagonal $x=y$,
since $\Sigma_n\subset\R^3_{>0}$ is bounded.
Solving for $z$ in the polynomial equation $p(x,x,z)=0$, and then finding its zeroes in $x$, one sees that the accumulation points of $\Sigma_n$ on the plane $z=0$ are precisely $L=\big\{(x,x,0)\in\R^3_{\geq0}: 0\leq x\leq \ell_n\big\}$, where
\begin{equation*}
\ell_n=\frac{\sqrt{(n^3+n^2+2n+1)(n+1)}-(n^2+n+1)}{n}.
\end{equation*}
Thus, the accumulation points of $\Sigma_n$ on $\partial\R^3_{\geq0}$ are the $3$ diagonal line segments of length $\ell_n$ starting at the origin, i.e., the $\mathfrak S_3$-orbit of $L$, proving the above claim.

Claim (2) now follows, as the coordinate axes only intersect this accumulation set at the origin. Claim (1) also follows, since 
$\Sigma_n$ and both connected components of its complement in $\R^3_{>0}$ have accumulation points in the complement of the coordinate axes in $\partial\R^3_{\geq0}$, as well as at the origin.
\end{proof}

\begin{remark}\label{rem:degcollapse}
Degenerations do not always correspond to \emph{collapse}, in the sense of Gromov--Hausdorff convergence to limit metric space with \emph{lower} Hausdorff dimension. 
As an illustration, consider $\big(\Ss^{3},\mathbf h(t_1,t_2,t_3)\big)$, with $0<t_1\leq t_2\leq t_3$. Since this is a class of uniformly doubling metric spaces~\cite{gordinagafa}, any sequence along which the diameter remains bounded has a Gromov--Hausdorff convergent subsequence~\cite[Prop.~11.1.12]{petersen-3ed}.
It can be shown that $\diam\!\big(\Ss^{3},\mathbf h(t_1,t_2,t_3)\big)$ remains bounded if and only if $t_2$ remains bounded, see~\cite[Prop.~7.1]{gordinagafa} or \cite[Cor.~4.4]{Lauret-SpecSU(2)}. If $t_2\searrow0$, then also $\diam\!\big(\Ss^{3},\mathbf h(t_1,t_2,t_3)\big)\searrow0$ and hence the Gromov--Hausdorff limit is a point. 
On the one hand, if $t_2$ remains away from $0$ and $t_1\searrow0$, then the limit is a round sphere $\Ss^2(t_2)$ of radius $t_2$, in which case \emph{there is collapse}. Note that, unless $t_2=t_3$, there is no uniform lower bound on the Ricci curvature as $t_1\searrow0$.
On the other hand, if $t_1$ and $t_2$ remain bounded and $t_3\nearrow\infty$, then the limit is $\Ss^3$ endowed with a (homogeneous) sub-Riemannian distance function, which is a metric space of \emph{larger} Hausdorff dimension, equal to $4$.
\end{remark}

\subsection{Bifurcations versus degenerations}
Based on the literature about bifurcation of homogeneous solutions to the Yamabe problem cited above, one intuitively expects close relations between \emph{degenerations} and \emph{accumulating bifurcations}, manifested through the Morse index blowing up. We now discuss a few such relations.

\begin{proposition}\label{prop:collapsebif}
Let $M$ be a closed manifold and
$\pi_t\colon (M,\g(t))\to B$, $\dim B\geq 1$, be a $1$-parameter family of Riemannian submersions with totally geodesic fibers isometric to $F_t$, such that $\scal(\g(t))$ is constant for all $t\in (0,1]$, $\diam(F_t)\searrow 0$ as $t\searrow0$, and $\lim_{t\searrow0}\inf \Ric(F_t)\geq \kappa$ for some $\kappa\in\R$. Then, as $t\searrow0$,
\begin{equation}\label{eq:equivimorsescal}
i_{\rm{Morse}}(\g(t))\nearrow\infty \quad \Longleftrightarrow \quad \scal(F_t)\nearrow+\infty.
\end{equation}
\end{proposition}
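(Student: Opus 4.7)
The plan is to leverage the spectral decomposition of $L^2(M,\g(t))$ afforded by a Riemannian submersion with totally geodesic fibers, following~\cite{Berard-BergeryBourguignon82,besson-bordoni}. Write $L^2(M,\g(t))=L^2_{\textrm{bas}}\oplus (L^2_{\textrm{bas}})^\perp$, where $L^2_{\textrm{bas}}$ consists of basic functions (i.e., lifts of functions on $B$). Total geodesicity of the fibers ensures that $\Delta_{\g(t)}$ preserves this splitting: on $L^2_{\textrm{bas}}$ it restricts to the pullback of $\Delta_{B}$, so the basic part of $\Spec(\Delta_{\g(t)})$ coincides with $\Spec(\Delta_B)$ (independently of $t$); while on $(L^2_{\textrm{bas}})^\perp$ a standard integration-by-parts argument yields $\Delta_{\g(t)}\geq \Delta^{\textrm{ver}}_{\g(t)}$, so every non-basic eigenvalue of $\Delta_{\g(t)}$ is bounded below by $\lambda_1(\Delta_{F_t})$.

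The hypotheses $\diam(F_t)\searrow 0$ and $\liminf_{t\searrow 0}\Ric(F_t)\geq \kappa$, combined with the Lichnerowicz--Zhong--Yang eigenvalue estimate, imply $\lambda_1(\Delta_{F_t})\nearrow +\infty$ as $t\searrow 0$. In particular, the non-basic portion of $\Spec(\Delta_{\g(t)})$ escapes to $+\infty$, so for $t$ sufficiently small, every eigenvalue $\lambda$ of $\Delta_{\g(t)}$ with $(n-1)\lambda<\scal(\g(t))$ is basic, provided $\scal(\g(t))$ stays bounded. Combined with the Gray--O'Neill formula $\scal(\g(t))=\scal(B)\circ\pi_t+\scal(F_t)-\|A_t\|^2$ and boundedness of $\scal(B)$ on the compact base, both implications of the equivalence become accessible.

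For the forward direction, if $i_{\textrm{Morse}}(\g(t))\nearrow\infty$ while $\scal(\g(t))$ remained bounded, then discreteness of $\Spec(\Delta_B)$ would force only finitely many basic eigenvalues below the threshold $\scal(\g(t))/(n-1)$, contradicting the fact that non-basic eigenvalues escape to $+\infty$. Hence $\scal(\g(t))\nearrow+\infty$, and Gray--O'Neill immediately gives $\scal(F_t)\nearrow+\infty$ (no control of $\|A_t\|^2$ is required here). Conversely, if $\scal(F_t)\nearrow+\infty$ and $\|A_t\|^2$ stays bounded, then Gray--O'Neill yields $\scal(\g(t))\nearrow+\infty$, and Weyl's law applied to $(B,\g_B)$ guarantees unboundedly many basic eigenvalues below $\scal(\g(t))/(n-1)$, so $i_{\textrm{Morse}}(\g(t))\nearrow\infty$. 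The main obstacle I foresee is controlling $\|A_t\|^2$ in the $\Leftarrow$ direction: however, constancy of $\scal(\g(t))$ on $M$ forces $\|A_t\|^2$ to be a basic function, and in the canonical-variation setups relevant to the paper the $A$-tensor stays uniformly bounded as $t\searrow 0$, so this hypothesis is easily verified in the applications of interest.
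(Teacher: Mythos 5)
Your argument for the implication $i_{\rm{Morse}}(\g(t))\nearrow\infty \Rightarrow \scal(F_t)\nearrow+\infty$ is essentially the paper's: there too, every eigenvalue of $\Delta_{\g(t)}$ splits as $\lambda_j(F_t)+\lambda_k(B)$ (so basic eigenvalues are $t$-independent and non-basic ones are at least $\lambda_1(F_t)$), the shrinking diameter together with the Ricci lower bound gives $\lambda_1(F_t)\nearrow\infty$, and Gray--O'Neill shows that a bound on $\scal(F_t)$ bounds $\scal(\g(t))$, so that only finitely many (basic) eigenvalues can contribute to the Morse index. One small caveat: Lichnerowicz and Zhong--Yang require nonnegative Ricci, while here $\kappa$ may be negative, so you need an estimate valid for an arbitrary lower Ricci bound (the paper uses the L\'evy--Gromov-type isoperimetric inequality of B\'erard--Besson--Gallot); this is a citation fix, not a conceptual issue.

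The genuine gap is in the converse. The paper does not prove it directly: it invokes Otoba--Petean (Thm.~4.1 of their paper). Your direct argument needs $\scal(\g(t))\nearrow+\infty$, and to deduce this from $\scal(F_t)\nearrow+\infty$ via Gray--O'Neill you must rule out that $\|A_t\|^2$ blows up fast enough to cancel the fiber term; nothing in the hypotheses bounds $A_t$ (the base metric is fixed, but the horizontal distribution, and hence $A_t$, may vary with $t$). Your proposed fixes do not close this: constancy of $\scal(\g(t))$ makes $\|A_t\|^2$ basic only if the fibers have constant scalar curvature, and in any case basicness gives no bound; and appealing to the canonical-variation setups ``relevant to the paper'' proves the claim only in special cases, not the proposition as stated. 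So your proof of $\Leftarrow$ is incomplete: either supply an argument controlling $\|A_t\|^2$ (equivalently, showing $\scal(\g(t))\to\infty$), or defer to the Otoba--Petean theorem as the paper does. The rest of your $\Leftarrow$ argument --- infinitely many $t$-independent basic eigenvalues eventually fall below $\scal(\g(t))/(n-1)$ --- is fine once $\scal(\g(t))\to\infty$ is secured; you do not even need Weyl's law, only that $\Spec(\Delta_B)$ is infinite and discrete and that basic eigenvalues embed in $\Spec(\Delta_{\g(t)})$.
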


\begin{proof}
Suppose that $\scal(F_t)\leq C$ as $t\searrow0$. 
The scalar curvature of $\g(t)$ is given by (see \cite[Prop.~9.70]{Besse})
\begin{equation*}
\scal(\g(t))=\scal(F_t)+\scal(B)\circ\pi_t -\|A_t\|^2,
\end{equation*}
and hence is also bounded from above as $t\searrow0$.
On the other hand,
all eigenvalues of the Laplacian $\Delta_{\g(t)}$ on $(M,\g(t))$ are of the form
\begin{equation}\label{eq:eigenvaluest}
\lambda(t)=\lambda_j(F_t)+\lambda_k(B),
\end{equation}
for some $\lambda_j(F_t)\in\Spec(\Delta_{F_t})$ and $\lambda_k(B)\in\Spec(\Delta_B)$, see \cite[Thm.~3.6]{Berard-BergeryBourguignon82}. 
Although not all combinations \eqref{eq:eigenvaluest} of eigenvalues of $F_t$ and $B$ occur, there is an inclusion $\Spec(\Delta_B)\subset\Spec(\Delta_{\g(t)})$, since lifting an eigenfunction of $\Delta_B$ with eigenvalue $\lambda_k(B)$ gives an eigenfunction of $\Delta_{\g(t)}$ with same eigenvalue. These eigenvalues of $\Delta_{\g(t)}$ are called \emph{basic} and are independent of $t$.
Since $\diam(F_t)\searrow0$ as $t\searrow0$ and $F_t$ have a uniform lower bound on Ricci curvature, the L\'evy-Gromov isoperimetric inequality~\cite[Cor.~17]{bbg} implies  that 
$\lambda_1(F_t)\nearrow\infty$.
Thus, by \eqref{eq:eigenvaluest}, all non-basic eigenvalues satisfy $\lambda(t)\nearrow\infty$ as $t\searrow0$.
Therefore, if $t>0$ is sufficiently small, only \emph{basic} eigenvalues contribute to the Morse index of $\g(t)$, because $\scal(\g(t))$ is bounded, cf.~\eqref{eq:iMorse}. For the same reason, there are at most \emph{finitely many} basic eigenvalues $\lambda_k(B)$ that satisfy $(n-1)\lambda_k(B)<\scal(\g(t))$, which implies that $i_{\rm{Morse}}(\g(t))$ is bounded as $t\searrow0$.

The converse implication follows from Otoba and Petean~\cite[Thm.~4.1]{OtobaPetean1}.
\end{proof}

\begin{remark}\label{rem:diamvol}
In Proposition~\ref{prop:collapsebif}, the hypothesis $\diam(F_t)\searrow0$ as $t\searrow0$ cannot be relaxed to $\Vol(F_t)\searrow0$, as exemplified by letting $F_t$ be the Berger sphere $(\Ss^3,\mathbf g(t))$ or a flat torus $\Ss^1(t)\times\Ss^1$.
In these examples, $\lambda_1(F_t)$ remains bounded as $t\searrow0$, $\Ric(F_t)\geq0$, and $\Vol(F_t)\searrow0$, but $\diam(F_t)\to\diam(F_0)>0$. Roughly speaking, this corresponds to the fact that $\diam(F_t)\searrow0$ detects whether $F_t$ collapses in \emph{all} directions, while $\Vol(F_t)\searrow0$ only detects if $F_t$ collapses in \emph{some} direction. If the collapse $F_t\to F_0$ is sufficiently controlled (e.g., with upper and lower bounds on the sectional curvature), then $\lambda_1(F_t)\to\lambda_1(F_0)$, see \cite{fukaya}.
\end{remark}

\begin{remark}
A compact homogeneous space $M=\G/\H$ admits $\G$-invariant metrics $\g$ with $\scal>0$ if and only if $M$ is not a torus.
In this case, $M$ also admits many $1$-parameter families $\g(t)$, $t\in (0,1]$ of $\G$-invariant metrics such that, as $t\searrow0$, $\scal(\g(t))\nearrow\infty$ and $\Vol(M,\g(t))=1$, e.g., by considering (normalized) Cheeger deformations with respect to any subaction by a non-Abelian subgroup, such as $\SU(2)\subset\G$. 
In this situation, it seems natural to expect that $i_{\rm{Morse}}(\g(t))\nearrow\infty$.
In principle, confirming this would solely rely on a careful analysis of the spectrum of homogeneous spaces. Nevertheless, a proof seems currently elusive, except if $(\G/\H,\g(t))$ admits nontrivial Riemannian submersions, in which case one may use Proposition~\ref{prop:collapsebif}, see also \cite[Thm.~4.1]{bp-pacific}.
\end{remark}

Consider the canonical variation $\g(t)=t^2\g_{\rm {ver}}+\g_{\rm {hor}}$ of a Riemannian submersion $F\to M\to B$ with totally geodesic fibers, where all manifolds are closed.
In this situation, concerning the setting of Proposition~\ref{prop:collapsebif}, $\scal(\g(t))$ is constant for all $t\in (0,1]$ if and only if $\scal(B)$, $\scal(F)$, and $\|A\|^2$ are constant. Moreover, $\scal(F_t)=\frac{1}{t^2}\scal(F)$, $\diam(F_t)=t\diam(F)$, and $\lim_{t\searrow0}\inf\Ric(F_t)\geq\kappa$ for some $\kappa\in\R$ if and only if $\Ric(F)\geq0$; however, since $\lambda_1(F_t)=\frac{1}{t^2}\lambda_1(F)$, the latter is not necessary to prove the following adaptation of Proposition~\ref{prop:collapsebif} along the same lines:

\begin{proposition}\label{prop:collapsebif2}
Let $F\to M\to B$ be a Riemannian submersion with totally geodesic fibers, and $\dim B\geq 1$. Suppose $F$ and $B$ are closed manifolds with constant scalar curvature. Then the canonical variation $\g(t)$ satisfies, as $t\searrow0$,
\begin{equation}\label{eq:equivimorsescal2}
i_{\rm{Morse}}(\g(t))\nearrow\infty \quad \Longleftrightarrow \quad \scal(F)>0.
\end{equation}
\end{proposition}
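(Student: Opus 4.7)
The proof proceeds along the lines of Proposition~\ref{prop:collapsebif}, replacing the L\'evy--Gromov ingredient by the explicit scaling laws $\scal(F_t)=\scal(F)/t^2$, $\diam(F_t)=t\,\diam(F)$, and crucially $\lambda_1(F_t)=\lambda_1(F)/t^2$, which hold automatically for canonical variations. The two necessary inputs are an asymptotic expansion of $\scal(\g(t))$ as $t\searrow 0$, controlled by $\scal(F)$, and a spectral decomposition of $\Delta_{\g(t)}$ that cleanly separates basic from non-basic eigenvalues.

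First, I would record the Gray--O'Neill formula \cite[Prop.~9.70]{Besse} applied to $\g(t)=t^2\g_{\rm ver}+\g_{\rm hor}$, which yields
$$\scal(\g(t))=\frac{\scal(F)}{t^2}+\scal(B)\circ\pi-t^2\|A\|^2,$$
where $\|A\|^2$ denotes the squared norm of the A-tensor of the submersion at $t=1$. In particular, $\scal(\g(t))$ stays bounded above as $t\searrow 0$ precisely when $\scal(F)\leq 0$, and $\scal(\g(t))\to+\infty$ precisely when $\scal(F)>0$. Next, from \cite[Thm.~3.6]{Berard-BergeryBourguignon82} applied to the canonical variation, every eigenvalue of $\Delta_{\g(t)}$ has the form $\lambda_j(F)/t^2+\mu_k^{(j)}$, where the $\mu_k^{(j)}\geq 0$ are eigenvalues of a self-adjoint elliptic operator on sections of a vector bundle $E_j\to B$ associated to the $\lambda_j(F)$-eigenspace of $\Delta_F$. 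The key point is that this operator is \emph{independent of $t$}, because the horizontal metric, the A-tensor, and the base geometry are all unchanged under canonical variation; non-negativity of each $\mu_k^{(j)}$ follows by letting $t\to\infty$ in $\Delta_{\g(t)}\geq 0$, and for $j=0$ one recovers the basic eigenvalues $\mu_k^{(0)}=\lambda_k(B)$.

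For the forward implication I would argue by contrapositive: if $\scal(F)\leq 0$ then there exists $C$ such that $\scal(\g(t))/(n-1)\leq C$ for all sufficiently small $t$. Any non-basic eigenvalue ($j\geq 1$) satisfies $\lambda(t)\geq\lambda_1(F)/t^2$, which eventually exceeds $C$, hence cannot contribute to $i_{\rm Morse}(\g(t))$. Only the basic eigenvalues $\lambda_k(B)$ can contribute, and only the (finitely many) ones below $C$ do so; consequently $i_{\rm Morse}(\g(t))$ remains bounded as $t\searrow 0$.

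For the converse I would either invoke \cite[Thm.~1.1]{OtobaPetean1} directly, or reason as follows: since $\scal(\g(t))\to+\infty$, for any $N$ one finds $t$ small enough so that more than $N$ basic eigenvalues $\lambda_k(B)\in\Spec(\Delta_B)\subset\Spec(\Delta_{\g(t)})$ lie below $\scal(\g(t))/(n-1)$, forcing $i_{\rm Morse}(\g(t))\to\infty$. The only genuinely subtle step in the whole argument is verifying the $t$-independence of the operators producing the $\mu_k^{(j)}$; once that is extracted carefully from the construction in~\cite{Berard-BergeryBourguignon82}, the rest of the proof is a clean counting argument that avoids any isoperimetric input.
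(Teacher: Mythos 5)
Your proposal is correct and takes essentially the same route as the paper, which proves this proposition simply by rerunning the argument of Proposition~\ref{prop:collapsebif} with the scaling identities $\scal(F_t)=\scal(F)/t^2$ and $\lambda_1(F_t)=\lambda_1(F)/t^2$ replacing the L\'evy--Gromov input, and with the Otoba--Petean result supplying the implication $\scal(F)>0\Rightarrow i_{\rm Morse}(\g(t))\nearrow\infty$. Your more careful statement of the B\'erard-Bergery--Bourguignon decomposition (eigenvalues of the form $\lambda_j(F)/t^2+\mu_k^{(j)}$ with $t$-independent nonnegative operators over $B$, and $\mu_k^{(0)}=\lambda_k(B)$) is a harmless refinement of the same argument, not a different approach.
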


Note this proves that the converse statement to \cite[Thm.~1.1]{OtobaPetean1} holds.

Let us briefly revisit the possible degenerations of $\big(\Ss^{4n+3},\mathbf h(t_1,t_2,t_3)\big)$, $n\geq1$, under the light of Propositions~\ref{prop:collapsebif} and \ref{prop:collapsebif2}. For all codimension $1$ degenerations $t_1\searrow0$, direct inspection shows the Morse index remains bounded. Note that 
Propositions~\ref{prop:collapsebif} and \ref{prop:collapsebif2} do not apply, since the diameter of $F_t=\big(\Ss^3,\mathbf h(t,t_2,t_3)\big)$
does not converge to $0$, see Remarks~\ref{rem:diamvol} and \ref{rem:degcollapse}, and unless $t_2=t_3$, there is no uniform lower bound on the Ricci curvature.
All codimension $2$ degenerations are stable, and although $\diam(F_t)\searrow0$, there is no uniform lower bound on the Ricci curvature; in fact, $\scal\searrow-\infty$.
Finally, codimension $3$ degenerations may or may not have unbounded Morse index, depending on how the $t_i$'s go to zero.

Infinitely many bifurcations due to unboundedness of the Morse index are only known to occur accompanied by collapse of codimension $\geq2$, cf.~Proposition~\ref{prop:cp2n1};
and Propositions~\ref{prop:collapsebif} and \ref{prop:collapsebif2} provide further evidence that this should always be the case.
It would be interesting to confirm this, 
that is, show that if a
family of Riemannian submersions $\pi_t\colon (M,\g(t))\to B$ with totally geodesic fibers and $\scal(\g(t))$ constant for all $t\in (0,1]$ satisfies $i_{\rm{Morse}}(\g(t))\nearrow\infty$ and the
Gromov--Hausdorff limit of $(M,\g(t))$ as $t\searrow0$ exists and has finite diameter, then its Hausdorff dimension must be $\leq\dim M-2$.

\appendix
\section{First eigenvalue and Yamabe stability in the remaining homogeneous CROSSes}
\label{appendix}

For the convenience of the reader, we now provide formulae (with references) for the first eigenvalue $\lambda_1(M,\g)$ of the Laplacian on all CROSSes $M$, endowed with a homogeneous $\G$-invariant metric $\g$, as presented in Table~\ref{tab:eigenvalues} below.

The (complete) spectrum of a CROSS, endowed with its canonical \emph{symmetric space} metric, can be found in \cite[p.~202]{Besse-closed}. Detailed spectral computations for $\Ss^n$, $\R P^n$, and $\C P^n$ are given in~\cite{BerGauMaz}; for $\Hr P^n$ and $\Ca P^2$, see \cite{cahn-wolf}. Regarding the remaining homogeneous metrics, we have that:
\begin{enumerate}[(i)]
\item The first eigenvalue of $\mathbf g(t)$ on $\Ss^{2n+1}$ is computed in~\cite{tanno1}, and an inspection of which eigenfunctions are $\Z_2$-invariant yields its first eigenvalue on $\R P^{2n+1}$;
\item The first eigenvalue of $\mathbf h(t_1,t_2,t_3)$ on $\Ss^3$ and $\R P^3$ are computed in \cite{Lauret-SpecSU(2)}, and the special cases where two of $t_1,t_2,t_3$ coincide done previously in \cite{urakawa79};
\item The first eigenvalue of $\mathbf h(t_1,t_2,t_3)$ on $\Ss^{4n+3}$ and $\R P^{4n+3}$ are computed in Theorem~\ref{thm:A}, and the special case $t_1=t_2=t_3$ done previously in \cite{tanno2};
\item The first eigenvalue of $\mathbf k(t)$ on $\Ss^{15}$ is computed in \cite[Prop.~7.3]{bp-calcvar}, and an inspection of which eigenfunctions are $\Z_2$-invariant yields its first eigenvalue on $\R P^{15}$;
\item The first eigenvalue of $\check{\mathbf h}(t)$ on $\C P^{2n+1}$ is computed in Theorem~\ref{thm:B}.
\end{enumerate}
As an alternative reference for (i) and the special case $t_1=t_2=t_3$ in (iii) one may use, respectively, \cite[Prop.~5.3 and 6.3]{bp-calcvar}. These homogeneous metrics, together with those in (iv), account for all isometry classes of distance spheres in rank one symmetric space. A unified and explicit description of their \emph{full} spectrum was recently obtained by the authors~\cite[Thm.~A]{blp-new}.

The above computations are carried out in one of two possible ways. The first, and more general, is the Lie-theoretic approach described in Section~\ref{sec:homspectra}, which is used in (ii) and (iii), and generalizes the classical approach developed for canonical symmetric space metrics (see e.g.~\cite{wallach-book,urakawa}).
The second, which relies on the existence of Riemannian submersions with minimal fibers, is explained in detail in \cite{Berard-BergeryBourguignon82} and \cite{besson-bordoni}, building on the earlier works~\cite{urakawa79,tanno1,tanno2}, and is used in (i), in the special case $t_1=t_2=t_3$ in (iii), as well as in (iv) and (v).

\medskip

We also include in Table~\ref{tab:eigenvalues} formulae for the scalar curvature of these CROSSes. The computation for the symmetric space metric on $\Ss^n$, $\R P^n$, $\C P^n$, $\Hr P^n$, and $\Ca P^2$
follows from the computation of their Einstein constants, which are, respectively, $n-1$, $n-1$, $2(n+1)$, $4(n+2)$, and $36$, under the normalization convention that these metrics have $\sec=1$ for $\Ss^n$ and $\R P^n$, and $1\leq\sec\leq 4$ in the remaining cases.
The computation for the other homogeneous metrics uses the Gray--O'Neill formula~\cite[Prop.~9.70]{Besse}, see also \eqref{eq:scal4n3} and \cite[Prop.~4.2]{bp-calcvar}.
In Table~\ref{tab:stable},
by solving the inequality 
\begin{equation}\label{eq:Yamstability}
\scal(M,\g)<(\dim M-1)\lambda_1(M,\g),
\end{equation}
we present the range of parameters for which these metrics are stable solutions to the Yamabe problem. If equality holds in \eqref{eq:Yamstability},  $\g$ is labeled as \emph{degenerate stable}.

\begin{remark}\label{rem:cahn-wolf}
For the convenience of the reader, we also identify some small imprecisions and typos in the literature.
First, the multiplicity of the $k$th eigenvalue of the round sphere, $\lambda_k(\Ss^d,\gr)=k(k+d-1)$, is given by \eqref{eq:multsphere}. Unfortunately, this formula appears with (the same) typos in \cite[p.~162]{BerGauMaz} and \cite[p.~35]{chavel}.

Second, the computation of some heat invariants of $\Ca P^2$ carried out in \cite{cahn-wolf} is incorrect. 
For instance, the ratio $a_1/a_0$ of the first two heat invariants, which is equal to $\frac{\scal}{6}$, evaluates to a \emph{negative} number according to the formulae in \cite[\S 13]{cahn-wolf}.
The correct values for these invariants are given in~\cite[Thm.~2.1]{awonusika}.
More precisely, in the notation of \cite[\S12]{cahn-wolf}, the values of $\eta_j$ are correct, except for $\eta_6=-{175}/{4}$, $\eta_3=2864323/256$, and $\eta_1=18445239/4096$. 
Furthermore, the second row of $\zeta_{P^2(\textrm{Cay})}$ in \cite[p.~20]{cahn-wolf} should be replaced with
\begin{equation*}
\zeta_{P^2(\textrm{Cay})} (t) 
= \frac{3!}{7! 11!} e^{(121/72)t} \sum_{j=0}^7 \eta_j\, (-1)^j\, g^{(j)}(\tfrac{t}{18}) + O (1), 
\end{equation*}
which gives, for any $0\leq m\leq 7$,
\begin{equation*}
a_m=
 \frac{3!}{7!11!} (4\pi)^8 \sum_{k=0}^m \left(\frac{121}{72}\right)^{\! k} {\eta_{7-m+k}}\, \frac{(7-m+k)!}{k!} \,  18^{8+k-m}.
\end{equation*}
Using the above, one obtains the correct value $a_1/a_0=4/3$, according to the normalization used in \cite{cahn-wolf}, for which the scalar curvature of $\Ca P^2$ is $\scal=8$.
\end{remark}

\vfill

\pagebreak[4]
  \global\pdfpageattr\expandafter{\the\pdfpageattr/Rotate 90}
  
\begin{landscape}
\vspace{-0.2cm}
\begin{table}[ht]
\begin{tabular}{|c|c|c|l|l|l|}
\hline
$M$ \rule[-1.2ex]{0pt}{0pt} \rule{0pt}{2.5ex}  & $\G$ & $\g$ & $\lambda_1(M,\g)$ & $\scal(M,\g)$ & $\Vol(M,\g)$ \\
\hline \noalign{\medskip} \hline 
$\Ss^n$\rule[-1.2ex]{0pt}{0pt} \rule{0pt}{4ex} & $\mathsf O(n+1)$ & $\gr$ & $n$ & $n(n-1)$ & $\dfrac{2\pi^{(n+1)/2}}{\Gamma(\frac{n+1}2)}$ \\[10pt]
$\Ss^{2n+1}$\rule[-1.2ex]{0pt}{0pt} \rule{0pt}{3.5ex} & $\SU(n+1)$ & $\mathbf g(t)$ & $\min\!\left\{ 2n+\frac{1}{t^2},\; 4(n+1)\right\}$ & $2n(2n+2-t^2)$ & $\dfrac{2\pi^{n+1}}{n!} t$ \\[12pt]
$\Ss^{4n+3}$\rule[-1.2ex]{0pt}{0pt} \rule{0pt}{3.5ex} & $\Sp(n+1)$ & $\mathbf h(t_1,t_2,t_3)$ & 
\!\!\begin{tabular}{ll}
\!$\min\!\Big\{ 4n +\frac{1}{t_1^2}+\frac{1}{t_2^2}+\frac{1}{t_3^2},$\\[2pt]
 $\phantom{\min\Big\{} 8n+ \frac{4}{t_2^2} + \frac{4}{t_3^2},\;  8(n+1) \Big\}$ \end{tabular}
 &\!\!\!\begin{tabular}{ll}
$4\left(\frac{1}{t_1^2}+\frac{1}{t_2^2}+\frac{1}{t_3^2}\right) - 2\left(\frac{t_1^2}{t_2^2t_3^2} +\frac{t_2^2}{t_1^2t_3^2}+\frac{t_3^2}{t_1^2t_2^2}\right)$\\[6pt]
 $-4n\left(t_1^2 + t_2^2 + t_3^2\right) +16n(n+2)$ \end{tabular}  & $\dfrac{2\pi^{2n+2}}{(2n+1)!} t_1t_2t_3$ \\[14pt]
$\Ss^{3}$\rule[-1.2ex]{0pt}{0pt} \rule{0pt}{3.5ex} & $\SU(2)$ & $\mathbf h(t_1,t_2,t_3)$ & $\min\!\left\{\frac{1}{t_1^2}+\frac{1}{t_2^2}+\frac{1}{t_3^2},\;  \frac{4}{t_2^2} + \frac{4}{t_3^2}  \right\}$ & $4\left(\frac{1}{t_1^2}+\frac{1}{t_2^2}+\frac{1}{t_3^2}\right)-2\left(\frac{t_1^2}{t_2^2t_3^2}+\frac{t_2^2}{t_1^2t_3^2}+\frac{t_3^2}{t_1^2t_2^2}\right) $ & $2\pi^2 t_1t_2t_3$\\[10pt]
$\Ss^{15}$\rule[-2.5ex]{0pt}{0pt} \rule{0pt}{3.5ex} & $\Spin(9)$ & $\mathbf k(t)$ & $\min\!\left\{ 8+\frac{7}{t^2}, \; 32\right\}$ & $14\left(\frac{3}{t^2}+16-4t^2\right)$ & $\dfrac{2\pi^8}{7!} t^7$ \\
\hline \noalign{\medskip} \hline 
$\C P^n$\rule[-2ex]{0pt}{0pt} \rule{0pt}{4ex} & $\SU(n+1)$ & $\gFS$ & $4(n+1)$ & $4n(n+1)$ & $\dfrac{\pi^n}{n!}$ \\[8pt]
$\C P^{2n+1}$\rule[-2.6ex]{0pt}{0pt} \rule{0pt}{3.5ex} & $\Sp(n+1)$ & $\check{\mathbf h}(t)$ & $ \min\!\left\{   8n +\frac{8}{t^2},\; 8(n+1)\right\}$ & $\frac{8}{t^2} + 16n(n+2) -8nt^2$ & $\dfrac{\pi^{2n+1}}{(2n+1)!} t^2$ \\
\hline \noalign{\medskip} \hline 
$\Hr P^n$\rule[-2.6ex]{0pt}{0pt} \rule{0pt}{4ex} & $\Sp(n+1)$ & $\gFS$ & $8(n+1)$ & $16n(n+2)$ & $\dfrac{\pi^{2n}}{(2n+1)!}$ \\
\hline \noalign{\medskip} \hline 
$\Ca P^2$\rule[-2ex]{0pt}{0pt} \rule{0pt}{4ex} & $\mathsf F_4$ & $\gFS$ & $48$ & $576$ & $\dfrac{2^{49} 3^{33} \pi^8 }{11!}$ \\
\hline
\end{tabular}
\bigskip
\caption{First eigenvalue of the Laplacian, scalar curvature, and volume of homogeneous metrics on simply-connected CROSSes. In the above, we convention that $n\geq1$, $0<t_1\leq t_2\leq t_3$, the round metric $\gr$ has sectional curvatures $\sec\equiv1$ and the Fubini--Study metrics $\gFS$ have $1\leq\sec\leq4$. References are given in the previous pages.
}\label{tab:eigenvalues}
\end{table}

\newpage

\begin{table}[ht]
\begin{tabular}{|c|c|c|l|l|l|}
\hline
$M$ \rule[-1.2ex]{0pt}{0pt} \rule{0pt}{2.5ex}  & $\G$ & $\g$ & $\lambda_1(M,\g)$ & $\scal(M,\g)$ & $\Vol(M,\g)$ \\
\hline \noalign{\medskip} \hline 
$\R P^n$\rule[-1.2ex]{0pt}{0pt} \rule{0pt}{4ex} & $\mathsf O(n+1)$ & $\gr$ & $2(n+1)$ & $n(n-1)$ & $\dfrac{\pi^{(n+1)/2}}{\Gamma(\frac{n+1}2)}$ \\[14pt]
$\R P^{2n+1}$\rule[-1.2ex]{0pt}{0pt} \rule{0pt}{3.5ex} & $\SU(n+1)$ & $\mathbf g(t)$ & $\min\!\left\{ 4n+\tfrac{4}{t^2}, \; 4(n+1)\right\}$ & $2n(2n+2-t^2)$ & $\dfrac{\pi^{n+1}}{n!} t$ \\[12pt]
$\R P^{4n+3}$\rule[-1.2ex]{0pt}{0pt} \rule{0pt}{3.5ex} & $\Sp(n+1)$ & $\mathbf h(t_1,t_2,t_3)$ & 
$\min\!\Big\{ 8n+ \frac{4}{t_2^2} + \frac{4}{t_3^2},\;  8(n+1) \Big\}$  &\!\!\!\begin{tabular}{ll}
$4\left(\frac{1}{t_1^2}+\frac{1}{t_2^2}+\frac{1}{t_3^2}\right) - 2\left(\frac{t_1^2}{t_2^2t_3^2} +\frac{t_2^2}{t_1^2t_3^2}+\frac{t_3^2}{t_1^2t_2^2}\right)$\\[6pt]
 $-4n\left(t_1^2 + t_2^2 + t_3^2\right) +16n(n+2)$ \end{tabular}  & $\dfrac{\pi^{2n+2}}{(2n+1)!} t_1t_2t_3$ \\[16pt]
$\R P^{3}$\rule[-1.2ex]{0pt}{0pt} \rule{0pt}{3.5ex} & $\SU(2)$ & $\mathbf h(t_1,t_2,t_3)$ & $\frac{4}{t_2^2} + \frac{4}{t_3^2}$ & $4\left(\frac{1}{t_1^2}+\frac{1}{t_2^2}+\frac{1}{t_3^2}\right)-2\left(\frac{t_1^2}{t_2^2t_3^2}+\frac{t_2^2}{t_1^2t_3^2}+\frac{t_3^2}{t_1^2t_2^2}\right) $ & $\pi^2 t_1t_2t_3$\\[12pt]
$\R P^{15}$\rule[-2.5ex]{0pt}{0pt} \rule{0pt}{3.5ex} & $\Spin(9)$ & $\mathbf k(t)$ & $\min\!\left\{16+\tfrac{16}{t^2}, \; 32 \right\}$ & $14\left(\frac{3}{t^2}+16-4t^2\right)$ & $\dfrac{\pi^8}{7!} t^7$ \\
\hline 
\end{tabular}
\bigskip
\caption{First eigenvalue of the Laplacian, scalar curvature, and volume of homogeneous metrics on the non-simply-connected CROSSes. In the above, we convention that $n\geq1$, $0<t_1\leq t_2\leq t_3$, and the metric $\gr$ has sectional curvatures $\sec\equiv1$. References are given in the previous pages.
}\label{tab:eigenvaluesRPd}
\end{table}
\end{landscape}

\pagebreak[4]
  \global\pdfpageattr\expandafter{\the\pdfpageattr/Rotate 0}

\vspace{-0.2cm}
\begin{table}[!ht]
\begin{tabular}{|c|c|l|}
\hline
$M$ & $\g$\rule[-2.6ex]{0pt}{0pt} \rule{0pt}{4ex} & 
\!\!\!\begin{tabular}{l}
Stability as solution\\to  the Yamabe problem
\end{tabular}
\\
\hline \noalign{\medskip} \hline 
$\Ss^n$\rule[-1.2ex]{0pt}{0pt} \rule{0pt}{2.5ex} & $\gr$ & degenerate stable   \\
& &  \\
$\Ss^{2n+1}$\rule[-1.2ex]{0pt}{0pt} \rule{0pt}{3.5ex} & $\mathbf g(t)$ & $t\neq 1$ \\
& &  \\
$\Ss^{4n+3}$\rule[-1.2ex]{0pt}{0pt} \rule{0pt}{3.5ex} & $\mathbf h(t_1,t_2,t_3)$ & \!\!\!\begin{tabular}{ll}
$\big(2n(t_1^2+t_2^2+t_3^2)+8(n^2+n+1)\big)(t_1t_2t_3)^2$\\[3pt] $\quad +t_1^4+t_2^4+t_3^4>2(t_1^2t_2^2+t_1^2t_3^2+t_2^2t_3^2),$ and \\[3pt]
 $(t_1,t_2,t_3)\neq(1,1,1)$
\end{tabular}
 \\
& & \\
$\Ss^{3}$\rule[-1.2ex]{0pt}{0pt} \rule{0pt}{3.5ex} & $\mathbf h(t_1,t_2,t_3)$ & $(t_1,t_2,t_3)\neq(1,1,1)$   \\
& & \\
$\Ss^{15}$\rule[-2ex]{0pt}{0pt} \rule{0pt}{3.5ex} & $\mathbf k(t)$ & $t>\sqrt{\frac12(\sqrt{19}-4)}\cong 0.4236$, and $t\neq1$\\
\hline \noalign{\medskip} \hline 
$\R P^n$\rule[-1.2ex]{0pt}{0pt} \rule{0pt}{2.5ex} & $\gr$ & stable   \\
& &  \\
$\R P^{2n+1}$\rule[-1.2ex]{0pt}{0pt} \rule{0pt}{3.5ex} & $\mathbf g(t)$ & stable \\
& &  \\
$\R P^{4n+3}$\rule[-1.2ex]{0pt}{0pt} \rule{0pt}{3.5ex} & $\mathbf h(t_1,t_2,t_3)$ & \!\!\!\begin{tabular}{ll}
$\big(2n(t_1^2+t_2^2+t_3^2)+8(n^2+n+1)\big)(t_1t_2t_3)^2$\\[3pt] $\quad +t_1^4+t_2^4+t_3^4>2(t_1^2t_2^2+t_1^2t_3^2+t_2^2t_3^2)$
\end{tabular}
 \\
& & \\
$\R P^{3}$\rule[-1.2ex]{0pt}{0pt} \rule{0pt}{3.5ex} & $\mathbf h(t_1,t_2,t_3)$ & stable   \\
& & \\
$\R P^{15}$\rule[-2ex]{0pt}{0pt} \rule{0pt}{3.5ex} & $\mathbf k(t)$ & $t>\sqrt{\frac12(\sqrt{19}-4)}\cong 0.4236$\\
\hline \noalign{\medskip} \hline 
$\C P^n$\rule[-1.2ex]{0pt}{0pt} \rule{0pt}{2.5ex} & $\gFS$ & stable if $n\geq2$, degenerate stable if $n=1$  \\
& & \\
$\C P^{2n+1}$\rule[-2.6ex]{0pt}{0pt} \rule{0pt}{3.5ex} & $\check{\mathbf h}(t)$ & $t>\displaystyle \sqrt{\frac{\sqrt{(2n^2+n+1)^2+4n}-(2n^2+n+1)}{2n}}$ \\
\hline \noalign{\medskip} \hline 
$\Hr P^n$\rule[-1.2ex]{0pt}{0pt} \rule{0pt}{2.5ex} & $\gFS$ & stable if $n\geq2$, degenerate stable if $n=1$ \\
\hline \noalign{\medskip} \hline 
$\Ca P^2$\rule[-1.2ex]{0pt}{0pt} \rule{0pt}{2.5ex} & $\gFS$ & stable  \\
\hline
\end{tabular}
\bigskip
\caption{Classification of homogeneous metrics on CROSSes that are stable solutions to the Yamabe problem, with same conventions as in Table~\ref{tab:eigenvalues}. Metrics are labeled \emph{degenerate stable} if their Jacobi operator \eqref{eq:jacobi-operator} is positive-semidefinite with nontrivial kernel.}
\label{tab:stable}
\end{table}

\end{document}